\documentclass[reqno]{amsart}
\usepackage{amsmath,amssymb}
\usepackage{amsmath,amssymb,cite}
\usepackage{graphics,epsfig,color}
\usepackage[mathscr]{euscript}
\usepackage{stmaryrd}
\usepackage{xcolor}
\usepackage{enumitem}
\usepackage{a4wide}
\usepackage{tikz}
\usepackage{pgfplots}
\theoremstyle{plain}
\newtheorem{theorem}{Theorem}[section]
\newtheorem{proposition}[theorem]{Proposition}
\newtheorem{lemma}[theorem]{Lemma}
\newtheorem{corollary}[theorem]{Corollary}

\theoremstyle{definition}
\newtheorem{definition}[theorem]{Definition}

\newtheorem{assertion}[theorem]{Assertion}
\theoremstyle{remark}
\newtheorem{remark}[theorem]{Remark}

\makeatletter
\@addtoreset{equation}{section}
\makeatother

\newcommand{\mc}[1]{{\mathcal #1}}
\newcommand{\mb}[1]{{\mathbf #1}}
\newcommand{\mf}[1]{{\mathfrak #1}}

\newcommand{\bb}[1]{{\mathbb #1}}
\newcommand{\ms}[1]{{\mathscr #1}}
\newcommand{\mss}[1]{{\textsf #1}}
\newcommand{\mtt}[1]{{\mathtt #1}}
\newcommand{\mz}[1]{{\mathscr #1}}

\newcommand{\lb}{\llbracket}
\newcommand{\rb}{\rrbracket}

\newcommand{\<}{\langle}
\renewcommand{\>}{\rangle}

\newcommand{\cb}[1]{{\color{blue} #1}}

\title[$\Gamma-$expansion of non reversible one-dimensional
diffusions]{Full $\Gamma-$expansion for the level-two large deviation
rate functionals of non-reversible one-dimensional diffusions with
periodic boundary conditions}

\author{Claudio Landim, and Christian Maura}
\address{Claudio Landim
  \hfill\break\indent IMPA \hfill\break\indent Estrada Dona Castorina
  110, \hfill\break\indent
J. Botanico, 22460 Rio de Janeiro, Brazil\hfill\break\indent
{\normalfont and} \hfill\break\indent
Univ. Rouen Normandie, \hfill\break\indent
CNRS, LMRS UMR 6085, \hfill\break\indent
F-76000 Rouen, France.} 
\email{landim@impa.br}

\address{Christian Maura
  \hfill\break\indent IMPA \hfill\break\indent Estrada Dona Castorina
  110, \hfill\break\indent
J. Botanico, 22460 Rio de Janeiro, Brazil} 
\email{christian.maura@impa.br}

\begin{document}

\begin{abstract}
Consider the diffusion process 
\begin{equation*}
dX_{\epsilon}(t) = \mss b(X_{\epsilon}(t)) \, dt
+ \sqrt{2\, \epsilon\, \mss a(X_\epsilon(t))} \, dW_{t},
\end{equation*}
on the one-dimensional torus $\bb T = [0,1)$.  Here $\epsilon$ is the
temperature, $W_{t}$ a Brownian motion on $\bb T$
and $\mss a$, $\mss b$ functions of class $C^{2}(\bb T)$ satisfying
further conditions. Denote by $\mss P(\bb T)$ the set of probability
measures on $\bb T$  equipped with the weak topology, and by
$\ms I_{\epsilon}\colon \mss P(\bb T)\to [0,+\infty)$ the level two large
deviation rate functional of the diffusion $X_{\epsilon}(\cdot)$. We
derive a full $\Gamma-$expansion of $\ms I_{\epsilon}$, as
$\epsilon \to 0$, expressing it as
\begin{equation*}
\ms I_{\epsilon} = \frac{1}{\epsilon} \;\ms J^{(-1)} \; +\; \ms J^{(0)} \;+\;
\sum_{p=1}^{\widehat{\mf q}}\frac{1}{\theta^{(p)}_{\epsilon}}\;\ms J^{(p)}\,,
\end{equation*}
where $\ms J^{(-1)}$, $\ms J^{(0)}$,
$\ms J^{(p)} \colon \mss P(\bb T)\to [0,+\infty]$ represent rate
functionals, independent of $\epsilon$, and $\theta^{(p)}_{\epsilon}$ are
the time-scales at which the Markov process $X_{\epsilon}(\cdot)$
exhibits a metastable behaviour.
\end{abstract}

\noindent
\keywords{level 2 large deviations, $\Gamma$-expansion, diffusion processes}

\subjclass[2020]
{Primary
   60J60, 
   60F10,
   Secondary
60H10}

\maketitle

\section{Introduction}
\label{secI}

The metastable behavior of Markov processes has attracted considerable
attention in recent years \cite{OV05, BH15, l-review}. In \cite{bl10,
bl12}, the authors proposed characterizing such processes as those
that can be effectively approximated by simpler Markov chains,
typically defined on finite state spaces \cite{b25}. This approach,
based on analyzing the trace of the process within the wells, has
since been successfully applied to a wide range of models in
statistical mechanics and diffusion processes \cite{bl12b, bdg17,
ls18, ks21, ks22}. This line of research culminated in
\cite{lms25}, which established that a Markov process is metastable in
the above sense if, and only if, the solutions to the associated
resolvent equations are asymptotically constant within the wells.
This property was proved for non-reversible diffusion processes
\cite{ls22, lls24, lls25, lm}, for zero range dynamics \cite{or18,
lms23}, and for  symmetric inclusion dynamics \cite{kl26}.

Let $x_\epsilon(\cdot)$ be the diffusion process 
\begin{equation*}
dx_\epsilon (t) \;=\; -\, \nabla U ( x_\epsilon (t) ) \, dt
\;+\; \sqrt{2\,\epsilon} \, dW(t) \;,
\end{equation*}
where $\epsilon$ stands for the temperature,
$U\colon \bb R^d \to \bb R$ a Morse potential, and $W(\cdot)$ a
$d$-dimensional Brownian motion. Denote by $I_\epsilon(\cdot)$ the
associated (Donsker and Varadhan) level two large deviations rate
functional \cite{dv75}.

Assuming the genericity condition that each well has a distinct depth,
Di Gesù and Mariani \cite{gm17} proved that the rate functional
$I_\epsilon (\cdot)$ converges, in the sense of $\Gamma$-convergence
-- the natural notion of convergence for rate functionals
\cite{mar}. More precisely, they showed that there exist rate
functionals $J_{-1}$, $J_0$, $J_p\colon \mss P(\bb R^d) \to \bb R_+$,
$1\le p < \mf q_0$ such that
\begin{equation}
\label{i01}
\Gamma-\lim_{\epsilon\to 0} \epsilon \,  I_\epsilon \,=\, J_{-1}\,, \quad
\Gamma-\lim_{\epsilon\to 0} I_\epsilon \,=\, J_0 \,, \quad
\Gamma-\lim_{\epsilon\to 0} \theta_\epsilon^{(p)}
I_\epsilon \,=\, J_{p}\, .
\end{equation}
Here, $\mss P(\bb R^d)$ denotes the space of probability measures on
$\bb R^d$ endowed with the weak topology; $\mf q_0$ is the number of
local minima of the potential $U(\cdot)$;
$\theta_\epsilon^{(p)} = e^{h_p/\epsilon}$, where $h_p$ represents the
depth of the $p$th shallowest well; and
$\Gamma-\lim_{\epsilon\to 0} a_\epsilon  I_\epsilon = J$ means that
the sequence of rate functionals $a_\epsilon I_\epsilon$ $\Gamma$-
converges to $J$.

Note that $a_\epsilon I_\epsilon$ represents the level two large
deviations rate functional associated with the diffusion process
$x_\epsilon(\cdot)$ accelerated by the factor $a_\epsilon$, that is,
the process $x_\epsilon(\cdot)$ observed on the time-scale
$a_\epsilon$. The preceding result therefore characterizes the
asymptotic behavior of the rate functional across $\mf q+1$ distinct
time-scales.

The time-scale $1$ corresponds to the regime in which the diffusion
escapes from unstable stationary points (such as saddle points or
local maxima of the potential $U(\cdot)$, for instance), while
$\theta_\epsilon^{(p)}$ represents the time-scale at which the process
$x_\epsilon (\cdot)$ starting from a well of depth $h_p$ transitions
to a deeper well.

The assumption that each well has a distinct depth was removed in
\cite{llm25}, following the works \cite{l23, bgl24}, which established
the $\Gamma$-convergence of the level-two large deviations rate
functional for finite-state Markov chains, and \cite{lms23b}, which
investigated the analogous problem for reversible random walks in a
potential field. These results were further extended in \cite{kl25} to
the large deviations rate functional of the pair empirical measure and
current, as originally derived by Bertini, Faggionato, and Gabrielli
in \cite{bfg14}.

While the proof in \cite{gm17} is purely analytical, relying on a
spectral analysis of the generator, the approaches developed in
\cite{l23, bgl24, lms23b, llm25} are probabilistic in nature and
exploit the metastable behavior of the underlying processes.

According to \cite[Corollary~5.3]{kl25}, the generator of a reversible
Markov chain on a finite state space can be reconstructed from its
large deviations rate functional. Consequently, the $\Gamma$-expansion
presented in~\eqref{i01} not only identifies the time scales at which
metastable behavior arises, but also determines the dynamics of the
reduced Markov chain, provided it is a reversible finite-state
process.

In this article, we investigate the $\Gamma$-expansion of the level
two large deviations rate-functional for non-reversible
one-dimensional diffusions with periodic boundary conditions.  The
main result, Theorem~\ref{mt3}, provides the full \(\Gamma\)-expansion
for this functional and states that:
\begin{itemize}
\item The shortest time scale, $\epsilon$, yields a functional
$\ms J^{(-1)}$ that penalizes measures not concentrated on critical
points of the drift.
\item The next scale, $1$,  yields $\ms J^{(0)}$ which
assigns positive cost to measures charging unstable critical points.
\item The metastable scales \(\theta_\epsilon^{(p)}\) produce
functionals $\ms J^{(p)}$ that are lifted versions of the level-two
rate functionals of finite-state reduced Markov chains
$\widehat{\mathbb{X}}_p$.
\end{itemize}

The proof combines analytic and probabilistic techniques. We use a
variational representation of the rate functional to derive lower
bounds, and we construct explicit recovery sequences using solutions
of resolvent equations and localized Gibbs measures to obtain upper
bounds. A key ingredient is the resolvent characterization of
metastability~\cite{lms25}, which allows us to relate the behavior of
the rate functional at scale $\theta_\epsilon^{(p)}$ to the
generator of the reduced chain $\widehat{\mathbb{X}}_p$.

The main novelty of our work is that we do not assume the process to
be reversible. To our knowledge, with the exception of \cite{l23, kl25},
where non-reversible finite-state Markov chains are considered, this is
the first proof of $\Gamma$-convergence for a non-reversible dynamics.

This non-reversibility introduces significant technical challenges:
while for reversible dynamics the large deviations rate functional
admits an explicit expression, in the non-reversible setting it is
defined only through a variational formulation. The absence of an
explicit formula explains why previous studies have generally
restricted attention to reversible dynamics.

Furthermore, the method to prove the $\Gamma-\limsup$ differs completely
in the reversible and non-reversible cases. In the reversible case,
see Section \ref{sec6} and \ref{sec2}, the recovery sequence of a
measure $\mu$ concentrated on a set of local minima $m$, is
constructed by fixing the values at $m$ and extending it harmonically
to the entire space. The procedure is entirely different in the
non-reversible case, and this is the main message of this article.  In
the non-reversible case, see Section \ref{sec8}, one has to proceed as
follows. Fix a measure $\mu$ concentrated on a set $\mc M$ of local
minima $m$. Find the tilting of the reduced Markov chain jump rates to
turn the measure $\mu$ typical for the reduced dynamics. Extend the
tilting harmonically to the entire space. Consider the diffusion
tilted by this extension and its associated stationary state. Prove
that this $\epsilon$-sequence of stationary states for the tilted
dynamics is a recovery sequence of the original measure. This is
carried out in Section \ref{sec8} in the last metastable
time-scale, the only one that gives rises to a non-reversible reduced
dynamics

The main challenge in the area of metastable diffusions is to
establish the metastable behavior of a diffusion process in dimension
$d\ge 2$ when the stationary distribution is not explicitly
known. Existing approaches to proving metastability crucially depend
on explicit computations involving the stationary state. In contrast,
the large deviations rate functional is available under weak
assumptions and does not require such explicit knowledge. This
suggests the possibility of developing analytical methods to study the
asymptotic behavior of this functional, thereby recovering the
metastable dynamics of the system.

In this direction, it may be particularly fruitful to investigate the
asymptotic behavior of the joint current–empirical measure large
deviations rate functional, which has the notable advantage of being
explicit even in the non-reversible case, unlike the level-$2$ large
deviations rate functional, which is defined only through a
variational characterization.

\section{Notation and Results}
\label{secNR}

In this section, we introduce the model, and state the main results.

\subsection*{\bf The diffusion process.}

Let $\bb T = [0,1)$ be the one-dimensional torus of length $1$.
Consider the one-dimensional diffusion processes with periodic
boundary conditions,
\begin{equation}
\label{03a}
dX_\epsilon (t) \;=\; \mss b (X_\epsilon (t))\, dt
\;+\; \sqrt{2\,\epsilon \, \mss a(X_\epsilon(t))} \,
dW_t\;,
\end{equation}
where $\epsilon>0$ is the temperature, and $W_t$ a Brownian motion on
the torus $\bb T$. Throughout this article, we assume that the
following conditions are fulfilled. Denote by $\cb{C(\bb T)}$ the
space of continuous functions $F\colon \bb T\to \bb R$, and by
$\cb{C^p(\bb T)}$, $p\ge 1$, the elements of $C(\bb T)$ with $p$
continuous derivatives.

\smallskip\noindent {\bf Condition H:} The drift
$\mss b\colon \bb T \to \bb R$ is of class $C^2(\bb T)$. It vanishes
at a finite number of points represented by $x_j$, $1\le j\le 2N$, and
$\mss b'(x_j) \neq 0$. The diffusion coefficient
$\mss a\colon \bb T \to \bb R$ is of class $C^2(\bb T)$, and is
strictly positive: $\mss a(x) \geq c_0 > 0$.  \smallskip

The generator of the diffusion process \eqref{03a}, denoted by
$\ms L_\epsilon$, is given by
\begin{equation*}
{\color{blue}\ms L_\epsilon f} \;=\; \mss b f'
\;+\; \epsilon \, \mss a\, f''\;, \quad f\in C^2(\bb T)\,.
\end{equation*}

Denote by $B$ the integral of the quotient $\mss b\,/\, \mss a$ over
the torus: 
\begin{equation}
\label{04}
{\color{blue}B}\;:=\; \int_{\bb T} \frac{\mss b(\theta)}{\mss
a(\theta)}\, d\theta \;.
\end{equation}
If $B=0$, there exists a potential $U\colon\bb T\to \bb R$ such that
$\mss b(\theta) \,/\, \mss a(\theta) = -\, U'(\theta)$.  In this case
the stationary state of the diffusion process  is given by
\begin{equation*}
\pi _\epsilon(d\theta) = \frac{1}{Z_\epsilon}
\frac{1}{\mss a(\theta)}\exp\{- U(\theta)/\epsilon\} \, d\theta\,,
\end{equation*}
for a suitable normalization factor $Z_\epsilon$, and the process
is reversible with respect to this measure.

In contrast, if the integral does not vanish the process is not
reversible, and the stationary state is not a Gibbs measure.
Faggionato and Gabrielli derived  the quasi-potential for this
dynamics in \cite{fg12}. The metastable behavior among the deepest
traps has been investigated in \cite{ls19}

Without loss of generality, we assume, from now on, that $B\ge 0$.
Our analysis includes therefore the case $B\neq 0$, where the process
$X_\epsilon(t)$ is non-reversible.

\subsection*{Level $2$ large deviations}

Denote by $\color{blue} \mss P(\bb T)$ the set of probability measures
on $\bb T$ equipped with the weak topology, and by $L_\epsilon (t)$ the
empirical measure of the process $X_\epsilon(t)$ defined as:
\begin{equation*}
{ \color{blue} L_\epsilon (t) }
\;:=\; \frac{1}{t} \int_0^t \delta_{\{X_\epsilon(s)\}}\; ds \;, \quad
t>0\,, 
\end{equation*}
where $\color{blue} \delta_{\{x\}}$, $x\in \bb T$, represents the
Dirac measure supported at $\{x\}$. Thus, $L_\epsilon (t)$ is a random
element of $\mss P(\bb T)$ and $L_\epsilon (t) (\mc A)$,
$\mc A \subset \bb T$, stands for the proportion of time the process
$X_\epsilon(t)$ stays at $\mc A$ in the time interval $[0,t]$.

As the Markov chain $X_\epsilon(t)$ is irreducible, by the ergodic
theorem, for any starting point $x\in \bb T$, as $t\to\infty$, the
empirical measure $L_\epsilon (t)$ converges in probability to the
stationary state, denoted by  $\color{blue} \pi_\epsilon$.

Donsker and Varadhan \cite{dv75} proved the associated large
deviations principle. More precisely, they showed that for any subset
$A$ of $\mss P(\bb T)$,
\begin{equation*}
\begin{aligned}
-\, \inf_{\mu\in A^o} \ms I_\epsilon (\mu) \;&\le\; \liminf_{t\to \infty}
\min_{x\in \bb T}\, \frac{1}{t}\,
\ln \bb P^\epsilon_x \big[ \, L_\epsilon (t) \in A\,\big] \\
&\qquad \;\le\; \limsup_{t \to \infty} \max_{x\in \bb T}\,
\frac{1}{t}\, \ln \bb P^\epsilon_x \big[ \, L_\epsilon (t) \in A\,\big]
\;\le\; -\, \inf_{\mu\in \overline{A}} \ms I_\epsilon (\mu)\;.
\end{aligned}
\end{equation*}
In this formula, $\color{blue} \bb P^\epsilon_{\! x}$, $x\in \bb T$,
stands for the distribution of the process $X_\epsilon(t)$ starting
from $x$, $A^o$, $\overline{A}$ represent the interior, closure of the
set $A$, respectively, and
$\ms I_\epsilon \colon \mss P(\bb T) \to [0,+\infty)$ is the level two
large deviations rate functional given by
\begin{equation*}
{\color{blue} \ms I_\epsilon (\mu)} \;:=\; \sup_u \,-\,
\int_{\bb T} \frac{(\ms L_\epsilon u)(x)}{u(x)} \; \mu(dx)\;,
\end{equation*}
where the supremum is performed over all strictly positive functions
$u\colon \bb T \to (0,\infty)$ in the domain of the generator.

\subsection*{$\Gamma-$expansion}

We investigate in this article the $\Gamma$-convergence of the action
functional $\ms I_\epsilon$.  We first recall the definition of a
$\Gamma$-expansion in a general setting, and refer to \cite{mar} for
further properties of $\Gamma$-convergence and its relation to large
deviations.

Fix a Polish space $\mc X$ and a sequence $(U_n : n\in\bb N)$ of
functionals on $\mc X$, $U_n\colon \mc X \to [0,+\infty]$.  The
sequence $U_n$ is said to \emph{$\Gamma$-converge} to the functional
$U\colon \mc X\to [0,+\infty]$ if the two following conditions are
met:

\begin{itemize}
\item [(i)]\emph{$\Gamma-\liminf$.} The functional $U$ is a
$\Gamma-\liminf$ for the sequence $U_n$: For each $x\in\mc X$ and each
sequence $x_n\to x$,
\begin{equation*}
\liminf_{n\to \infty} U_n(x_n) \;\ge\; U(x) \;.
\end{equation*}

\item [(ii)]\emph{$\Gamma-\limsup$.} The functional $U$ is a
$\Gamma-\limsup$ for the sequence $U_n$: For each $x\in\mc X$ there
exists a sequence $x_n\to x$ (called the recovery sequence) such that
\begin{equation*}
\label{30}
\limsup_{n\to \infty} U_n(x_n) \;\le\; U(x)\;.
\end{equation*}
\end{itemize}

Recall from the introduction that $\theta_\epsilon \, \ms I_\epsilon$
corresponds to the large deviations rate functional of the diffusion
process $X_\epsilon (\cdot)$ seen at the time-scale $\theta_\epsilon$,
that is, accelerated by this quantity. For this reason, if the
functional $U_n$ introduced above is multiplied by a constant $a_n$,
we interpret $a_n$ as a time-scale.

For two positive monotone sequences $(\alpha_n:n\geq 1)$ and
$(\beta_n:n\geq 1)$, denote by $\alpha_n\prec \beta_n$ if
$\alpha_n/\beta_n \to 0$ as $n\to +\infty$. In particular, we write
$1\prec\alpha_n$, if $\alpha_n\to +\infty$, and $\beta_n\prec 1$, if
$\beta_n\to 0$, as $n\to +\infty$.

In the context of the previous definition, suppose that there exists a
positive monotone sequence $(\alpha^{(0)}_n:n\geq 1)$ such that
$\alpha_n^{(0) \, }U_n$ $\Gamma-$converges to a functional $U^{(0)}$.
If the $0-$level set of the functional $U^{(0)}$ is a singleton, the
exploration over time-scales larger than $\alpha_n^{(0)}$
ends. Otherwise, we consider the $\Gamma-$convergence of the sequence
$\alpha_n^{(1)}U_n$, in a time-scale $(\alpha_n^{(1)}:n\geq 1)$
satisfying $\alpha_n^{(0)}\prec \alpha_n^{(1)}$, to distinguish the
elements in the $0$-level set of $U^{(0)}$.

Suppose, therefore, that the $0-$level set of the functional $U^{(0)}$ is
not a singleton, and that there exists a sequence $\alpha_n^{(1)}$
such that $\alpha_n^{(0)}\prec \alpha_n^{(1)}$, and
$\alpha_n^{(1)}U_n$ $\Gamma-$converges to some functional denoted by
$U^{(1)}$. It is clear that $U^{(1)}(x)$ is finite only if $x$ belongs
to the $0-$level set of $U^{(0)}$. We say that $\alpha_n^{(1)}$ is the
correct speed whenever $U^{(1)} (x)$ happens to be finite if $x$
belongs to the $0-$level set of $U^{(0)}$. If the $0-$level set of
$U^{(1)}$ is a singleton, we stop exploring longer
time-scales. Otherwise, we iterate the procedure. This recursion stops
at the first step $\mf q$ at which we obtain a functional
$U^{(\mf q)}:\mc X\to \bb R$ whose $0-$level set is a singleton.

We turn to the description of the iterative procedure over shorter
time-scales. If for every positive sequence $(\alpha_n:n\geq 1)$
satisfying $\alpha_n \prec \alpha_n^{(0)}$, the $\Gamma$-limit of
$\alpha_n U_n$ is $0$, this procedure stops. Otherwise, we seek for a
sequence $\alpha_n^{(-1)}$ and a functional $U^{(-1)}:\mc X\to \bb R$
such that $\alpha_n^{(-1)}\prec \alpha_n^{(0)}$, $\alpha_n^{(-1)}U_n$
$\Gamma-$converges to $U^{(-1)}$, and $U^{(-1)}(x) = 0$ if, and only if,
$U(x) < +\infty$.

If $U^{(-1)}$ is such that $\alpha_nU^{(-1)}\to 0$, for every sequence
$(\alpha_n:n\geq 1)$ such that $\alpha_n \prec \alpha_n^{(-1)}$, we
stop exploring shorter time-scales. Otherwise, we iterate the
procedure. This recursion continues until the first step $\mf r$ at
which we find a time-scale $\alpha_n^{(-\mf r)}$ and a functional
$U^{(-\mf r)}:\mc X\to \bb R$, not equal to $0$, such that
$\alpha_n^{-\mf r}U_n$ $\Gamma-$converges to $U^{-(\mf r)}$, and
$\alpha_nU_n$ $\Gamma$-converges to $0$, for every
$\alpha_n \prec \alpha_n^{-(\mf r)}$.

By the end of these two procedures we obtain a family of speeds and
functionals that captures the complete asymptotics of $U_n$. These
considerations are summarized in the following

\begin{definition}\label{gamma_expansion_def}
Consider a non empty Polish space $\mc X$ and a sequence of
functionals $U_n: \mc X \to [0,+\infty]$. A full $\Gamma-$expansion of
$U_n$ is given by the speeds $(\alpha^{(p)}_n, n\geq 1)$,
$-\mf r \leq p \leq \mf q$, and the functionals
$U^{(p)}: \mc X \to [0,+\infty]$, $-\mf r \leq p \leq \mf q$, if:

\begin{enumerate}

\item The speeds $\alpha^{(-\mf r)}_n, \dots, \alpha^{(\mf q)}_n$ are
positive sequences such that $\alpha^{(p)}_n\prec \alpha^{(p+1)}_n $,
for every $-\mf r \leq p < \mf q$;

\item $\alpha_{\epsilon}^{(p)} U_n$ $\Gamma-$converges to $U^{(p)}$,
$-\mf r \leq p \leq \mf q$;

\item $U^{(p+1)}(x)$ is finite if, and only if, $x$ belongs to the
$0-$level set of $U^{(p)}$, for every $-\mf r \leq p < \mf q$;

\item $\beta_n U_n$ $\Gamma-$converges to $0$, for every sequence
$\beta_n \prec \alpha_n^{(-\mf r)}$; and

\item the $0-$level set of $U^{(\mf q)}$ is a singleton.
\end{enumerate}

In this case, we express the expansion as 
\begin{equation*}
U_n = \sum_{p\in -\mf r}^{\mf q}\frac{1}{\alpha_{n}^{(p)}} \, U^{(p)}.
\end{equation*}
\end{definition}

In this article, we derive the full $\Gamma-$expansion of the level
two large deviation rate functional $\ms I_\epsilon$. We establish the
connection of the $\Gamma$-expansion to the metastable behavior of the
diffusion $X_\epsilon(\cdot)$.

\subsection*{Pre-metastable time-scales}

Denote by $\mc C\subset \bb T$ the set of the critical points of the ODE
\begin{equation}
\label{crit_R}
\dot x(t) \,=\,  \mss b(x(t)) \,, \quad 
{\color{blue} \mc C} \,:=\,  \{\theta \in \bb T: \mss b(\theta) = 0\}\;.
\end{equation}
Let $\mc M$, $\mc W$ be the sets of stable and unstable critical
points, respectively,
\begin{equation*}
\label{mnmx_R}
{\color{blue} \mc M} := \{m\in \mc C: -\, \mss b'(m)>0\},
\hspace{5mm}
{\color{blue} \mc W} := \{\sigma\in \mc C: -\, \mss b'(\sigma)<0 \}.
\end{equation*}
By condition H, every critical point is either stable or unstable, so
that $\mc C = \mc M\cup \mc W$.  

For $\mu\in \mss P(\bb T)$, denote by {\color{blue}$\mu_{\rm ac}$},
{\color{blue}$\mu_{\rm s}$} the absolutely continuous and the singular
part, respectively, of the measure $\mu$ with respect to the Lebesgue
measure denoted by {\color{blue}$\lambda$}. Let {\color{blue}$m_\mu$}
be the Radon-Nikodym derivative of $\mu_{\rm ac}$ with respect to
$\lambda$.

Let $\ms J^{(-1)}: \mss P(\bb T)\to [0,+\infty)$ be the functional
given by
\begin{equation}
\label{1st_rate_f} 
{\color{blue} \ms J^{(-1)}(\mu)}\::=\:
\frac{1}{4} \,  \int_{\bb T} \frac{\mss b^2}{\mss a} \; d\mu
\;-\; \frac{1}{4} \,  \frac{1}{\< 1/(\mss am_\mu)\>} \, B^2 \;,
\end{equation}
where $B$ is the constant introduced in \eqref{04}. In this formula
and below, for a nonnegative function $h\colon \bb T\to \bb R_+$,
$\color{blue} \< h\> = \int_{\bb T} h(x) \, dx$. Clearly, if $1/m_\mu$
is not Lebesgue integrable, the second term vanishes. The first term
in the $\Gamma-$expansion of $\ms J_\epsilon$ is provided by the
following result.

\begin{proposition}
\label{l01}
Assume that condition {\rm (H)} is in force. Then,
$\epsilon\, \ms I_\epsilon$ $\Gamma$-converges to $\ms J^{(-1)}$.  The
$0$-level set of the functional $\ms J^{(-1)}$ consists of all convex
combinations of Dirac measures supported on the elements of $\mc C$.
Moreover, $\alpha_\epsilon\, \ms I_\epsilon$ $\Gamma$-converges to $0$
for any positive sequence $\alpha_\epsilon$ such that
$\alpha_\epsilon \, \epsilon^{-1}\to 0$.
\end{proposition}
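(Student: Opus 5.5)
The plan is to work throughout with the variational formula for $\ms I_\epsilon$. We write $u=e^{w}$ and take $\mu(dx)=f\,dx$ with $f$ smooth and strictly positive, and set $g:=\mss a f$. Since $\ms L_\epsilon u/u=\mss b w'+\epsilon\mss a w''+\epsilon \mss a (w')^2$, one integration by parts gives
\begin{equation*}
-\int_{\bb T}\frac{\ms L_\epsilon u}{u}\,d\mu \;=\; -\int_{\bb T} w'\,j_f\,dx \;-\;\epsilon\int_{\bb T} g\,(w')^2\,dx\;,\qquad j_f:=\mss b f-\epsilon g'\;.
\end{equation*}
Here $v=w'$ ranges over functions with $\int v=0$. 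For any constant $J$ we may replace $j_f$ by $j_f-J$, and completing the square yields
\begin{equation*}
\ms I_\epsilon(\mu)\;\le\;\inf_{J\in\bb R}\frac{1}{4\epsilon}\int_{\bb T}\frac{(J-\mss b f+\epsilon g')^2}{g}\,dx\;.
\end{equation*}
(This is in fact an equality, but only the inequality is needed.)

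Expanding the right-hand side shows the structure behind $\ms J^{(-1)}$:
\begin{equation*}
\epsilon\,\ms I_\epsilon(\mu)\le \tfrac14\inf_J\Big\{\int\tfrac{\mss b^2}{\mss a}f-2JB+J^2\<1/(\mss a f)\>\Big\}+O(\epsilon)\;.
\end{equation*}
The cross terms are harmless. The term $\epsilon\int J g'/g$ vanishes by periodicity. The term $\epsilon\int \mss b f g'/g=\epsilon\int(\mss b/\mss a)g'$ is $O(\epsilon)$ because $\mss b/\mss a\in C^1$. The term $\epsilon^2\int (g')^2/g$ is $O(\epsilon^2)$ for fixed smooth positive $f$. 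Optimizing at $J=B/\<1/(\mss af)\>$ gives exactly $\ms J^{(-1)}(\mu)+O(\epsilon)$.

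For the $\Gamma$-limsup at a general $\mu$, I would build a recovery sequence in two steps and then take a diagonal sequence. First, approximate $\mu$ by $\mu_\delta=(1-\delta)\mu*\rho_\delta+\delta\lambda$, which is smooth and positive. Second, apply the bound above at fixed $\delta$. It remains to check that $\limsup_\delta \ms J^{(-1)}(\mu_\delta)\le\ms J^{(-1)}(\mu)$. The first term of $\ms J^{(-1)}$ is weakly continuous. For the second term we need $\limsup_\delta\<1/(\mss a m_{\mu_\delta})\>\le\<1/(\mss a m_\mu)\>$. Mollifying the singular part only increases the density. Jensen's inequality for the convex map $x\mapsto1/x$ gives $1/(m_\mu*\rho_\delta)\le(1/m_\mu)*\rho_\delta$, and the continuity of $\mss a$ yields a vanishing error. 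When $1/m_\mu\notin L^1$ the bound is trivial.

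For the $\Gamma$-liminf, take $\mu_\epsilon\to\mu$ and use $u=e^{h/\epsilon}$ with $h\in C^2(\bb T)$ fixed. This gives
\begin{equation*}
\epsilon\,\ms I_\epsilon(\mu_\epsilon)\ge-\int(\mss b h'+\mss a (h')^2)\,d\mu_\epsilon-\epsilon\int \mss a h''\,d\mu_\epsilon\;.
\end{equation*}
The integrands are continuous, so the liminf is at least $\Phi_\mu(h'):=-\int(\mss b h'+\mss a(h')^2)\,d\mu$. The main obstacle is the duality identity
\begin{equation*}
\sup_{v\in C^1,\ \int v=0}\Phi_\mu(v)=\ms J^{(-1)}(\mu)
\end{equation*}
for arbitrary $\mu$, which may have a singular part and a density vanishing on large sets. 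I would attack it as follows.
\begin{itemize}
\item On the absolutely continuous part, the Lagrange multiplier computation suggests $v=-(\mss b-J/m_\mu)/(2\mss a)$ with $J=B/\<1/(\mss a m_\mu)\>$.
\item Since this $v$ need not be smooth or integrable, I would truncate $1/m_\mu$ at level $K$ and correct the mean with a small constant.
\item On a neighbourhood of a closed set carrying most of $\mu_{\rm s}$ but of small Lebesgue measure, I would set $v=-\mss b/(2\mss a)$, which gains $\tfrac14\int\mss b^2/\mss a\,d\mu_{\rm s}$.
\item I would then smooth $v$ and let the parameters tend to their limits, controlling the mean-zero correction by the smallness of the Lebesgue measure of the modified sets.
\end{itemize}

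For the zero-level set, write $B=\int (\mss b\sqrt{m}/\sqrt{\mss a})\cdot(1/\sqrt{\mss a m})$ with $m=m_\mu$. The Cauchy--Schwarz inequality gives $B^2\le\int \mss b^2 m/\mss a\cdot\<1/(\mss am)\>$. Hence $\ms J^{(-1)}(\mu)=0$ forces two things: $\int \mss b^2/\mss a\,d\mu_{\rm s}=0$, so $\mu_{\rm s}$ is carried by $\mc C$; and equality in Cauchy--Schwarz, that is, $\mss b\,m\equiv c$ a.e. If $c\neq0$ then $m=c/\mss b$, and $1/\mss b$ is not integrable near a simple zero of $\mss b$, which exist by condition (H). So $c=0$, $m=0$ a.e., and $\mu$ is a convex combination of Dirac masses on $\mc C$. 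Conversely, such $\mu$ clearly give $\ms J^{(-1)}(\mu)=0$.

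Finally, if $\alpha_\epsilon/\epsilon\to0$, the $\Gamma$-liminf of $\alpha_\epsilon\ms I_\epsilon$ is trivially at least $0$. Using the recovery sequence above, $\alpha_\epsilon\ms I_\epsilon(\mu_\epsilon)=(\alpha_\epsilon/\epsilon)\,\epsilon\ms I_\epsilon(\mu_\epsilon)\to0$, because $\ms J^{(-1)}\le\tfrac14\max\mss b^2/\mss a<\infty$ everywhere.
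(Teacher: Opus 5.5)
Your proposal is correct. The $\Gamma$-liminf, the zero-level set and the scales $\alpha_\epsilon\prec\epsilon$ follow the paper essentially verbatim; the genuine departure is the $\Gamma$-limsup for general $\mu$.

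\textbf{The parts that match the paper.}
\begin{itemize}
\item For the $\Gamma$-liminf you use test functions $e^{h/\epsilon}$, then a near-optimal $h'$. It equals $-\mss b/(2\mss a)$ on a Lebesgue-null set carrying $\mu_{\rm s}$ and is built from a regularized $1/m_\mu$ elsewhere. The paper regularizes with $\chi_A(1+\epsilon)/(m_\mu+\epsilon)$ instead of truncating.
\item For the zero-level set you use Cauchy--Schwarz, as the paper does. The paper excludes $c\neq0$ by the sign change of $\mss b$ rather than by non-integrability of $1/\mss b$ at a simple zero; both work.
\item Your upper bound for smooth positive densities, via integration by parts and pointwise completion of the square, is the dual form of Lemmas \ref{l02}--\ref{l03}. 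There $\mu$ is made stationary for the tilted generator $\ms L_{H,\epsilon}$.
\end{itemize}

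\textbf{The $\Gamma$-limsup for general $\mu$.} The paper writes $\mu=r\mu^*_{\rm ac}+(1-r)\mu^*_{\rm s}$ and builds separate recovery sequences. For $\mu^*_{\rm ac}$ it smooths and lifts the density. For $\mu^*_{\rm s}$ it uses localized Gaussians for Dirac masses (Lemma \ref{l04}) and then convex combinations. It glues the two using convexity of $\ms I_\epsilon$ and the identity $r/\<1/(\mss a m^*)\>=1/\<1/(\mss a\, r m^*)\>$.

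You instead use the single approximation $(1-\delta)\mu*\rho_\delta+\delta\lambda$. The only estimate needed is $\limsup_\delta\<1/(\mss a m_{\mu_\delta})\>\le\<1/(\mss a m_\mu)\>$. This holds because the smeared singular part only increases the density, and by Jensen's inequality $1/(m_\mu*\rho_\delta)\le(1/m_\mu)*\rho_\delta$. This is shorter and needs neither convexity nor any analysis of point masses.

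What the paper's route buys is the explicit $\epsilon$-dependent Gaussian construction at a point. It reappears, with width tuned to $|\mss b'(0)|/\mss a(0)$, in Lemma \ref{l4_limsup}. At that scale a fixed mollification of $\delta_0$ is useless: it has $\ms J^{(-1)}>0$, so $\ms I_\epsilon$ of it diverges.

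\textbf{A minor gap in the zero-level set step.} The factorization $\mss b/\mss a=(\mss b\sqrt{m}/\sqrt{\mss a})(1/\sqrt{\mss a m})$ requires $m_\mu>0$ a.e. So first dispose of the case $\lambda(\{m_\mu=0\})>0$. There $\<1/(\mss a m_\mu)\>=\infty$, and $\ms J^{(-1)}(\mu)=\tfrac14\int\mss b^2/\mss a\,d\mu=0$ immediately gives $\mu(\mc C^c)=0$.
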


We turn to the second term in the expansion. Recall that we denote by
$\delta_{\{z\}}$, $z\in \bb T$ or $\bb R$, the Dirac measure supported
on $\{z\}$. Let $\cb{\mss P(\bb M)}$ be the set of probability measures
on a metric space $\bb M$ equipped with the Borel $\sigma$-algebra.
Let $\ms J^{(0)} \colon\mss P(\bb T)\to [0,+\infty]$ be given by

\begin{equation}
\label{2nd_rate_f}
{\color{blue}
\ms J^{(0)}(\mu)}\::=\: \begin{cases}
\sum_{z \in \mc C}\ w(z) \zeta(z),
&\text{if there exists $w\in \mss P(\mc C)$
such that}\ \mu = \sum_{z \in \mc C}\ w(z)\, \delta_{\{z\}} \\
+\infty &\text{otherwise}\ ,
\end{cases}
\end{equation}
where
\begin{equation}
\label{2f_1}
{\color{blue}
\zeta(z)}:= \max\{\:\mss b'(z) \,,\, 0\:\}. 
\end{equation}
Clearly $\zeta(z) >0$ if $z$ is an unstable solution of the ODE
\eqref{crit_R}, and it vanishes if it is a stable solution.

The second result of the paper reads as follows. It corresponds to the
time-scale at which the diffusion process $X_\epsilon(\cdot)$ escapes
from the local maxima and provides the cost for doing so.

\begin{proposition}
\label{s04}  
Assume that condition (H) is in force. Then, $\ms I_\epsilon$
$\Gamma-$converges to the functional $\ms J^{(0)}$ given by
\eqref{2nd_rate_f}. The $0$-level set of the functional $\ms J^{(0)}$
consists of all convex combinations of Dirac measures supported on the
elements of $\mc M$.
\end{proposition}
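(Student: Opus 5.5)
We prove the $\Gamma$-$\liminf$ and the $\Gamma$-$\limsup$ separately. The identification of the $0$-level set is immediate from \eqref{2nd_rate_f} and \eqref{2f_1}: $\zeta(z)=0$ exactly when $\mss b'(z)<0$, that is, when $z\in\mc M$. By Proposition~\ref{l01}, if $\mu_\epsilon\to\mu$ and $\mu$ is not a convex combination of Dirac masses on $\mc C$, then $\liminf_\epsilon \epsilon\,\ms I_\epsilon(\mu_\epsilon)\ge \ms J^{(-1)}(\mu)>0$. Hence $\ms I_\epsilon(\mu_\epsilon)\to+\infty$ and the $\liminf$ inequality holds trivially. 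It therefore remains to treat $\mu=\sum_{z\in\mc C}w(z)\delta_{\{z\}}$.

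\emph{Lower bound.} We use the variational formula with test functions $u=e^{\phi/2}$ localized near each unstable point $\sigma\in\mc W$. This gives
\begin{equation*}
-\frac{\ms L_\epsilon u}{u}\;=\;-\tfrac12\,\mss b\,\phi' \;-\;\tfrac{\epsilon}{2}\,\mss a\,\phi''\;-\;\tfrac{\epsilon}{4}\,\mss a\,(\phi')^2 .
\end{equation*}
Near $\sigma$ we choose $\phi(x)=-\mss b'(\sigma)(x-\sigma)^2/(2\epsilon\,\mss a(\sigma))\cdot c$, with a parameter $c$ to be optimized, cut off at distance $\delta$. This is the Gaussian quasi-ground state: for the linearized operator $\mss b'(\sigma)y\,\partial_y+\epsilon\,\mss a(\sigma)\partial_y^2$ with $\mss b'(\sigma)>0$, the positive function $u=\exp\{-\mss b'(\sigma)y^2/(2\epsilon\mss a(\sigma))\}$ satisfies $-\ms L u/u=\mss b'(\sigma)+O(y^2/\epsilon)$ near $0$. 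Under $\mu_\epsilon$, which concentrates near $\sigma$ with weight about $w(\sigma)$, we then get a contribution $\mss b'(\sigma)w(\sigma)-o(1)$, provided the error terms are controlled.

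The difficulty is that $\mu_\epsilon$ may spread at scale $\sqrt\epsilon$, where the $O(y^2/\epsilon)$ error is not small. I would handle this by a scale-invariant argument. Rescale $y=\sqrt\epsilon\, z$; the rate functional of the linearized dynamics is then the level-two functional of an Ornstein--Uhlenbeck-type process with repulsive drift. For such a process the infimum of the rate functional over all probability measures equals the bottom of the spectrum, $\mss b'(\sigma)$, by a ground-state transformation. Concretely, I would choose $u$ so that $-\ms L_\epsilon u/u\ge \mss b'(\sigma)-\eta$ on a $\delta$-neighbourhood of $\sigma$, and bounded below by $-C\eta$ elsewhere. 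Near stable points the contributions are nonnegative, using $u=1$ there, and away from $\mc C$ the mass of $\mu_\epsilon$ vanishes. Gluing these localized test functions and letting $\epsilon\to0$, then $\delta,\eta\to0$, should yield $\liminf\ge\sum_\sigma w(\sigma)\mss b'(\sigma)$. The main obstacle is this gluing: one must keep $-\ms L_\epsilon u/u$ bounded below in the transition regions, uniformly in $\epsilon$. For the lower bound the natural candidate is $u=\exp\{-\mss b'(\sigma)\,\psi(x)/\epsilon\}$ with $\psi(x)\approx\int_\sigma^x \mss b/\mss a$, smoothly truncated. This choice makes the drift terms cancel up to $O(\epsilon)$ corrections, away from regions where $\mu_\epsilon$ has negligible mass.

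\emph{Upper bound.} By convexity and lower semicontinuity of $\ms I_\epsilon$, it suffices to build, for each $z\in\mc C$, measures $\nu^z_\epsilon\to\delta_{\{z\}}$ with $\limsup\ms I_\epsilon(\nu^z_\epsilon)\le\zeta(z)$, and then take $\sum_z w(z)\nu^z_\epsilon$. For $\nu=f\,dx$ with $f>0$ smooth, we bound $\ms I_\epsilon(\nu)$ by the explicit Donsker--Varadhan expression: decompose $\ms L_\epsilon$ into its symmetric and antisymmetric parts with respect to $\nu$, and use the standard bound by the Dirichlet form of $\sqrt f$ plus a term involving the $H^{-1}$ norm of the current. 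For $m\in\mc M$, take $\nu_\epsilon$ to be the normalized local Gibbs density $\exp\{-\int_m^x (-\mss b/\mss a)/\epsilon\}$, cut off near $m$. Its flux vanishes locally, and a direct computation gives $\ms I_\epsilon(\nu_\epsilon)\to0$. For $\sigma\in\mc W$, take the Gaussian density $f_\epsilon\propto\exp\{-\mss b'(\sigma)(x-\sigma)^2/(2\epsilon\,\mss a(\sigma))\}$, cut off. Computing $\ms I_\epsilon(f_\epsilon\,dx)$ through the Dirichlet form of $\sqrt{f_\epsilon}$ and the reversibilized drift then gives the limit $\mss b'(\sigma)$. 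Here the Taylor expansions of $\mss a$ and $\mss b$ (which are $C^2$) control the errors at scale $\sqrt\epsilon$.
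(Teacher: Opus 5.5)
Your reduction to $\mu=\sum_{z}w(z)\delta_{\{z\}}$, your description of the zero-level set and your upper bound are fine. The upper bound is essentially the paper's argument: a Gaussian of variance $\epsilon\,\mss a(\sigma)/\mss b'(\sigma)$ at $\sigma\in\mc W$, a localized Gibbs density at stable points, the explicit formula of Lemma~\ref{l03} applied to $m+\delta$ followed by lower semicontinuity (the cut-off densities vanish on open sets), and convexity.

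The gap is in the $\Gamma$-$\liminf$, at the gluing step that you leave open, and that step is the whole difficulty. For $u=e^{F/\epsilon}$, using $\mss b=-\mss a S'$,
\begin{equation*}
-\frac{\ms L_\epsilon u}{u}\;=\;-\frac{\mss a}{\epsilon}\,(F'-S')\,F'\;-\;\mss a\,F''\,.
\end{equation*}
With $F=S-S(\sigma)$ near $\sigma$ the singular term vanishes and $-\mss a S''(\sigma)=\mss b'(\sigma)$. (Drop the factor $\mss b'(\sigma)$ in your $u=\exp\{-\mss b'(\sigma)\psi/\epsilon\}$; otherwise the $1/\epsilon$ terms do not cancel.)

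Now take a truncation in space, $F=\chi(x)\,(S-S(\sigma))$. It forces $F$ to climb back from $S-S(\sigma)<0$ to $0$ on the support of $\chi'$, where $S$ is still decreasing (to the right of $\sigma$). There $F'>0>S'$, so the first term is negative and of order $1/\epsilon$ on a set at positive distance from $\mc C$. For an arbitrary $\mu_\epsilon\to\mu$ this set carries mass $o(1)$ only. Even when $\liminf\ms I_\epsilon(\mu_\epsilon)<\infty$, Lemma~\ref{s02} gives at best $O(\epsilon)$, so ``negligible mass'' does not absorb the error. Your rescaling discussion does not address this. For the linearized operator the Gaussian already gives exactly $\mss b'(\sigma)$, the nonlinear corrections are removed by $F=S-S(\sigma)$, and the only obstruction sits in the cut-off zone.

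The paper handles this with Lemma~\ref{s02} and a pigeonhole over $\lceil 1/\epsilon\rceil$ level windows of $S$. It places an $\epsilon$-dependent cutoff where $\mu_\epsilon$ has mass $O(\epsilon^2)$ and $|S-c_{\sigma,\epsilon}|=O(\epsilon)$.

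A shorter way to complete your argument is to truncate in the level of $S$ rather than in space. Let $m_-<\sigma<m_+$ be the neighbouring stable points, in the lift to $\bb R$. On $[m_-,m_+]$ put $F=\psi(S-S(\sigma))$, with a fixed $\psi\in C^2(\bb R)$ such that:
\begin{itemize}
\item $\psi(s)=s$ near $0$;
\item $\psi'(s)=0$ for $s\le -h$, where $0<h<\min\{S(\sigma)-S(m_-),\,S(\sigma)-S(m_+)\}$;
\item $0\le\psi'\le 1$.
\end{itemize}
Extend $F$ by the constant $\psi(-h)$. Then, with $\psi',\psi''$ evaluated at $S-S(\sigma)$,
\begin{equation*}
-\frac{\ms L_\epsilon u}{u}\;=\;\frac{\mss a}{\epsilon}\,\psi'(1-\psi')\,(S')^2\;-\;\mss a\,\big[\psi''\,(S')^2+\psi'\,S''\big]\;\ge\;g\,,
\end{equation*}
where $g$ is continuous and independent of $\epsilon$, $g(\sigma)=\mss b'(\sigma)$, and $g=0$ on $\mc C\setminus\{\sigma\}$. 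Sum over $\sigma\in\mc W$; the supports of the $F'_\sigma$ are disjoint. This yields, for every $\mu_\epsilon\to\mu$ and without Lemma~\ref{s02},
\begin{equation*}
\ms I_\epsilon(\mu_\epsilon)\;\ge\;\int g\,d\mu_\epsilon\;\to\;\sum_{\sigma\in\mc W}w(\sigma)\,\mss b'(\sigma)\;=\;\ms J^{(0)}(\mu)\,.
\end{equation*}
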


\subsection*{Metastable time-scales}

We turn to the time-scales at which the diffusion process escapes from
the stable solutions of the ODE \eqref{crit_R}. This is easier to
explain if we lift the diffusion to the line.  Regard $\mss b(\cdot)$
and $\mss a(\cdot)$ as periodic functions on $\bb R$, and denote by
$S: \bb R \to \bb R$ the potential given by
\begin{equation*}
\label{potential}
{\color{blue}S(x)} = -\int_0^x \frac{\mss b(z)}{\mss a(z)}\;dz.
\end{equation*}
The function $S(\cdot)$ is periodic if, and only if, the constant $B$
introduced in \eqref{04} vanishes. Consider the one-dimensional
diffusion process on $\bb R$ given by
\begin{equation}
\label{03}
dY_\epsilon (t) \;=\; \mss b(Y_\epsilon (t))\, dt \;+\;
\sqrt{2\, \epsilon\, \mss a(Y_\epsilon(t))} \,
dW_t\;,
\end{equation}
where $W_t$ is the Brownian motion on $\bb R$. 

\smallskip\noindent{\bf First order metastable structure.}  Denote by
$\mz C$, $\mz M$, $\mz W$ the lifting to $\bb R$ of the sets $\mc C$,
$\mc M$, $\mc W$, respectively.  Label the sets $\mz M$ and $\mz W$ in
the following way:
\begin{equation*}
\mz M:= \{m_k: k\in \bb Z\}\;, \hspace{5mm} \mz W:=\{\sigma_k: k\in \bb Z \},  
\end{equation*}
so that $m_{k+N}= m_k +1$ and $\sigma_{k+N} = \sigma_k+1$, for every
$k\in \bb Z$. Without loss of generality, assume that
\begin{equation*}
\label{1st_ordering}
m_{-1} < 0 < \sigma_0 < m_0 < \dots < \sigma_{N-1} < m_{N-1} < 1 < \sigma_N.
\end{equation*}
Let $\mz M_1(k)$, $k\in \bb Z$, be the sets defined by 
\begin{equation}
\label{1st_partition}
{\color{blue}\mz M_1(k)}:= \{m_k\}\, \;\;
\text{and set} \;\; {\color{blue}\mz S_1}:= \{\mz M_1(k): k\in \bb Z\}.
\end{equation}

For every $k\in \bb Z$, let 
\begin{equation*}
\label{f01}
{\color{blue}  h^{+}_{k}}  := S(\sigma_{k+1}) - S(m_{k}),
\hspace{10mm}
{\color{blue}  h^{-}_{k}} := S(\sigma_{k}) - S(m_{k}),
\end{equation*}
which can be regarded as the escape barriers on the right and left of
the set $\mz M_{1}(k)$, respectively. For each
$k\in \bb Z$ define the height (of the escape barrier) of
$\mz M_{1}(k)$ as the minimum of its right and left escape
barriers:
\begin{equation*}
\label{height}
{\color{blue}   h_k}  := \min\{h^{-}_{k}, h^{+}_{k}\}.
\end{equation*}
Clearly, since $\mss b$ and $\mss a$ are periodic, 
\begin{equation}
\label{per_h}
h^{\pm}_{k+N} = h^{\pm}_{k}\;, \hspace{5mm} h_{k+N} = h_{k}\,,
\quad
\forall\; k \in \bb Z\,.
\end{equation}
Let $\mf h_1$, and
$\theta_{\epsilon}^{(1)}$, be the smallest height, and the associated
time-scale, respectively:
\begin{equation}
\label{f02}
{\color{blue}\mf h_1} =
\min\{h_k: k\in \bb Z\}, \hspace{5mm} {\color{blue}\theta_{\epsilon}^{(1)}} =
e^{\mf h_1/\epsilon}.
\end{equation}
By \eqref{per_h}, $\mf h_1>0$. Thus,
$\theta^{(1)}_\epsilon \uparrow +\infty$ as $\epsilon \to 0$.

For $k\in\bb Z$, let
\begin{equation}
\label{22}
{\color{blue}\pi_{1}(k)} := \cb{\pi_1(\{m_k\}) } \,:=\, 
\frac{1}{\mss a(m_k)}\sqrt{\frac{2\pi}{S''(m_k)}} \;,
\;\;
{\color{blue} \sigma_1(k,k+1)}
\, :=\, \sqrt{\frac{2\pi}{-\,S''(\sigma_{k+1})}}\,.
\end{equation}
Here, the indices $k$, $k+1$ of $\sigma_1(k,k+1)$ indicate that we
consider the $S$-local maximum between the local minima $m_k$ and
$m_{k+1}$.  Let $\color{blue} \sigma_1(k+1,k) = \sigma_1(k,k+1)$.
Define the jump rates
\begin{equation}
\label{73}
{\color{blue} R_1 (\mz M_{1}(k),\mz M_{1}(k\pm 1))} \; := \;
\frac{1}{\pi_{1}(k)\, \sigma_1(k,k\pm 1) }
\; \mb 1\{ h^{\pm }_{k} = \mf h_1\} \;.
\end{equation}
Therefore, the jump rate from $\mz M_{1}(k)$ to $\mz M_{1}(k\pm 1)$
vanishes if $h^{\pm }_{k} > \mf h_1$. [By definition
$\mf h_1 \le h^{\pm }_{j}$ for all $j\in\bb Z$.] Recall the definition
of the set $\mz S_1$ given in \eqref{1st_partition}. Denote by
$\color{blue}  \bb X_1 (\cdot)$ the $\mz S_{1}$-valued Markov
chain induced by the generator $\bb L_1$ defined as
\begin{equation*}
\label{72b}
{\color{blue} (\bb L_1 f)(\mz M_{1}(k))} \,:=\, 
\sum_{a = \pm 1} R_1(\mz M_{1}(k), \mz M_{1}(k+a))\,
\big[\, f(\mz M_{1}(k+a)) - f(\mz M_{1}(k))\, \big]\;, \quad k\in \bb
Z\;. 
\end{equation*}

As $\mss a(\cdot)$, $\mss b(\cdot)$ are periodic functions, the jump
rates defined in \eqref{73} are periodic as well:
$R_1 (\mz M_{1}(k + N),\mz M_{1}(k+N\pm 1)) = R_1 (\mz M_{1}(k),\mz
M_{1}(k\pm 1))$, $k\in \bb Z$. Therefore, if
$\Pi\colon \bb R \to \bb T$ represents the projection defined by
\begin{equation}
\label{proj_1}
{\color{blue}\Pi(x)} = x-\lfloor x \rfloor\,, \quad\text{where
$\lfloor r \rfloor$ is the integer part of a real number $r$}\,,
\end{equation}
$\cb{ \widehat{\bb X}_1 (\cdot) }: = \Pi( \bb X_1 (\cdot))$ is a
continuous-time Markov chain taking values in
$\cb{\mc S_1 }:= \{ \mc M_1(k) ; 0\le k<N\}$, where
$\cb{\mc M_1(k)} = \{\mf m_k\}$, $\cb{\mf m_k} := \Pi(m_k)$.  Denote
by $\cb{ \widehat{\bb L}_1}$ the generator of the projected Markov
chain $\widehat{\bb X}_1(\cdot)$, and let
$\bb I^{(1)} \colon \mss P (\mc S_1) \to [0,+\infty)$ be the associated
large deviations rate functional:
\begin{equation*}
\cb{\bb I^{(1)}(\mu)} \,:=\,
\sup_{H}-\, \sum_{\mc M\in \mc S_{1}}
\frac{(\widehat{\bb L}_1 H)(\mc M)} {H(\mc M)}\mu(\mc M)\,,
\end{equation*}
where the supremum is performed over all positive functions
$H\colon \mc S_1 \to \bb R$. Let
$\ms J^{(1)} \colon\mss P(\bb T)\to [0,+\infty]$ be the functional
given by
\begin{equation}
\label{23}
{\color{blue}
\ms J^{(1)}(\mu)}\::=\:
\begin{cases}
\bb I^{(1)}(\omega), &\text{if } \mu(\cdot)
= \sum_{\mc M\in \mc S_{1}} \omega(\mc M)\,
\delta_{\mc M}(\cdot) \text{ for } \omega \in \mss P(\mc S_{1}) \\
+\infty &\text{otherwise.}
\end{cases}
\end{equation}

\begin{proposition}
\label{s03}
Assume that condition H is in force.
Then, $\theta_\epsilon^{(1)}\, \ms I_\epsilon$ $\Gamma$-converges to
$\ms J^{(1)}$.
\end{proposition}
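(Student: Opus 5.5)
We prove the $\Gamma$-$\liminf$ and the $\Gamma$-$\limsup$ separately, both times reducing to the finite chain $\widehat{\bb X}_1$.

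\textbf{Reduction of the limit measure.} Let $\mu_\epsilon\to\mu$ with $\liminf_\epsilon\theta^{(1)}_\epsilon\,\ms I_\epsilon(\mu_\epsilon)<\infty$. Then $\ms I_\epsilon(\mu_\epsilon)\to0$. By Proposition~\ref{s04} this forces $\ms J^{(0)}(\mu)=0$, so $\mu=\sum_{k<N}\omega(k)\,\delta_{\mf m_k}$ for some $\omega\in\mss P(\mc S_1)$. The $+\infty$ branch of \eqref{23} is therefore a consequence of Proposition~\ref{s04}.

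\textbf{$\Gamma$-$\liminf$.} Fix $\mu=\sum\omega(k)\delta_{\mf m_k}$ and a positive $H$ on $\mc S_1$. Write $\widehat{\bb L}_1H=\lambda_0 H-g$ and let $u_\epsilon$ solve the resolvent equation
\begin{equation*}
\big(\lambda_0-\theta^{(1)}_\epsilon\ms L_\epsilon\big)u_\epsilon=G\,,
\end{equation*}
where $G$ is a smooth function equal to $g(k)$ near $\mf m_k$, supported in small wells, and $\lambda_0$ is chosen large so that $G>0$. By the resolvent characterization of metastability \cite{lms25} (proved for these non-reversible diffusions in \cite{ls22,lls24,lls25,lm}), $u_\epsilon\to H(k)$ uniformly on neighbourhoods of each $\mf m_k$. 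The function $u_\epsilon$ is positive, since the resolvent of a positive source is positive. Plugging it into the variational formula gives
\begin{equation*}
\theta^{(1)}_\epsilon\ms I_\epsilon(\mu_\epsilon)\ \ge\ \int\frac{G-\lambda_0u_\epsilon}{u_\epsilon}\,d\mu_\epsilon\,.
\end{equation*}
The integrand is bounded and converges uniformly near the wells. Off the wells it is only bounded, but by the reduction step $\mu_\epsilon$ gives vanishing mass there. Hence the liminf is at least $-\sum_k\omega(k)\,(\widehat{\bb L}_1H)(k)/H(k)$. Taking the supremum over $H$ yields $\bb I^{(1)}(\omega)$.

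The technical point is uniform convergence of $u_\epsilon$ on the wells, together with boundedness of $u_\epsilon$ and $1/u_\epsilon$. If the cited results only give $L^\infty$ convergence on the wells, I would add a cutoff argument. Uniform positivity of $u_\epsilon$ comes from the maximum principle.

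\textbf{$\Gamma$-$\limsup$.} It suffices to treat $\omega$ with full support, which is dense, and conclude by lower semicontinuity and convexity of $\bb I^{(1)}$ with a diagonal argument. The paper stresses (Section~\ref{sec8}) that for non-reversible reduced chains one must tilt. I would follow that recipe here as well.
\begin{enumerate}
\item Find a positive $H$ on $\mc S_1$ such that $\omega$ is stationary for the tilted chain with rates $R_1(j,k)H(k)/H(j)$. Then $\bb I^{(1)}(\omega)=-\sum\omega\,\widehat{\bb L}_1H/H$; this is the standard Donsker–Varadhan fact for finite chains, and $H$ exists when $\omega>0$.
\item Extend $H$ to a smooth positive $\mf u$ on $\bb T$ that is constant on each well and interpolates across the saddles $\sigma_k$, in the form $\mf u=e^{F}$.
\item Let $\mu_\epsilon$ be the stationary measure of the tilted generator $\ms L^{\mf u}_\epsilon f=\mf u^{-1}\ms L_\epsilon(\mf u f)-(\mf u^{-1}\ms L_\epsilon\mf u)f$, which is again a diffusion with drift $\mss b+2\epsilon\mss a F'$. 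The supremum in $\ms I_\epsilon(\mu_\epsilon)$ is then attained at $u=\mf u$ (Donsker–Varadhan), so
\begin{equation*}
\ms I_\epsilon(\mu_\epsilon)=-\int\frac{\ms L_\epsilon\mf u}{\mf u}\,d\mu_\epsilon\,.
\end{equation*}
\item Since the tilted diffusion is one-dimensional, its stationary measure is explicit, with a periodic correction when $B\neq0$. Laplace asymptotics then show that $\mu_\epsilon$ concentrates on $\{\mf m_k\}$ with weights proportional to $\pi_1(k)H(k)^2$ times the tilted stationary weights, i.e.\ $\omega$. They also show $\theta^{(1)}_\epsilon\int(-\ms L_\epsilon\mf u/\mf u)\,d\mu_\epsilon\to-\sum\omega\,\widehat{\bb L}_1H/H$.
\end{enumerate}

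\textbf{Main obstacle.} The hardest step is the Laplace computation in the last item. The quantity $\ms L_\epsilon\mf u$ is supported near the saddles, where $\mu_\epsilon$ has mass of order $e^{-\mf h_1/\epsilon}$ relative to the wells, and one must obtain the exact constants $1/(\pi_1\sigma_1)$. Equally delicate is showing that saddles whose barriers exceed $\mf h_1$ contribute negligibly. I would handle this by choosing $\mf u$ to be constant across such saddles, so that $\ms L_\epsilon\mf u$ vanishes there.
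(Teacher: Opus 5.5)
Your $\Gamma$-$\liminf$ is the paper's argument: a resolvent solution with source $(\lambda_0-\widehat{\bb L}_1)H$, combined with Proposition~\ref{resolvent_limit_point_2}. There is one inaccuracy. The maximum principle gives only $u_\epsilon\ge 0$, not a lower bound uniform in $\epsilon$. A uniform lower bound needs the hitting-time input of Lemma~\ref{s10}. In the $\liminf$ you can avoid this by testing with $u_\epsilon+\delta$ and letting $\delta\to 0$ after $\epsilon\to0$, as the paper does.

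The genuine gap is Step~1 of your $\Gamma$-$\limsup$.

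\emph{Why Step~1 fails.}
\begin{itemize}
\item A positive $H$ that makes a full-support $\omega$ stationary for the $H$-tilted chain exists only if $\widehat{\bb X}_1$ is irreducible. Proposition~\ref{s03} is stated in general, and $\widehat{\bb X}_1$ is typically reducible. Already for $B=0$ and two wells of different depths, the shallow one is transient and the deep one absorbing. This covers every situation with $\widehat{\mf q}\ge 2$, and also case (Rev)(a).
\item Tilting by a positive function does not change which rates vanish. So transient states of $\widehat{\bb X}_1$ stay transient and receive zero mass under every stationary law of the tilted chain.
\item Equivalently, the supremum defining $\bb I^{(1)}(\omega)$ is not attained. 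The exit terms $\omega(\mc M)R_1(\mc M,\mc M')$ between equivalence classes in \eqref{reduced_ldf0} are not of the form $-\sum\omega\,\widehat{\bb L}_1H/H$ for any positive $H$.
\item Reducing to full-support $\omega$ does not help, since transient states must carry mass.
\item Even when there are no transient states but several closed classes, a single diffusion tilted on all of $\bb T$ by an $\epsilon$-independent $\mf u$ splits its stationary mass among the classes according to the higher barriers. That split is set by the slower scales $\theta^{(p)}_\epsilon$, $p\ge2$, not by $\omega$. So the $\mu_\epsilon$ of your Step~4 does not converge to $\mu$.
\end{itemize}

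\emph{What is missing: localization, as in Section~\ref{sec6}.}
\begin{itemize}
\item For each equivalence class $\mf D$, take the density $J_{\mf D,\epsilon}^2e^{-S_{\mf D}/\epsilon}/(\mss a\,c_{\mf D}(\epsilon))$, which vanishes outside a neighbourhood $\Omega_{\mf D}$ of the class.
\item Here $J_{\mf D,\epsilon}$ equals $\sqrt{\omega(\cdot|\mf D)/\nu^1_{\mf D}}$ on the wells of $\mf D$, vanishes beyond the two boundary saddles, and follows the Gaussian equilibrium profile across each saddle.
\item Combine these measures with weights $\omega(\mf D)$, using convexity of $\ms I_\epsilon$.
\item Because such a density vanishes on an interval, the $B$-term of Lemma~\ref{l03} disappears after the $\delta$-regularization. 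One gets $\ms I_\epsilon(\mu_{\mf D,\epsilon})\le\epsilon\int(J'_{\mf D,\epsilon})^2e^{-S_{\mf D}/\epsilon}/c_{\mf D}(\epsilon)\,dx$.
\item The interior saddles of $\Omega_{\mf D}$ produce $\bb I^{(1)}_{\mf D}(\omega(\cdot|\mf D))$. The boundary saddles produce exactly the exit terms of \eqref{reduced_ldf0}.
\end{itemize}

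Your tilting recipe is only needed when $\widehat{\bb X}_1$ is irreducible and non-reversible. In that case it is the paper's Section~\ref{sec8}, with one difference. The paper tilts by the resolvent solution $\phi_\epsilon$ itself, so $\theta^{(1)}_\epsilon\ms L_\epsilon\phi_\epsilon=\lambda\phi_\epsilon-G$ is explicit. This replaces the saddle computation of $\ms L_\epsilon\mf u$, which you flag as your main obstacle, by estimates on $\phi_\epsilon$ near the saddles through Lemma~\ref{s10}.
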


If the Markov chain $\widehat{\bb X}_1$ has only one closed
irreducible class, then the metastable description of
$X_\epsilon(\cdot)$ ended. Otherwise, $X_\epsilon(\cdot)$ exhibits a
metastable behavior in a longer timescale.

\smallskip\noindent{\bf Higher order metastable structure.}  In this
subsection, we describe the metastable behavior of the process
$Y_\epsilon(\cdot)$ across all time scales. The exposition is based on
a recursive construction proposed in \cite{lm}. 

The recursive construction produces an integer $\mf q\ge 1$, and  triplets
\begin{equation*}
\label{meta_trip}
{\color{blue}
\Gamma_q= (\mtt P_{q},\; \mf h_{q},\;
\bb L_{q})},   \quad 1\le q\le\mf q\,,
\end{equation*}
where $\mtt P_{q} := \mz S_q \cup \{ \mz T_q\}$ is a
partition of the set $\mz M$,
${\color{blue} \mz S_q} := \{ \mz M_{q}(j) : j\in \bb Z\}$ is a family
of disjoint, non-empty subsets of $\mz M$, $\mf h_q>0$ is an energy
barrier, and $\bb L_q$ is the generator of a
$\mz S_q$-valued Markov chain, denoted by $\bb X_q(\cdot)$.
The first triplet $\Gamma_{1}$ has been constructed in the
previous subsection.

It is shown in \cite{lm} that each triplet $\Gamma_q$,
$1\le q\le \mf q$, satisfies postulates $\mc P_1(q)$--$\mc P_{10}(q)$
listed in Appendix \ref{secA1} of this article. We present below the
properties of this constructions needed to state the main theorem.
The reader can check that they are fulfilled by the triplet
$\Gamma_{1}$. Fix $2\le q\le \mf q$.

\begin{enumerate}

\item[(a)] The sets $\mz M_q(j)$ are well ordered in the sense that $x<y$
for all $x\in \mz M_q(j)$, $y\in \mz M_q(j+1)$, $j\in \bb Z$. [$\mc P_2(q)$].

\item[(b)] The potential $S(\cdot)$ restricted to the sets $\mz M_q(j)$ is
constant. For each $j\in \bb Z$, $S(y)=S(x)$ for all $y$,
$x\in \mz M_q(j)$. The common value is denoted by
$\cb{ S(\mz M_q(j)) } $.  [$\mc P_3(q)]$.

\item[(c)] The sets $\mz M_q(j)$ are periodic in the sense that there
exists and integer $\cb{\mf u_q}$ such that
$\mz M_q(j+\mf u_q) = \mz M_q(j) +1$. By the previous construction
$\mf u_1=N$. [$\mc P_3(q)$].

\item[(d)] By construction, each set $\mz M_{q}(j)$, $j\in \bb Z$, is
obtained as the union of a $\bb X_{q-1}$-closed irreducible
class. That is, for each $j\in\bb Z$, there exists a finite subset
$A_{q,j}$ of $\bb Z$ such that $\{ \mc M_{q-1}(i) : i\in A_{q,j} \}$
is a $\bb X_{q-1}$-closed irreducible class, and
$\mz M_{q}(j) = \cup_{i\in A_{q,j}} \mz M_{q-1}(i)$.
\end{enumerate}

We may now define the escape barrier $\mf h_{q}$.  Denote by
$m^+_{q,i}$, $m^-_{q,i}$, $i\in \bb Z$, the rightmost, leftmost
element of $\mz M_{q}(i)$, respectively:
\begin{equation}
\label{17}
{\color{blue} m^-_{q,i}}
\,:=\,  \min\{m\in \mz M_{q}(i)\}\;, \hspace{5mm}
{\color{blue} m^+_{q,i}}
\,:=\,  \max\{m\in \mz M_{q}(i)\}.
\end{equation}
Let $\mz W^{(q)}_{k,k+1}$ be the set of the absolute maximum of $S$ on
the intervals $[m^+_{q, k}, m^-_{q, k+1}]$:
\begin{equation}
\label{max_br}
{\color{blue}\mz W^{(q)}_{k,k+1} } \; := \;
\operatorname*{arg\,max}_{x\;\in\;
[m^+_{q, k} \,,\,  m^-_{q, k+1}]} S(x) \,,\quad
\cb{\mz W_{q} } \,:=\, \bigcup_{k\in \bb Z} \mz W^{(q)}_{k,k+1}
\;.
\end{equation}
Clearly, the potential $S(\cdot)$ is constant on
$\mz W^{(q)}_{k,k+1}$. Denote by $\cb{S(\mz W^{(q)}_{k,k+1})}$ the
common value.  Let $\sigma_{k, k+1}^{p,+}$, $\sigma_{k, k+1}^{p,-}$ be
the rightmost and leftmost maxima of $\mz W^{(p)}_{k,k+1}$,
respectively:
\begin{equation}
\label{rl_max:R}
{\color{blue}\sigma^{p,+}_{k, k+1}} \,:= \, \max\{\sigma\in
\mz W^{(p)}_{k, k+1}\}\,, \quad
{\color{blue}
\sigma^{p,-}_{k, k+1}} \,:=\,
\min\{\sigma\in \mz W^{(p)}_{k-1, k}\}\,, \quad k\in \bb Z\,.
\end{equation}

Since the sets $\mz M_{q}(k)$, $k\in\bb Z$, are ordered and the
elements of each set have the same depth, we may define the heights of
the left and right escape barriers of the set $\mz M_{q}(k)$,
$k\in \bb Z$, as
\begin{equation}
\label{18}
\begin{gathered}
{\color{blue} h^{q,+}_{k} }  \,:=\,
S(\mz W^{(q)}_{k,k+1} ) \,-\,  S(\mz M_{q}(k))\;,
\quad
{\color{blue} h^{q,-}_{k} }  \,:=\,
S(\mz W^{(q)}_{k-1,k})  \,-\,  S(\mz M_{q}(k)) \;.
\end{gathered}
\end{equation}
Define the height $h^{q}_{k}$ of the escape barrier of
$\mz M_{q}(k)$ as the minimum of its left and right heights. Let
$\mf h_{q}$ be the smallest height and
$\theta_\epsilon^{(q)}$ the associated time-scale:
\begin{equation}
\label{19}
{\color{blue} h^{q}_{k}} := h^{q,-}_{k}\wedge h^{q,+}_{k} \;, \quad 
{\color{blue}\mf h_{q}} := \min
\big\{ \, h^{q}_{k} : k\in \bb Z\, \big\} \,, \quad
\cb{\theta_\epsilon^{(q)}}  \,:=\, e^{\mf h_{q}/\epsilon}\,.
\end{equation}
By property (c) of the sets $\mz M_q(j)$,
$h^{q}_{k+\mf u_{q}} \,=\, h^{q}_{k}$ for all $k\,\in\, \bb Z$.
Hence, the heights $h^{q}_{k}$, $k\in\bb Z$, take at most $\mf u_{q}$
different values, and $\mf h^{q} \in (0,\infty)$. Moreover, by
\cite[Proposition 2.3]{lm},
\begin{equation}
\label{27}
\mf h_{q} \,>\, \mf h_{q-1}\,.
\end{equation}

Let the weight $\pi_{q}(k)$ of the set of local minima
$\mz M_{q}(k)$ and the weight $\sigma_{q}(k,k+1)$ of the set of
maxima $\mz W^{(q)}_{k,k+1}$, $k\in \bb Z$, be defined as
\begin{equation}
\label{p_weights}
{\color{blue}\pi_{q}(k)} \,:=\, \pi_1(\mz M_{q}(k) )\, =\, 
\sum_{m\in \mz M_{q}(k)}
\frac{1}{\mss a(m)}\sqrt{\frac{2\pi}{S''(m)}}\,, \quad
{\color{blue}\sigma_{q}(k,k+1)} \,:= \,
\sum_{ \sigma\in \mz W^{(q)}_{k,k+1}}
\sqrt{\frac{2\pi}{ -\, S''(\sigma)}}\;\cdot
\end{equation}
Define the jump rates as
\begin{equation}
\label{50b}
\begin{gathered}
{\color{blue}R_{q}(\mz M_{q}(k),\mz M_{q}(k+1))} \; :=
\; \frac{1}{\pi_{q}(k) \, \sigma_{q}(k,k+1)}\;
\mb 1\{\, h^{q,+}_{k} \,=\,  \mf h_{q} \,\} \;,
\\
{\color{blue}R_{q}(\mz M_{q}(k),\mz M_{q}(k-1))} \; :=
\; \frac{1}{\pi_{q}(k)\, \sigma_{q}(k-1,k)}
\; \mb 1\{\,  h^{q,-}_{k} \,=\,  \mf h_{q} \,\} \;.
\end{gathered}
\end{equation}
Let ${\color{blue}\bb X_{q}(\cdot)}$ be the $\mz S_{q}-$valued
Markov chain induced by the generator $\bb L_{q}$ given by
\begin{equation}
\label{p_generator}
{\color{blue}(\bb L_{q}f)\, (\mz M_{q}(k))}
\;=\; \sum_{a = \pm 1} R_{q}(\mz M_{q}(k), \mz
M_{q}(k\pm a)) \,
[\, f(\mz M_{q}(k+a)) - f(\mz M_{q}(k)) \,]\;,
\end{equation}
for any $f\colon \mz S_{q} \to \bb R$ and
$\mz M_{q}(k)\in \mz S_{q}$.

Observe that the Markov chain $\bb X_q$ only jumps to
``nearest-neighbor'' sets: $R_{q}(\mz M_{q}(j),\mz M_{q}(k)) =0$ if
$|k-j|\neq 1$. This remark together with property (d) explain property
(a) of the sets $\mz M_q(j)$.

\begin{remark}
\label{rm2}
This remarks also holds for $q=1$.  Since
$\pi_{q}(k)\, \sigma_{q}(k,k\pm 1)$, $k\in \bb Z$, is strictly
positive and finite, by definition of $\mf h_{q}$, there exists
$k'\in \bb Z$ and $a\in \{-1, 1\}$ such that
$R_{q}(\mz M_{q}(k'),\mz M_{q}(k'+a))>0$. Therefore, the state
$\mz M_{q}(k')$ is either a transient state or it belongs to a
$\bb X_{q}$-closed irreducible class with at least two elements.
\end{remark}

\begin{remark}
\label{rm3}
By conditions (c), the variable $\mf u_q$, $1\le q\le \mf q$
represents the number of states of the Markov chain $\bb X_{q}$ up to
equivalence. By conditions (d), for $2\le q\le \mf q$, $\mf u_q$ is
also the number of $\bb X_{q-1}$-closed irreducible classes, up to
equivalence. By Remark \ref{rm2}, the sequence $\mf u_p$ strictly
decreases.  The recursive construction ends when the Markov chain
$\bb X_p(\cdot)$ has a unique closed irreducible class, the whole set
$\ms S_p$, or none (if all states are transient).
\end{remark}

As $\mss a(\cdot)$, $\mss b(\cdot)$ are periodic functions, by
property (c) of the sets $\mz M_q(j)$, the jump rates
$R_q(\cdot ,\, \cdot)$ are periodic as well:
$R_q (\mz M_{q}(k + \mf u_q),\mz M_{q}(k+ \mf u_q \pm 1)) = R_q (\mz
M_{q}(k),\mz M_{q}(k\pm 1))$, $k\in \bb Z$. Therefore,
$\cb{ \widehat{\bb X}_q (\cdot) }: = \Pi( \bb X_q(\cdot))$ is a
continuous-time Markov chain taking values in
$\cb{\mc S_q }:= \{ \mc M_q(k) ; 0\le k<\mf u_q\}$, where
$\cb{\mc M_q(k)} = \Pi(\mz M_q(k))$.

Let $\cb{\widehat{\mf q}}$ be the first integer $p\ge 1$ such that
$\widehat{\bb X}_{p}$ has a unique closed irreducible class. Clearly,
$\widehat{\mf q} \le \mf q$, and it may happen that
$\widehat{\mf q} = \mf q-1$. See Figure \ref{fig-1f} in Appendix
\ref{secA1}. Indeed, suppose that the Markov chain $\bb X_{p}$ has
infinitely many closed irreducible classes, which are all
equivalent. In this case, we may construct a new layer by the
recursive procedure. In this new layer, the states will be the closed
irreducible classes of level $p$. As they are all equivalent and the
jump rates periodic, either all states are transient or there is only
one closed irreducible class. In particular, the recursive procedure
ends at this stage and $\mf q=p+1$. On the other hand, the projected
Markov chain $\widehat{\bb X}_{p}$ has only one closed irreducible
class so that $\widehat{\mf q}=p$. We describe in Lemma \ref{s11} all
possible behaviors of the Markov chain $\bb X_{\widehat{\mf q}}$ and
the relation between $\widehat{\mf q}$ and $\mf q$ in each case.

Denote by $\cb{ \widehat{\bb L}_q}$, $1\le q\le \widehat{\mf q}$, the
generator of the projected Markov chain $\widehat{\bb X}_q(\cdot)$,
and let $\bb I^{(q)} \colon \mss P (\mc S_q) \to [0,+\infty)$ be the
associated large deviations rate functional:
\begin{equation}
\label{25}
\cb{ \bb I^{(q)}(\mu)} \,:=\,
\sup_{H}-\, \sum_{\mc M\in \mc S_{q}}
\frac{(\widehat{\bb L}_q H)(\mc M)} {H(\mc M)}\mu(\mc M)\,,
\end{equation}
where the supremum is performed over all positive functions
$H\colon \mc S_1 \to \bb R$.

The definition of the rate functional $\ms J^{(q)}$ requires some
further notation. Extend the definitions of he weights $\pi_1(\cdot)$,
introduced in \eqref{22}, to the torus by setting
$\pi_1(\{\mf m\}) = \pi_p(\Pi^{-1} (\{\mf m\}))$. This is well defined
because $\mss a(\cdot)$ and $\mss b(\cdot)$ are periodic.  Let
$\mu^{p}_{\mc M_{p}(k)} \in \mss P(\bb T)$, $1\le p\le \widehat{\mf q}$,
$1\le k\le \mf u_p$, be the measures defined by
\begin{equation}
\label{21}
\cb{\mu^{p}_{\mc M_{p}(k)}(\cdot)}
\,:=\,  \frac{1}{\pi_{p}(k)}
\sum_{\mf m  \in  \mc M_p(k)} 
\pi_{1}(\{\mf m\} ) \, \delta_{\mf m} (\cdot)\,, 
\end{equation}
where $\pi_p(k)$ has been introduced in \eqref{p_weights}, and turns
$\mu^{p}_{\mc M_{p}(k)}$ into a probability measure.  Informally,
$\mu_{\mc M_p(k)}^p(\{m\})$, $m\in \mc M_p(k)$, represents the
proportion of time the diffusion $X_\epsilon(\cdot)$ spends close to
the point $m$ when it visits the set $\mc M_p(k)$.

For $2\le p\le \widehat{\mf q}$, let
$\ms J^{(p)} \colon\mss P(\bb T)\to [0,+\infty]$ be the functional
given by
\begin{equation}
\label{23b}
{\color{blue}
\ms J^{(p)}(\mu)}\::=\: \begin{cases}
\bb I^{(p)}(\omega), &\text{if } \mu(\cdot)
= \sum_{\mc M\in \mc S_{p}}\omega(\mc M)\,
\mu^{p}_{\mc M}(\cdot) \text{ for } \omega \in \mss P(\mc S_{p}) \\
+\infty &\text{otherwise.} \end{cases}
\end{equation}
This functional can be regarded as the lifted version of $\bb I^{(p)}$
to $\mss P(\bb T)$.  We are now in position to state the main result of
this article.

\begin{theorem}
\label{mt3}
Assume that condition H is in force.  Then, for $2\le p\le
\widehat{\mf q}$,
$\theta_\epsilon^{(p)}\, \ms I_\epsilon$ $\Gamma$-converges to
$\ms J^{(p)}$. Moreover, all conditions of Definition
\ref{gamma_expansion_def} are fulfilled and we can write
\begin{equation*}
\ms I_\epsilon \;=\; \frac{1}{\epsilon}\;\ms J^{(-1)} \;+\; \ms J^{(0)}  \;+\;
\sum_{p=1}^{\widehat{\mf q}} \frac{1}{\theta_{\epsilon}^{(p)}} \, \ms J^{(p)}  \;.
\end{equation*}
\end{theorem}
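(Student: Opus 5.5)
We split the proof into the $\Gamma$-liminf and $\Gamma$-limsup for each $2\le p\le\widehat{\mf q}$, and then check the structural conditions of Definition~\ref{gamma_expansion_def} using Propositions~\ref{l01}, \ref{s04}, \ref{s03}.

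For the \emph{$\Gamma$-liminf}, fix $\mu_\epsilon\to\mu$. By induction on $p$ we may assume $\mu$ lies in the $0$-level set of $\ms J^{(p-1)}$; otherwise $\liminf\theta^{(p-1)}_\epsilon\ms I_\epsilon(\mu_\epsilon)>0$ and $\theta^{(p)}_\epsilon/\theta^{(p-1)}_\epsilon\to\infty$ by \eqref{27}, so the liminf is $+\infty=\ms J^{(p)}(\mu)$. Thus $\mu=\sum_{\mc M\in\mc S_p}\omega(\mc M)\mu^p_{\mc M}$. Fix a positive $H\colon\mc S_p\to\bb R$. In the variational formula for $\ms I_\epsilon$ we insert test functions $u_\epsilon$ obtained from the resolvent characterization of metastability \cite{lms25} as established for this diffusion in \cite{lm}. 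Concretely, we take $u_\epsilon=\lambda+F_\epsilon$, where $F_\epsilon$ solves the resolvent equation $(\lambda-\theta^{(p)}_\epsilon\ms L_\epsilon)F_\epsilon=G$ with $G=\sum_{\mc M}g(\mc M)\mb 1_{\mc E(\mc M)}$ localized near the wells. By the resolvent theorem, $F_\epsilon$ converges uniformly on the wells to $f=(\lambda-\widehat{\bb L}_p)^{-1}g$, and $\theta^{(p)}_\epsilon\ms L_\epsilon F_\epsilon=\lambda F_\epsilon-G$ is bounded. Hence
\[
-\theta^{(p)}_\epsilon\int \frac{\ms L_\epsilon u_\epsilon}{u_\epsilon}\,d\mu_\epsilon
\longrightarrow -\sum_{\mc M}\frac{(\widehat{\bb L}_pf)(\mc M)}{\lambda+f(\mc M)}\,\omega(\mc M),
\]
provided the mass of $\mu_\epsilon$ outside the wells does not contribute. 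This requires $u_\epsilon$ to stay bounded away from $0$ and $\theta^{(p)}_\epsilon\ms L_\epsilon u_\epsilon$ to stay bounded everywhere, which the maximum principle gives. Choosing $g$ so that $\lambda+f=H$ and optimizing over $H$ yields $\bb I^{(p)}(\omega)$.

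For the \emph{$\Gamma$-limsup} we may assume $\ms J^{(p)}(\mu)<\infty$. By lower semicontinuity and convexity it suffices to treat $\omega$ in a dense subclass, namely $\omega$ charging every state of the unique closed class of $\widehat{\bb X}_p$. Following the strategy announced in the introduction, we find a positive $H$ tilting the reduced chain so that $\omega$ becomes stationary for the tilted generator $H^{-1}\widehat{\bb L}_p(H\,\cdot)$ minus its diagonal correction; then $\bb I^{(p)}(\omega)=-\sum\omega\,\widehat{\bb L}_pH/H$. We extend $\log H$ to a smooth function $\phi_\epsilon$ on $\bb T$ that is constant on each well of $\mc M_p(k)$, using the harmonic or equilibrium-potential interpolation between the barriers $\mz W^{(p)}$. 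The tilted diffusion has generator $\ms L^{\phi}_\epsilon f=e^{-\phi}\ms L_\epsilon(e^{\phi}f)-e^{-\phi}(\ms L_\epsilon e^{\phi})f$, and we take $\mu_\epsilon$ to be its stationary state. The Donsker--Varadhan supremum is then attained at $u=e^{\phi_\epsilon}$, so
\[
\ms I_\epsilon(\mu_\epsilon)=-\int e^{-\phi_\epsilon}\ms L_\epsilon e^{\phi_\epsilon}\,d\mu_\epsilon .
\]
Two things remain. First, $\mu_\epsilon\to\mu$: we compute the stationary state of a one-dimensional drifted diffusion explicitly via the standard formula for non-reversible 1D diffusions, and use Laplace asymptotics, which give weights $\pi_1(\mf m)$ inside each $\mc M_p(k)$ and $\omega$ across classes. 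Second, $\theta^{(p)}_\epsilon\ms I_\epsilon(\mu_\epsilon)\to\bb I^{(p)}(\omega)$: the integrand is concentrated near the saddles, where $\phi'_\epsilon$ is of order the capacity profile, and the computation mirrors the Eyring--Kramers rates \eqref{50b}.

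The main obstacle is the limsup in the non-reversible case. There the tilted stationary measure is not Gibbs, and its mass near transient wells and near the saddles must be controlled to precision $o(1/\theta^{(p)}_\epsilon)$. I would first handle the explicit 1D formula for the stationary density and check exponential negligibility of non-closed-class wells, using $\mf h_p>\mf h_{p-1}$.

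Finally, the expansion conditions of Definition~\ref{gamma_expansion_def} are checked as follows. Conditions (1) and (4) follow from Proposition~\ref{l01} and \eqref{27}. For (3), $\ms J^{(p)}<\infty$ exactly on measures of the form $\sum_{\mc M\in\mc S_p}\omega(\mc M)\mu^p_{\mc M}$, and by property (d) these are the zeros of $\bb I^{(p-1)}$, i.e.\ the mixtures over the closed classes of $\widehat{\bb X}_{p-1}$. Condition (5) holds because $\widehat{\bb X}_{\widehat{\mf q}}$ has a unique closed irreducible class, hence a unique stationary measure, which is the unique zero of $\bb I^{(\widehat{\mf q})}$.
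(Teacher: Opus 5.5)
Your $\Gamma$-liminf is the paper's argument (Proposition~\ref{p_liminf}) up to parametrization. Taking $u_\epsilon=\lambda+F_\epsilon$ with $\lambda$ large plays the role of $\Phi^h_\epsilon+\delta$. You should, however, use a smooth lifting of $g$ as in \eqref{smooth_lifted}: with indicator right-hand sides, $F_\epsilon$ is not $C^2$ and so is not an admissible test function.

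The genuine gap is in your $\Gamma$-limsup for $2\le p<\widehat{\mf q}$, and also in case (Rev)(a) of the last scale. By definition of $\widehat{\mf q}$, for these $p$ the chain $\widehat{\bb X}_p$ has at least two closed irreducible classes and, in general, transient states. So there is no \emph{unique closed class} to reduce to, and your key step fails. A tilt $H^{-1}\widehat{\bb L}_p(H\,\cdot)$ has the same communication graph as $\widehat{\bb L}_p$. Hence no strictly positive $\omega$ is stationary for it once transient states are present. The supremum defining $\bb I^{(p)}(\omega)$ is then not attained, and the identity $\bb I^{(p)}(\omega)=-\sum\omega\,\widehat{\bb L}_pH/H$ on which you build is unavailable. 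Instead $\bb I^{(p)}(\omega)$ is given by \eqref{reduced_ldf0}. Besides the class-wise terms $\omega(\mf D_r)\,\bb I^{(p)}_{\mf D_r}(\omega(\cdot|\mf D_r))$, this formula contains the escape terms $\omega(\mc M)R_p(\mc M,\mc M')$ between different equivalence classes. Your construction provides no mechanism that generates these terms.

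The missing idea is the class-by-class construction of Section~\ref{sec6}:
\begin{itemize}
\item Reduce to $\omega>0$ (Lemma B5 of \cite{l23}) and write $\mu=\sum_{\mf D}\omega(\mf D)\mu_{\mf D}$ over the $\widehat{\bb X}_p$-equivalence classes; then use convexity of $\ms I_\epsilon$.
\item For each $\mf D$, take a measure localized on a neighborhood $\Omega_{\mf D}$ of $\mf D$, namely $J_{\mf D,\epsilon}^2e^{-S_{\mf D}/\epsilon}/(\mss a\,c_{\mf D}(\epsilon))$. Here $J_{\mf D,\epsilon}$ equals $f_{\mf D}=\sqrt{\omega(\cdot|\mf D)/\nu^p_{\mf D}}$ on the wells of $\mf D$, is interpolated by Gaussian equilibrium potentials across the saddles, and vanishes outside $\Omega_{\mf D}$.
\item Each reflected class chain is reversible (Lemma~\ref{pro_red_stat}). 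Hence $f_{\mf D}$ is the Donsker--Varadhan optimizer on $\mf D$, and the cost reduces to a Dirichlet form (Assertion~\ref{a1:sup_high}).
\item The interior saddles yield $\bb I^{(p)}_{\mf D}(\omega(\cdot|\mf D))$. The drop of $J_{\mf D,\epsilon}$ from $f_{\mf D}$ to $0$ across the two boundary saddles of $\Omega_{\mf D}$ yields exactly the escape terms (Lemma~\ref{main_lemma_limsup_p}).
\end{itemize}

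At the last scale under (NonRev) your plan is the paper's (Section~\ref{sec8}), but the two steps you leave open are its actual content. First, the tilt is the resolvent solution of \eqref{q:resolv_limsup}. Then $\theta^{(\widehat{\mf q})}_\epsilon\ms L_\epsilon\phi_{\widehat{\mf q},\epsilon}$ is bounded and converges to $\widehat{\bb L}_{\widehat{\mf q}}\widehat h$ on the wells, so the cost is read off there after proving \eqref{45}, with no saddle-scale computation. Second, the fact that the tilted stationary measure gives mass $\omega(\mc M_{\widehat{\mf q}}(k))$ to the $k$-th well is not a Laplace computation alone. It is Lemma~\ref{s07}, which uses that the current of $\omega$ under the tilted chain is a positive constant.

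You also do not separate the cases of Lemma~\ref{s11}. The paper handles (Rev)(c), where $B>0$ but the reduced chain is reversible, in Section~\ref{sec2} with a modified periodic potential $\widehat S$.

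Finally, in your check of condition (3), property (d) alone is not enough. You also need that the stationary law of each closed class of $\widehat{\bb X}_{p-1}$ is proportional to $\pi_{p-1}$ (Lemma~\ref{pro_red_stat}, Proposition~\ref{lifted_red_stat}). Only then are these mixtures exactly the measures $\mu^p_{\mc M}$ of \eqref{21}.
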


\subsection*{Organization of the paper}
The rest of the article is organized as follows. In Section \ref{sec1},
we derive an explicit formula for the rate functional
$\ms I_\epsilon$ and prove Proposition \ref{l01}, which constitutes the first
step of the expansion. Section \ref{sec0004} is devoted to the proof of Proposition
\ref{s04}. In Section \ref{sec5}, we introduce a representation
formula for the rate-function of the reduced Markov chain
$\widehat{\bb X}_{p}(\cdot)$, $1\leq p\leq \widehat{\mf q}$, originally derived
in \cite{l23}, and state the resolvent condition to
metastability. These results are the main technical and theoretical
tools required to derive the higher order expansion. In Section
\ref{sec5.2}, we verify the third condition in Definition
\ref{gamma_expansion_def} and establish the $\Gamma-\liminf$ at every
metastable scale. The $\Gamma-\limsup$ at metastable regimes is
discussed along the remaining three sections. In Section \ref{sec6},
we introduce a method to prove the $\Gamma-\limsup$ at every
metastable scale with the exception of the last one. In Section
\ref{sec2} we extend this method to the last time-scale, under the
additional assumption that the Markov chain $\widehat{\bb X}_{\widehat{\mf
q}}(\cdot)$ is either reducible or reversible. Since this argument
breaks down in the complementary case, Section \ref{sec8} is devoted
to the remaining situation in which $\widehat{\bb X}_{\widehat{\mf q}}(\cdot)$ is
irreducible and non-reversible. There we establish the
$\Gamma-\limsup$ at the last metastable time-scale and complete the
proofs of Proposition \ref{s03} and Theorem \ref{mt3}.

\section{Proof of Proposition \ref{l01}}
\label{sec1}

In this section, we establish several properties of the action
functional $\ms I_\epsilon$ and prove Proposition~\ref{l01}. The
section is organized as follows.  In Lemmata \ref{l02} and \ref{l03},
we provide an explicit formula for $\ms I_\epsilon(\mu)$ when the
measure $\mu$ is absolutely continuous with respect to the Lebesgue
measure and has a smooth, strictly positive density. Using this
explicit formula, in Corollary \ref{l05} we construct a recovery
sequence and prove the $\Gamma-\limsup$ for absolutely continuous
measures. This recovery sequence is obtained by smoothing the density
and shifting it away from the origin.

To extend the $\Gamma-\limsup$ to singular measures, we first construct
in Lemma \ref{l04} a recovery sequence for a Dirac measure supported
at a single point. Then, in Corollary \ref{l06}, using the convexity
of the rate functional $\ms I_\epsilon(\cdot)$ and the linearity of
$\ms J^{(-1)}(\cdot)$ when restricted to singular measures, we obtain
a recovery sequence and establish the $\Gamma-\limsup$ for singular
measures.  By decomposing a general measure into its absolutely
continuous and singular parts, we can extend the $\Gamma-\limsup$ proof
to all measures, again relying on the convexity of
$\ms I_\epsilon(\cdot)$. This final step is carried out in the second
part of the proof of Proposition \ref{l01}, towards the end of this
section.

The $\Gamma-\liminf$ part of the proof is obtained in two steps. First,
exploiting the variational representation of the rate functional
$\ms I_\epsilon(\cdot)$ and choosing test functions of the form
$u_\epsilon = e^{F/\epsilon}$, for a smooth function $F(\cdot)$, we
derive a lower bound. Optimizing over $F$ completes the argument.
Since $\ms J^{(-1)}(\mu)$ is finite for all measures
$\mu \in \mss P(\bb T)$, it follows that $\ms J^{(-1)}(\mu)$ appears as
the first term in the $\Gamma$-expansion of the rate functional
$\ms I_\epsilon(\cdot)$. This is the content of Lemma
\ref{shortest:scale}, and it completes the proof of Proposition
\ref{l01}.

\subsection{Properties of the rate functional $\ms I_\epsilon$}

For a function
$H\in C^2(\bb T)$, denote by $\ms L_{H,\epsilon}$ the generator given
by
\begin{equation*}
{\color{blue}(\ms L_{H,\epsilon} F)(x)}
\;=\; \{\, \mss b \,+\, 2\,\epsilon\, \mss a\, H' \,\} F' \;+\;
\epsilon\, \mss a\, F''\;. 
\end{equation*}
Thus, the diffusion induced by the generator $\ms L_{H,\epsilon}$ has
an extra drift $2\epsilon\, \mss a\, H' $ with respect to the one induced by
$\ms L_{\epsilon}$.

\begin{lemma}
\label{l02}
Let $m\in C^2(\bb T)$ be the density of a probability measure
($m\ge 0$, $\int_{\bb T} m(x)\, dx =1$). Suppose that there exists
$c>0$ such that $m(x) \ge c$ for all $x\in\bb T$.  Let
$H\colon \bb T \to \bb R$ be given by
\begin{equation}
\label{02}
H(x) \;=\; \frac{1}{2} \, \log \big( \mss a(x)\, m (x) \big)
\;+\; \frac{1}{2\epsilon} 
\frac{\<\mss b/\mss a\>}{\<1/(\mss a\,m)\> }  \,
\int_0^x \frac{1}{\mss a(y)\, m (y)} \, dy  \,-\,
\frac{1}{2\epsilon} 
\int_0^x \frac{\mss b(y)}{\mss a(y)} \, dy \;,
\end{equation}
where, recall, $\<h\> = \int_{\bb T}h(y)\,dy$. Then, $H$ belongs to
$C^2(\bb T)$ and $m$ is the unique stationary state for the diffusion
induced by the generator $\ms L_{H,\epsilon}$.
\end{lemma}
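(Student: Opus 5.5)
Two things must be checked: that $H$ is a well-defined $C^2$ function on the torus (periodicity), and that $m$ solves the stationary Fokker--Planck equation for $\ms L_{H,\epsilon}$ (uniqueness will then be automatic). We check periodicity first. Since $\mss a\ge c_0>0$, $m\ge c$ and both are $C^2$, the first term $\tfrac12\log(\mss a m)$ is $C^2$ and periodic. The derivative of the remaining terms is
\begin{equation*}
\frac{1}{2\epsilon}\Big(\frac{\<\mss b/\mss a\>}{\<1/(\mss a m)\>}\frac{1}{\mss a m}-\frac{\mss b}{\mss a}\Big),
\end{equation*}
which is $C^2$ and periodic. Its integral over $[0,1]$ equals $\frac{1}{2\epsilon}\big(\<\mss b/\mss a\>-\<\mss b/\mss a\>\big)=0$, so the two integral terms define a periodic function. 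This is exactly why the normalizing constant was chosen as it is. Hence $H\in C^2(\bb T)$; in fact $H'$ is $C^1$, and it is $H'$ that enters the generator.

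Next we verify stationarity. A probability density $m$ is stationary for $\ms L_{H,\epsilon}$ iff $\int \ms L_{H,\epsilon}F\, m\,dx=0$ for all $F\in C^2(\bb T)$. Integrating by parts on the torus turns this into $J'=0$, where the probability flux is
\begin{equation*}
J \;=\; (\mss b+2\epsilon \mss a H')\,m \;-\; \epsilon\,(\mss a m)' .
\end{equation*}
Here we need $\mss a m\in C^1$, which holds. Substituting
\begin{equation*}
2\epsilon H' \;=\; \epsilon\frac{(\mss a m)'}{\mss a m} \;+\; \frac{B}{\<1/(\mss a m)\>}\frac{1}{\mss a m} \;-\; \frac{\mss b}{\mss a},
\end{equation*}
we get $2\epsilon \mss a H' m = \epsilon(\mss a m)' + B/\<1/(\mss a m)\> - \mss b m$. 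The flux therefore reduces to the constant $J=B/\<1/(\mss a m)\>$, so $J'=0$ and $m$ is stationary. This is the only real computation, and it is routine.

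Uniqueness is the last step. The tilted generator is uniformly elliptic, since $\epsilon\mss a\ge\epsilon c_0$, and has a $C^1$ drift on the compact torus. Hence the diffusion is irreducible with a strong Feller, positive transition density, and its invariant probability measure is unique. Alternatively, uniqueness can be seen directly from the ODE. Any stationary density $\rho$ satisfies the first-order linear equation $\epsilon(\mss a\rho)'-(\mss b+2\epsilon\mss a H')\rho=-J$ with a constant $J$. Periodicity together with normalization pins down both $J$ and $\rho$: the periodic solution space of the homogeneous-plus-constant problem is one-dimensional, because the integrating factor has nonzero total increment over the torus, or else is periodic with $J$ forced to be $0$. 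We expect no serious obstacle; the only subtle point is confirming that the chosen constant makes $H$ periodic, and that is built into the definition.
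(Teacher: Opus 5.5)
Your proof is correct and follows the paper's route. The paper verifies directly that $m$ solves the stationary Fokker--Planck equation $\epsilon(\mss a m)'' - (\mss b m)' - 2\epsilon(\mss a H' m)'=0$, and your observation that the flux equals the constant $B/\<1/(\mss a m)\>$ is that identity integrated once; your explicit checks of the periodicity of $H$ and of uniqueness (via ellipticity, or via the one-dimensional space of periodic solutions of the first-order ODE) fill in points the paper leaves implicit.
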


\begin{proof}
An elementary computation yields that $H$ belongs to $C^2(\bb T)$ and
that
\begin{equation*}
\epsilon \, \big(\mss a \,m \big)''
\,-\, \big(\mss b\, m \big)'
\,-\, 2\epsilon \, \big(\mss a\, H'\, m \big)' \;=\; 0\;.
\end{equation*}
Hence, by integration by part, 
\begin{equation*}
\int_{\bb T}  (\ms L_{H,\epsilon} F)(x) \, m(x) \, dx \;=\; 0\,,
\end{equation*}
for any function $F\in C^2(\bb T)$, as claimed.
\end{proof}

\begin{lemma}
\label{l03}
Let $\mu\in\mss P(\bb T)$ be such that $\mu(dx) = m(x)\, dx$, where
$m\in C^2(\bb T)$ and there exists $c>0$ such that $m(x) \ge c$ for
all $x\in\bb T$.  Then,
\begin{equation*}
\ms I_\epsilon (\mu) \;=\; -\,
\int_{\bb T} \frac{\ms L_\epsilon e^H}{e^{H}} \; d\mu
\;=\; \epsilon \int_{\bb T} \mss a\,[H']^2  \; d\mu \;=\;
\frac{1 }{4\, \epsilon}\, \int_{\bb T} \mss a\,\Big\{ \, \epsilon\,
\Big[ \frac{m'}{m}
+ \frac{\mss a'}{\mss a}\Big] \,+\,  \frac{B \,/\, (\mss a\,m)} { \int_{\bb
T} 1\,/ (\mss a\, m) \,dy } \,-\, \frac{\mss b}{\mss a}  \, \Big\}^2 \, d \mu\;,
\end{equation*}
where $H$ is given by \eqref{02}.
\end{lemma}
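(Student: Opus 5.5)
We prove the three equalities in turn: the first is the substantive one, and the other two are direct computations.

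\emph{First equality.} Write $u = e^{H} g$ for an arbitrary strictly positive $g$ in the domain of the generator. A direct computation gives
\begin{equation*}
\frac{\ms L_\epsilon (e^H g)}{e^H g} \;=\; \frac{\ms L_\epsilon e^H}{e^H} \;+\; \frac{\ms L_{H,\epsilon} g}{g}\;.
\end{equation*}
Indeed, $(e^Hg)'' = e^H\{g'' + 2H'g' + (H''+[H']^2)g\}$, and the cross term $2\epsilon\mss a H' g'$ is exactly the extra drift in $\ms L_{H,\epsilon}$. Hence
\begin{equation*}
-\int_{\bb T} \frac{\ms L_\epsilon u}{u}\, d\mu \;=\; -\int_{\bb T} \frac{\ms L_\epsilon e^H}{e^H}\, d\mu \;-\; \int_{\bb T} \frac{\ms L_{H,\epsilon} g}{g}\, d\mu\;.
\end{equation*}
The plan is to show that the last integral is $\le 0$, with equality when $g\equiv 1$; the first equality then follows by taking the supremum over $u$.

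To see this, write $\ms L_{H,\epsilon} g / g = \ms L_{H,\epsilon}(\log g) + \epsilon\, \mss a\, [(\log g)']^2$. By Lemma \ref{l02}, $m$ is stationary for $\ms L_{H,\epsilon}$, so $\int \ms L_{H,\epsilon}(\log g)\, m\, dx = 0$. The remaining term $\epsilon\int \mss a[(\log g)']^2\,d\mu$ is nonnegative. This proves the first equality. Note that $H\in C^2(\bb T)$ by Lemma \ref{l02}, so $e^H$ is an admissible test function.

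\emph{Second equality.} Expanding,
\begin{equation*}
-\frac{\ms L_\epsilon e^H}{e^H} \;=\; -\,\mss b H' - \epsilon\,\mss a(H''+[H']^2)\;.
\end{equation*}
Rewrite this as
\begin{equation*}
-\,(\mss b + 2\epsilon\mss a H')H' - \epsilon\,\mss a H'' + \epsilon\,\mss a [H']^2 \;=\; -\,\ms L_{H,\epsilon} H + \epsilon\,\mss a[H']^2\;.
\end{equation*}
Integrating against $\mu$ and using the stationarity of $m$ again kills the $\ms L_{H,\epsilon} H$ term, which leaves $\epsilon\int \mss a [H']^2\, d\mu$.

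\emph{Third equality.} Differentiate \eqref{02}:
\begin{equation*}
2\epsilon H' \;=\; \epsilon\Big[\frac{m'}{m}+\frac{\mss a'}{\mss a}\Big] \;+\; \frac{B/(\mss a m)}{\langle 1/(\mss a m)\rangle} \;-\; \frac{\mss b}{\mss a}\;,
\end{equation*}
where we used $\langle \mss b/\mss a\rangle = B$. Substituting this into $\epsilon\,\mss a[H']^2 = \frac{1}{4\epsilon}\mss a\,(2\epsilon H')^2$ gives the third expression.

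The only delicate point is justifying the integration by parts in $\int \ms L_{H,\epsilon} f\, d\mu = 0$ for $f=\log g$ and $f=H$. This holds because $m$, $\mss a$, $H$ and $g$ are $C^2$ on the compact torus, $m\ge c>0$, and $g>0$ is continuous on the torus, so $\log g$ is $C^2$.
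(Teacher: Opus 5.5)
Your proof is correct and is essentially the paper's argument. Writing $u=e^{H}g$ and using $\ms L_{H,\epsilon}g/g=\ms L_{H,\epsilon}(\log g)+\epsilon\,\mss a\,[(\log g)']^2$ is the same identity as the paper's completion of the square $\epsilon\,\mss a\,[G']^2-2\epsilon\,\mss a\,G'H'=\epsilon\,\mss a\,[(G-H)']^2-\epsilon\,\mss a\,[H']^2$ with $G=H+\log g$, and both rest only on the stationarity of $\mu$ for $\ms L_{H,\epsilon}$ from Lemma \ref{l02}; your explicit differentiation of \eqref{02} for the third equality just fills in a step the paper leaves implicit.
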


\begin{proof}
Fix $G\in C^2(\bb T)$. An elementary computation yields that
\begin{equation}
\label{26}
\frac{\ms L_\epsilon e^G}{e^{G}} \;=\;
\mss b \, G' \, +\, \epsilon\,  \mss a \, (G')^2
\,+\; \epsilon\, \mss a\, G''  \,.
\end{equation}
Adding and subtracting
$2\,\epsilon \, \mss a\, G' \, H'$, yields that
\begin{equation}
\label{05}
\frac{\ms L_\epsilon e^G}{e^{G}} \;=\;
\ms L_{H,\epsilon} G \;+\; \epsilon\, \mss a\, [G']^2 \;-\; 2\,\epsilon \, \mss a \, G'
\, H'\;.
\end{equation}
Completing the square, and since, by Lemma \ref{l02}, $\mu$ is the
stationary state of the diffusion induced by the generator $\ms L_{H,\epsilon}$,
\begin{equation*}
-\, \int_{\bb T} \frac{\ms L_\epsilon e^G}{e^{G}} \; d\mu
\;\le\; -\, \int_{\bb T} \ms L_{H,\epsilon}  G \; d\mu \;+\;
\epsilon \int_{\bb T} \mss a\, [H']^2  \; d\mu \;=\;
\epsilon \int_{\bb T} \mss a\, [H']^2  \; d\mu  \;.
\end{equation*}
Taking $G=H$ in equation \eqref{05} yields that
\begin{equation*}
-\, \int_{\bb T} \frac{\ms L_\epsilon e^H}{e^{H}} \; d\mu \;=\;
\epsilon \int_{\bb T} \mss a\, [H']^2  \; d\mu  \;,
\end{equation*}
which completes the proof of the lemma in view of the penultimate
displayed equation inequality.
\end{proof}

\begin{corollary}
\label{l05}
Let $\mu(dx) = m(x)\, dx \in\mss P(\bb T)$ be a measure absolutely
continuous with respect to the Lebesgue measure. Then, there exists a
sequence $\mu_\epsilon \in \mss P(\bb T)$, such that $\mu_\epsilon$
converges weakly to $\mu$ as $\epsilon \to 0$, and
\begin{equation*}
\limsup_{\epsilon\to 0} \epsilon\, \ms I_\epsilon (\mu_\epsilon) \;\le
\; \frac{1 }{4}\,  \Big\{ \, \int_{\bb T}  \frac{\mss b^2}{\mss a}\, d \mu
\;-\; \frac{1} { \<1/(\mss a\,m) \>  }\, B^2\,  \Big\} \;.
\end{equation*}
\end{corollary}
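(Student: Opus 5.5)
Starting from Lemma~\ref{l03}, I will build the recovery sequence by first regularizing the density and then letting $\epsilon\to0$ in the explicit formula. Fix $\mu(dx)=m(x)\,dx$. First I handle a smooth, strictly positive $m$. In that case the constant sequence $\mu_\epsilon=\mu$ should work. By Lemma~\ref{l03},
\begin{equation*}
\epsilon\,\ms I_\epsilon(\mu)=\frac14\int_{\bb T}\mss a\Big\{\epsilon\Big[\frac{m'}{m}+\frac{\mss a'}{\mss a}\Big]+\frac{B/(\mss a m)}{\<1/(\mss a m)\>}-\frac{\mss b}{\mss a}\Big\}^2 m\,dx .
\end{equation*}
Every term inside the braces is bounded uniformly in $\epsilon$, since $m\ge c>0$, $m\in C^2$, and $\mss a\ge c_0$. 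So as $\epsilon\to0$ the right-hand side converges to
\begin{equation*}
\frac14\int\mss a\Big(\frac{B}{\<1/(\mss a m)\>\,\mss a m}-\frac{\mss b}{\mss a}\Big)^2 m\,dx .
\end{equation*}
Expanding the square gives three contributions. The first is $\frac{B^2}{\<1/(\mss am)\>^2}\int\frac{1}{\mss a m}\,dx=\frac{B^2}{\<1/(\mss am)\>}$. The cross term is $-\frac{2B}{\<1/(\mss am)\>}\int\frac{\mss b}{\mss a}\,dx=-\frac{2B^2}{\<1/(\mss am)\>}$. The last is $\int\frac{\mss b^2}{\mss a}\,d\mu$. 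Adding these up gives exactly the claimed bound, with equality.

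Next I treat a general density $m\in L^1$, $m\ge0$. I regularize in two steps. Set $m_\delta=(1-\delta)\,(m*\phi_\delta)+\delta$, where $\phi_\delta$ is a smooth mollifier on $\bb T$. Then $m_\delta$ is smooth, $m_\delta\ge\delta$, $\int m_\delta=1$, and $m_\delta\,dx\to\mu$ weakly, in fact in $L^1$. By the first step,
\begin{equation*}
\limsup_{\epsilon\to0}\epsilon\,\ms I_\epsilon(m_\delta\,dx)=\frac14\Big\{\int\frac{\mss b^2}{\mss a}m_\delta\,dx-\frac{B^2}{\<1/(\mss a m_\delta)\>}\Big\}.
\end{equation*}
The plan is to show this right-hand side converges, as $\delta\to0$, to the target expression for $\mu$, and then extract a diagonal sequence $\delta(\epsilon)\to0$. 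Since weak convergence is metrizable on $\mss P(\bb T)$, the usual diagonal argument produces $\mu_\epsilon=m_{\delta(\epsilon)}\,dx$ with the stated $\limsup$ bound. The first term converges because $\mss b^2/\mss a$ is continuous.

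The main obstacle is the second term: I need $\<1/(\mss a m_\delta)\>\to\<1/(\mss a m)\>$, with the convention that the term vanishes when $1/m\notin L^1$. Since the term enters with a minus sign, it suffices to have
\begin{equation*}
\limsup_{\delta\to 0}\<1/(\mss a m_\delta)\>\le\<1/(\mss a m)\>.
\end{equation*}
If $\<1/(\mss a m)\>=\infty$ this is trivial. Otherwise I use convexity of $t\mapsto 1/t$ on $(0,\infty)$. Jensen applied to the mixture gives $1/m_\delta\le(1-\delta)/(m*\phi_\delta)+\delta/\delta$. Jensen applied to the convolution gives $1/(m*\phi_\delta)\le(1/m)*\phi_\delta$. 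The factor $1/\mss a$ does not commute with convolution, but it is continuous, so it is harmless: $(1/m)*\phi_\delta\to1/m$ in $L^1$ and $1/\mss a$ is uniformly continuous and bounded. This yields $\limsup_\delta\<1/(\mss a m_\delta)\>\le\<1/(\mss am)\>$, where the extra $+\delta$ contributes $O(1)$ weighted by $\delta$. One caveat is that the estimate $\delta/\delta=1$ multiplies a mixture weight $\delta$, so the contribution is really $\delta\cdot\int\frac{1}{\mss a\,\delta}\cdot$(weight) $=O(\delta)$, which vanishes. Combining these bounds gives the corollary.
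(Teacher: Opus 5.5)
Your proof is correct. Its skeleton matches the paper's: a constant sequence together with Lemma~\ref{l03} for smooth densities bounded below, then a regularization of a general density and a diagonal argument. The difference is in how the regularization is done.

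\textbf{The paper's route.} It regularizes in two stages.
\begin{itemize}
\item For $m\ge c>0$ it mollifies. The uniform lower bound $c$ makes $t\mapsto 1/t$ Lipschitz on the relevant range, so $\<1/(\mss a\, m_n)\>\to\<1/(\mss a\, m)\>$ is immediate.
\item For a general $m$ it uses $(m+\epsilon)/(1+\epsilon)$. There $\<1/(\mss a\, m_\epsilon)\>\to\<1/(\mss a\, m)\>$, also when the limit is infinite, by monotone and dominated convergence.
\end{itemize}
It thus obtains genuine convergence of the limiting functional, at the cost of two diagonal arguments.

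\textbf{Your route.} You use the single family $(1-\delta)(m*\phi_\delta)+\delta$, whose lower bound $\delta$ degenerates, so dominated convergence is not available. You replace it by the convexity bound $1/(m*\phi_\delta)\le(1/m)*\phi_\delta$. This only gives the one-sided estimate $\limsup_{\delta}\<1/(\mss a\, m_\delta)\>\le\<1/(\mss a\, m)\>$. That is exactly the direction the $\Gamma$-$\limsup$ needs, and you correctly note that the case $\<1/(\mss a\,m)\>=\infty$ is trivial.

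\textbf{A slip to fix.} The Jensen bound for the mixture should read $1/m_\delta\le(1-\delta)/(m*\phi_\delta)+\delta$: the weight $\delta$ sits on the constant $1$, so the second term is $\delta\cdot 1/1$, not $\delta/\delta$. As you wrote it, the extra term $\<1/\mss a\>$ would not vanish. With the correct bound the extra contribution is $\delta\<1/\mss a\>$, and your closing caveat becomes unnecessary. Simpler still, $m_\delta\ge(1-\delta)\,m*\phi_\delta$ gives directly
\[
\<1/(\mss a\, m_\delta)\>\le(1-\delta)^{-1}\<\mss a^{-1}\,(1/m)*\phi_\delta\>,
\]
and the right-hand side converges to $\<1/(\mss a\,m)\>$ because $(1/m)*\phi_\delta\to 1/m$ in $L^1$ and $1/\mss a$ is bounded.
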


\begin{proof}
If $\mu$ satisfies the assumptions of the previous lemma, consider the
constant sequence $\mu_\epsilon = \mu$. The assertion of the corollary
is a straightforward consequence of the formula for
$\ms I_\epsilon (\mu) $ presented in Lemma \ref{l03}.

Fix a measure $\mu(dx) = m(x)\, dx \in\mss P(\bb T)$ such that
$m(x) \ge c$ for all $x\in\bb T$ and some $c>0$. In this case,
smoothing the density $m$ we obtain a sequence $m_n$ satisfying the
hypotheses of Lemma \ref{l03} and such that $\mu_n(dx) = m_n(x) \, dx$
converges weakly to $\mu(dx)$. Clearly,
$\ms J (\mu_n) \to \ms J(\mu)$. A diagonal argument permits to
construct a sequence $\mu_\epsilon \in \mss P(\bb T)$ such that
$\mu_\epsilon \to \mu$ weakly as $\epsilon\to 0$, and
$\limsup_{\epsilon\to 0} \epsilon\, \ms I_\epsilon (\mu_\epsilon)
\;\le \ms J(\mu)$.

Finally, fix a measure $\mu(dx) = m(x)\, dx \in\mss P(\bb T)$. Let
$m_\epsilon (x) = (m(x) + \epsilon)/(1+\epsilon)$. Then,
$\mu_\epsilon (dx) = m_\epsilon (x)\, dx$ converges weakly to
$\mu(dx) $ as $\epsilon \to 0$ and
$\lim_{\epsilon\to 0} \ms J (\mu_\epsilon) = \ms J(\mu)$.
The same diagonal argument permits to conclude the proof.
\end{proof}

\subsection{Recovery sequence for singular measures}

Fix $x \in \bb T$, define the interval
${\color{blue}I_x}:=(x-1/4,\,x+1/4)$, and consider a function
$G\in C^2(\bb T)$ such that
\begin{align*}
 G(y) \,=\, 
 \frac{\Vert y-x\Vert^2}{2}\,,\,\, y\in I_x\,,
 \quad \quad  G(y) \,\geq\, \frac{1}{32}\,,\,\, y\in \bb T\setminus I_x\,.
\end{align*}
In particular, $G(y)=0$ if, and only if, $y=x$,
and $G'(x)=0$.  Let
\begin{align}
\label{06}
{\color{blue}\mu_\epsilon (dy)}
\, := \; \frac{e^{- G(y)/\epsilon}}{\<e^{-G/\epsilon}\>}\; dy  \, . 
\end{align}

\begin{lemma}
\label{l04}
Fix $x \in \bb T$, and consider the sequence of measures
$\mu_\epsilon $ given by \eqref{06}. Then, as $\epsilon\to 0$,
$\mu_\epsilon$ converges weakly to $\delta_x$, and
\begin{equation*}
\lim_{\epsilon\to 0} \epsilon\,
\ms I_\epsilon (\mu_\epsilon)
\;=\; \frac{1}{4} \, \frac{\mss b(x)^2}{\mss a(x)}\, .
\end{equation*}
\end{lemma}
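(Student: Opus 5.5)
The plan is to apply the explicit formula of Lemma~\ref{l03} to the smooth, strictly positive density
\[
m_\epsilon(y)=e^{-G(y)/\epsilon}/\<e^{-G/\epsilon}\>.
\]
The lemma applies because $G\in C^2(\bb T)$, so $m_\epsilon\in C^2(\bb T)$ and $m_\epsilon\ge c_\epsilon>0$.

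Weak convergence $\mu_\epsilon\to\delta_x$ is the standard Laplace argument. On $\bb T\setminus I_x$ we have $G\ge 1/32$, while $\<e^{-G/\epsilon}\>\ge c\sqrt{\epsilon}$ from the quadratic behaviour near $x$. Hence the mass outside any neighbourhood of $x$ is $O(\epsilon^{-1/2}e^{-\delta/\epsilon})$. Near $x$ the measure is a Gaussian of variance $\epsilon$ centred at $x$.

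Next we compute the terms in Lemma~\ref{l03}. Since $m_\epsilon'/m_\epsilon=-G'/\epsilon$, the bracket is
\[
\epsilon\Big[\frac{m_\epsilon'}{m_\epsilon}+\frac{\mss a'}{\mss a}\Big]+\frac{B/(\mss a m_\epsilon)}{\<1/(\mss a m_\epsilon)\>}-\frac{\mss b}{\mss a}
= -G'+\epsilon\frac{\mss a'}{\mss a}+\frac{B\,e^{G/\epsilon}}{\mss a\,\<e^{G/\epsilon}/\mss a\>}-\frac{\mss b}{\mss a}.
\]
Multiplying by $\epsilon$ and using Lemma~\ref{l03}, we get
\[
\epsilon\,\ms I_\epsilon(\mu_\epsilon)=\frac14\int_{\bb T}\mss a\,\{\cdots\}^2\,d\mu_\epsilon .
\]
The terms $\epsilon\,\mss a'/\mss a$ and $\mss b/\mss a$ are bounded and continuous. $G'$ is bounded and vanishes at $x$. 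Since $\mu_\epsilon\to\delta_x$, these terms contribute $\mss a(x)\,(\mss b(x)/\mss a(x))^2=\mss b(x)^2/\mss a(x)$ to the limit. Cross terms and the $G'$ contributions go to $0$, for example $\int\mss a\,(G')^2\,d\mu_\epsilon\to0$.

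The remaining, and delicate, term is the one involving $B$. Its square integrated against $\mu_\epsilon$ equals
\[
\int \mss a\,\frac{B^2 e^{2G/\epsilon}}{\mss a^2\,\<e^{G/\epsilon}/\mss a\>^2}\,\frac{e^{-G/\epsilon}}{\<e^{-G/\epsilon}\>}\,dy
=\frac{B^2}{\<e^{G/\epsilon}/\mss a\>\,\<e^{-G/\epsilon}\>}.
\]
Here $\<e^{G/\epsilon}/\mss a\>\ge c\,e^{1/(32\epsilon)}$, because $G\ge1/32$ on a set of positive measure. Also $\<e^{-G/\epsilon}\>\ge c\sqrt\epsilon$. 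So this term is $O(\epsilon^{-1/2}e^{-1/(32\epsilon)})\to0$. The associated cross terms vanish by Cauchy--Schwarz, since the other factors are bounded. This yields the limit $\tfrac14\,\mss b(x)^2/\mss a(x)$.

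The only real obstacle is controlling this $B$-term. Its integrand is exponentially large where $\mu_\epsilon$ is small, so one must not bound it pointwise. Instead, as above, the weights $e^{2G/\epsilon}$ and $e^{-G/\epsilon}$ cancel exactly to $e^{G/\epsilon}$ against the normalization, which makes the term exponentially small.
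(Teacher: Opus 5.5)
Your proposal is correct and follows the same route as the paper. You apply Lemma \ref{l03} to the density $m_\epsilon$, expand the square, and use the weak convergence $\mu_\epsilon\to\delta_x$ to isolate $\tfrac14\int \mss b^2/\mss a\,d\mu_\epsilon$ as the only surviving term. The paper merely asserts that the other terms vanish, and your argument correctly fills in that step: you compute the squared $B$-term exactly as $B^2/(\<e^{G/\epsilon}/\mss a\>\,\<e^{-G/\epsilon}\>)$, show it is exponentially small, and handle its cross terms by Cauchy--Schwarz.
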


\begin{proof}
For every $\phi\in C(\bb T)$ and $\delta \in (0,1)$,
\begin{equation*}
\left|\langle \mu_\epsilon ,\phi\rangle -\langle \delta_x,\phi\rangle
\right|
\;\le\; \sup_{\substack{y\in \bb T\\
\Vert y-x\Vert<\delta}}\left|\phi(y)-\phi(x)\right| \;+\;
2\,\|\phi\|_\infty\, \frac{1}{\<e^{-G/\epsilon}\>}  \, \int_{\Vert y- x\Vert>\delta}
e^{- G(y)/\epsilon} \, dy\;,
\end{equation*}
which, by the continuity of $\phi$ at $x\in \bb T$ and the quadratic
behavior of $G$ at $x$, vanishes taking first $\epsilon\to 0$, and
then $\delta \to 0$. This proves that $\mu_\epsilon $ converges to
$\delta_x$ as $\epsilon\to 0$.

On the other hand, by Lemma \ref{l03},
\begin{equation*}
\epsilon\, \ms I_\epsilon (\mu_\epsilon) \;=\;
\frac{1 }{4}\, \int_{\bb T} \mss a\,\Big\{ \, \epsilon\,
\Big[\frac{m'_\epsilon}{m_\epsilon}
\,+\, \frac{\mss a'}{\mss a}\Big]
\,+\, \frac{B \,/\, (\mss a\,m_\epsilon)} {\<1/(\mss a\,m_\epsilon)\>}
\,-\, \frac{\mss b}{\mss a}  \, \Big\}^2 \, d \mu_\epsilon\;,
\end{equation*}
where $m_\epsilon \,=\, e^{- G/\epsilon}/\<e^{-G/\epsilon}\>$. Expand
the square. It is not difficult to show that all terms vanish as
$\epsilon\to 0$, except
$(1/4) \, \int_{\bb T} \mss b^2\,/\,\mss a \, d \mu_\epsilon$ which
converges to $\mss b(x)^2\,/4\,\mss a(x)$ since $\mu_\epsilon$
weakly converges to $\delta_x$.
\end{proof}

\begin{corollary}
\label{l06}
Fix $\mu \in \mss P(\bb T)$. Then, there exists a
sequence $\mu_\epsilon \in \mss P(\bb T)$, such that $\mu_\epsilon$
converges weakly to $\mu$ as $\epsilon \to 0$, and
\begin{equation*}
\limsup_{\epsilon\to 0} \epsilon\, \ms I_\epsilon (\mu_\epsilon) \;\le
\; \frac{1 }{4}\,  \int_{\bb T} \frac{\mss b^2}{\mss a}  \, d \mu \;,
\end{equation*}
\end{corollary}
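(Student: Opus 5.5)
The plan is to combine Lemma \ref{l04} with the convexity of $\ms I_\epsilon$, and then use a diagonal argument. $\ms I_\epsilon$ is a supremum of functionals $\mu\mapsto -\int (\ms L_\epsilon u/u)\,d\mu$, each linear in $\mu$, so it is convex. It is also lower semicontinuous for the weak topology.

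\emph{Step 1: finite convex combinations of Dirac masses.} Let $\mu=\sum_{i=1}^n w_i\delta_{x_i}$. For each $i$, Lemma \ref{l04} gives a sequence $\mu^{i}_\epsilon$ with $\mu^i_\epsilon\to\delta_{x_i}$ and $\epsilon\,\ms I_\epsilon(\mu^i_\epsilon)\to \mss b(x_i)^2/(4\mss a(x_i))$. Set $\mu_\epsilon=\sum_i w_i\mu^i_\epsilon$. Then $\mu_\epsilon\to\mu$ weakly, and convexity gives
\begin{equation*}
\limsup_{\epsilon\to0}\epsilon\,\ms I_\epsilon(\mu_\epsilon)\le \sum_i w_i \frac{\mss b(x_i)^2}{4\,\mss a(x_i)} = \frac14\int_{\bb T}\frac{\mss b^2}{\mss a}\,d\mu .
\end{equation*}

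\emph{Step 2: general measures.} Fix $\mu\in\mss P(\bb T)$. Choose finitely supported probability measures $\nu_n\to\mu$ weakly; for example, partition $\bb T$ into intervals of length $1/n$ and place the $\mu$-mass of each interval at its left endpoint. The function $\mss b^2/\mss a$ is continuous, so
\begin{equation*}
\int\frac{\mss b^2}{\mss a}\,d\nu_n\to\int\frac{\mss b^2}{\mss a}\,d\mu .
\end{equation*}
Step 1 gives, for each $n$, a sequence $\nu_{n,\epsilon}\to\nu_n$ with $\limsup_\epsilon \epsilon\,\ms I_\epsilon(\nu_{n,\epsilon})\le \frac14\int \frac{\mss b^2}{\mss a}\,d\nu_n$.

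Fix a metric $d$ metrizing the weak topology on $\mss P(\bb T)$. Choose $\epsilon_n\downarrow0$ such that for $\epsilon<\epsilon_n$ both
\begin{equation*}
d(\nu_{n,\epsilon},\nu_n)<1/n \quad\text{and}\quad \epsilon\,\ms I_\epsilon(\nu_{n,\epsilon})\le \frac14\int\frac{\mss b^2}{\mss a}\,d\nu_n+\frac1n .
\end{equation*}
For $\epsilon\in[\epsilon_{n+1},\epsilon_n)$ set $\mu_\epsilon=\nu_{n,\epsilon}$. Then $\mu_\epsilon\to\mu$ and the $\limsup$ bound of the corollary holds.

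The only delicate point is this diagonal bookkeeping, and it is routine. All the analytic work sits in Lemma \ref{l04}, which is already available.
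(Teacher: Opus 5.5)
Your proposal is correct and follows the same route as the paper: Lemma \ref{l04} provides recovery sequences for Dirac masses, convexity of $\ms I_\epsilon$ and linearity of $\mu\mapsto\frac14\int \mss b^2/\mss a\,d\mu$ handle finite convex combinations, and a diagonal argument along finitely supported approximations of $\mu$ finishes the proof. Your explicit choice of the thresholds $\epsilon_n$ simply writes out the diagonal step that the paper leaves implicit.
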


\begin{proof}
Fix $\mu \in \mss P(\bb T)$, and let $\ms J_0\colon \mss P(\bb T) \to \bb
R_+$ be given by
$\ms J_0(\mu) = (1/4)\, \int_{\bb T} \mss b^2\,/\,\mss a \, d \mu$.

By Lemma \ref{l04}, the assertion of the corollary holds for Dirac
measures.  Consider a sequence $\mu_n$ of finite convex combinations
of Dirac measures which converges weakly to $\mu$. Since $\mss b$ and
$\mss a$ are continuous, $\ms J_0(\mu_n) \to \ms J_0(\mu)$. On the
other hand, by the convexity of $\ms I_\epsilon$, the linearity of
$\ms J_0$ and Lemma \ref{l04},
\begin{equation*}
\limsup_{\epsilon\to 0} \epsilon\, \ms I_\epsilon (\mu_n) \;\le
\; \ms J_0(\mu_n) 
\end{equation*}
for all $n\ge 1$. A diagonal argument provides a sequence which
satisfies the assertions of the corollary.
\end{proof}

Observe that the previous corollary provides a recovery sequence and
the $\Gamma-\limsup$ for singular measures.

\subsection*{$\Gamma$-convergence of $\epsilon\, \ms I_\epsilon$}
\label{sec3.1}

In this subsection, we prove the $\Gamma$-convergence of the action
functional $\epsilon\, \ms I_\epsilon$, as stated in Proposition
\ref{l01}.

\begin{proof}[Proof of Proposition \ref{l01}]
We prove the $\Gamma-\liminf$ and $\Gamma-\limsup$ inequalities
separately.
	
\smallskip\noindent \emph{$\Gamma-\liminf$}.  Fix
$\mu \in \mss P(\bb T)$, and let $\mu_\epsilon \in \mss P(\bb T)$ be a
sequence such that $\mu_\epsilon \to\mu$.  Let
$u_\epsilon:= e^{F/\epsilon}$ for some $F\in C^2( \bb T)$. By
\eqref{26}, by the definition of $\ms I_\epsilon$ and since $F$ belongs to
$C^2( \bb T)$,
\begin{equation*}
\epsilon\, \ms I_\epsilon (\mu_\epsilon) \;\ge\; \epsilon\,
\int_{\bb T} -\frac{\ms L_\epsilon u_\epsilon} {u_\epsilon} \;
d\mu_\epsilon \;=\; -\,
\int_{\bb T}  \big(\, \mss b + \mss a\, F'\,\big) \, F' \;
d\mu_\epsilon \;+\; O(\epsilon)\;.
\end{equation*}
Therefore, as $\mu_\epsilon\to \mu$, and $\mss a, \mss b$ and $F'$ are
continuous functions,
\begin{equation*}
\liminf_{\epsilon\to 0} \epsilon\, \ms I_\epsilon(\mu_\epsilon)
\;\ge\; -\, \int_{\bb T} \big(\, \mss b + \mss a\, F'\,\big) \, F' \; d\mu\;.
\end{equation*}

Fix a function $h$ in $L^2(\mu) \cap L^1(dx)$, such that $h\geq 0$,
$\<h/\mss a\>>0$, and let
$H = (1/2) \, \big(\frac{\mss b}{\mss a} - \frac{B\,h/\mss a}{\<
h\,/\,\mss a\>}\,\big)$. Note that $\< H \> =0$. Consider a sequence
$F_n$ of $C^2(\bb T)$ functions such that $F'_n$ converges to $-H$ in
$L^2(\mu)$ to conclude that
\begin{equation}
\label{prop:scale0:eq1}
\liminf_{\epsilon\to 0} \epsilon\, \ms I_\epsilon(\mu_\epsilon)
\;\ge\;  \frac{1}{4} \, \Big\{ \, \int_{\bb T} \frac{\mss b^2}{\mss a} \; d\mu
\;-\; \frac{\int_{\bb T} h^2 / \mss a \; d\mu}{\< h / \mss a\>^2}
\, B^2 \,  \Big\} \;\cdot
\end{equation}
To complete the proof, it remains to optimize in $h$. This is done in
the next paragraph.

Recall that we denote by $\mu_{\rm ac}$, $\mu_{\rm s}$ the absolutely
continuous and the singular part, respectively, of the measure $\mu$
with respect to the Lebesgue measure $\lambda$, and by $m_\mu$ the
Radon-Nikodym derivative of $\mu_{\rm ac}$ with respect to $\lambda$.

Since $\mu_{\rm s}$ is the singular part, there exists a measurable
set $A$ such that $\mu_{\rm s} (A) = 0$ and $\lambda (A) = 1$.  Let
$h_\epsilon = \chi_{A} \, (1+\epsilon)/(m_\mu + \epsilon)$. It is easy
to see that the function $h_\epsilon$ satisfies the hypothesis needed
to derive \eqref{prop:scale0:eq1}. By definition of $A$,
\begin{equation*}
\frac{\int_{\bb T} h_\epsilon^2 / \mss a \; d\mu}{
\< h_\epsilon / \mss a\>^2}
\;=\; \frac{\<\,m_{\mu}/(\,\mss a\,[m_{\mu}+\epsilon]^2\,)\,\>}
{\<\,1/(\,\mss a\,[m_{\mu}+\epsilon]\,)\,\>^2} \;\cdot
\end{equation*}
If $m_\mu^{-1}$ is Lebesgue integrable, by the monotone convergence
theorem, this expression converges to
$1\,/\,\< 1/\, \mss a \,m_\mu\>$. If it is not, observe that it is
bounded by
\begin{equation*}
\frac{\<1/(\,\mss a\,[m_{\mu}+\epsilon]\,)\,\>}
{\<\,1/(\,\mss a\,[m_{\mu}+\epsilon]\,)\,\>^2}
\;=\;  \frac{1}
{\<1/(\,\mss a\,[m_{\mu}+\epsilon]\,)\,\>} \;\to\;
0\;=\; \frac{1}
{ \< 1/\mss a\, m_\mu\>} \;\cdot
\end{equation*}
Thus, in both cases, the expression on the right-hand side of the
penultimate formula converges to $1/\< 1/\mss a m_\mu\>$ as
$\epsilon\to 0$. This shows that
\begin{equation*}
\liminf_{n\to \infty} \epsilon\, \ms I_\epsilon(\mu_\epsilon)
\;\ge\;  \frac{1}{4} \, \Big\{ \, \int_{\bb T} \frac{\mss b^2}{\mss a} \; d\mu
\;-\; \frac{1}{\< 1/\mss a m_\mu\>} \, B^2 \,  \Big\} \;,
\end{equation*}
and completes the proof of the $\Gamma-\liminf$.

\smallskip\noindent \emph{$\Gamma-\limsup$}.  Fix
$\mu\in \mss P(\bb T)$, and recall that we denote by $\mu_{\rm ac}$,
$\mu_{\rm s}$ the absolutely continuous and the singular part,
respectively, of the measure $\mu$ with respect to the Lebesgue
measure. Let $r = \mu_{\rm ac} (\bb T)$ to rewrite $\mu$ as
$\mu = r \mu^*_{\rm ac} + (1-r) \mu^*_{\rm s}$, where
$\mu^*_{\rm ac} = (1/r) \mu_{\rm ac} \in \mss P(\bb T)$,
$\mu^*_{\rm s} = [1/(1-r)] \mu_{\rm s} \in \mss P(\bb T)$.
We assume here that $0<r<1$, but the argument is simpler if $r(1-r)
=0$. 

Denote by $\mu_\epsilon$ the measure given by Corollary \ref{l05}
associated to $\mu^*_{\rm ac}$, and by $\nu_\epsilon$ the one given by
Corollary \ref{l06} associated to $\mu^*_{\rm s}$. By these results,
as $\epsilon \to 0$,
$\varrho_\epsilon = r \mu_\epsilon + (1-r) \nu_\epsilon$ converges
weakly to $r \mu^*_{\rm ac} + (1-r) \mu^*_{\rm s} = \mu$. On the other
hand, by convexity of $\ms I_\epsilon$,
$\ms I_\epsilon (\varrho_\epsilon) \le r\, \ms I_\epsilon (
\mu_\epsilon) + (1-r)\, \ms I_\epsilon (\nu_\epsilon)$.  Hence, by
Corollaries \ref{l05} and \ref{l06},
\begin{equation*}
\begin{aligned}
& \limsup_{n\to \infty} \epsilon\, \ms I_\epsilon(\varrho_\epsilon)
\;\le\;
\frac{r}{4} \, \Big\{ \, \int_{\bb T} \frac{\mss b^2}{\mss a} \; d\mu^*_{\rm ac}
\;-\; \frac{1}{\< 1/\mss a m_{\mu^*_{\rm ac}}\>} \, B^2 \,  \Big\}
\;+\; \frac{1-r}{4} \, \int_{\bb T} \frac{\mss b^2}{\mss a} \; d\mu^*_{\rm s} \\
&\qquad =\;
\frac{1}{4} \, \Big\{ \, \int_{\bb T} \frac{\mss b^2}{\mss a} \; d\mu
\;-\; \frac{1}{\< 1/r \mss a m_{\mu^*_{\rm ac}}\>} \, B^2 \,  \Big\}
\; =\;
\frac{1}{4} \, \Big\{ \, \int_{\bb T} \frac{\mss b^2}{\mss a} \; d\mu
\;-\; \frac{1}{\< 1/\mss a m_{\mu}\>} \, B^2 \,  \Big\}
\;.
\end{aligned}
\end{equation*}
This establishes the $\Gamma-\limsup$. Lemma \ref{shortest:scale}
below completes the proof of the proposition.
\end{proof}

We conclude this section showing that $\epsilon$ is the shortest
time-scale at which the rate functional $\ms J_\epsilon$ exhibits a
nontrivial limit in the sense of $\Gamma-$convergence.

\begin{lemma}\label{shortest:scale}
Let $(\beta_\epsilon: \epsilon>0)$ be any positive sequence such that
$\beta_\epsilon \prec \epsilon$. Then, $\beta_\epsilon \ms I_\epsilon$
$\Gamma-$converges to $0$.
\end{lemma}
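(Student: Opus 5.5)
The argument has two halves. The $\Gamma$-liminf is immediate: $\ms I_\epsilon\ge 0$ (take $u\equiv 1$ in the variational formula), so $\liminf_{\epsilon\to0}\beta_\epsilon\,\ms I_\epsilon(\mu_\epsilon)\ge 0$ for every sequence $\mu_\epsilon\to\mu$. All the work is in the $\Gamma$-limsup. For each $\mu\in\mss P(\bb T)$ we need a sequence $\mu_\epsilon\to\mu$ with $\beta_\epsilon\,\ms I_\epsilon(\mu_\epsilon)\to 0$.

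We reuse the recovery sequence already built in the proof of Proposition \ref{l01}. That proof gives $\varrho_\epsilon\to\mu$ with
\begin{equation*}
\limsup_{\epsilon\to 0}\epsilon\,\ms I_\epsilon(\varrho_\epsilon)\;\le\;\ms J^{(-1)}(\mu)\;\le\;\frac14\,\Big\|\frac{\mss b^2}{\mss a}\Big\|_\infty\;<\;\infty .
\end{equation*}
So there is a constant $C$ with $\epsilon\,\ms I_\epsilon(\varrho_\epsilon)\le C$ for all small $\epsilon$. Then
\begin{equation*}
\beta_\epsilon\,\ms I_\epsilon(\varrho_\epsilon)\;=\;\frac{\beta_\epsilon}{\epsilon}\,\epsilon\,\ms I_\epsilon(\varrho_\epsilon)\;\le\;C\,\frac{\beta_\epsilon}{\epsilon}\;\longrightarrow\;0,
\end{equation*}
because $\beta_\epsilon\prec\epsilon$. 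This is exactly the $\Gamma$-limsup with limit functional $0$.

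The one point needing care is that the constructions in Corollaries \ref{l05} and \ref{l06} produce a genuine family indexed by $\epsilon$, not merely a limsup along subsequences. Both corollaries obtain $\varrho_\epsilon$ by a diagonal argument. For $\epsilon\,\ms I_\epsilon(\varrho_\epsilon)$ to be eventually bounded, the diagonal choice must be made so that the limsup bound holds for the whole family. This is what the diagonal argument provides: a family $\varrho_\epsilon\to\mu$ whose limsup as $\epsilon\to0$ is at most $\ms J^{(-1)}(\mu)$. Finiteness of that limsup already implies $\sup_{\epsilon<\epsilon_0}\epsilon\,\ms I_\epsilon(\varrho_\epsilon)<\infty$ for some $\epsilon_0>0$, which is all that is used.

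If one prefers not to rely on the diagonal construction, an explicit alternative is available. First treat finite convex combinations of Dirac masses, using the fixed Gaussian-type densities of Lemma \ref{l04} and the convexity of $\ms I_\epsilon$. Then pass to general $\mu$ by a diagonal argument in which the $n$-th approximant is used only for $\epsilon$ so small that $\beta_\epsilon/\epsilon<1/n$. I expect no real obstacle beyond this bookkeeping.
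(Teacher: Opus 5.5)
Your proof is correct and follows the paper's argument: for the $\Gamma$-limsup you reuse the recovery sequence of Proposition~\ref{l01} and combine the finiteness of $\ms J^{(-1)}(\mu)$ with $\beta_\epsilon/\epsilon\to 0$, exactly as the paper does. For the $\Gamma$-liminf the paper also invokes Proposition~\ref{l01}, but your direct observation that $\ms I_\epsilon\ge 0$ (take $u\equiv 1$) gives the same trivial bound more cleanly.
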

\begin{proof}
Fix a sequence $\beta_\epsilon$ such that
$\beta_\epsilon\prec \epsilon$. We prove the $\Gamma-\liminf$ and
$\Gamma-\limsup$ inequalities separately.

\smallskip\noindent \emph{$\Gamma-\liminf$.} Fix $\mu\in \mss P(\bb T)$
and let $\mu_\epsilon\in \mss P(\bb T)$ be a sequence such that
$\mu_\epsilon$ converges weakly to $\mu$. By \eqref{1st_rate_f},
$\ms J^{(-1)}(\mu)<+\infty$. Then, by Proposition \ref{l01} and the
definition of $\Gamma-$convergence,
\begin{equation*}
\liminf_{\epsilon\to 0}\beta_\epsilon \,\ms I(\mu_\epsilon)
\,\geq\, \liminf_{\epsilon\to 0}
\frac{\beta_\epsilon}{\epsilon}\,\ms J^{(-1)}(\mu) \,=\, 0\,.
\end{equation*}

\smallskip\noindent \emph{$\Gamma-\limsup$.} Fix
$\mu\in \mss P(\bb T)$. By Proposition \ref{l01}, choose a sequence
$\mu_\epsilon\in \mss P(\bb T)$ such that $\mu_\epsilon$ converges
weakly to $\mu$ and
\begin{equation*}
\limsup_{\epsilon \to 0}\epsilon\ms I_{\epsilon}(\mu_\epsilon)
\,\leq \, \ms J^{(-1)}(\mu). 
\end{equation*}
Then, since $\beta_\epsilon \prec \epsilon$,
\begin{equation*}
\limsup_{\epsilon \to 0}\beta_\epsilon\ms I_\epsilon(\mu_\epsilon)
\leq \limsup_{\epsilon\to 0}\frac{\beta_\epsilon}{\epsilon}\ms J^{(-1)}(\mu) = 0,  
\end{equation*}
given that $\ms J^{(-1)}(\mu)< +\infty$.
\end{proof}

\section{Proof of Proposition \ref{s04}}
\label{sec0004}

In this section we establish Proposition \ref{s04}. The section is divided in two main parts.  In the first one, we address the
$\Gamma$–$\liminf$ of the functional $\ms I_\epsilon$. This is
Proposition \ref{p4.1}, whose proof is developed in three steps. In
Lemma \ref{s01}, we show that the zero-level set of the functional
$\ms J^{(-1)}$ consists of convex combinations of Dirac measures
supported on $\mc C$, the set of critical points. This observation
allows us to restrict our analysis to this convex hull. A key
ingredient in the proof of the $\Gamma-\liminf$ is the introduction of
local versions of the potential $S$ defined via cut-off
functions. However, this approach raises the technical challenge of
obtaining suitable estimates on the intervals containing the supports
of the derivatives of these cut-offs. Lemma \ref{s02} provides the
necessary bounds to overcome this difficulty. We conclude this part by
assembling all the intermediate results to complete the proof of the
$\Gamma$–$\liminf$ of $\ms I_\epsilon$.

In the second part of the section, we turn to the $\Gamma$–$\limsup$
of $\ms I_\epsilon$, as stated in Proposition \ref{p4_limsup}. The
main step is Lemma \ref{l4_limsup}, which constructs a recovery
sequence for the $\Gamma$–$\limsup$ corresponding to Dirac measures
supported on $\mc C$. By the convexity of $\ms I_\epsilon$, this
suffices to prove Proposition \ref{p4_limsup}. As in the previous
subsection, we verify that the constructed sequence is indeed a
recovery sequence by applying the formula from Lemma \ref{l03} to
estimate $\ms I_\epsilon$.

\subsection*{Second order $\Gamma-\liminf$}
\label{sec4.1}

The main result of this subsection reads as follows.

\begin{proposition}
\label{p4.1}   
Assume that condition {\rm (H)} is in force. Then, the functional
$\ms J^{(0)}$ introduced in \eqref{2nd_rate_f} is a $\Gamma-\liminf$ of
the sequence of rate functionals $\ms I_\epsilon$.
\end{proposition}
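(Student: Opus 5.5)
Fix $\mu\in\mss P(\bb T)$ and a sequence $\mu_\epsilon\to\mu$ weakly; we must show $\liminf_{\epsilon\to0}\ms I_\epsilon(\mu_\epsilon)\ge \ms J^{(0)}(\mu)$. First we reduce to measures supported on $\mc C$. If $\liminf \ms I_\epsilon(\mu_\epsilon)<\infty$, then along a subsequence $\epsilon\,\ms I_\epsilon(\mu_\epsilon)\to 0$. The $\Gamma-\liminf$ of Proposition \ref{l01} then gives $\ms J^{(-1)}(\mu)=0$. By the characterization of the zero-level set of $\ms J^{(-1)}$ (to be recorded as a lemma: $\int \mss b^2/\mss a\,d\mu \ge B^2/\<1/(\mss a m_\mu)\>$ by Cauchy--Schwarz, with equality forcing $\mu_{\rm ac}=0$ and $\mu$ charging only zeros of $\mss b$), $\mu=\sum_{z\in\mc C} w(z)\delta_z$. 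Otherwise $\ms J^{(0)}(\mu)=+\infty$ and the bound is trivial. It then suffices to show $\liminf \ms I_\epsilon(\mu_\epsilon)\ge \sum_{z\in \mc W} w(z)\,\mss b'(z)$.

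For the lower bound we use the variational formula with a test function $u=e^{F_\epsilon}$ adapted to each critical point. The key computation \eqref{26} gives $-\ms L_\epsilon e^G/e^G = -\mss b G'-\epsilon\mss a (G')^2-\epsilon \mss a G''$. We choose $G$ at scale $1/\epsilon$: near an unstable point $\sigma$ take $G=-S_\sigma/(2\epsilon)$, where $S_\sigma$ is the local potential with $S_\sigma'=-\mss b/\mss a$. Then the quadratic terms give $-\mss b G'-\epsilon\mss a(G')^2=\mss b^2/(4\epsilon\mss a)\ge 0$, and the second-order term gives $-\epsilon\mss a G''=\tfrac12\mss a S_\sigma''=-\tfrac12\mss b'+\tfrac12 \mss b\mss a'/\mss a$. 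That yields only $\tfrac12\mss b'(\sigma)$ at $\sigma$, so we optimize further: take $G=-\lambda S_\sigma/\epsilon$ with $\lambda\in(0,1)$. Then $-\mss bG'-\epsilon\mss a(G')^2 = \lambda(1-\lambda)\mss b^2/(\epsilon\mss a)\ge0$, and the second-order term at $\sigma$ is $\lambda\mss b'(\sigma)$. Letting $\lambda\uparrow1$ gives $\mss b'(\sigma)$. At stable points $m$ take $G=0$ locally (or $\lambda\downarrow 0$), giving contribution $\ge0$.

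Globally, define $F_\epsilon=\sum_{z\in\mc W}\chi_z\,(-\lambda S_z/\epsilon)$, with smooth cut-offs $\chi_z$ equal to one on a $\delta$-neighborhood of $z$ and supported in disjoint small neighbourhoods. Then $-\ms L_\epsilon e^{F_\epsilon}/e^{F_\epsilon}\ge \lambda\mss b'(\sigma)-o_\delta(1)$ near each $\sigma$, and $\ge0$ away from the supports of $\chi_z'$. On the transition annuli, where $\chi_z'\neq0$, error terms of order $1/\epsilon$ appear, for instance $-\mss b\chi' S/\epsilon$ and $\epsilon\mss a (F')^2$ cross terms. This is the main obstacle, and is the role of Lemma \ref{s02}: since $\mss b$ is bounded away from $0$ there, the positive term $\lambda(1-\lambda)\chi^2\mss b^2/(\epsilon\mss a)$ must dominate. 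To arrange this, I would normalize $S_z(z)=0$ and choose $\chi$ so that the bad terms $\chi'(\ldots)S_z$ are small relative to $\mss b^2$ (the annulus lies where $|S_z|\lesssim\delta^2$, while $\mss b^2\gtrsim\delta^2$). If this fails, I would fall back on a crude bound $-\ms L_\epsilon u/u\ge -C/\epsilon$ on the annuli, combined with the fact that $\mu_\epsilon(\text{annuli})\to0$. That fact is not free: it needs $\epsilon\,\ms I_\epsilon(\mu_\epsilon)$ controlling the mass in the annuli, i.e. a quantitative version of Proposition \ref{l01} bounding $\mu_\epsilon(\{|\mss b|>\eta\})$ by $C\epsilon\,\ms I_\epsilon(\mu_\epsilon)+o(1)$, which is too weak. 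So the first approach, a pointwise lower bound, is preferred.

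Integrating against $\mu_\epsilon$, using $\ms I_\epsilon(\mu_\epsilon)\ge-\int \ms L_\epsilon u/u\,d\mu_\epsilon$ and weak convergence to $\sum w(z)\delta_z$, we get $\liminf\ms I_\epsilon(\mu_\epsilon)\ge\lambda\sum_{\sigma\in\mc W}w(\sigma)\mss b'(\sigma)-o_\delta(1)$. Letting $\delta\to0$ and $\lambda\uparrow1$ completes the proof.
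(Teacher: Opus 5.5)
Your reduction to $\mu=\sum_{z\in\mc C}w(z)\delta_z$ and your local computation are fine, up to a sign. With $S_\sigma'=-\mss b/\mss a$ you need $G=+\lambda S_\sigma/\epsilon$. As written, $G=-\lambda S_\sigma/\epsilon$ gives quadratic part $-\lambda(1+\lambda)\mss b^2/(\epsilon\mss a)$ and Hessian contribution $-\lambda\mss b'(\sigma)$. Also, $\lambda=1$ is admissible; it is what the paper uses ($u=e^{F/\epsilon}$ with $F=S$ minus a constant near $\sigma$).

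\textbf{The gap.} It is on the transition annuli, which you correctly flag as the crux but do not resolve. Write $G=\lambda g/\epsilon$ with $g=\chi S_\sigma$, independent of $\epsilon$. The $\epsilon^{-1}$ part of $-\ms L_\epsilon e^G/e^G$ is $-\epsilon^{-1}\mss a\,\lambda g'\,(\mss b/\mss a+\lambda g')$. A pointwise lower bound uniform in $\epsilon$ therefore forces $\lambda g'$ to lie between $0$ and $-\mss b/\mss a$ at every point. To the right of $\sigma$, where $\mss b>0$, this says $g$ is nonincreasing. With your normalization $S_\sigma(\sigma)=0$, you have $g(\sigma)=0$, $g\le 0$ on the support of $\chi$, and $g=0$ at its right end. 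Hence $g\equiv 0$ to the right of $\sigma$, contradicting $g=S_\sigma<0$ where $\chi=1$.

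Concretely, the good term $\lambda\chi(1-\lambda\chi)\mss b^2/(\epsilon\mss a)$ degenerates where $\chi\to0$. The cross term $-\lambda(1-2\lambda\chi)\chi' S_\sigma\mss b/\epsilon$ is negative on both sides of $\sigma$ and does not degenerate, since $|\chi'|/\chi$ is unbounded near the edge of the support. So matching orders of magnitude ($\delta^2$ against $\delta^2$) cannot save it. Your fallback fails for the reason you give: Lemma \ref{s02} only yields $\mu_\epsilon$-mass $O(\epsilon)$ off $\mc C$, against errors of size $\epsilon^{-1}$.

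\textbf{A fix: localize $F'$ instead of $F$.} Take $F\in C^2(\bb T)$ with $F'=-\theta\,\mss b/\mss a$, where $\theta\in C^1(\bb T)$ satisfies:
\begin{itemize}
\item $0\le\theta\le 1$, with $\theta=1$ near each $\sigma\in\mc W$ and $\theta=0$ near each $m\in\mc M$;
\item $\int_{\bb T}\theta\,\mss b/\mss a\,dx=0$, so that $F$ is periodic. You can arrange this by tuning the tails of $\theta$ on the two sides of each $\sigma$, where $\mss b$ has opposite signs.
\end{itemize}
For $u=e^{F/\epsilon}$, \eqref{26} gives $-\ms L_\epsilon u/u=\epsilon^{-1}\theta(1-\theta)\mss b^2/\mss a+\mss a\,(\theta\,\mss b/\mss a)'\ge \mss a\,(\theta\,\mss b/\mss a)'$. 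The right-hand side is a fixed continuous function whose value at each $z\in\mc C$ is $\theta(z)\mss b'(z)$. Weak convergence then gives $\liminf_{\epsilon\to0}\ms I_\epsilon(\mu_\epsilon)\ge\sum_{\sigma\in\mc W}w(\sigma)\mss b'(\sigma)$ directly, without Lemma \ref{s02}. This is the same mechanism as in the proof of Lemma \ref{s02}, where $F'$ is kept between $-\mss b/\mss a$ and $0$.

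\textbf{The paper's route.} The paper instead keeps $F=\Phi_{\sigma,\epsilon}(S-c_{\sigma,\epsilon})$ and makes the annuli $\epsilon$-dependent. Using Lemma \ref{s02} and a pigeonhole argument over $\lceil 1/\epsilon\rceil$ level bands of $S$, it chooses layers of width $\Theta(\epsilon)$ carrying $\mu_\epsilon$-mass $O(\epsilon^2)$. It takes $c_{\sigma,\epsilon}$ in that band, so $|S-c_{\sigma,\epsilon}|=O(\epsilon)$ on the layers. The $O(\epsilon^{-1})$ and $O(\epsilon^{-2})\cdot O(\epsilon)$ errors then integrate to $O(\epsilon)$. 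This refinement is exactly what your fallback was missing.
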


The proof requires some preliminary estimates. We first show that the
$0$-level set of the functional $\ms J^{(-1)}(\cdot)$ is exactly the set of all convex
combinations of the Dirac measures $\delta_x$, $x\in \mc C$.

\begin{lemma}
\label{s01}
A probability measure $\mu \in \mss P(\bb T)$ belongs to the $0$-level
set of the functional $\ms J^{(-1)}(\cdot)$ if, and only if,
$\mu(\cdot) = \sum_{z\in \mc C}\omega(z)\, \delta_z(\cdot)$ for some
$\omega\in \mss P(\mc C)$.
\end{lemma}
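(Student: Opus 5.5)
The plan is to prove the two implications separately. The only analytic input is a Cauchy--Schwarz inequality that relates the two terms in \eqref{1st_rate_f}.

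\emph{Sufficiency.} Let $\mu=\sum_{z\in\mc C}\omega(z)\delta_z$. Since $\mss b$ vanishes on $\mc C$, we get $\int \mss b^2/\mss a\,d\mu=0$. Moreover $\mu$ is purely atomic, so $\mu_{\rm ac}=0$ and $m_\mu=0$ Lebesgue-a.e. Hence $1/(\mss a m_\mu)=+\infty$ a.e., the integral $\<1/(\mss a m_\mu)\>$ is infinite, and by the convention stated after \eqref{1st_rate_f} the second term vanishes. Therefore $\ms J^{(-1)}(\mu)=0$.

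\emph{Necessity.} This rests on the inequality
\begin{equation*}
\frac{B^2}{\<1/(\mss a m_\mu)\>}\;\le\;\int_{\bb T}\frac{\mss b^2}{\mss a}\,m_\mu\,dx\;\le\;\int_{\bb T}\frac{\mss b^2}{\mss a}\,d\mu .
\end{equation*}
The second inequality holds because $\mu\ge \mu_{\rm ac}$. For the first, assume $\<1/(\mss a m_\mu)\><\infty$, which forces $m_\mu>0$ a.e.; otherwise the left side is $0$ and there is nothing to prove. Write
\begin{equation*}
\frac{\mss b}{\mss a}=\frac{\mss b}{\sqrt{\mss a}}\sqrt{m_\mu}\cdot\frac{1}{\sqrt{\mss a m_\mu}}
\end{equation*}
and apply Cauchy--Schwarz to get $B^2\le \int(\mss b^2/\mss a)m_\mu\,dx\cdot \<1/(\mss a m_\mu)\>$. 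Consequently $4\ms J^{(-1)}(\mu)$ splits as the sum of two nonnegative pieces,
\begin{equation*}
4\ms J^{(-1)}(\mu)=\int\frac{\mss b^2}{\mss a}\,d\mu_{\rm s}+\Big[\int\frac{\mss b^2}{\mss a}m_\mu\,dx-\frac{B^2}{\<1/(\mss a m_\mu)\>}\Big],
\end{equation*}
and $\ms J^{(-1)}(\mu)=0$ forces both pieces to vanish.

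The first piece vanishing gives $\mss b=0$ $\mu_{\rm s}$-a.e., because $\mss a>0$. So $\mu_{\rm s}$ is supported on the finite set $\mc C$ and is a combination of Diracs there. For the second piece, I must show $\mu_{\rm ac}=0$, which is the only delicate point. Suppose $\mu_{\rm ac}\neq0$.

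If $\<1/(\mss a m_\mu)\>=\infty$, the second piece is $\int(\mss b^2/\mss a)m_\mu\,dx$. Its vanishing means $\mss b=0$ on $\{m_\mu>0\}$, a set of positive Lebesgue measure. That contradicts the finiteness of $\mc C$.

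If $\<1/(\mss a m_\mu)\><\infty$, then $m_\mu>0$ a.e. and equality holds in Cauchy--Schwarz. This means $\mss b\sqrt{m_\mu}/\sqrt{\mss a}=c/\sqrt{\mss a m_\mu}$ a.e. for some constant $c$, i.e.\ $\mss b\, m_\mu=c$ a.e. Since $\mss b$ changes sign (it has simple zeros, $2N\ge2$ of them) and $m_\mu>0$ a.e., $c$ must be $0$. Then $\mss b=0$ a.e., again contradicting the finiteness of $\mc C$.

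Hence $\mu_{\rm ac}=0$ and $\mu=\mu_{\rm s}=\sum_{z\in\mc C}\omega(z)\delta_z$ with $\omega\in\mss P(\mc C)$.
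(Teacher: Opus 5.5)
Your proof is correct and follows the paper's argument essentially line by line. It uses the same Cauchy--Schwarz splitting into a singular part and a nonnegative absolutely continuous defect, the same two cases according to whether $\<1/(\mss a m_\mu)\>$ is finite, and the same sign-change contradiction in the equality case. The only cosmetic difference is that you first force $c=0$ from the sign change of $\mss b$ and then contradict the finiteness of $\mc C$, whereas the paper rules out $c=0$ first and then invokes the sign change; your ordering is slightly cleaner.
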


\begin{proof}
Assume that
$\mu(\cdot) = \sum_{z\in \mc C}\omega(z)\, \delta_z(\cdot)$ for some
$\omega\in \mss P(\mc C)$, and recall the notation introduced in
\eqref{1st_rate_f}. Clearly $m_\mu=0$, and the second  term on the right-hand
side of \eqref{1st_rate_f} vanishes. The first one also vanishes because $\mss
b(x)=0$ for all $x\in \mc C$.

Conversely, suppose that $\ms J^{(-1)}(\mu) =0$ for some
$\mu \in \mss P(\bb T)$. We have to show that $\mu(\mc C^c)=0$.

By Schwarz inequality,
\begin{equation}
\label{07}
\int_{\bb T} \frac{\mss b^2}{\mss a} \; d\mu_{\rm ac}
\;-\;  \frac{1}{\< 1/(\mss am_\mu)\>} \, B^2 \,\ge\, 0 \;.
\end{equation}
Hence, if $\ms J^{(-1)}(\mu) =0$, the previous difference vanishes and
\begin{equation*} 
\int_{\bb T} \frac{\mss b^2}{\mss a} \; d\mu_{\rm s} \,=\, 0\,.
\end{equation*}
Since $\mss b(x)^2/\mss a(x) >0$ for $x\not\in \mc C$,
$\mu_{\rm s} (\mc C^c)=0$.

We turn to the absolutely continuous part $\mu_{\rm ac}$.  If
$\< 1/m_\mu\> =+\infty$, since the difference \eqref{07} vanishes, the
first integral vanishes. Therefore, as $\mss b(x)^2/\mss a(x) >0$ for
$x\not\in \mc C$, $\mu_{\rm ac} (\mc C^c)=0$, which completes the
proof of the lemma in the case $\< 1/m_\mu\> =+\infty$.

It remains to consider the case $\< 1/m_\mu\> <+\infty$. As  the
difference \eqref{07} vanishes (and Schwarz inequality becomes an
identity), there exists $c\in \bb R$ such that
$\mss b \sqrt{m_\mu} / \sqrt{\mss a} = c\, 1/\sqrt{\mss a\, m_\mu}$
Lebesgue a.s. The constant $c$ can not be $0$ because we assumed that
$\< 1/m_\mu\> <+\infty$. If $c\neq 0$, then, as $m_\mu>0$ Lebesgue
a.s., $\mss b = c/m_\mu$ a.s., which is a contradiction because $\mss
b$ changes sign while $m_\mu$ is always positive. This completes the
proof of the lemma.
\end{proof}

Let $\cb{B_r(x)}\subset \bb T$ be the ball of radius $r>0$ centered at
$x\in \bb T$, and, for a finite subset $A$ of $\bb T$, set
$\cb{B_r(A)}: = \cup_{x\in A} B_r(x)$. Denote by $\cb{r_0}$ the
minimal distance between points in $\mc C$. The proof of the
$\Gamma-\liminf$ relies on the following estimate.

\begin{lemma}
\label{s02}
Fix $\mu \in \mss P(\bb T)$, and a sequence
$(\mu_{\epsilon})_{\epsilon>0}$, $\mu_\epsilon \in \mss P(\bb T)$,
converging weakly to $\mu$. Suppose that
\begin{equation*}
\liminf_{\epsilon \to 0}\ms I_{\epsilon}(\mu_{\epsilon}) \,<\,
\infty \,.
\end{equation*}
Then, for all $r<r_0/3$, $x\in \mc C$,
\begin{equation*}
\label{bound_int_1st}
\liminf_{\epsilon \to 0} \frac{1}{\epsilon}\, \mu_{\epsilon} (\bb T
\setminus B_r(\mc C)\,\big) \,<\, \infty\,.
\end{equation*} 
\end{lemma}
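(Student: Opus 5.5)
We will use the variational formula for $\ms I_\epsilon$ with a test function $u_\epsilon = e^{F/\epsilon}$ for a suitable $F\in C^2(\bb T)$. Multiplying \eqref{26} by $1/\epsilon$ gives
\begin{equation*}
\ms I_\epsilon(\mu_\epsilon) \;\ge\; -\,\frac{1}{\epsilon}\int_{\bb T} \big(\mss b + \mss a\, F'\big)\, F' \; d\mu_\epsilon \;-\; \int_{\bb T} \mss a\, F''\, d\mu_\epsilon \;.
\end{equation*}
The second term is bounded by $\|\mss a\|_\infty \|F''\|_\infty$ uniformly in $\epsilon$. So it suffices to choose $F$ with two properties. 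First, $-(\mss b + \mss a F')F' \ge c_r>0$ on $\bb T\setminus B_r(\mc C)$. Second, $-(\mss b+\mss a F')F'\ge 0$ everywhere, or at least $\ge -C\epsilon$ in a controlled way. Then
\begin{equation*}
\frac{c_r}{\epsilon}\,\mu_\epsilon(\bb T\setminus B_r(\mc C)) \;\le\; \ms I_\epsilon(\mu_\epsilon) + C_F\,,
\end{equation*}
and taking $\liminf$ along the subsequence where $\ms I_\epsilon(\mu_\epsilon)$ stays bounded gives the claim.

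\textbf{Choice of $F$.} The pointwise quantity $-(\mss b+\mss a F')F'$ is a concave quadratic in $F'$. It is nonnegative exactly when $F'$ lies between $0$ and $-\mss b/\mss a$, and it is maximal, equal to $\mss b^2/(4\mss a)$, at $F' = -\mss b/(2\mss a)$. The natural choice $F' = -\mss b/(2\mss a)$ is not periodic when $B\neq 0$, since its integral over $\bb T$ is $-B/2$. We therefore take $F' = -(1-\chi)\,\mss b/(2\mss a) + \kappa\,\psi$. Here $\chi$ is a smooth cutoff, $\psi\ge 0$ is a smooth bump supported in $B_{r}(\mc C)$ near a single critical point, and the constant $\kappa$ restores $\int_{\bb T} F' = 0$. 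For instance, we could place the whole correction inside $B_r(z_0)$ for one $z_0\in\mc C$ and set $\chi = 0$ elsewhere.

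Off $B_r(\mc C)$ we then have $F' = -\mss b/(2\mss a)$, so the integrand equals $\mss b^2/(4\mss a)\ge c_r>0$, because $\mss b$ does not vanish there. Inside $B_r(\mc C)$ the integrand may become negative. It is, however, bounded below by $-K$ with $K$ depending only on $r$ and $F$, which gives
\begin{equation*}
\ms I_\epsilon(\mu_\epsilon) \;\ge\; \frac{c_r}{\epsilon}\,\mu_\epsilon(\bb T\setminus B_r(\mc C)) \;-\; \frac{K}{\epsilon}\,\mu_\epsilon(B_r(z_0)) \;-\; C\,.
\end{equation*}

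\textbf{Main obstacle.} The difficulty is exactly this negative contribution of order $1/\epsilon$ coming from the region where $F'$ must be corrected to be periodic. Since $\mu_\epsilon$ may put mass of order one near $z_0$, that term cannot simply be discarded.

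The plan is to make $F'$ lie between $0$ and $-\mss b/\mss a$ pointwise, so the integrand is nonnegative everywhere. This fails near zeros of $\mss b$ whenever a correction of nonzero total mass is needed. The interval between $0$ and $-\mss b/\mss a$ always contains $0$, so taking $F'=0$ inside $B_r(\mc C)$ is always admissible. The remaining issue is the zero-mean constraint $\int F' = 0$.

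To handle it, we use the freedom to pick the multiplier outside $B_r(\mc C)$. On each complementary interval we choose $F' = -t(x)\,\mss b/\mss a$ with $t\in[\delta,1-\delta]$, which keeps the integrand $\ge \delta(1-\delta)\,\mss b^2/\mss a\ge c_r>0$. We will try to select $t$ so that $\int F' = 0$. The sign of $\mss b$ alternates between consecutive complementary intervals, since critical points alternate between stable and unstable. Weighting the positive-$\mss b$ intervals and the negative-$\mss b$ intervals differently should therefore cancel the net integral. This requires the condition $\delta \int_{\{\mss b>0\}} \mss b/\mss a < (1-\delta)\int_{\{\mss b<0\}}|\mss b|/\mss a$ and its reverse, which hold for $\delta$ small. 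Inside $B_r(\mc C)$ we interpolate $t$ smoothly down to $0$, so the integrand stays $\ge 0$ there.

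This yields the inequality without the negative term and completes the argument. Checking that the interpolation in $t$ can be done in $C^2$ while preserving the zero-mean condition is the step requiring care.
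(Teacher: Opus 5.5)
Your final construction is correct, and it takes a different route from the paper's.

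\textbf{The paper's route.} The paper never uses a global test function. Around each unstable point it takes a compactly supported piecewise linear bump $F$: slope $\eta/2$ where $\mss b/\mss a\le-\eta$, slope $-\eta/2$ where $\mss b/\mss a\ge\eta$, and constant in between. Such a bump is automatically a function on $\bb T$. The paper then mollifies it and checks that the mollified slope still lies between $0$ and $-\mss b/\mss a$, so that $-(\mss b+\mss a F')F'\ge 0$. This gives a positive lower bound only on two intervals of length $\delta$. It then covers $\bb T\setminus B_r(\mc C)$ by sliding $z_{\mtt l},z_{\mtt r}$ and summing finitely many such bounds.

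\textbf{Your route.} Your ansatz $F'=-t\,\mss b/\mss a$ with $0\le t\le 1$ gives the pointwise identity $-(\mss b+\mss aF')F'=t(1-t)\,\mss b^2/\mss a$. This yields nonnegativity everywhere and a uniform lower bound on all of $\bb T\setminus B_r(\mc C)$ at once, with no mollification and no covering. The price is the single global constraint $\int_{\bb T}t\,\mss b/\mss a\,dx=0$. That is where the sign change of $\mss b$ guaranteed by Condition H is used.

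\textbf{The step you flag as delicate is linear.} Fix smooth cut-offs $0\le\phi_\pm\le 1$ supported in $\{\pm\mss b>0\}$ and equal to $1$ on $\{\pm\mss b>0\}\setminus B_r(\mc C)$. Put $t=\alpha\phi_++\beta\phi_-$. Then $t\in[0,1]$ and $F\in C^2(\bb T)$. The constraint becomes $\alpha P=\beta Q$, where $P=\int\phi_+\,\mss b/\mss a$ and $Q=\int\phi_-\,|\mss b|/\mss a$. Both are positive, because $r<r_0/3$ leaves part of every arc between consecutive critical points outside $B_r(\mc C)$. For example, take $\alpha=Q/(2\max\{P,Q\})$ and $\beta=P/(2\max\{P,Q\})$. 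This gives the lower bound $\tfrac12\min\{\alpha,\beta\}\,\min_{\bb T\setminus B_r(\mc C)}\mss b^2/\mss a$ off $B_r(\mc C)$.

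\textbf{What each buys.} The paper's local route needs no global constraint, since each bump only sees one unstable point. Yours gives a one-line argument with a single test function.

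The opening attempt, with a compensating bump near one $z_0$, should be deleted, as you yourself noticed.
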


\begin{proof}
By the definition of the rate functional $\ms I_{\epsilon}$ and by
hypothesis, for any functions $F$ in $C^2(\bb T)$
\begin{align*}
\liminf_{\epsilon \to 0}
- \int_{\bb T} \frac{\ms L_{\epsilon} e^{F(x)/\epsilon}}
{e^{F (x)/\epsilon}}
\;d\mu_{\epsilon}(x)
\,\le\,
\liminf_{\epsilon \to 0}\ms I_{\epsilon}(\mu_{\epsilon}) \,<\, \infty\,.
\end{align*}
Thus, by \eqref{26}, there exists a finite constant $C_0$ such
that
\begin{align*}
\liminf_{\epsilon \to 0} \Big\{
-\, \frac{1}{\epsilon} \, \int_{\bb T}
\big[\, \mss b(x) \, +\, \mss a(x) \, F'(x)\, \big]
F'(x)\;d\mu_{\epsilon}(x) \;-\; \int_{\bb T}\mss
a(x)\, F''(x)\;d\mu_{\epsilon}(x) \,\Big\}\,\le\, C_0\,.
\end{align*}
As $F$ belongs to $C^2(\bb T)$, the second integral is bounded so that
\begin{align}
\label{11}
\liminf_{\epsilon \to 0} \Big\{
-\, \frac{1}{\epsilon} \, \int_{\bb T} \mss a(x) \,
\big[\, \mss c(x) \, +\,  F'(x)\, \big]
F'(x)\;d\mu_{\epsilon}(x) \,\Big\}\,\le\, C_0\,,
\end{align}
for a new constant $C_0$ whose value may change from line to line and
which may depend on the function $F$. Here
$\mss c(x) = \mss b(x) /\mss a(x)$.

Fix $x\in \bb T$, and suppose that $\mss c(x)\ge 0$. In this case,
$[\, \mss c(x) \, +\, F'(x)\, \big] F'(x)\le 0$ if, and only if,
$F'(x) \in [-\mss c(x) , 0]$. If $\mss c(x)\le 0$, then
$[\, \mss c(x) \, +\, F'(x)\, \big] F'(x)\le 0$ if, and only if, $F'(x)
\in [0, - \mss c(x)]$. Therefore, to fulfill the inequality, $F$ has
to decrease in intervals where $\mss b(\cdot)$ is positive and to
increase in the intervals where $\mss b(\cdot)$ is negative. In
consequence, the test function $F(\cdot)$ may not have support in an
interval formed by two consecutive critical points of $S(\cdot)$, and
$F'(\cdot)$  has to vanish at these critical points.

In view of this observation, the construction of the test functions
$F$ is clear. Fix three consecutive critical points
$x_{\mtt l} < x_0 < x_{\mtt r}$, and $\delta>0$. Assume, without loss
of generality, that $\mss c (x)\le 0$ for $x\in [x_{\mtt l}, x_0]$,
and, in consequence, $\mss c (x)\ge 0$ for $x\in [x_0, x_{\mtt
r}]$. As $\mss c(\cdot)$ is continuous, there exists $\eta>0$ such
that $\mss c(x) \le -\eta$ for all
$x\in [x_{\mtt l}+\delta, x_0-\delta]$, and $\mss c(x) \ge \eta$ for
all $x\in [x_0+\delta , x_{\mtt r}-\delta]$.

Fix a point $z_{\mtt l}\in [x_{\mtt l}+2\delta , x_0 - 5\delta]$ and a point
$z_{\mtt r}\in [x_0+5\delta , x_{\mtt r} - 2\delta]$. Let
$F\colon \bb T \to \bb R$ be given by
\begin{equation*}
F(x) \,=\, (\eta/2)\, (x- z_{\mtt l})\, \quad \text{for}\;\; x\in
[z_{\mtt l}  , z_{\mtt l} + 3\delta] \,.
\end{equation*}
In the interval $[z_{\mtt l}+ 3\delta , z_{\mtt r} - 3\delta]$, $F$ is
set to be constant, and
\begin{equation*}
F(x) \,=\, (\eta/2)\, ( z_{\mtt r}  - x)\,
\quad \text{for}\;\; x\in
[z_{\mtt r}-3\delta , z_{\mtt r}]\,.
\end{equation*}
Complete the definition of $F(\cdot)$ by setting $F(y)=0$ for
$y\not\in [z_{\mtt l}  , z_{\mtt r} ]$. In this way,
$F(\cdot)$ is a continuous function.

By definition of $F$, on the interval
$[z_{\mtt l} , z_{\mtt l} +3\delta]$, since $-\, \mss c(\cdot)$ is
bounded below by $\eta$ on this interval, $\mss c(x) \, +\,  F'(x) \le
-\, \eta/2$, so that
\begin{equation}
\label{12}
-\, \mss a(x) \,
\big[\, \mss c(x) \, +\,  F'(x)\, \big] \, F'(x)
\,\ge\, (\eta/2)^2 \, \inf_{x\in \bb T} \mss a(x)\,.
\end{equation}
A similar bound holds in the interval
$[z_{\mtt r}- 3\delta , z_{\mtt r} ]$ because $\mss c(\cdot)$ is
bounded below by $\eta$. On the complement of these intervals $F'$
vanishes. 

If $F(\cdot)$ were in the domain of the generator, we could directly
substitute it into equation \eqref{11} to complete the proof. However,
since $F'(\cdot)$ is discontinuous we must smooth $F(\cdot)$ in a way
that preserves the previously established bounds in order to complete
the proof of the lemma.

Let $\cb{\Phi}\colon [-1,1]\to \bb R_+$ be a smooth approximation of
the identity: the support of $\Phi(\cdot)$ is contained in the set
$(-1,1)$, and $\int \Phi(x)\, dx=1$. Let
$\cb{\Phi_a(x)}: = (1/a) \, \Phi(x/a)$, $0<a<1$.

Let $F_\delta = F * \Phi_\delta$. The function $F_\delta$ is smooth,
and thus belongs to the domain of the generator.  We examine the
left-hand side of \eqref{12} with $F_\delta $ in place of $F$. By
definition of $F(\cdot)$,
$F'_\delta (x) = \int F'(x-y)\, \Phi_\delta (y)\, dy$ vanishes in the
complement of
$[z_{\mtt l}- \delta , z_{\mtt l} +4 \delta] \cup [z_{\mtt r}- 4\delta
, z_{\mtt r} +\delta ]$. On the intervals
$[z_{\mtt l}- \delta , z_{\mtt l} + \delta] $,
$[z_{\mtt l} + 2\delta , z_{\mtt l} +4 \delta] $, as
$-\, \mss c(x) \ge \eta$, and $0\le F'_\delta (x) \le \eta/2$, the
left-hand side of \eqref{12} with $F_\delta $ in place of $F$ is
positive. On the interval
$[z_{\mtt l} + \delta , z_{\mtt l} +2 \delta] $,
$F_\delta (x) = F'(x)$ so that \eqref{12} holds for $F_\delta$. A
similar analysis can be carried out on the interval
$[z_{\mtt r}- 4\delta , z_{\mtt r} +\delta ]$. Therefore,
\begin{align*}
& -\, \, \int_{\bb T} \mss a(x) \,
\big[\, \mss c(x) \, +\,  F_\delta'(x)\, \big]
F_\delta'(x)\;d\mu_{\epsilon}(x)
\\
& \quad \,\ge\,
(1/2) \, \eta^2 \, \inf_{x\in \bb T} \mss a(x)
\, \big\{ \, \mu_\epsilon ([z_{\mtt l} + \delta , z_{\mtt l} +2
\delta])
\,+\,  \mu_\epsilon ([z_{\mtt r} -2 \delta , z_{\mtt r} - \delta])  \,\big\}
\,.
\end{align*}

As the previous intervals have length $\delta$, choosing $z_{\mtt l}$,
$z_{\mtt r}$ appropriately yields that
\begin{align*}
& \sum_{j=0}^{N-1}
\, \big\{ \, \mu_\epsilon ([ \sigma_j + 5\delta , m_j - 5\delta ])
\,+\,  \mu_\epsilon ([  m_j + 5\delta , \sigma_{j+1} - 5\delta])
\,\big\}
\\
& \quad  \le\, \frac{-\, 2}{\delta \, \eta^2}
\, \frac{1}{\inf_{x\in \bb T} \mss a(x)}
\, \int_{\bb T} \mss a(x) \,
\big[\, \mss c(x) \, +\,  F_\delta'(x)\, \big]
F_\delta'(x)\;d\mu_{\epsilon}(x)
\,.
\end{align*}
This completes the proof of the lemma in view of \eqref{11}.
\end{proof}

Given two real valued functions $f,g$, where $g$ is positive, we say
that $\cb{f(x)=O(g(x))}$, if there exist some positive constant $C>0$
such that $|f(x)|\leq Cg(x)$, for every $x$ in the domain of $f$. If
$f$ is also positive, we write $\cb{f(x) = \Theta(g(x))}$ if $f$ and
$g$ have the same domain, and both $f(x)=O(g(x))$ and $g(x)=O(f(x))$
hold. We are now in position to prove Proposition \ref{p4.1}

\begin{figure}
\centering
\begin{tikzpicture}[scale=1.44, xshift=-5cm]
\draw[rounded corners] (0.5,3) -- (2,5);
\draw[rounded corners] (2,5) .. controls (3.5,7) .. (5,5);  
\draw[rounded corners] (5,5) .. controls (6.5, 3) .. (8, 4.5);

\fill(3.5,6.5)node[above]{$\sigma$};
\fill(6.5,3.4)node[below]{$m$};

\draw[solid](-1,2)--(8.5,2);





\fill(-0.5,3.2)node[above, font=\tiny]{$S(x_{\sigma, 2}^-) = S(x_{\sigma,2}^+)$};

\fill(-0.5,5.6)node[above, font=\tiny]{$S(x_{\sigma, 1}^-)=S(x_{\sigma,1}^+)$};

\fill(0.2, 4.4)node[below, font=\tiny]{$S(y_{\sigma,\epsilon}^{2,-})=S(y_{\sigma,\epsilon}^{2,+})$};

\fill(0.2,4.8)node[above,font=\tiny]{$S(y_{\sigma,\epsilon}^{1,-})=S(y_{\sigma,\epsilon}^{1,+})$};

\draw[decorate, decoration={brace}, teal, thick]  (-1.5,3.6) -- node[left, teal] {$J_\sigma$}  (-1.5,5.6);

\draw[decorate, decoration={brace}, orange, thick]  (-0.9,4.4) -- node[left, orange] {$J_{r,\epsilon}^{\sigma}$}  (-0.9,4.8);

\draw[dashed](-1.3,3.6)--(8.5,3.6);
\draw[dashed](-1.3,5.6)--(8.5,5.6);

\draw[dashed](0.95,3.6)--(0.95,2);
\draw[dashed](2.45,5.6)--(2.45,2);

\fill(0.8,2)node[below, font=\tiny]{$x_{\sigma,2}^-$};
\fill(2.5,2)node[below, font=\tiny]{$x_{\sigma,1}^-$};

\draw[dashed](6.1,3.6)--(6.1,2);
\draw[dashed](4.5,5.6)--(4.5,2);

\draw[dashed, orange](5.4,4.4)--(5.4,2);
\draw[dashed, orange](5.15,4.8)--(5.15,2);

\draw[dashed, orange](-0.8,4.4)--(8.5,4.4);
\draw[dashed, orange](-0.8,4.8)--(8.5,4.8);

\draw[dashed, orange](1.55,4.4)--(1.55,2);
\draw[dashed, orange](1.9,4.8)--(1.9,2);

\fill(4.4,2)node[below, font=\tiny]{$x_{\sigma,1}^+$};
\fill(6.1,2)node[below, font=\tiny]{$x_{\sigma, 2}^+$};

\fill(5.5,2)node[below, font=\tiny]{$y_{\sigma,\epsilon}^{2,+}$};
\fill(5,2)node[below, font=\tiny]{$y_{\sigma, \epsilon}^{1,+}$};

\draw[solid, teal, thick]  (4.5,1) -- node[below, teal] {$I_\sigma^+$}  (6.1,1);

\draw[solid](6.1,0.95)--(6.1,1.05);
\draw[solid](4.5,0.95)--(4.5,1.05);

\draw[solid, orange, thick]  (5.15,1.5) -- node[below, orange] {$I_{\sigma,\epsilon}^+$}  (5.4,1.5);

\draw[solid](5.4,1.45)--(5.4,1.55);
\draw[solid](5.15,1.45)--(5.15,1.55);

\draw[solid](1.9,1.45)--(1.9,1.55);

\draw[solid, cyan, thick]  (1.9,1.5) -- node[below, cyan] {$I_{\sigma,\epsilon}$}  (5.15,1.5);



\fill(1.45,2)node[below, font=\tiny]{$y_{\sigma,\epsilon}^{2,-}$};

\fill(1.9,2)node[below, font=\tiny]{$y_{\sigma,\epsilon}^{1,-}$};

\draw[solid, teal, thick]  (0.95,1) -- node[below, teal] {$I_\sigma^-$}  (2.45,1);

\draw[solid, orange, thick]  (1.55,1.5) -- node[below, orange] {$I_{\sigma,\epsilon}^-$}  (1.9,1.5);

\draw[solid](1.55,1.45)--(1.55,1.55);
\draw[solid](0.95,0.95)--(0.95,1.05);
\draw[solid](2.45,0.95)--(2.45,1.05);

\end{tikzpicture}
\caption{Here we illustrate the intervals constructed in the proof of
Proposition \ref{p4.1}. The intervals in orange satisfy
$\mu_\epsilon(I_{\sigma, \epsilon}^- \cup I_{\sigma, \epsilon}^+) =
O(\epsilon^2)$. The union of these sets contain the support of the
derivative of the cut-off function $\Phi_{\sigma,\epsilon}$. More
precisely, $\Phi_{\sigma, \epsilon}$ is strictly increasing on
$I_{\sigma, \epsilon}^-$, strictly decreasing on
$I_{\sigma, \epsilon}^+$, it is equal to $1$ on $I_{\sigma,\epsilon}$,
and it vanishes on
$(I_{\sigma, \epsilon}^- \cup I_{\sigma,\epsilon} \cup I_{\sigma,
\epsilon}^+)^c$.}
\label{fig001}
\end{figure}

\begin{proof}[Proof of Proposition \ref{p4.1}]
Fix $\mu \in \mss P(\bb T)$ and
$(\mu_\epsilon)_{\epsilon>0}\subset \mss P(\bb T)$ such that
$\mu_\epsilon$ converges weakly to $\mu$. If
$\ms J^{(0)}(\mu)=+\infty$, then by \eqref{2nd_rate_f} and Lemma
\ref{s01}, $\ms J^{(-1)}(\mu)>0$. Thus, by Proposition \ref{l01} and
the definition of $\Gamma-$convergence,
\begin{equation*}
\liminf_{\epsilon\to 0}\ms I_\epsilon(\mu_\epsilon)
\;\geq\; \lim_{\delta  \to 0} \delta^{-1}
\liminf_{\epsilon\to 0}\epsilon\, \ms I_\epsilon(\mu_\epsilon)
\;\geq\; \lim_{\delta \to 0} \delta^{-1}\ms J^{(-1)} (\mu) \;=\; +
\infty \,=\, \ms J^{(0)}(\mu) \,.
\end{equation*}
This proves the $\Gamma-\liminf$ for measures $\mu$ such that $\ms
J^{(0)}(\mu)=+\infty$.  

We turn to the case where $\ms J^{(0)}(\mu)<+\infty$. Suppose 
that
\begin{equation*}
\label{liminf1:mu}
\mu(\cdot) = \sum_{z\in \mc C}\omega(z)\delta_z(\cdot),
\end{equation*}
for some $\omega\in \mss P(\mc C)$. Without loss of generality, we may
also assume that
\begin{equation}
\label{prop:liminf1:asump}
\liminf_{\epsilon\to 0}\ms I_\epsilon(\mu_\epsilon) < +\infty,
\end{equation}
otherwise the result trivially holds. By the definition of the
rate functional $\ms I_\epsilon$, for any function $F\in C^2(\bb T)$
\begin{equation*}
\liminf_{\epsilon\to 0}\ms I_\epsilon(\mu_\epsilon)
\,\geq\, \liminf_{\epsilon \to 0}-\int_{\bb T}
\frac{\ms L_\epsilon e^{F(x)/\epsilon}}{e^{F(x)/\epsilon}}d\mu_\epsilon(x)\,.
\end{equation*}
By \eqref{26}, and the definition of the functional $\ms J^{(0)}$, it
is enough to find a sequence of test functions
$F_\epsilon \in C^{2}(\bb T)$, such that
\begin{equation}
\label{prop:liminf1:eq0}
\liminf_{\epsilon\to 0}-\int_{\bb T}
\Big\{\, \frac{1} {\epsilon} \, \big[\,
\mss b(x)+\mss a(x)F'_\epsilon(x) \,\big]
\, F'_\epsilon(x) \, + \, \mss a(x)\, F''_\epsilon(x)\,
\Big\}\, d\mu_\epsilon 
\,\geq\, \sum_{z\in \mc W}\omega(x)\, \mss b'(z)\,.
\end{equation}

To obtain the derivative $\mss b'$ in the limit, for each
$\sigma\in \mc W$ we define a local version of the potential $S$ with
the help of a cut-off function $\Phi_{\epsilon,\sigma}$ whose support
is an interval containing $\sigma$, and no other critical point. The
test function is constructed as a linear combination of these local
potentials.  Because of the factor $\epsilon^{-1}$ appearing in the
first term of the integral in \eqref{prop:liminf1:eq0}, and the
presence of the second derivative of the test function in the second
term of the same integral, the interval containing the support of the
derivative of the cut-off function needs to be carefully defined. Mind
that, by the definition of $\ms J^{(0)}$, we can neglect the local
minima.

We now proceed to the construction of the test function. We start with
the construction of the intervals needed in the definition of the
cut-off function, see Figure \ref{fig001} above. Fix
$\sigma\in \mc W$. Choose four points
$x_{\sigma, 1}^{-}, x_{\sigma, 2}^{-}, x_{\sigma, 1}^{+}, x_{\sigma,
2}^{+}\in (0,1)$ such that (a)
$x_{\sigma, 2}^{-} < x_{\sigma, 1}^{-} < \sigma < x_{\sigma, 1}^{+} <
x_{\sigma, 2}^{+}$, (b) $S(x_{\sigma,2}^{-}) = S(x_{\sigma,2}^{+})$,
$S(x_{\sigma,1}^{-})=S(x_{\sigma,1}^{+})$, and (c) the only critical
point in $[x_{\sigma,2}^{-}, x_{\sigma,2}^{+}]$ is $\sigma$:
\begin{equation}
\label{prop:liminf1:test1}
[x_{\sigma,2}^{-}, x_{\sigma,2}^{+}]\cap \mc C = \{\sigma\}\,.
\end{equation}

Let $I_\sigma^{-}, I_\sigma^{+}\subset (0,1)$ be the intervals defined
by $I_\sigma^- := [x_{\sigma,2}^{-}, x_{\sigma,1}^{-}]$ and
$I_\sigma^+ := [x_{\sigma,1}^{+}, x_{\sigma,2}^{+}]$. Since $\sigma$
is a point of local maxima, $S$ is strictly increasing on $I_\sigma^-$
while strictly decreasing on $I_\sigma^+$. Then,
$S(I_\sigma^-) = S(I_\sigma^+)$. Denote this interval by $J_\sigma$,
\begin{equation}
\label{prop:liminf1:imageint}
J_\sigma := S(I_\sigma^-) = S(I_\sigma^+). 
\end{equation}
By \eqref{prop:liminf1:asump}, \eqref{prop:liminf1:test1} and Lemma
\ref{s02},
\begin{equation}
\label{liminf1:supp1}
\mu_\epsilon(I_\sigma^-\cup I_\epsilon^+) = O(\epsilon).
\end{equation}

Partition the interval $J_\sigma$ in $\lceil1/\epsilon\rceil$ sub
intervals of length $\lambda (J_\sigma) / \lceil1/\epsilon\rceil$,
where, recall, $\lambda(\cdot)$ stands for the Lebesgue measure.
Denote these intervals by $ J_{r,\epsilon}^\sigma$ so that
\begin{equation*}
\label{prop4.1:eq2}
J_\sigma = \bigsqcup_{r =
1}^{\lceil1/\epsilon\rceil} J_{r,\epsilon}^\sigma\,. 
\end{equation*} 
By \eqref{prop:liminf1:imageint} and \eqref{liminf1:supp1}, choose
some interval $J_{r,\epsilon}^\sigma$ in the partition such that
\begin{equation}
\label{liminf1:suppcut1}
\mu_\epsilon \big( \, [\, I^-_{\sigma} \cup I^+_{\sigma} \,] 
\, \cap \, \Pi(S^{-1}(J_{r,\epsilon}^\sigma)) \, \big)
\,=\, O(\epsilon^2),
\quad \lambda(J_{r,\epsilon}^\sigma) = \lambda(J_\sigma) /
\lceil1/\epsilon\rceil\,, 
\end{equation}
where $\Pi$ is the projection defined in \eqref{proj_1}. Let
$I_{\sigma,\epsilon}^- \subset I_\sigma^-$ and
$I_{\sigma, \epsilon}^+ \subset I_\sigma^+$ be the intervals such that
\begin{equation*}
I_{\sigma,\epsilon}^- \cup I_{\sigma,\epsilon}^+ \,=\,
(I^-_{\sigma} \cup I^+_{\sigma} )
\, \cap \,  \Pi(S^{-1}(J_{r,\epsilon}^\sigma))\,.
\end{equation*}
By \eqref{liminf1:suppcut1} and since the function $S(\cdot)$ is
Lipschitz,
\begin{equation}
\label{liminf1:suppcut2}
\mu_\epsilon(I_{\sigma, \epsilon}^{\pm}) = O(\epsilon^2)\,,
\quad \lambda(I_{\sigma, \epsilon}^{\pm}) = \Theta(\epsilon)\,,
\end{equation}
where the notation $O(\cdot)$, $\Theta(\cdot)$ has been introduced
just before the beginning of the proposition's proof.
Let $y_{\sigma, \epsilon}^{1,\pm}$,
$y_{\sigma, \epsilon}^{2,\pm} \in (0,1)$ be the endpoints of the
intervals $I_{\sigma, \epsilon}^{\pm}$,
$I_{\sigma, \epsilon}^{-} = [y_{\sigma, \epsilon}^{2,-}, y_{\sigma,
\epsilon}^{1,-}]$,
$I_{\sigma, \epsilon}^{+} = [y_{\sigma, \epsilon}^{1,+}, y_{\sigma,
\epsilon}^{2,+}]$, so that
\begin{equation}
\label{liminf1:interv}
x_{\sigma, 2}^- < y_{\sigma, \epsilon}^{2,-}
< y_{\sigma, \epsilon}^{1,-} < x_{\sigma, 1}^{-}
< \sigma < x_{\sigma, 1}^{+} < y_{\sigma, \epsilon}^{1,+}
< y_{\sigma, \epsilon}^{2,+} < x_{\sigma, 2}^{+}\,.
\end{equation}

Denote by $I_{\sigma,\epsilon}$ the interval given by
$\cb{I_{\sigma,\epsilon}} := [y_{\sigma, \epsilon}^{1,-}, y_{\sigma,
\epsilon}^{1,+}]$, and let $\Phi_{\sigma, \epsilon}$ be some
nonnegative smooth function satisfying the following properties

\begin{enumerate}
\item $\Phi_{\sigma, \epsilon}(x) = 1$, for every
$x\in I_{\sigma, \epsilon}$;

\item $\Phi_{\sigma, \epsilon}$ is strictly increasing on
$I_{\sigma,\epsilon}^-$ while strictly decreasing on
$I_{\sigma,\epsilon}^+$;

\item
$\sup_{x\in I_{\sigma, \epsilon}^{-} \cup I_{\sigma,
\epsilon}^{+}}|\Phi_{\sigma, \epsilon}'(x)| = \Theta(\epsilon^{-1})$;

\item
$\sup_{x\in I_{\sigma, \epsilon}^{-} \cup I_{\sigma,
\epsilon}^{+}}|\Phi_{\sigma, \epsilon}''(x)| = \Theta(\epsilon^{-2})$;
and

\item $\Phi_{\sigma,\epsilon}(x) = 0$, for every
$x\in (I_{\sigma,\epsilon}\cup I_{\sigma,\epsilon}^-\cup I_{\sigma,
\epsilon}^+)^c$.
\end{enumerate}

Fix $c_{\sigma,\epsilon}\in J_{r,\epsilon}^\sigma$ and let
$S_{\sigma, \epsilon}: \bb R\to \bb R$ be the translated potential
$\cb{S_{\sigma, \epsilon}(x)} := S(x) - c_{\sigma,\epsilon}$.  By
\eqref{liminf1:suppcut1},
\begin{equation}
\label{liminf1:bound:pot}
\sup_{x\in I_{\sigma, \epsilon}^{-}\cup I_{\sigma,\epsilon}^{+}}
|S_{\sigma,\epsilon}(x)| = O(\epsilon)\,.
\end{equation}
Clearly, the product
$\Phi_{\sigma, \epsilon}\cdot S_{\sigma,\epsilon}$ defines an element
in $C^2(\bb T)$. We perform the same construction for each local
maxima $\sigma\in \mc W$ and let $F_\epsilon: \bb T \to \bb R$ be the
function
\begin{equation}
\label{prop4.3:eq5}
F_\epsilon(x) := \sum_{\sigma \in \mc W}\Phi_{\sigma, \epsilon}(x)
\, S_{\sigma, \epsilon}(x)\,.
\end{equation}

By linearity, $F_\epsilon$ is contained in the domain of
$\ms L_{\epsilon}$. It remains to show that the sequence $F_\epsilon$
satisfies \eqref{prop:liminf1:eq0}. By \eqref{prop4.3:eq5}, the left
hand side of \eqref{prop:liminf1:eq0} is equal to
\begin{align}
\label{prop4.3:eq6}
&- \sum_{\sigma \in \mc W}\;\int_{\bb T}
\left\{\frac{\mss b(x)\,+\, \mss a(x)
\left(\Phi_{\sigma, \epsilon} \cdot
S_{\sigma,\epsilon}\right)'(x)}{\epsilon}\right\}
\left(\Phi_{\sigma, \epsilon}\cdot S_{\sigma, \epsilon}\right)'(x)\;
d\mu_{\epsilon}(x) \notag
\\
&- \sum_{\sigma \in \mc W}\int_{\bb T} \mss a(x)
\left(\Phi_{\sigma, \epsilon}\cdot S_{\sigma, \epsilon}\right)''(x)
\;d\mu_{\epsilon}(x)\,.
\end{align} 
We claim that the first sum of \eqref{prop4.3:eq6} vanishes as
$\epsilon \to 0$, while the $\liminf$ of the second sum is bounded below
by $ \sum_{z\in \mc W}\omega(x)\mss b'(z)$. We start with the first
sum. Fix $\sigma \in \mc W$. By item (1) in the definition of
$\Phi_{\sigma, \epsilon}$ and the fact that
$S_{\sigma,\epsilon}' = - \mss b/\mss a$, the expression inside the
brackets vanishes on $I_{\sigma,\epsilon}$. By item (5) in the
definition of $\Phi_{\sigma, \epsilon}$, the remaining factor vanishes
on
$\big(I_{\sigma,\epsilon}\cup I_{\sigma,\epsilon}^-\cup I_{\sigma,
\epsilon}^+\big)^{c}$. Then, it is enough to show that
\begin{equation*}
\lim_{\epsilon\to 0} 
\int_{I_{\sigma,\epsilon}^-\cup I_{\sigma,\epsilon}^+}
\left\{\frac{\mss b(x)\,+\, \mss a(x)
\left(\Phi_{\sigma, \epsilon} \cdot
S_{\sigma, \epsilon}\right)'(x)}{\epsilon}\right\}
\left(\Phi_{\sigma, \epsilon}\cdot S_{\sigma, \epsilon}\right)'(x)\;
d\mu_{\epsilon}(x)  \,=\, 0 \,.
\end{equation*}
Calculate the derivatives. By the triangle inequality, the previous
integral is bounded above by
\begin{equation*}
\label{prop4.3:eq7}
\begin{aligned}
&\frac{1}{\epsilon}\;\, \int_{I_{\sigma,\epsilon}^-\cup I_{\sigma,\epsilon}^+}
\frac {1} {\mss a(x) }\,   \mss b(x)^2 \, \big[\, 1 \,-\,  \Phi_{\sigma,
\epsilon}(x)\,\big] \, 
\Phi_{\sigma, \epsilon}(x) \, \;d\mu_{\epsilon}(x)
\\
&+ \frac{2}{\epsilon}\;
\int_{I_{\sigma,\epsilon}^-\cup I_{\sigma,\epsilon}^+}
\Big\{\, \big|\, \mss b(x) \, \Phi_{\sigma, \epsilon}'(x)
\, \Phi_{\sigma, \epsilon}(x) \, S_{\sigma, \epsilon}(x)  \,  \big|
\,+\,
\big|\, \mss b(x)\, \Phi'(x)\,  S_{\sigma, \epsilon}(x) \,  \big|\,
\Big\}
\;d\mu_{\epsilon}(x)
\\
& + \frac{1}{\epsilon}\; \int_{I_{\sigma,\epsilon}^-\cup
I_{\sigma,\epsilon}^+}
\mss a(x) \,  \big[\, \Phi_{\sigma, \epsilon}'(x)\,
S_{\sigma, \epsilon}(x)\, \big]^2 \;d\mu_{\epsilon}(x)\,.
\end{aligned} 
\end{equation*}

Since $\mss b, \mss a$ are bounded, $\mss a$ is strictly positive, and
$0\leq \Phi \leq 1$, the previous expression is bounded above by
\begin{equation*}
\frac{C_0}{\epsilon}\; \Big\{
\mu_{\epsilon}(I_{\sigma,\epsilon}^-\cup I_{\sigma,\epsilon}^+)
\,+\,  \int_{I_{\sigma,\epsilon}^-\cup I_{\sigma,\epsilon}^+}
\Big[ \, \big| \, \Phi_{\sigma, \epsilon}'(x)\,
S_{\sigma, r(\epsilon)}(x) \, \big| \,+\,
\big[ \, \Phi_{\sigma, \epsilon}'(x)\,
S_{\sigma, r(\epsilon)}(x) \, \big]^2 \,\Big] \;d\mu_{\epsilon}(x)
\,\Big\}  
\end{equation*}
for some finite constant $C_0$. This expression is less than or equal
to
\begin{align*}
\label{prop4.3:eq8}
\frac{C_0}{\epsilon}\; \{ \, 1\, + a^2_\epsilon\}\,
\mu_{\epsilon}(I_{\sigma,\epsilon}^-\cup I_{\sigma,\epsilon}^+)
\;\;\text{where}\;\;
a_\epsilon \,=\,
\sup_{x\in I_{\sigma,\epsilon}^-\cup
I_{\sigma,\epsilon}^+}|\Phi_{\sigma,\epsilon}'(x)|\;\sup_{x\in
I_{\sigma,\epsilon}^-\cup I_{\sigma,\epsilon}^+}
|S_{\sigma,r(\epsilon)}(x)| \,.
\end{align*}
By \eqref{liminf1:suppcut2}, \eqref{liminf1:bound:pot} and item (3) in
the definition of $\Phi_{\sigma, \epsilon}$, this expression vanishes
as $\epsilon$ goes to $0$. This proves that the first term in
\eqref{prop4.3:eq6} converges to $0$ as $\epsilon \to 0$.

We turn to the second sum in \eqref{prop4.3:eq6}. Fix
$\sigma\in \mc W$. By the definitions of $\Phi_{\sigma,\epsilon}$ and
$S_{\sigma,\epsilon}$, the term in the sum corresponding to $\sigma$
is equal to
\begin{equation*}
\label{prop4.3:eq9}
\begin{aligned}
&\int_{I_{\sigma,\epsilon}}
\Big\{\, \mss b'(x)
\,-\, \frac{\mss b(x)\, \mss a'(x)}{\mss a(x)} \,\Big\}
\;d\mu_{\epsilon}(x)
\,-\,  \int_{I_{\sigma,\epsilon}^-\cup I_{\sigma,\epsilon}^+}
\mss a(x) \, \Phi_{\sigma, \epsilon}''(x)\, S_{\sigma, \epsilon}(x) \;d\mu_{\epsilon}(x)
\\
& + \int_{I_{\sigma,\epsilon}^-\cup I_{\sigma,\epsilon}^+}
\Big\{\, 2\, \Phi_{\sigma, \epsilon}'(x)\, \mss b(x)
\,+\,  \Phi_{\sigma, \epsilon}(x)  \,
\Big[\, \mss b'(x) - \frac{\mss a'(x)\,
\mss b(x)} {\mss a(x)} \, \Big]\, \Big\} \;d\mu_{\epsilon}(x)\,.
\end{aligned}
\end{equation*}
By items (3) and (4) in the definition of $\Phi_{\sigma,\epsilon}$,
\eqref{liminf1:suppcut2}, and \eqref{liminf1:bound:pot}, the integrals
over $I_{\sigma,\epsilon}^-\cup I_{\sigma,\epsilon}^+$ vanish as
$\epsilon\to 0$. We turn to the integral over
$I_{\sigma,\epsilon}$. Recall \eqref{liminf1:interv}, let
$I_\sigma:= [x_{\sigma,2}^-,x_{\sigma,2}^+]$ and rewrite the integral
as
\begin{equation*}
\int_{I_\sigma} \Big\{\,
\mss b'(x) - \frac{\mss b(x)\, \mss a'(x)}{\mss a(x)}\, \Big\}\, d\mu_\epsilon(x)
- \int_{I_\sigma\setminus I_{\sigma,\epsilon}} \Big\{\,
\mss b'(x) - \frac{\mss b(x)\, \mss a'(x)}{\mss a(x)} \, \Big\}\,
d\mu_\epsilon(x)\,.
\end{equation*}
By \eqref{liminf1:mu} and the assumption that $\mu_\epsilon$ converges
weakly to $\mu$,
$\mu_\epsilon(I_\sigma\setminus I_{\sigma,\epsilon}) \to 0$ as
$\epsilon \to 0$. Hence, since the integrand is bounded, the second
integral in the previous displayed equation vanishes as
$\epsilon \to 0$. To finish the proof, note that by \eqref{liminf1:mu}
and the fact that $\mss b(\sigma) = 0$,  the first
integral converges to $\omega(\sigma)\, \mss b'(\sigma)$, as
$\epsilon\to 0$.
\end{proof}

\subsection*{Second order $\Gamma-\limsup$}
\label{sec4.2}

The main result of this subsection reads as follows.

\begin{proposition}
\label{p4_limsup}
Assume that condition (H) is in force. Then, the functional
$\ms J^{0}$ is a $\Gamma-\limsup$ for the sequence $\ms I_\epsilon$.
\end{proposition}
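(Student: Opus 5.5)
We reduce to measures with $\ms J^{(0)}(\mu)<\infty$, that is, $\mu=\sum_{z\in\mc C}w(z)\,\delta_z$; for any other $\mu$ the constant sequence already works. Since $\ms I_\epsilon$ is convex and $\ms J^{(0)}$ is linear on this convex hull, it suffices to build, for each $z\in\mc C$, a sequence $\mu^z_\epsilon\to\delta_z$ with $\limsup_{\epsilon\to0}\ms I_\epsilon(\mu^z_\epsilon)\le \zeta(z)=\max\{\mss b'(z),0\}$. We then take $\mu_\epsilon=\sum_z w(z)\mu^z_\epsilon$ and use $\ms I_\epsilon(\mu_\epsilon)\le\sum_z w(z)\ms I_\epsilon(\mu^z_\epsilon)$. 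This mirrors the structure of Corollary \ref{l06}. We plan to state this single-point construction as a lemma (Lemma \ref{l4_limsup}).

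For fixed $z\in\mc C$, we take Gaussian-type densities $m_\epsilon\propto e^{-G_\epsilon/\epsilon}$ concentrated at $z$, smooth and strictly positive. Lemma \ref{l03} then gives the exact value
\[
\ms I_\epsilon(\mu_\epsilon)=\frac{1}{4\epsilon}\int_{\bb T}\mss a\Big\{\epsilon\Big[\frac{m_\epsilon'}{m_\epsilon}+\frac{\mss a'}{\mss a}\Big]+\frac{B/(\mss a m_\epsilon)}{\<1/(\mss a m_\epsilon)\>}-\frac{\mss b}{\mss a}\Big\}^2 d\mu_\epsilon .
\]
Locally near $z$ we write $\mss b(x)\approx \mss b'(z)(x-z)$. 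The term $\epsilon m_\epsilon'/m_\epsilon=-G_\epsilon'$ is of order one. The $B$-term is exponentially small in the region where $\mu_\epsilon$ lives, since $\<1/(\mss a m_\epsilon)\>$ is exponentially large while $1/m_\epsilon$ is tiny near $z$; a more careful estimate of this term is needed only away from $z$. The choice depends on the type of $z$.
\begin{itemize}
\item If $z\in\mc M$, take $G_\epsilon=G:=S(\cdot)-S(z)$ near $z$, which is the local Gibbs weight. Then $-G'-\mss b/\mss a=0$ near $z$, and what remains is the $O(\epsilon)$ correction, giving a cost $O(\epsilon)\to 0$.
\item If $z\in\mc W$, take $G(x)=\kappa (x-z)^2/2$ near $z$ with $\kappa>0$ to be optimized. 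Writing $\beta=\mss b'(z)>0$ and $\alpha=\mss a(z)$, the leading term is
\[
\frac{1}{4\epsilon}\,\alpha\Big(\kappa+\frac{\beta}{\alpha}\Big)^2\,\mathbb E[(x-z)^2] + \text{cross term with }\epsilon\,\mss a'/\mss a ,
\]
where $\mathbb E[(x-z)^2]=\epsilon/\kappa$.
\end{itemize}
Letting $\kappa\to 0$ slowly, this leading term equals
\[
\frac{\alpha(\kappa+\beta/\alpha)^2}{4\kappa},
\]
which diverges. We therefore need to choose more carefully. Expanding the square gives the pieces $\alpha\kappa^2\mathbb E/(4\epsilon)=\alpha\kappa/4$, $2\kappa\beta\,\mathbb E/(4\epsilon)=\beta/2$, and $(\beta^2/\alpha)\mathbb E/(4\epsilon)=\beta^2/(4\alpha\kappa)$. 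Balancing these suggests $\kappa=\beta/\alpha$, which yields $\beta/4+\beta/2+\beta/4=\beta$. This is exactly $\zeta(z)$, and it corresponds to the reversed Gaussian $G=-(S-S(z))$ locally. We also have to track the $O(\epsilon)\cdot O(1)$ cross terms involving $\epsilon\mss a'/\mss a$; they are harmless. Thus for $z\in\mc W$ we take $G$ equal to $S(z)-S(x)$ near $z$.

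The main obstacle is the global, non-reversible part. The function $G$ must be extended periodically to $\bb T$, stay smooth, and be positive away from $z$, so that $m_\epsilon$ is exponentially small there. The integrand away from $z$, however, carries a prefactor $1/\epsilon$. Concretely:
\begin{itemize}
\item We choose the extension with $G\ge c>0$ off a neighbourhood of $z$. The contribution there is then $O(\epsilon^{-1})\cdot e^{-c/\epsilon}\to 0$, provided the integrand is polynomially bounded.
\item The $B$-term equals $B\, e^{G/\epsilon}/\<\mss a^{-1}e^{G/\epsilon}\>$ up to $\mss a$ factors. Multiplied by $m_\epsilon\propto e^{-G/\epsilon}$, its square becomes $\mss a^{-1}B^2 e^{G/\epsilon}\big/\big(Z_\epsilon\<\mss a^{-1}e^{G/\epsilon}\>^2\big)$. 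This is exponentially small only if $\<e^{G/\epsilon}\>$ dominates $e^{G(x)/\epsilon}$ at every $x$, which holds since it is of order $e^{\max G/\epsilon}$ up to a polynomial factor, while $Z_\epsilon$ is polynomial.
\end{itemize}
The delicate point is the region where $G$ is near its maximum, where $e^{G/\epsilon}$ is comparable to $\<e^{G/\epsilon}\>$. In that region, however, $m_\epsilon$ is exponentially small, so the contribution is controlled. We expect that this estimate, together with the control of the cross terms, completes the proof that $\limsup\ms I_\epsilon(\mu^z_\epsilon)\le\zeta(z)$.
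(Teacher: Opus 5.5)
Your proposal is correct and follows the paper's proof: you reduce by convexity to single Dirac masses at critical points, place a Gaussian profile at $z$ with inverse variance $|\mss b'(z)|/\mss a(z)$, and evaluate through Lemma~\ref{l03}, which gives $\mss b'(z)/2+|\mss b'(z)|/4+|\mss b'(z)|/4=\zeta(z)$. The only difference is the $B$-term: the paper uses a compactly supported density, perturbs it to $(m_\epsilon+\delta)/(1+\delta)$ and lets $\delta\to0$ by lower semicontinuity, whereas you keep a strictly positive density. Your version works, and more simply than you suggest: expanding the square and using $\int_{\bb T}(\log m_\epsilon)'\,dx=\int_{\bb T}(\log\mss a)'\,dx=0$ shows the $B$-terms contribute exactly $-B^2/\big(4\epsilon\,\langle 1/(\mss a\,m_\epsilon)\rangle\big)\le 0$, so no delicate estimate near the maximum of $G$ is needed.
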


The proof of this proposition requires some preparation. Fix a
probability measure $\mu\in \mss P(\bb T)$. If
$\ms J^{(0)}(\mu) = +\infty$, there is nothing to prove. Hence, by
definition of $\ms J^{(0)}(\cdot)$, we may assume that
$\mu = \sum_{z \in \mc C} w(z)\, \delta_z$ for some
$w\in \mss P(\mc C)$.

It is actually enough to prove that for every critical point
$\mf c\in \mc C$, there exists a sequence
$\mu_{\mf c, \epsilon}\in \mss P(\bb T)$ which converges weakly to
$\delta_{\mf c}$ and such that
$\limsup_{\epsilon \to 0}\ms I_\epsilon(\mu_{\mf c, \epsilon})\leq \ms
J^{(0)}(\delta_{\mf c})$. Indeed, in this case, setting
$\nu_{\epsilon} := \sum_{\mf c \in \mc C}\omega(\mf c)\, \mu_{\mf c,
\epsilon}$ yields a sequence which converges weakly to
$\mu = \sum_{z \in \mc C} w(z)\, \delta_z$ and such that, by
convexity,
\begin{equation*}
\limsup_{\epsilon \to 0}\ms I_\epsilon(\nu_\epsilon)
\,\leq\, \sum_{\mf c \in \mc C}\omega(\mf c) \,
\limsup_{\epsilon \to 0}\ms I_\epsilon(\mu_{\mf c, \epsilon})
\,\leq\,  \sum_{\mf c\in \mc C} \omega(\mf c) \, \ms J^{(0)}(\delta_{\mf c})
\,=\, \ms J^{(0)}(\mu)\,,
\end{equation*}
where the last inequality follows by \eqref{2nd_rate_f}.

Suppose, from now on, that $\mu = \delta_{\mf c}$, for some
$\mf c\in \mc C$, and assume, without loss of generality, that
$\mf c = 0$. To complete the proof of the $\Gamma-\limsup$, we need to
prove the following result.

\begin{lemma}
\label{l4_limsup}
There exists a sequence of probability measures
$\mu_{\epsilon}\in \mss P(\bb T)$ which converges weakly to
$\delta_{0}$, and such that
\begin{equation*}
\limsup_{\epsilon \to 0}\ms I_\epsilon(\mu_\epsilon) \;\leq\;
b'(0)\vee 0\,. 
\end{equation*}
\end{lemma}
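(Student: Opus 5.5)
We plan to take $\mu_\epsilon$ to be an explicit localized Gibbs-type measure with a smooth, strictly positive density, so that Lemma \ref{l03} applies and gives a closed formula for $\ms I_\epsilon(\mu_\epsilon)$. Recall $S' = -\mss b/\mss a$ and $\mss b(0)=0$, so $S'(0)=0$ and $S''(0) = -\mss b'(0)/\mss a(0)\neq 0$. Fix a small neighbourhood $I_0=(-\delta,\delta)$ containing no other critical point. Let $G\in C^2(\bb T)$ satisfy $G(y) = |S(y)-S(0)|$ on $I_0$, $G\ge c_1>0$ outside $I_0$, and $G(y)=0$ only at $y=0$. By choosing $\delta$ small, $|S(y)-S(0)|$ is $C^2$ on $I_0$, since $S-S(0)$ has a nondegenerate extremum at $0$. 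Thus $G=S-S(0)$ if $0$ is a stable point ($\mss b'(0)<0$) and $G = S(0)-S$ if it is unstable. Set $m_\epsilon = e^{-G/\epsilon}/\<e^{-G/\epsilon}\>$ and $\mu_\epsilon = m_\epsilon\,dy$. Weak convergence $\mu_\epsilon\to\delta_0$ follows exactly as in the proof of Lemma \ref{l04}.

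By Lemma \ref{l03}, since $\epsilon m_\epsilon'/m_\epsilon = -G'$,
\begin{equation*}
\ms I_\epsilon(\mu_\epsilon) \;=\; \frac{1}{4\epsilon}\int_{\bb T} \mss a\,\Big\{-G' - \frac{\mss b}{\mss a} + \epsilon\frac{\mss a'}{\mss a} + \frac{B/(\mss a m_\epsilon)}{\<1/(\mss a m_\epsilon)\>}\Big\}^2 d\mu_\epsilon\,.
\end{equation*}
On $I_0$ we have $-G' - \mss b/\mss a = 0$ in the stable case and $= -2\mss b/\mss a$ in the unstable case. Expanding the square, the main term in the unstable case is
\begin{equation*}
\frac{1}{\epsilon}\int_{I_0}\frac{\mss b^2}{\mss a}\,d\mu_\epsilon\,.
\end{equation*}
Under $\mu_\epsilon$ restricted to $I_0$, $y$ is approximately Gaussian with variance $\epsilon/\kappa$, where $\kappa = |S''(0)| = \mss b'(0)/\mss a(0)$. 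Using $\mss b(y)^2\approx \mss b'(0)^2 y^2$, a Laplace-method computation gives the limit
\begin{equation*}
\frac{\mss b'(0)^2}{\mss a(0)}\cdot\frac{1}{\kappa} \;=\; \mss b'(0)\,.
\end{equation*}
This is also the optimal width: with $G = \kappa' y^2/2$ the cost is $\mss a(0)(\kappa'+\kappa)^2/(4\kappa')$, minimized at $\kappa'=\kappa$. In the stable case the main term vanishes identically on $I_0$, which is how $\mss b'(0)\vee 0$ arises.

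It remains to show the error terms vanish, and we expect this to be the main technical step. First, the term $(1/4\epsilon)\int \mss a(\epsilon \mss a'/\mss a)^2 d\mu_\epsilon$ is $O(\epsilon)$. Second, the cross term between $\epsilon\mss a'/\mss a$ and $2\mss b/\mss a$ is bounded by $C\int_{I_0}|y|\,d\mu_\epsilon = O(\sqrt\epsilon)$. Third, outside $I_0$ the bracket has bounded contributions from $G'$ and $\mss b/\mss a$, but is weighted by $\mu_\epsilon(I_0^c) = O(e^{-c_1/\epsilon}/\sqrt\epsilon)$, which kills the $1/\epsilon$ prefactor. Fourth, for the $B$-term, note
\begin{equation*}
\frac{1/(\mss a m_\epsilon)}{\<1/(\mss a m_\epsilon)\>} \;=\; \frac{e^{G/\epsilon}/\mss a}{\<e^{G/\epsilon}/\mss a\>}\,.
\end{equation*}
By Laplace's method the denominator is at least $c\sqrt{\epsilon}\,e^{\max G/\epsilon}$, so this ratio is at most $C\epsilon^{-1/2}e^{(G-\max G)/\epsilon}$. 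Its square, integrated against $m_\epsilon \propto e^{-G/\epsilon}$ and divided by $\epsilon$, is therefore bounded by $C\epsilon^{-3/2}\,e^{-\max G/\epsilon}$, up to polynomial factors from the normalization $\<e^{-G/\epsilon}\>\sim\sqrt{\epsilon}$. This is exponentially small. The cross terms involving the $B$-term are handled by Cauchy--Schwarz.

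Collecting these bounds gives $\limsup_{\epsilon\to0}\ms I_\epsilon(\mu_\epsilon)\le \mss b'(0)\vee 0$, which proves Lemma \ref{l4_limsup}. Combined with the convexity argument preceding the lemma, this yields Proposition \ref{p4_limsup}.
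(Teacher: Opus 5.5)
Your proof is correct. It shares the paper's basic idea: a Gaussian-type density at $0$ with variance $\epsilon/|S''(0)|$, evaluated through the explicit formula of Lemma \ref{l03}. The technical execution differs in two ways.

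\emph{The paper's route.} The paper uses $e^{-Cx^2/2\epsilon}\Phi(x)$ with a compactly supported cutoff $\Phi$. It needs $(\Phi')^2/\Phi$ bounded, hence $\Phi=\Psi^2$, so the density vanishes on a set of positive Lebesgue measure. It then regularizes by $m_{\epsilon,\delta}=(m_\epsilon+\delta)/(1+\delta)$. Since $\<1/(\mss a\, m_{\epsilon,\delta})\>$ is of order $\delta^{-1}$, the $B$-terms disappear as $\delta\to 0$, and the paper concludes by lower semicontinuity of $\ms I_\epsilon$. Finally it evaluates three terms separately: the cross term, by integration by parts, gives $\mss b'(0)/2$, and each diagonal term gives $|\mss b'(0)|/4$.

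\emph{Your route.} You keep a strictly positive density on $\bb T$, so Lemma \ref{l03} applies directly. This removes the $\delta$-regularization, the semicontinuity step and any condition on the cutoff. The price is an extra estimate: the $B$-term must be shown exponentially small, via the Laplace lower bound on $\<e^{G/\epsilon}/\mss a\>$. You also take $G=|S-S(0)|$ exactly on $I_0$ rather than its quadratic approximation. Then the bracket vanishes identically near $0$ in the stable case and equals $-2\mss b/\mss a$ in the unstable case. So $\mss b'(0)\vee 0$ comes out of a single Laplace computation, not a cancellation among three terms.

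\emph{A simplification.} Your fourth error estimate and the Cauchy--Schwarz step can be dropped. For any strictly positive $C^2$ density $m$, the cross term between the $B$-term and $\epsilon[m'/m+\mss a'/\mss a]$ integrates to zero by periodicity, as in the proof of Assertion \ref{estim_00_limsup1}. The whole $B$-dependent part of the formula in Lemma \ref{l03} is therefore
\begin{equation*}
\frac{B^2}{4\epsilon\,\<1/(\mss a\, m)\>}\;-\;\frac{B^2}{2\epsilon\,\<1/(\mss a\, m)\>}\;=\;-\,\frac{B^2}{4\epsilon\,\<1/(\mss a\, m)\>}\;\le\;0\,.
\end{equation*}
For an upper bound you may simply discard it. This also shows that the paper's $\delta$-regularization is not essential either.
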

    
Fix $0<\delta_0<\delta_1<1/32$ and let $\Phi \in C^{2} (\bb R)$ be a
non-negative function, bounded by $1$, whose support is contained in
$( -1/2 +\delta_0 , 1/2-\delta_0)$ and such that $\Phi(x) = 1$ for
$x\in (-1/2+\delta_1,1/2-\delta_1)$. We also require $(\Phi')^2/\Phi$
to be bounded. This is not a serious condition. Let $\Psi$ be a function
which fulfills all requirements for $\Phi$, but the last one. Define
$\Phi$ as $\Psi^2$. The first requirements hold for $\Phi$, and
$(\Phi')^2/\Phi = (\Psi')^2$, which is a bounded function.

Let $G_\epsilon:\bb R \to \bb R$, $\epsilon>0$, be the function given by
\begin{equation*}
G_\epsilon(x) = e^{-C\frac{x^2}{2\epsilon}},
\end{equation*}
where $C:= |\mss b'(0)|/\mss a(0)$. Consider the probability measure
$\mu_\epsilon\in \mss P(\bb T)$ defined by
\begin{equation*}
\mu_\epsilon(dx) := m_\epsilon(x) \, dx \,:=\,
\frac{1}{Z_\epsilon} G_\epsilon(x) \Phi(x) \, dx\,, 
\end{equation*}
where $Z_\epsilon$ is a normalization constant that turns
$\mu_\epsilon$ a probability measure.  Clearly, the support of the
measure $\mu_\epsilon$ is contained in the interval $(-1/2,1/2)$, and
$\mu_\epsilon$ can be considered a probability measure on the
torus. Moreover, by a change of variables, as $\Phi(\cdot)$ is
continuous at the origin and equal to $1$,
\begin{equation}
\label{15}
\lim_{\epsilon\to 0} \, \frac{Z_\epsilon}{\sqrt{\epsilon}}
\,=\, \lim_{\epsilon\to 0} \,  \frac{1}{\sqrt{\epsilon}} \,\int_{\bb R}
e^{-C\frac{x^2}{2\epsilon}} \,  \Phi(x) \, dx
\,=\, \lim_{\epsilon\to 0} \,  \frac{1}{\sqrt{C}} \,\int_{\bb R}
e^{-\frac{x^2}{2}} \,  \Phi(x \sqrt{\epsilon}/\sqrt{C}) \, dx
\,= \, \sqrt \frac{2\pi}{C}  \,.
\end{equation}

\begin{assertion}
The sequence of probability measures $(\mu_\epsilon)_{\epsilon>0}$
converges weakly to $\delta_0$.
\end{assertion}

\begin{proof}
Fix $\phi\in C^{2}(\bb T)$.  Since $\Phi$ is bounded by one, a
change of variables yields
\begin{equation*}
|\langle \mu_\epsilon, \phi\rangle - \phi(0)|
\leq \frac{\sqrt{\epsilon}}{Z_\epsilon}
\int_{-1/2\sqrt{\epsilon}}^{1/2\sqrt{\epsilon}} 
|\phi(\sqrt{\epsilon}x) - \phi(0)| \, e^{-C\frac{x^2}{2}} \, dx\,.  
\end{equation*}
As $Z_\epsilon$ is of order $\sqrt{\epsilon}$ and $\phi$ is
continuous, by the dominated convergence theorem, the right-hand
side of the previous expression vanishes as $\epsilon\to 0$, which
proves the assertion.
\end{proof}

To complete the proof of the lemma, we slightly perturb
$\mu_\epsilon$ in order to apply Lemma \ref{l03} to estimate
$\ms I_\epsilon(\mu_\epsilon)$.

\begin{assertion}
\label{estim_00_limsup1}
Given $\delta>0$, let $\mu^\delta_\epsilon\in \mss P(\bb T)$ be the
probability measure given by the density
\begin{equation*}
m_{\epsilon,\delta}(x):= \frac{m_\epsilon(x)+\delta}{1+\delta}.
\end{equation*}
Then,
\begin{equation*}
\lim_{\delta\to 0}
\left|\;\ms I_\epsilon (\mu_\epsilon^\delta) \;-\;
\frac{1}{4\epsilon}\int_{\bb T} \mss a\,
\left\{\epsilon \Big[\frac{m_{\epsilon, \delta}'}{
m_{\epsilon,\delta}}
\,+\, \frac{\mss a'}{\mss a}\Big]
\,-\, \frac{\mss b}{\mss a}\right\}^2 d\mu_\epsilon^\delta \;\right|
\;=\, 0\,.
\end{equation*}
\end{assertion}

\begin{proof}
Since $m_{\epsilon,\delta}(x)\geq \delta/(1+\delta)>0$, $x\in \bb T$,
by Lemma \ref{l03}, 
\begin{equation*}
\ms I_\epsilon(\mu^\delta_\epsilon)=
\frac{1}{4\epsilon}\int_{\bb T}\mss a\, \left\{\epsilon\,
\Big[\frac{m_{\epsilon, \delta}'}{ m_{\epsilon,\delta}}\,+\,
\frac{\mss a'}{\mss a}\Big] \,+\, \frac{B/(\mss a\,
m_{\epsilon,\delta})}{ \<1/ \mss a \, 
m_{\epsilon,\delta} \> } \,-\, \frac{\mss b}{\mss
a}\right\}^2d\mu_{\epsilon}^{\delta}\,. 
\end{equation*}
Expand the square. To get the desired result it is enough to show that
the integral of the terms involving the factor
$B/(\mss a\, m_{\epsilon,\delta})$ vanishes as $\delta\to 0$.

Since $m_\epsilon$ vanishes on a set of strictly positive Lebesgue
measure, the term $\<1/ \mss a \, m_{\epsilon,\delta} \> $ is bounded
below by $C_0 \delta^{-1}$, and above by $C_1 \delta^{-1}$ for finite
constants $0<C_0<C_1<\infty$.  Therefore, by the definitions of
$\mu^{\delta}_{\epsilon}$ and $B$, 
\begin{equation*}
\lim_{\delta\to 0} \frac{1}{4\epsilon}\, \int_{\bb T}\mss a\,
\left\{\frac{B/(\mss a\, m_{\epsilon,\delta})}
{\<1/ \mss a \, m_{\epsilon,\delta} \>
}\right\}^2\mu^{(\delta)}_\epsilon
\, =\, \lim_{\delta\to 0}\frac{1}{4\epsilon}
\frac{B^2}{\<1/ \mss a \, m_{\epsilon,\delta} \>} = 0\,,
\end{equation*}
and
\begin{equation*}
\lim_{\delta\to 0}\frac{1}{4\epsilon}\int_{\bb T}
- 2\mss b\,\frac{B/(\mss a\, m_{\epsilon,\delta})}
{\<1/ \mss a \, m_{\epsilon,\delta} \>}
d\mu^{(\delta)}_{\epsilon}(x)
\,=\,  \lim_{\delta\to 0}-\frac{B^2}{2\epsilon}
\frac{1}{\<1/ \mss a \, m_{\epsilon,\delta} \>} \,=\, 0\, .
\end{equation*}

On the other hand, since $m$ and $\mss a$ are periodic, 
\begin{align*}
\frac{1}{2\epsilon} \,
\frac{B}{\< 1/\mss a \, m_{\epsilon,\delta} \>}
\int_{\bb T} \Big[\frac{m_{\epsilon,\delta}'(x)}{m_{\epsilon,\delta}}(x)
\,+\, \frac{\mss a'(x)}{\mss a(x)}\Big]\,dx \,=\, 0\,.
\end{align*}
This completes the proof of the  assertion.  
\end{proof}

Since $\mu_\epsilon^\delta$ converges weakly to $\mu_\epsilon$,
by the lower semicontinuity of $\ms I_\epsilon$,
\begin{equation}
\label{13}
\ms I_\epsilon (\mu_\epsilon) \,\le\, \liminf_{\delta\to 0}
\ms I_\epsilon (\mu^\delta_\epsilon) \,.
\end{equation}
Since $m'_{\epsilon,\delta} = m'_{\epsilon}/(1+\delta)$, by the
monotone convergence theorem and the previous assertion,
\begin{equation}
\label{14}
\liminf_{\delta\to 0}
\ms I_\epsilon (\mu^\delta_\epsilon)
\,=\,
\frac{1}{4\epsilon}\int_{\bb T} \mss a\,
\left\{\epsilon \Big[\frac{m_{\epsilon}'}{
m_{\epsilon}}
\,+\, \frac{\mss a'}{\mss a}\Big]
\,-\, \frac{\mss b}{\mss a}\right\}^2 d\mu_\epsilon
\,=\,
\frac{1}{4\epsilon}\int_{\bb T} \mss a\,
\left\{\epsilon \, \frac{m_{\epsilon}'}{m_{\epsilon}}
\,+\, \frac{\epsilon\, \mss a' - \mss b}{\mss a} \, \right\}^2 d\mu_\epsilon
\,.
\end{equation}
It remains to estimate the right-hand side as $\epsilon\to 0$.

We estimate term by term.  We start with the cross term. We claim that
\begin{equation*}
\lim_{\epsilon\to 0} \frac{1}{2}\int_{\bb T}
\big[\, \epsilon \, \mss a'(x) - \mss b(x)\,\big]\,
m'_\epsilon(x) \; dx \,=\, \frac{\mss b'(0)}{2} \,\cdot
\end{equation*}
Indeed, as $\mu_\epsilon$ converges to the Dirac mass at the origin,
an integration by parts yields the result.

In the second diagonal term, the expressions involving
$\mss a'(\cdot)$ are easy to estimate. Since $\mss b(\cdot)$ vanishes
at the critical points, and $\mss a(\cdot)$ is strictly positive,
\begin{equation*}
\lim_{\epsilon \to 0} \frac{1}{4}\int_{\bb T}
\left\{\epsilon\, \frac{\mss a'(x)^2 }{\mss a(x)}
\,-\,2\, \mss b(x)\,  \mss a'(x)  \right\}
\, \mu_\epsilon(dx)  \,=\, 0\,.
\end{equation*}

The next two assertions take care of the remaining terms.

\begin{assertion}
\label{a_estim_2}
With the notation introduced above,
\begin{equation*}
\lim_{\epsilon\to 0} \frac{1}{4\epsilon}\int_{\bb T} \mss a\,
\Big\{\epsilon \, \frac{m_{\epsilon}'}{m_{\epsilon}} \,\Big \}^2
d\mu_\epsilon \, = \,  \frac{|\mss b'(0)|}{4}  \,\cdot
\end{equation*}
\end{assertion}

\begin{proof}
By definition of $m_{\epsilon}$, and since $\Phi$ has compact support
on $(-1/2, 1/2)$, the left-hand side is equal to
\begin{equation*}
\frac{\epsilon}{4\, Z_\epsilon} \, \int_{\bb R}
\mss a(x)\, \left\{-\,\frac{Cx}{\epsilon}
\;+\; \frac{\Phi'(x)}{\Phi (x)}\, \right\}^2\,
\Phi (x)\, e^{-C\frac{x^2}{2\epsilon}}\;dx \,.
\end{equation*}
The cross term is easy to handle. After a change of variables it
becomes
\begin{equation*}
-\, \frac{C\, \epsilon}{2\, Z_\epsilon} \, \int_{\bb R}
\mss a(\epsilon x)\, x\, 
\Phi'(\epsilon x) \, e^{-Cx^2/2}\;dx \,.
\end{equation*}
Since $Z_\epsilon$ is of order $\sqrt{\epsilon}$, this expression
vanishes as $\epsilon\to 0$.

On the other hand, since, by assumption, $\Phi'(x)^2/\Phi (x)$ is a
bounded function whose support is contained in
$(-1/2 +\delta_0, -1/2+\delta_1) \cup (1/2 -\delta_1, 1/2-\delta_0)$
and $1/2 -\delta_1\ge 1/4$, the second diagonal term is bounded by
\begin{equation*}
C_0\,  \frac{\epsilon}{Z_\epsilon} \, \int_{|x|\ge 1/4}
\mss a(x)\,\, e^{-Cx^2/2\epsilon}\;dx \,.
\end{equation*}
As $Z_\epsilon$ is of order $\sqrt{\epsilon}$, this expression
vanishes as $\epsilon\to 0$.

We turn to the first diagonal term which is equal to
\begin{equation*}
\frac{C^2}{4\, \epsilon\, Z_\epsilon} \, \int_{\bb R}
\mss a(x)\, x^2\, 
\Phi (x) \, e^{-Cx^2/2\epsilon}\;dx
\,=\,
\frac{C^2\sqrt{\epsilon}}{4\, Z_\epsilon} \, \int_{\bb R}
\mss a(\epsilon x)\, \Phi (\epsilon x) \,  x^2\, 
e^{-Cx^2/2}\;dx \,.
\end{equation*}
Since $\Phi(\cdot)$, $\mss a(\cdot)$ are continuous at the origin,
$\Phi(0)=1$, $C=|\mss b'(0)|/\mss a(0)$, by \eqref{15} the previous
expression converges, as $\epsilon\to 0$, to $(1/4) |\mss b'(0)|$. To
complete the proof of the assertion, it remains to recollect all
previous estimates.
\end{proof}

A similar argument yields that
\begin{equation*}
\lim_{\epsilon\to 0} \frac{1} {4\epsilon} \int_{\bb T}
\frac{\mss b^2(x)}{\mss a(x)}\, \mu_\epsilon (dx) 
\, =\, \frac{|\mss b'(0)|} {4}\,\cdot 
\end{equation*}

By \eqref{13}, \eqref{14}, and recollecting all previous estimates
yields that
\begin{equation*}
\limsup_{\epsilon\to 0}  \ms I_\epsilon(\mu_\epsilon) \leq
\frac{\mss b'(0)}{2} + \frac{|\mss b'(0)|}{4} +  \frac{|\mss b'(0)|}{4}
\,=\, \mss b'(0)\vee 0\,.
\end{equation*}
We have established Lemma \ref{l4_limsup}, and, thus, also proven
Proposition \ref{p4_limsup}.

\begin{proof}[Proof of Proposition \ref{s04}.]
The first assertion of the proposition is a direct consequence of
Propositions \ref{p4.1} and \ref{p4_limsup}.  By the definition of the
functional $\ms J^{(0)}$ given in \eqref{2nd_rate_f}, \eqref{2f_1},
the $0$-level set of the functional $\ms J^{(0)}$ consists of all
convex combinations of Dirac measures supported on the elements of
$\mc M$.
\end{proof}

\section{Higher order expansion}
\label{sec5}

In this section we introduce the main technical and theoretical tools
needed to prove Proposition \ref{s03} and Theorem \ref{mt3}. The
presentation is divided in two parts.

In the first subsection, we introduce a useful representation formula
for the level two large deviation rate functionals $\bb I^{(p)}$,
$1\leq p \leq \widehat{\mf q}$, associated to the reduced Markov
chains $\widehat{\bb X}_p(\cdot)$ defined in \eqref{p_generator}. This
formula, originally derived in \cite{l23}, will be used in Section
\ref{sec5.2} to verify the third condition in the definition of
$\Gamma-$expansion, and in Section \ref{sec6} to construct the
recovery sequence for the $\Gamma-\limsup$ at the intermediate
time-scales.

In the second subsection, we state Proposition
\ref{resolvent_limit_point_2}, which was essentially proven in
\cite{lm}. This result puts at our disposal the metastable properties
of the process $X_\epsilon(\cdot)$ that will be needed throughout the
remainder of the article. In particular, it plays a key role in
Section \ref{sec5.2}, where it is used to construct the test functions
required for the proof of the $\Gamma-\liminf$ of
$\theta_\epsilon^{(p)}\ms I_\epsilon$, for
$1\leq p\leq \widehat{\mf q}$, and in Section \ref{sec8}, where it is
used to define the densities of the recovery sequence needed to
establish the $\Gamma-\limsup$ of
$\theta_\epsilon^{(p)} \ms I_\epsilon$ at the last metastable scale,
under the assumption that the reduced chain
$\widehat{\bb X}_{\mf q}(\cdot)$ is irreducible and non-reversible.

\subsection*{The rate function $\bb I^{(p)}$ for the reduced Markov
chain $\widehat{\bb X}_p(\cdot)$:}
\label{sec5.1}

Fix $1\leq p \leq \widehat{\mf q}$ and recall the definition of the rate
functional $\bb I^{(p)}$ given in \eqref{25}. In this subsection we
introduce a formula for $\bb I^{(p)}$ that will be useful in the proof of
the $\Gamma-$convergence of $\theta_\epsilon^{(p)}\ms
J_\epsilon$. This requires some preliminary notation.

For every set $\mf C\subset \mc S_p$, denote by
$\widehat{\bb L}_{p,\mf C}$ the generator of the process
$\widehat{\bb X}_p(\cdot)$ reflected on $\mf C$,
\begin{equation}
\label{37}
{\color{blue}(\widehat{\bb L}_{p, \mf C}f)(\mc M)} = \sum_{\mc M'\in
\mf C}R_p(\mc M,\mc M')\{f(\mc M')-f(\mc M)\}\,, \quad f: \mf C\to
\mathbb R\,,\quad \mc M\in \mf C. 
\end{equation}   
Similarly, given a set $\mf C\in \mc S_p$, let
$\bb I^{(p)}_{\mf C}: \mss P(\mf C)\to [0,+\infty]$ be the large
deviations rate functional associated to $\widehat{\bb L}_{p,\mf C}$
and given by
\begin{equation}
\label{20}
{\color{blue}\bb I^{(p)}_{\mf C}(\omega')} := \sup_{h}-\sum_{\mc
M\in \mf C}\frac{[\widehat{\bb L}_{p, \mf C}h](\mc M)}{h(\mc
M)}\omega'(\mc M), \hspace{5mm}  \omega'\in \mss P(\mf C),  
\end{equation}
where the supremum is taken with respect to all positive functions
$h:\mf C\to \bb R$.

Fix a probability measure $\omega\in \mss P(\mc S_p)$. Let
${\color{blue}\mc S_p^{(\omega)}}\subset \mc S_p$ be the support of
$\omega$. We do not assume that $\mc S_p^{(\omega)}$ is a proper
subset of $\mc S_p$. Denote by
{\color{blue}$\widehat{\bb X}^{(\omega)}_p(\cdot)$} the Markov chain
induced by $\widehat{\bb L}_{p,\mc S_p^{(\omega)}}$. Let
${\color{blue}\mf E_{1}^{p},...,\mf E_{o_p}^{p}}\subset \mc
S_{p}^{(\omega)}$ be the equivalence classes of
$\widehat{\bb X}_p^{(\omega)}(\cdot)$, and
${\color{blue}\mf D_{1}^{p},..., \mf D_{l_p}^{p}} \subset \mc
S_{p}^{(\omega)}$ the equivalence classes of
$\widehat{\bb X}_p^{(\omega)}(\cdot)$ containing two or more
elements. Here, ${\color{blue}o_p}$ and ${\color{blue}l_p}$ are
nonnegative integers representing the number of equivalent classes and
equivalent classes with two or more elements, respectively.

Some elementary observations.  If the support of $\omega(\cdot)$ is
not a singleton and $\widehat{\bb X}_p^{(\omega)}$ is irreducible,
then $o_p=l_p=1$, and $\mf E_1^{p}=\mf D_1^{p} = \mc
S_p^{(\omega)}$. If $\omega$ is strictly positive, then
$\widehat{\bb X}_p^{(\omega)} = \widehat{\bb X}_p$. If, additionally,
$\widehat{\bb X}_p^{(\omega)}$ is irreducible, then, by construction,
$p = \widehat{\mf q}$.

For every set $\mf C\subset \mc S_p$, let $\omega(\cdot|\mf C)$ be the
measure $\omega$ conditioned on $\mf C$,
\begin{equation*}
{\color{blue}\omega(\mc M|\mf C)} := \frac{\omega(\mc M)}{\omega(\mf
C)}\,,\quad \mc M\in \mf C.   
\end{equation*}          
Lemmata A.7 and A.9 in \cite{l23} provide the following 
formula for the rate functional $\bb I^{(p)}$:
\begin{align}
\label{reduced_ldf0}
\bb I^{(p)}(\omega) &= \sum_{r=1}^{l_p}\omega(\mf D^{p}_{r})\;\bb
I^{(p)}_{\mf D^{p}_{r}}(\omega(\;\cdot\;|\;\mf D^{p}_{r})) \;+\;
\sum_{\mc M\in \mc S_p^{(\omega)}}\sum_{\mc M'\neq \mc
S_p^{(\omega)}}\omega(\mc M)R_p(\mc M,\mc M') \notag
\\ 
&+ \sum_{r=1}^{o_p}\sum_{s\neq r}\sum_{\mc M\in \mf
E^{p}_{r}}\sum_{\mc M'\in \mf E^{p}_s}\omega(\mc M)R_p(\mc M,\mc M').   
\end{align}

Notice that for every $1\leq r \leq l_p$, the reflected generator
$\widehat{\bb L}_{p,\mf D_r^p}$ is irreducible and the measure
$\omega(\cdot|\mf D_r^p)$, is strictly positive on $\mf D_r^p$.
Denote by ${\color{blue}\nu_{\mf D_r}^p}\in \mss P(\mf D_r)$ the
stationary state of the Markov chain induced by
$\widehat{\bb L}_{p,\mf D_r}$.  By Lemma A.3 in \cite{l23}, for every
$1\leq r \leq l_p$ there exists a strictly positive function
${\color{blue}\widehat{h}_r \colon \mf D^p_r\to \bb R}$, unique up to
a multiplicative constant, such that
\begin{equation}
\label{reflected_equiv}
\bb I^{(p)}_{\mf D^p_r}(\omega(\cdot|\mf D^p_r))
= - \sum_{\mc M \in \mf D^p_r}\frac{[\widehat{\bb L}_{p,\mf
D^p_r}\;\widehat{h}_r]
(\mc M)}{\widehat{h}_r(\mc M)}\; \omega(\mc M|\mf D^p_r)\,.  
\end{equation} 

Denote by $\widehat{\bb L}_{p, \widehat{h}_r}$ the reflected generator
$\widehat{\bb L}_{p, \mf D^p_r}$ tilted by the function
$\widehat{h}_r$,
\begin{equation}
\label{51}
{\color{blue}(\widehat{\bb L}_{p,\widehat{h}_r}f)(\mc M)} :=
\sum_{\mc M'\in \mf D_r^p}
\frac{R_p(\mc M,\mc M')\, \widehat{h}_r(\mc M')}
{\widehat{h}_r(\mc M)}\{f(\mc M')-f(\mc M)\}\,,
\end{equation}
for $f\colon \mf D_r^p \to \bb R$. Let
{\color{blue}$R_{p,\widehat{h}_r}(\mc M,\mc M')\,:=
\,\widehat{h}_r(\mc M)^{-1}\,R_p(\mc M,\mc M')\,\widehat{h}_r(\mc
M')$} be the associated tilted jump-rates. Clearly, the Markov chain
induced by the tilted generator $\widehat{\bb L}_{p,\widehat{h}_r}$ is
irreducible as well.  By \cite[Lemma A.2]{l23}, the probability
measure $\omega(\cdot|\mf D_r^p)$ is the unique stationary state for
the tilted generator $\widehat{\bb L}_{p,\widehat{h}_r}$:
\begin{equation}
\label{h_tilted_invariant_m}
\sum_{\mc M \in \mf D_r^p}(\widehat{\bb L}_{p,\widehat{h}_r})f(\mc M)\,
\omega(\mc M|\mf D_r^p) = 0\,,
\end{equation}
for every $f\colon \mf D_r^p \to \bb R$. 

If the measure $\omega(\cdot\, |\mf D_r^p) $ satisfies the detailed
balance conditions for the jump rates $R_{p,\widehat{h}_r}$, by Lemma
\ref{pro_red_stat} and Theorem 5 in \cite{dv75} the function
$\widehat{h}_r$ is given by
\begin{equation*}
\label{sol_var_ld}
\widehat{h}_r(\mc M) := \sqrt{\frac{\omega(\mc M|\mf
D^p_r)}{\nu^p_{\mf D^p_r}(\mc M)}}\,,
\end{equation*}
where, recall, $\nu_{\mf D_r}^p$ is the
stationary state of the Markov chain induced by
$\widehat{\bb L}_{p,\mf D_r}$.

\subsection*{Resolvent characterization of metastability}
\label{sec_Meta}

In this subsection, we introduce the main tool needed for the proof of
the $\Gamma-$convergence of $\theta_\epsilon^{(p)}\ms I_\epsilon$,
$1\leq p\leq \widehat{\mf q}$. 

Denote by {\color{blue}$N(x,r)$}, $x\in \bb R$, $r>0$, the connected
component of the set $\{y\in \bb R: S(y)<S(x)+r\}$ that contains $x$,
and by {\color{blue}$B(x,r)$} the open ball of radius $r$ centered at
$x$. Fix ${\color{blue}\hat{r}}>0$ such that
$B(m_k,\hat{r})\subsetneq (0,1)$ for every $0\leq k\leq N-1$. Recall
the definition of the height $\mf h_1$ introduced in \eqref{f02}. Given
$0< r <\mf h_1\wedge \hat{r}$, denote by
$\mc N_{r}(m_k)\subset \bb T$, $0\leq k \leq N-1$, the set defined by
\begin{equation*}
\label{0_well}
{\color{blue}\mc N_{r}(m_k)} := \Pi(N(x,r)\cap B(m_k,r)).
\end{equation*}
Fix $0<{\color{blue}r_0} < (1/2) \, (\mf h_1\wedge \hat{r})$,
and denote by $\mc E_1(m_k)$, $0\leq k \leq N-1$, the set given by
\begin{equation}
\label{1_well}
{\color{blue}\mc E_1(m_k)}:= \mc N_{r_0}(m_k).
\end{equation}
For each $m_k\in \mc M$ we say that $\mc E_1(m_k)$ is a first order
well. By the definition of $r_0$,
\begin{equation}
\label{ord_well_1}
\mc E_1(m_k)\cap \mc E_1(m_l) \,=\, \varnothing,
\end{equation} 
for $k,l\in S_1$, $k\neq l$.

Fix $1\leq p \leq \widehat{\mf q}$, and recall from condition (c) above equation
\eqref{17} that $\mf u_p$ represents the number of elements of
$\mc S_p$. Let {\color{blue}$S_p =\{0, \dots, \mf u_p-1\}$}. Define the
$p-$order well $\mc E_p(k)$, $k\in S_p$, by
\begin{equation*}
\label{pth_well}
{\color{blue}\mc E_p(k)} := \bigcup_{m\in \mc M_p(k)}\mc E_1(m).
\end{equation*}
Note that the set $\mc E_p(k)$ is disconnected if $\mc M_p(k)$ is not
a singleton.  By property $\mc P_2(p)$ in Appendix \ref{secA1},
\eqref{ord_well_1} and the definition of $r_0$, we also have
$\mc E_p(k)\cap \mc E_p(l) = \varnothing$, for $k,l\in S_p$,
$k\neq l$. Note that we can induce on $\{\mc E_p(k)\}_{k\in S_p}$ the
order given in $\mc S_p$. Fix $r\in (2r_{0},\mf h_1\wedge \hat{r})$,
and for every $k\in S_{p}$, choose a smooth function
$\zeta^{p,k} \colon \bb T \to \bb R$ such that
\begin{equation}
\label{resolv_smooth}
\mss 1\{\mc E_{p}(k) \} \leq
\cb{\zeta^{p,k} } \leq \mss 1 \{\mc N^{p}_{r}(k) \}\,,
\quad {\color{blue}\mc N^{p}_{r}(k)} := \bigcup_{\mf m\in \mc M_p(k)}\mc N_r(\mf m).
\end{equation}

Fix a function $g\colon \mc S_p \to \bb R$, and let
$G_g: \bb T \to \bb R$ be its smooth lifting to $\bb T$ given by
\begin{equation}
\label{smooth_lifted}
{\color{blue}G_g(x)} = \sum_{k\in S_p}
g(\mc M_p(k)) \, \zeta_{p,k}(x).
\end{equation}    
Note that $G_g$ is smooth, constant on each well $\mc E_p(k)$,
$k\in S_p$, and vanishes on
$\bb T\setminus \bigcup_{k\in S_p}\mc N_r(k)$. For $\lambda>0$, let
$\phi_{p,\epsilon}$ be the unique solution of the resolvent equation
\begin{equation}
\label{smooth_resolvent_eq}
(\lambda - \theta_{\epsilon}^{(p)}
\ms L_\epsilon) \, {\color{blue} \phi_{p,\epsilon}} = G_g.
\end{equation}
Since $G_g$ is smooth, the function $\phi_{p,\epsilon}$ belongs to the
domain of $\ms L_\epsilon$. In particular, it is of class
$C^2(\bb T)$.  We are now in a position to state the main tool in the
proof of the $\Gamma-$convergence of the rate functional
$\theta_\epsilon^{(p)}\ms I_\epsilon$.

\begin{proposition}
\label{resolvent_limit_point_2}
For every $1\leq p \leq \widehat{\mf q}$, $\lambda>0$, and
$g\colon \mc S_p\to \bb R$, the unique solution $\phi_{p,\epsilon}$ of
the resolvent equation \eqref{smooth_resolvent_eq} satisfies
\begin{equation}
\label{resolvent_limit_point_2:eq}
\lim_{\epsilon \to 0}\max_{k\in S_p}\sup_{x\in \ms
E_p(k)}|\phi_{p,\epsilon}(x) - f_p(\mc M_p(k))| = 0, 
\end{equation}   
where $f_{p} \colon \mc S_p \to \bb R$ is the unique solution of the
reduced resolvent equation
\begin{equation*}
\label{red_res_equation}
(\lambda - \widehat{\bb L}_p)f_{p} = g.
\end{equation*}
\end{proposition}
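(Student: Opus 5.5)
The plan is to reduce the statement to the corresponding result on the real line, established in \cite{lm} for the lifted diffusion $Y_\epsilon(\cdot)$ of \eqref{03}, and then project back to the torus. Lift $G_g$ to a $1$-periodic function $\widetilde G_g$ on $\bb R$. By property (c) of the sets $\mz M_p(j)$, $\widetilde G_g$ equals $g(\mc M_p(k \bmod \mf u_p))$ on the lifted wells $\Pi^{-1}(\mc E_p(k))$. Uniqueness of bounded solutions of the resolvent equation on $\bb R$, together with the periodicity of $\mss a$ and $\mss b$, shows that the lifted resolvent solution $\widetilde\phi_{p,\epsilon}$ is $1$-periodic and coincides with $\phi_{p,\epsilon}\circ \Pi$. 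For the reduced chain we argue in the same way: the function $\widetilde f_p = f_p\circ \Pi$ on $\mz S_p$ is bounded and solves $(\lambda - \bb L_p)\widetilde f_p = \widetilde g$, because $\widehat{\bb L}_p$ is the projection of $\bb L_p$ and the rates are periodic. It is therefore the unique bounded solution of the lifted reduced resolvent equation. The statement thus reduces to showing that $\widetilde\phi_{p,\epsilon}$ converges uniformly on each lifted well to the corresponding value of the lifted reduced solution.

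This convergence is the content of the resolvent characterization in \cite{lm} for $1\le p\le \mf q$. Its proof runs by induction on $p$ through the postulates $\mc P_1(p)$–$\mc P_{10}(p)$. The induction uses three ingredients. First, a uniform bound $\|\widetilde\phi_{p,\epsilon}\|_\infty\le \|g\|_\infty/\lambda$. Second, a flatness estimate: inside each well $\mc E_1(m)$, the process at time-scale $\theta^{(p)}_\epsilon$ mixes instantaneously, so the oscillation of $\widetilde\phi_{p,\epsilon}$ vanishes. This is proved through the explicit one-dimensional Green function and the scale function $e^{S/\epsilon}$, which make $\phi$ nearly constant between consecutive maxima of height below $\mf h_p$. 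Third, identification of the limit values: one tests the equation against the scale-function harmonic extensions (capacities). The explicit one-dimensional formulas give the Eyring–Kramers rates $1/(\pi_p(k)\sigma_p(k,k\pm1))$ times $e^{-\mf h_p/\epsilon}$ across barriers of height exactly $\mf h_p$. Barriers higher than $\mf h_p$ contribute nothing, and wells belonging to the same $\bb X_{p-1}$-closed class are already equilibrated at the longer scale $\theta^{(p)}_\epsilon\gg\theta^{(p-1)}_\epsilon$.

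Once these estimates hold, any limit point of the vector of well values solves the lifted reduced resolvent equation and is bounded. Uniqueness then yields convergence of the whole sequence, and uniformity follows since there are only $\mf u_p$ wells up to translation. The main obstacle is the non-reversible case $B\neq0$, where $S$ is not periodic and no Gibbs stationary measure exists. The remedy is to work on the line, where $Y_\epsilon$ is reversible with respect to $e^{-S/\epsilon}/\mss a$ (an infinite measure). The computations above are then local, involving only finitely many neighbouring wells. It remains to check that the boundedness and uniqueness arguments survive despite the linear drift of $S$; I would handle this by the maximum principle on large finite intervals together with periodicity.
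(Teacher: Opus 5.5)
The gap is in your sentence ``This convergence is the content of the resolvent characterization in \cite{lm}.'' Your reduction to the line is harmless but not needed: the paper runs the argument of \cite[Theorem 3.3]{lm} directly on $\bb T$ with the projected chain $\widehat{\bb X}_p$. What your proposal skips is the one extra step the paper's proof has to make. The right-hand side of \eqref{smooth_resolvent_eq} is the smooth function $G_g=\sum_k g(\mc M_p(k))\,\zeta^{p,k}$. It equals $g(\mc M_p(k))$ on $\mc E_p(k)$, but it is also non-zero, with intermediate values, on the annuli $\mc N^p_r(k)\setminus\mc E_p(k)$. In the paper, the argument of \cite{lm} gives the limit only for the indicator right-hand side $G^*=\sum_k g(\mc M_p(k))\,\mb 1\{\mc E_p(k)\}$. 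A separate input, \cite[Proposition 4.2]{ls22}, is then used to pass from $G^*$ to $G_g$. You assert that \cite{lm} covers $\widetilde G_g$ directly, and your outline only uses the values of $\widetilde G_g$ on the wells, so this step is missing.

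Concretely, let $\phi^*_\epsilon$ solve the resolvent equation with $G^*$. Using the stochastic representation, you must show
\begin{equation*}
\sup_{x\in\mc E_p(k)}\big|\phi_{p,\epsilon}(x)-\phi^*_\epsilon(x)\big|
\,\le\, \max_{j\in S_p}|g(\mc M_p(j))|\, \sup_{x\in\mc E_p(k)}
\bb E^{p,\epsilon}_x\Big[\int_0^\infty e^{-\lambda s}\,
\mb 1\{X^{(p)}_\epsilon(s)\in A\}\,ds\Big]\,\longrightarrow\, 0\,,
\quad A=\bigcup_{j\in S_p}\big(\mc N^p_r(j)\setminus\mc E_p(j)\big)\,.
\end{equation*}
That is, the discounted occupation time of the annuli on the scale $\theta^{(p)}_\epsilon$ must vanish uniformly over starting points in the wells. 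This is true: the annuli lie inside the basins of the minima, where $S\ge S(\mf m)+c$ for some $c>0$. Each excursion into them is short, and they carry an exponentially small fraction of the time spent near $\mf m$. It still needs an argument, either an excursion estimate or \cite[Proposition 4.2]{ls22}.

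A cheaper alternative, for $g\ge 0$, is a sandwich:
\begin{itemize}
\item Bound $G^*\le G_g\le \sum_k g(\mc M_p(k))\,\mb 1\{\mc N^p_r(k)\}$ and use positivity of the resolvent.
\item Run the \cite{lm} argument also with the enlarged single-minimum wells $\mc N_r(\mf m)$, assuming that argument applies to them; this is plausible since $r<\mf h_1$.
\item Recover general $g$ by splitting $g=g^+-g^-$ and using linearity.
\end{itemize}

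Your description of the inner mechanics of \cite{lm}, including the unverified maximum-principle and uniqueness step on $\bb R$, is unnecessary if you cite \cite{lm} as the paper does. As a self-contained proof it would be far from complete.
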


\begin{proof}
Fix $1\leq p\leq \widehat{\mf q}$, $\lambda>0$ and a function
$g\colon\mc S_p \to \bb R$. Let $G^{*}\colon\bb T\to \bb R$ be the function
defined by
\begin{equation*}
G^{*}(x) = \sum_{k\in S_p}g(\mc M_p(k))1_{\mc E_p(k)}(x).
\end{equation*}
Denote by $\phi_\epsilon^{*}$ the solution of the resolvent equation
\eqref{smooth_resolvent_eq} with $G^{*}$ on the right hand side. The
same argument given in the proof of \cite[Theorem 3.3]{lm}, yields
\begin{equation*}
\lim_{\epsilon \to 0}\max_{k\in S_p}\sup_{x\in \ms
E_p(k)}|\phi_{p,\epsilon}^{*}(x) - f_p(\mc M_p(k))| = 0. 
\end{equation*}
To finish the proof, note that \cite[Proposition 4.2]{ls22} and the
above equation, imply \eqref{resolvent_limit_point_2:eq}.
\end{proof}

\begin{remark}
\label{ref_resolv_meta}
For a discussion on the connection between the resolvent condition and
the metastable behaviour of Markov processes, we refer the reader to
\cite{lms23b} for the case of Markov chains on countable state
spaces, and \cite{ls22} for the case of metastable diffusions.
\end{remark}

We finish this subsection with three more properties of the solution of
the resolvent equation \eqref{smooth_resolvent_eq}. Recall that $\bb P_x^\epsilon$ denotes the law of the diffusion $X_\epsilon(\cdot)$, starting from $x\in \bb T$, and write {\color{blue}$\bb E_x^\epsilon$} for the corresponding expectation. Similarly, let
{\color{blue}$\bb P_x^{p,\epsilon}$} be the law of the re-scaled
diffusion
${\color{blue}X_\epsilon^{(p)}(t)} :=
X_\epsilon(\theta_\epsilon^{(p)}t)$, $t\geq 0$, starting from
$x\in \bb T$, and denote by {\color{blue}$\bb E_x^{p, \epsilon}$} the
associated expectation. Clearly,
$\theta_\epsilon^{(p)}\ms L_\epsilon$ is the infinitesimal generator
of $X_\epsilon^{(p)}(\cdot)$. Then, by \cite[Section 6.5]{Fri}, the
solution of \eqref{smooth_resolvent_eq} admits the following
stochastic representation:
\begin{equation*}
\label{stoc:rep}
\phi_{p,\epsilon}(x) = \bb
E^{p, \epsilon}_{x}\Big[\, \int_{0}^{\infty}e^{-\lambda s}\,
G_g ( X_\epsilon^{(p)} (s)) \; ds\,\Big]\;.
\end{equation*} 
The following result is a simple consequence of this formula.

\begin{lemma}
\label{l09}
Fix $\lambda>0$, $1\leq p\leq \widehat{\mf q}$ and a bounded function
$g:\mc S_p \to \bb R$. Let $\phi_\epsilon$ be the solution of
\eqref{smooth_resolvent_eq}. Then,
\begin{equation*}
\sup_{\epsilon>0}\sup_{x\in \bb T}|\phi_\epsilon(x)| \leq
\frac{1}{\lambda}\sup_{x\in \bb T}|G_g(x)| <+\infty. 
\end{equation*}
Moreover, if $g$ is nonnegative, then $\phi_\epsilon$ is also
nonnegative.
\end{lemma}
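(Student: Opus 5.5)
The plan is to read both assertions off the stochastic representation of $\phi_\epsilon$ stated just before the lemma,
\begin{equation*}
\phi_{\epsilon}(x) \;=\; \bb E^{p,\epsilon}_{x}\Big[\, \int_{0}^{\infty}e^{-\lambda s}\, G_g ( X_\epsilon^{(p)} (s)) \; ds\,\Big]\;,
\end{equation*}
valid because $G_g$ is smooth and the process lives on the compact torus (see \cite[Section 6.5]{Fri}).

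\emph{Uniform bound.} For every $x\in\bb T$ and every $\epsilon>0$, bring the absolute value inside the expectation and the time integral. Then bound $|G_g(X^{(p)}_\epsilon(s))|$ by $\sup_{\bb T}|G_g|$ and use $\int_0^\infty e^{-\lambda s}\,ds = 1/\lambda$. This gives $|\phi_\epsilon(x)|\le \lambda^{-1}\sup_{\bb T}|G_g|$. The right-hand side does not depend on $\epsilon$ or $x$, so we may take suprema on the left.

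\emph{Finiteness of the bound.} By \eqref{smooth_lifted} and \eqref{resolv_smooth}, each cut-off $\zeta^{p,k}$ takes values in $[0,1]$. Moreover, the supports $\mc N^p_r(k)$ of these cut-offs are disjoint for distinct $k$, by the choice of $r$ and property $\mc P_2(p)$. Hence at each point at most one term of the sum defining $G_g$ is nonzero, and $\sup|G_g|\le \max_k |g(\mc M_p(k))|<\infty$ since $\mc S_p$ is finite. Even without using disjointness, the crude bound $\sum_k |g(\mc M_p(k))|$ is finite, which suffices.

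\emph{Nonnegativity.} If $g\ge 0$, then $G_g\ge 0$ because $\zeta^{p,k}\ge 0$. The integrand in the representation is therefore nonnegative, so $\phi_\epsilon\ge 0$. An alternative route is the maximum principle: at a minimum point $x_0$ of $\phi_\epsilon$ we have $\phi'_\epsilon(x_0)=0$ and $\phi''_\epsilon(x_0)\ge0$, so $\ms L_\epsilon\phi_\epsilon(x_0)\ge 0$ and hence $\lambda\phi_\epsilon(x_0)\ge G_g(x_0)\ge0$.

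There is no real obstacle here. The only point needing care is the validity of the stochastic representation, which the paper takes from \cite{Fri}. If one prefers to avoid it, the same maximum-principle argument applied at a maximum point of $\phi_\epsilon$ (and of $-\phi_\epsilon$) gives the uniform bound directly.
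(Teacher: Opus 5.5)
Your proposal is correct and takes the same approach as the paper, which presents the lemma as a direct consequence of the stochastic representation $\phi_{p,\epsilon}(x)=\bb E^{p,\epsilon}_x\big[\int_0^\infty e^{-\lambda s}\,G_g(X^{(p)}_\epsilon(s))\,ds\big]$: the bound $\lambda^{-1}\sup|G_g|$ and nonnegativity (from $\zeta^{p,k}\ge 0$) are read off exactly as you do. Your maximum-principle alternative is also valid and self-contained, since it uses only that $\phi_\epsilon\in C^2(\bb T)$ and $\mss a>0$.
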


The next result provides a representation of the solution of the
resolvent equation \eqref{smooth_resolvent_eq} in terms of level
$p$-minima hitting
times

For every closed set $A\subset \bb T$, let $\tau(A)$ be the first time
the trajectory $\mtt x(\cdot)$ reaches the set $A$,
\begin{equation*}
\cb{\tau(A)} := \inf\{t\geq 0: \mtt x(t)\in A\}\,.
\end{equation*}
If $A$ is a singleton or a pair, say $A=\{a\}$, $A=\{a,b\}$ for some
$a,b\in \bb T$, we write $\cb{\tau(a)}$, $\cb {\tau(a,b)}$ for
$\tau(\{a\})$, $\tau(\{a,b\})$, respectively.

Fixed a level $1\leq p\leq \widehat{\mf q}$, let $\mc M_p\subset \mc M$ be the set of local minima of level $p$,
\begin{equation*}
    {\color{blue}\mc M_p}:= \bigcup_{k\in S_p}\mc M_p(k)\,.
\end{equation*}
For each point
$x \in \bb T\setminus \mc M_p$, which is not a local minima of level
$p$, denote by {\color{blue}$m_p^-(x), m_p^+(x)$} the closest level
$p$ minima to the left, right of $x$, respectively. That is,
$m_p^{-}(x)$, $m_p^+(x)\in \mc M_p$ and
\begin{equation*}
x\in (\,m_p^-(x),\,m_p^+(x)\,)\,, \quad
[\,m_p^-(x),\,m_p^+(x)\,]\,\cap\, \mc M_p \,=\,
\{\,m_p^-(x),\,m_p^+(x)\,\} \,.
\end{equation*}

\begin{lemma}
\label{s10}
Fix $\lambda>0$, $1\leq p\leq \widehat{\mf q}$ and a bounded function
$g\colon \mc S_p \to \bb R$. Let $\phi_{p,\epsilon}$ be the solution of
\eqref{smooth_resolvent_eq}. Then, for every
$x\in \bb T\setminus \mc M_p$,
\begin{align*}
\label{eq:resolv:replacement}
&\Big| \,\phi_{p,\epsilon}(x) \,-\, \phi_{p,\epsilon}(m_p^-(x))
\, \bb P_x^\epsilon \big[\, \tau(m_p^-(x))<\tau(m_p^+(x)) \,\big]
\,-\,\phi_{p,\epsilon}(m_p^+(x))\, 
\bb P_x^\epsilon \big[\, \tau(m_p^+(x))<\tau(m_p^-(x)) \,\big] \,\Big|
\notag\\
&\quad \leq 2\, \Vert G_g\Vert_\infty \,\bb E_x^\epsilon
\Big[\tau(m_p^-(x),m_p^+(x))\,/\,\theta^{(p)}_\epsilon\Big]\,. 
\end{align*}
In particular, 
\begin{equation*}
\liminf_{\epsilon\to 0}\inf_{x\in \bb T}\phi_{p,\epsilon}(x)
\,\ge \, \min_{\mc M\in \mc S_p}f_p(\mc M) \,.
\end{equation*}
\end{lemma}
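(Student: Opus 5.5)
The plan is to apply the strong Markov property at the exit time $\tau := \tau(m_p^-(x), m_p^+(x))$, working with the rescaled process $X^{(p)}_\epsilon$. For $x\in\bb T\setminus\mc M_p$, set $\tau_p := \tau/\theta^{(p)}_\epsilon$, the corresponding hitting time for $X^{(p)}_\epsilon$. Split the stochastic representation of $\phi_{p,\epsilon}$ at $\tau_p$:
\begin{equation*}
\phi_{p,\epsilon}(x) \;=\; \bb E^{p,\epsilon}_x\Big[\int_0^{\tau_p} e^{-\lambda s} G_g(X^{(p)}_\epsilon(s))\,ds\Big] \;+\; \bb E^{p,\epsilon}_x\big[ e^{-\lambda \tau_p}\, \phi_{p,\epsilon}(X^{(p)}_\epsilon(\tau_p))\big].
\end{equation*}
The first term is bounded in absolute value by $\Vert G_g\Vert_\infty\, \bb E^\epsilon_x[\tau/\theta^{(p)}_\epsilon]$. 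On the torus the process started at $x$ is confined to the arc $(m_p^-(x),m_p^+(x))$ until $\tau$, so $\tau<\infty$ a.s. and $X^{(p)}_\epsilon(\tau_p)\in\{m_p^-(x),m_p^+(x)\}$. (If $\mc M_p$ has a single point, the two endpoints coincide and the argument is the same.) Hence the second term equals
\begin{equation*}
\sum_{\pm}\phi_{p,\epsilon}(m_p^\pm(x))\, \bb E^\epsilon_x\big[e^{-\lambda\tau_p}\,\mb 1\{\tau(m_p^\pm(x))<\tau(m_p^\mp(x))\}\big].
\end{equation*}
Replacing $e^{-\lambda\tau_p}$ by $1$ produces the hitting probabilities in the statement. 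The error is at most $\max_\pm|\phi_{p,\epsilon}(m^\pm_p(x))|\,\bb E_x^\epsilon[1-e^{-\lambda\tau_p}]$. By Lemma \ref{l09} this is bounded by $\lambda^{-1}\Vert G_g\Vert_\infty\,\lambda\,\bb E^\epsilon_x[\tau_p] = \Vert G_g\Vert_\infty\,\bb E^\epsilon_x[\tau/\theta^{(p)}_\epsilon]$, using $1-e^{-u}\le u$. Adding the two errors gives the constant $2\Vert G_g\Vert_\infty$. We also need $\bb E^\epsilon_x[\tau]<\infty$, which holds because this is an exit time from a bounded interval for a uniformly elliptic one-dimensional diffusion.

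For the second assertion, write $c:=\min_{\mc M} f_p(\mc M)$. Each point $m\in\mc M_p$ lies in some well $\mc E_p(k)$, so by Proposition \ref{resolvent_limit_point_2}, $\min_{m\in\mc M_p}\phi_{p,\epsilon}(m)\ge c-o(1)$. The first assertion then gives, for $x\notin\mc M_p$,
\begin{equation*}
\phi_{p,\epsilon}(x)\;\ge\; c - o(1) - 2\Vert G_g\Vert_\infty\, \bb E^\epsilon_x[\tau]/\theta^{(p)}_\epsilon .
\end{equation*}
It therefore suffices to show that $\sup_{x}\bb E^\epsilon_x[\tau(m_p^-(x),m_p^+(x))]/\theta^{(p)}_\epsilon\to0$.

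This uniform bound on the expected exit time is the main obstacle. I would compute it explicitly through the one-dimensional Green's function. With scale density $e^{S/\epsilon}$ and speed density $e^{-S/\epsilon}/(\epsilon\,\mss a)$, the expected exit time from $(a,b)$ is bounded by
\begin{equation*}
\frac{1}{\epsilon}\int_a^b\int_a^b e^{(S(y)-S(z))/\epsilon}\,\mss a(z)^{-1}\,dz\,dy .
\end{equation*}
By Laplace asymptotics this is of order $e^{H/\epsilon}$ up to polynomial factors. Here $H$ is the largest value of $S(y)-S(z)$ over pairs with $z$ lying between an endpoint and $y$, i.e.\ the deepest descending barrier inside the interval. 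Between two consecutive level-$p$ minima, every intermediate well was absorbed at a level $q<p$, so its escape barrier is at most $\mf h_{p-1}<\mf h_p$ by \eqref{27}. This is the step requiring the postulates of Appendix \ref{secA1}, so I would make it precise by induction on $q$. One must also check that the maximal pair $(y,z)$ occurs only in the downhill direction toward $a$ or $b$, which the exact Green's function formula (with its hitting-probability weights) handles. Granting this, $H<\mf h_p$ and the ratio vanishes uniformly in $x$. If $p=1$, the only critical points between two consecutive minima are a single maximum, so $H=0$ and the bound is polynomial.

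If the exact barrier comparison proves delicate, a fallback is to bound the ratio only on $\bb T\setminus\bigcup_k\mc E_p(k)$ and to use Proposition \ref{resolvent_limit_point_2} directly inside the wells.
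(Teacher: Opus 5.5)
Your first assertion is proved correctly. Splitting the resolvent representation at $\tau=\tau(m_p^-(x),m_p^+(x))$ by the strong Markov property, and using $1-e^{-u}\le u$ together with Lemma~\ref{l09}, is the standard argument behind \cite[Proposition~4.1]{lm}, which the paper simply cites. Your reduction of the second assertion is also the paper's: combine Proposition~\ref{resolvent_limit_point_2} at the points of $\mc M_p$ with $\sup_{x}\bb E^\epsilon_x[\tau(m_p^-(x),m_p^+(x))]/\theta^{(p)}_\epsilon\to 0$.

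The gap is that this uniform exit-time estimate, which you call the main obstacle, is never proved.
\begin{itemize}
\item The bound you display, $\epsilon^{-1}\int_a^b\int_a^b e^{(S(y)-S(z))/\epsilon}\mss a(z)^{-1}\,dz\,dy$, is too weak. Its Laplace exponent is $\max_{[a,b]}S-\min_{[a,b]}S$. When $m_p^-(x)$ and $m_p^+(x)$ lie in consecutive states $\mz M_p(k)$ and $\mz M_p(k+1)$, the arc contains $\mz W^{(p)}_{k,k+1}$ and $\min_{[a,b]}S\le S(\mz M_p(k))$. The exponent is then at least $h^{p,+}_k\ge\mf h_p$, so the ratio to $\theta^{(p)}_\epsilon$ does not vanish.
\item The refined comparison from the exact Green's function is what would be needed. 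It amounts to showing $\max_z[\min(\max_{[a,z]}S,\max_{[z,b]}S)-S(z)]<\mf h_p$, and you only assert it (``Granting this''). Proving it requires following the cascade through the non-level-$p$ wells via the hierarchical postulates, which is the content of \cite[Lemma~4.3]{lm}.
\item Your fallback does not avoid the difficulty. You would still need the estimate for $x\notin\bigcup_k\mc E_p(k)$, which is exactly where exit times are long.
\end{itemize}

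The missing step is already available in the paper.
\begin{itemize}
\item The open arc between $m_p^-(x)$ and $m_p^+(x)$ contains no point of $\mc M_p$, and paths are continuous. Hence $\tau(m_p^-(x),m_p^+(x))=\tau(\mc M_p)$ $\bb P^\epsilon_x$-a.s.
\item Lemma~\ref{hitting_1} then gives $\sup_x\bb E^\epsilon_x[\tau(m_p^-(x),m_p^+(x))]/\theta^{(p)}_\epsilon\le C_0\,\epsilon^{-1}e^{-(\mf h_p-\mf h_{p-1})/\epsilon}$.
\item This tends to $0$ by \eqref{27}, with the convention $\mf h_0=0$.
\end{itemize}
This is exactly how the paper concludes.
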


\begin{proof}
The first assertion of the lemma follows from \cite[Proposition
4.1]{lm}. We turn to the second. Fix $x\in \bb T\setminus \mc M_p$. By
the first assertion of the lemma,
\begin{align*}
\phi_{p,\epsilon}(x)
\, \geq \,  \min_{m\in \mc M_{p}}\phi_{p,\epsilon}(m)
\,-\,2\, \Vert G_g\Vert_\infty\, 
\bb E_x^\epsilon\Big[\tau(m_p^-(x),m_p^+(x))\,/\,\theta^{(p)}_\epsilon\Big]\,.
\end{align*}
By Lemma \ref{hitting_1}, the previous expression is bounded below by
\begin{equation*}
\min_{m\in \mc M_{p}}\phi_{p,\epsilon}(m)
\,-\, \frac{C}{\epsilon}e^{[h_{p-1}-h_p]/\epsilon}\,,
\end{equation*}
for some finite constant $C$ which doesn't depend on $\epsilon$ nor on
$x$. Here, we adopted the convention that $h_0 = 0$.  Since $\mc M_p$
is a finite set and $h_p>h_{p-1}$, to complete the proof it remains to
let $\epsilon\to 0$ and to recall the statement of Proposition
\ref{resolvent_limit_point_2}.
\end{proof}

\section{$\Gamma-\liminf$  in the metastable time-scales} 
\label{sec5.2}

In this subsection we prove that $\ms J^{(p)}$ is a $\Gamma-\liminf$
of $\theta_\epsilon^{(p)}\ms I_\epsilon$ for
$1\leq p\leq \widehat{\mf q}$. This is the content of Proposition
\ref{p_liminf}. The core of the proof resides in the case where
$\ms J^{(p)}$ is finite, since the remaining situation will trivially
follow by the definition of a $\Gamma-\liminf$. The proof is divided
in two parts. First, we use \eqref{reduced_ldf0} and
\eqref{reflected_equiv} to show that, for $2\leq p\leq \widehat{\mf q}$, the
functional $\ms J^{(p)}(\mu)$ is finite if, and only if, $\mu$ belongs
to the zero-level set of $\ms J^{(p-1)}$. This verifies property (3)
in Definition \ref{gamma_expansion_def}. Next, we use the resolvent
characterization of metastability to construct a test function that
yield the desired $\Gamma-\liminf$ bound.

Fix $1\leq p\leq \widehat{\mf q}$ and recall the definition of the metastable
states $\mu^p_{\mc M_p(k)}(\cdot)$, $k\in S_p$, introduced in 
\eqref{21}. Denote by $\mss P_{\mc M_p}\subset \mss P(\bb T)$
the convex-hull of the $p-$order metastable states:
\begin{equation*}
{\color{blue}\mss P_{\mc M_p}}:=  \Big \{\sum_{l\in S_{p}}
\omega(\mc M_{p}(l)) \,
\mu^p_{\mc M_{p} (l)}(\cdot):\; \omega\in \mss P(\ms
S_{p})\Big\} \,. 
\end{equation*}
The next result characterizes the $0-$level set of $\ms J^{(p)}$ in
terms of $\mss P_{\mc M_{p+1}}$.

\begin{proposition}
\label{higher_ord:0level}
For every $0\leq p < \widehat{\mf q}$, the $0-$level set of $\ms J^{(p)}$ is the
convex hull of the $(p+1)-$order metastable states. That is,
\begin{equation*}
\left\{\mu\in \mss P(\bb T): \ms J^{(p)}(\mu) = 0\right\}\,=\, \mss P_{\mc M_{p+1}}.
\end{equation*}
\end{proposition}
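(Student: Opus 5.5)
The plan is to split into the case $p=0$ and the cases $1\le p<\widehat{\mf q}$.

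\emph{The case $p=0$.} Here the assertion is already contained in Proposition \ref{s04}. That result says the $0$-level set of $\ms J^{(0)}$ is the set of convex combinations of $\delta_{\mf m}$, $\mf m\in\mc M$. Since $\mc M_1(k)=\{\mf m_k\}$, we have $\mu^1_{\mc M_1(k)}=\delta_{\mf m_k}$, and this set is exactly $\mss P_{\mc M_1}$.

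\emph{The case $1\le p<\widehat{\mf q}$.} By \eqref{23}, \eqref{23b}, $\ms J^{(p)}(\mu)=0$ forces $\mu=\sum_{\mc M\in\mc S_p}\omega(\mc M)\mu^p_{\mc M}$ with $\bb I^{(p)}(\omega)=0$. The measures $\mu^p_{\mc M}$ have disjoint supports, so $\omega$ is uniquely determined by $\mu$. The problem therefore reduces to two facts about the finite chain $\widehat{\bb X}_p$:
\begin{enumerate}
\item[(i)] $\bb I^{(p)}(\omega)=0$ iff $\omega$ is a convex combination of the stationary states of the closed irreducible classes of $\widehat{\bb X}_p$;
\item[(ii)] under the lifting $\omega\mapsto\sum\omega(\mc M)\mu^p_{\mc M}$, these stationary states are precisely the measures $\mu^{p+1}_{\mc M_{p+1}(j)}$.
\end{enumerate}

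\emph{Proof of (i).} I use formula \eqref{reduced_ldf0}, in which all three terms are nonnegative, so each must vanish.
\begin{itemize}
\item The second term vanishes iff no mass of $\omega$ exits its support; that is, $\mc S_p^{(\omega)}$ is closed for $\widehat{\bb X}_p$.
\item The third term vanishes iff there are no jumps between distinct classes of $\widehat{\bb X}^{(\omega)}_p$. Together with the previous point, every class $\mf E^p_r$ is then closed. A singleton class $\{\mc M\}$ is then absorbing. I will check that this is impossible, or else that it is itself a closed class of $\widehat{\bb X}_p$, which is also allowed.
\item The first term vanishes iff $\bb I^{(p)}_{\mf D_r}(\omega(\cdot|\mf D_r))=0$ for every $r$. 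By \eqref{reflected_equiv}–\eqref{h_tilted_invariant_m}, or the standard Donsker–Varadhan fact for irreducible chains, this holds iff $\omega(\cdot|\mf D_r)=\nu^p_{\mf D_r}$.
\end{itemize}
The converse direction is immediate from the same formula.

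\emph{Proof of (ii).} This is the step I expect to be the main obstacle, since it requires the bookkeeping of property (d) and the postulates of Appendix \ref{secA1}. By property (d), each $\mc M_{p+1}(j)$ is the union of a closed irreducible class $\{\mc M_p(i): i\in A_{p+1,j}\}$, projected to the torus. I must show that the stationary measure of this class is $\nu(\mc M_p(i))=\pi_p(i)/\pi_{p+1}(j)$. For that I use that $R_p$ is nearest-neighbour with rates $1/(\pi_p(k)\sigma_p(k,k\pm1))$. On a closed class, both directions of each internal edge have active barrier height $\mf h_p$; I expect this to follow from the postulates, since otherwise the class would not be irreducible. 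The rates then satisfy detailed balance with respect to $\pi_p$, because $\pi_p(k)R_p(k,k+1)=1/\sigma_p(k,k+1)=\pi_p(k+1)R_p(k+1,k)$ and $\sigma_p$ is symmetric.

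There is one point to watch: on the torus a closed class might wrap around and be non-reversible. This cannot happen for $p<\widehat{\mf q}$. Wrapping would force a single class, which only occurs at level $\widehat{\mf q}$, and in any case the lifted class in $\bb Z$ is a finite interval.

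Given detailed balance, the stationary measure is $\pi_p/\pi_{p+1}(j)$. Using \eqref{21} and $\pi_{p+1}(j)=\sum_{i\in A_{p+1,j}}\pi_p(i)$, we get
\[
\sum_i \frac{\pi_p(i)}{\pi_{p+1}(j)}\,\mu^p_{\mc M_p(i)}=\mu^{p+1}_{\mc M_{p+1}(j)}.
\]
Convex combinations then correspond exactly, which gives both inclusions.
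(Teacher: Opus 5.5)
Your proposal is correct and follows essentially the paper's argument. The case $p=0$ comes directly from the definition of $\ms J^{(0)}$. For $p\ge 1$, you force $\omega$ to be a mixture of stationary states of closed classes by requiring each nonnegative term in \eqref{reduced_ldf0} to vanish, and you then identify those stationary states with the $\mu^{p+1}_{\mc M_{p+1}(j)}$ via detailed balance with respect to $\pi_p$; this last step is exactly Lemma \ref{pro_red_stat} and Proposition \ref{lifted_red_stat}, re-derived inline.

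Two of your remarks make explicit points the paper uses tacitly: that $\omega$ is determined by $\mu$ because the supports are disjoint, and that no closed class of $\widehat{\bb X}_p$ wraps around the torus when $p<\widehat{\mf q}$. Regarding your hedge on singletons: absorbing singleton classes do occur and simply become singleton states at level $p+1$, so there is nothing to rule out there.
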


\begin{proof}
For $p=0$, the result follows directly by the definitions of
$\ms J^{(0)}$, $\mu^1_{m_k}$, $k\in S_1$, given in \eqref{2nd_rate_f},
\eqref{21}, respectively.

We turn to the case where $1\leq p< \widehat{\mf q}$. Fix
$\mu\in \mss P_{\mc M_{p+1}}$, and choose
$\omega_{p+1}\in \mss P(\mc S_{p+1})$ such that
\begin{equation}
\label{eq:higher0level}
\mu(\cdot) = \sum_{\mc M\in \mc S_{p+1}}\omega_{p+1}(\ms
M)\, \mu_{\mc M}^{p+1}(\cdot). 
\end{equation}
Let $\omega_p\in \mss P(\mc S_p)$ be the measure given by
\begin{equation*}
\omega_p(\mc M)\::=\: \begin{cases}
\omega_{p+1}(\mc M_{p+1}(k))\,
\nu^p_{\mc M_{p+1}(k)}(\mc M), &\text{if }
\mc M\in \mc M_{p+1}(k), \text{ for }k\in S_{p+1}\\ 
0, &\text{otherwise}\ ,
\end{cases}
\end{equation*}
where $\nu^p_{\mc M_{p+1}(k)} (\cdot)$, $k\in S_{p+1}$, is the measure
introduced in \eqref{stat_recur}. By \eqref{24} and Proposition
\ref{lifted_red_stat}, 
\begin{equation*}
\mu(\cdot) = \sum_{\mc M\in \mc S_p}\omega_p(\mc M)\,
\mu_{\mc M}^p(\cdot).
\end{equation*}
By \eqref{eq:higher0level} and \eqref{24}, every closed irreducible
class of $\widehat{\bb X}^{(\omega_p)}_p(\cdot)$ that is contained in
the support of $\omega_p$ is a closed irreducible class of
$\widehat{\bb X}_p$. Moreover, $\omega_p(\mc M)=0$ for every
$\widehat{\bb X}_p(\cdot)-$transient state $\mc M$. Then, since
$\nu_{\mf D_r^p}(\cdot)$ is the stationary state of the Markov chain
induced by $\widehat{\bb L}_{p, \mf D_r^p}$, $1\leq r\leq l_p$, by
\eqref{23b} and \eqref{reduced_ldf0},
\begin{equation*}
\ms J^{(p)}(\mu) \,=\, \bb I^{(p)}(\omega_p) \,=\, \sum_{r =
1}^{l_p}\bb I_{\mf D_r^p}^{(p)}(\nu_{\mf D_r^p}^p) \,=\, 0. 
\end{equation*}

We turn to the converse. Fix $\mu\in \mss P(\bb T)$ and assume that
$\ms J^{(p)}(\mu)=0$. By the definition of $\ms J^{(p)}$,
$\mu(\cdot) = \sum_{\mc M\in \mc S_p}\omega(\mc M)\mu_{\ms
M}^{p}(\cdot)$, for some $\omega\in \mss P(\mc S_p)$. By
\eqref{reduced_ldf0},
\begin{align}
\label{eq333}
0 = \bb I^{(p)}(\omega) &= \sum_{r=1}^{l_p}\omega(\mf D^{p}_{r})\;\bb
I^{(p)}_{\mf D^{p}_{r}}(\omega(\;\cdot\;|\;\mf D^{p}_{r})) \;+\;
\sum_{\mc M\in \mc S_p^{(\omega)}}\sum_{\mc M'\neq \ms
S_p^{(\omega)}}\omega(\mc M)R_p(\mc M,\mc M') \notag
\\ 
&+ \sum_{r=1}^{o_p}\sum_{s\neq r}\sum_{\mc M\in \mf
E^{p}_{r}}\sum_{\mc M'\in \mf E^{p}_s}\omega(\mc M)R_p(\mc M,\mc M').   
\end{align}

Since each sum on the right-hand side of the above expression is
nonnegative, each sum must be equal to $0$. Let $\mf D\subset \mc S_p$
be an equivalent class of $\widehat{\bb X}_p(\cdot)$ such that
$\mf D\cap \mc S_p^{(\omega)}\neq \varnothing$. Since the second sum
on the right-hand side of \eqref{eq333} vanishes, $\mf D$ is
completely contained in the support of $\omega$,
$\mf D\subset \mc S_p^{(\omega)}$. As the third sum is equal to $0$,
$\mc S_p^{(\omega)}$ cannot contain transient states of
$\widehat{\bb X}_p(\cdot)$. Thus, $\mf D$ must be a closed irreducible
class of $\widehat{\bb X}_p(\cdot)$. Moreover, as the first sum also
vanishes, $\omega(\mc M)\,=\,c_{\mf D} \, \nu_{\mf D}(\mc M)$,
$\mc M\in \mf D$, where, recall, $\nu_{\mf D}$ is the stationary state
of $\widehat{\bb L}_{p, \mf D}$, and $c_{\mf D}$ is some positive
constant that depends only on $\mf D$. Clearly,
$\omega(\mf D) = c_{\mf D}$ so that
\begin{equation}
\label{eq888}
\omega(\mc M)\,=\ \omega(\mf D) \, \nu_{\mf D}(\mc M)\,, \quad 
\mc M\in \mf D \,.
\end{equation}

To finish the proof, let
$\omega_{p+1}\in \mss P(\mc S_{p+1})$ be the measure defined by
\begin{equation*}
\omega_{p+1}(\mc M_{p+1}(k)) = \omega(\mc M_{p+1}(k))\,,
\quad k\in S_{p+1}\,.
\end{equation*}    
By definition of the mesure $\mu(\cdot)$, since $\omega (\mc M) =0$ if
$\mc M$ does not belong to a closed irreducible class of
$\widehat{\bb X}_p(\cdot)$, and by \eqref{eq888},
\begin{align*}
\mu(\cdot) \,=\, \sum_{\mc M'\in \mc S_{p}}
\omega(\mc M') \, \mu_{\mc M'}^{p}(\cdot)
\,&=\,
\sum_{\mc M\in \mc M_{p+1}(k)}
\sum_{\mc M'\in \mc S_{p} : \mc M' \subset \mc M} 
\omega(\mc M') \, \mu_{\mc M'}^{p}(\cdot)
\\
\,&=\,
\sum_{\mc M\in \mc M_{p+1}(k)} \omega(\mc M) \, 
\sum_{\mc M'\in \mc S_{p} : \mc M' \subset \mc M} 
\nu_{\mc M}(\mc M') \, \mu_{\mc M}^{p}(\cdot)\,.
\end{align*}
By Proposition \ref{lifted_red_stat} and the definition of
$\omega_{p+1}(\cdot) $, this expression is equal to
\begin{align*}
\sum_{\mc M'\in \mc S_{p+1}}\omega_{p+1}(\mc M')\, \mu_{\ms
M'}^{p+1}(\cdot),  
\end{align*}
which completes the proof of the proposition. 
\end{proof}

We are now in position to prove the main result of this subsection.

\begin{proposition}
\label{p_liminf}
For every $0\leq p \leq \widehat{\mf q}$, the functional $\ms J^{(p)}$, defined
in \eqref{2nd_rate_f}, \eqref{23}, \eqref{23b}, is a $\Gamma-\liminf$ of
$\theta_{\epsilon}^{(p)} \ms I_\epsilon$.
\end{proposition}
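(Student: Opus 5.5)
The case $p=0$ is Proposition \ref{p4.1}, so fix $1\le p\le\widehat{\mf q}$, a measure $\mu\in\mss P(\bb T)$ and a sequence $\mu_\epsilon\to\mu$. The argument splits according to whether $\ms J^{(p)}(\mu)$ is finite.

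\emph{Infinite case.} Suppose $\ms J^{(p)}(\mu)=+\infty$. I would first show that $\mu$ is not in the $0$-level set of $\ms J^{(p-1)}$. For $p\ge 2$ this follows from Proposition \ref{higher_ord:0level} together with the remark that $\mss P_{\mc M_p}$ is exactly the set where $\ms J^{(p)}<\infty$ (by \eqref{23b}, since $\bb I^{(p)}$ is finite). For $p=1$ it follows from Proposition \ref{s04}. Hence $\ms J^{(p-1)}(\mu)>0$. Since $\theta^{(p-1)}_\epsilon\prec\theta^{(p)}_\epsilon$ by \eqref{27} (with $\theta^{(0)}_\epsilon=1$), the $\Gamma-\liminf$ at level $p-1$, which I take as given by induction, yields
\[
\liminf_{\epsilon\to 0}\theta_\epsilon^{(p)}\ms I_\epsilon(\mu_\epsilon)\ \ge\ \lim_{\epsilon\to0}\frac{\theta^{(p)}_\epsilon}{\theta^{(p-1)}_\epsilon}\,\ms J^{(p-1)}(\mu)\ =\ +\infty .
\]

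\emph{Finite case.} Now suppose $\mu=\sum_{\mc M\in \mc S_p}\omega(\mc M)\mu^p_{\mc M}$. The plan is to use test functions built from resolvents. Fix a positive $H\colon\mc S_p\to(0,\infty)$ and $\lambda>0$. Set $g=(\lambda-\widehat{\bb L}_p)H$; this is bounded, so the reduced resolvent solution is exactly $f_p=H$. Let $\phi_\epsilon$ be the solution of \eqref{smooth_resolvent_eq} with right-hand side $G_g$. By Lemma \ref{s10}, $\liminf_\epsilon\inf_{\bb T}\phi_\epsilon\ge\min H>0$, so $\phi_\epsilon$ is positive for small $\epsilon$ and is an admissible test function in the variational formula for $\ms I_\epsilon$. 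By the resolvent equation,
\[
-\theta^{(p)}_\epsilon\frac{\ms L_\epsilon\phi_\epsilon}{\phi_\epsilon}\;=\;\frac{G_g-\lambda\phi_\epsilon}{\phi_\epsilon},
\]
which is a continuous function that is uniformly bounded, by Lemma \ref{l09} and the positive lower bound on $\phi_\epsilon$.

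It then remains to pass to the limit in $\int (G_g-\lambda\phi_\epsilon)/\phi_\epsilon\,d\mu_\epsilon$. On each well $\mc E_p(k)$, $G_g=g(\mc M_p(k))$, and by Proposition \ref{resolvent_limit_point_2}, $\phi_\epsilon\to H(\mc M_p(k))$ uniformly there. So the integrand converges uniformly on $\bigcup_k\mc E_p(k)$ to $(g-\lambda H)/H=-\widehat{\bb L}_pH/H$. Since $\mu$ is concentrated on $\mc M_p$, which lies in the interior of these wells, weak convergence gives $\mu_\epsilon(\bb T\setminus\bigcup_k\mc E_p(k))\to0$. Combined with the uniform boundedness of the integrand, this gives
\[
\liminf_{\epsilon\to0}\theta^{(p)}_\epsilon\ms I_\epsilon(\mu_\epsilon)\ \ge\ -\sum_k\frac{\widehat{\bb L}_pH}{H}(\mc M_p(k))\,\omega(\mc M_p(k)).
\]
Here I use that $\mu^p_{\mc M}$ places total mass $1$ on the well attached to $\mc M$, so the limiting weight of $\mc E_p(k)$ is $\omega(\mc M_p(k))$. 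The only subtlety is that the weak limit assigns mass to the set where the integrand has a uniform limit; this is what the uniform convergence and boundedness handle. Taking the supremum over $H$ gives $\bb I^{(p)}(\omega)=\ms J^{(p)}(\mu)$.

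The main obstacle is ensuring positivity and uniform control of $\phi_\epsilon$ off the wells, where Proposition \ref{resolvent_limit_point_2} gives no information. Lemma \ref{s10} supplies the lower bound, and Lemma \ref{l09} the upper bound, which together make the off-well contribution negligible.
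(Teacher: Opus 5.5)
Your proof is correct and follows the paper's argument. It proceeds by induction on $p$. The infinite case uses $\ms J^{(p-1)}(\mu)>0$ together with the scale separation \eqref{27}. The finite case uses the resolvent test function with right-hand side $G_{(\lambda-\widehat{\bb L}_p)h}$ combined with Proposition \ref{resolvent_limit_point_2}.

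The only difference is how positivity of the test function is secured. The paper takes $\lambda$ large so that $(\lambda-\widehat{\bb L}_p)h>0$, uses Lemma \ref{l09} to get $\Phi^h_\epsilon\ge 0$, adds $\delta$, and lets $\delta\to 0$ at the end. You instead invoke Lemma \ref{s10} for a uniform positive lower bound, as the paper itself does in Section \ref{sec8}; this removes the $\delta$-limit but relies on the hitting-time estimates behind that lemma.
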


\begin{proof}
We proceed by induction on $p$. The case $p=0$ is the content of
Proposition \ref{p4.1}. Fix $1\leq p\leq \widehat{\mf q}$, and assume that
$\ms J^{(q)}$ is a $\Gamma-\liminf$ of
$\theta_\epsilon^{(q)}\ms J_\epsilon$ for every $0\leq q< p$. Fix a
measure $\mu\in \mss P(\bb T)$ and a sequence
$(\mu_\epsilon:\epsilon>0)\subset \mss P(\bb T)$ such that
$\mu_\epsilon$ converges weakly to $\mu$. If
$\mu \notin \mss P_{\mc M_{p}}$, then by Proposition
\ref{higher_ord:0level}, $\ms J^{(p-1)}(\mu)>0$. Thus, by the
inductive hypothesis and \eqref{27},
\begin{equation*}
+\infty = \liminf_{\epsilon \to
0}\frac{\theta^{p+1}}{\theta_{\epsilon}^{(p)}}\ms J^{(p)}(\mu)
\leq \liminf_{\epsilon \to
0}\frac{\theta^{p+1}}{\theta_{\epsilon}^{(p)}}\liminf_{\epsilon
\to 0}\theta_{\epsilon}^{(p)}\ms I_\epsilon(\mu_\epsilon) \leq
\liminf_{\epsilon\to 0}\theta_{\epsilon}^{(p+1)}\ms
I_\epsilon(\mu_\epsilon). 
\end{equation*} 
Hence,
$\ms J^{(p)}(\mu) \leq \liminf_{\epsilon\to
0}\theta_{\epsilon}^{(p+1)}\ms I_\epsilon(\mu_\epsilon)$, if $\mu$
isn't a convex combination of $p-$order metastable states.

We turn to the case where $\mu\in \mss P_{\mc M_p}$. Choose
$\omega\in \mss P(\mc C)$ such that
\begin{equation*}
\mu(\cdot) = \sum_{\mc M\in \mc S_p}\omega(\mc M)\mu^p_{\mc M}(\cdot).
\end{equation*}
Fix a strictly positive function $h:\mc S_p\to \bb R$. Let
$h_0 := \min_{\mc M\in \mc S_p}h(\mc M)$, choose a finite constant
$\lambda$ such that
$\lambda> \max_{\mc M\in \mc S_p}|(\widehat{\bb L}_p h)(\mc M)|/h_0$, and
let $H\colon \mc S_p \to \bb R$ be the function  given by
\begin{equation*}
H(\mc M_p(k)):= [(\lambda - \widehat{\bb L}_p)h](\mc M_p(k)),
\hspace{5mm} \forall k\in S_p. 
\end{equation*}
Note that $H$ is strictly positive. Let
$\Phi_{\epsilon}^{h}: \bb T \to \bb R$ be the solution of the
resolvent equation \eqref{smooth_resolvent_eq} with $G:=G_H$ on the
right-hand side, where $G_H$ is given by \eqref{smooth_lifted}. Hence,
$\Phi_{\epsilon}^{h} \in C^{2}(\bb T)$ and, by Lemma
\ref{l09}, $\Phi^{h}_{\epsilon}$ is nonnegative. Let
$\delta>0$ be an arbitrary positive constant and let
$\Phi^{h,\delta}_{\epsilon}:\bb T\to \bb R$ be the function given by
\begin{equation*}
\Phi^{h,\delta}_{\epsilon}(x) := \Phi^{h}_{\epsilon}(x) + \delta.
\end{equation*}
Then, $\Phi^{h,\delta}_{\epsilon}$ is a positive function of class
$C^2(\bb T)$. Thus, by the definitions of $\ms I_\epsilon$ and $H$,
\begin{align}
\label{l51}
\theta_{\epsilon}^{(p)} \ms I_\epsilon (\mu_{\epsilon}) 
&\geq - \int_{\bb T}\frac{[\theta_{\epsilon}^{(p)}\ms L_{\epsilon}
\Phi_{\epsilon}^{h,\delta}](x)}{\Phi_{\epsilon}^{h, \delta}(x)}d\mu_{\epsilon}(x) 
= -\sum_{k\in S_{p}}[(\widehat{\bb L}_p)h](\mc M_{p}(k))\int_{\ms
N^{p}_{r}(k)}\frac{\zeta^{p,k}(x)}{\Phi^{h,\delta}_{\epsilon}(x)}d\mu_{\epsilon}(x)
\\ 
&+ \sum_{k\in S_{p}}\int_{\mc N^{p}_{r}(k)}\frac{\lambda \{h(\ms
M_p(k))\zeta^{p,k}(x) -
\Phi_{\epsilon}^{h,\delta}(x)\}}{\Phi^{h,\delta}_{\epsilon}(x)}d\mu_{\epsilon}(x)
-\int_{\bb T \setminus \{\bigcup_{k\in S_p} \ms
N^{p}_{r}(k)\}}\frac{\lambda
\Phi_{\epsilon}^{h,\delta}(x)}{\Phi_{\epsilon}^{h,\delta}(x)}d\mu_{\epsilon}(x).\notag 
\end{align}

Since $\mu_\epsilon$ converges weakly to $\mu$, and $\mu$ is
concentrated on the set $\mc M_p\subset \bigcup_{k\in S_p}\mc N^p_r(k)$,
the third term on the right hand side of \eqref{l51} vanishes as
$\epsilon \to 0$. We turn to the second term. By the definition of
$\Phi^{h,\delta}_\epsilon$, we can write this expression as
\begin{equation}
\label{l52}
\sum_{k\in S_{p}}\int_{\mc N^{p}_{r}(k)}\frac{\lambda \{h(\ms
M_p(k))\zeta^{p,k}(x) -
\Phi_{\epsilon}^{h}(x)\}}{\Phi^{h,\delta}_{\epsilon}(x)}d\mu_{\epsilon}(x)
- \int_{\mc N^{p}_{r}(k)}\frac{\lambda
\delta}{\Phi^{h}_{\epsilon}(x)+\delta}d\mu_{\epsilon}(x).    
\end{equation}
By Proposition \ref{resolvent_limit_point_2}, by the definition of
$\zeta^{p,k}$ given in \eqref{resolv_smooth}, and the assumption that
$\mu_\epsilon$ converges weakly to $\mu$, the first term in
\eqref{l52} vanishes as $\epsilon\to 0$. Similarly, the second one
converges to
\begin{equation*}
\sum_{k\in S_{p}}\frac{\lambda \delta}{h(\mc M_p(k))+\delta}
\, \mu(\mc E^p(k)) \,.
\end{equation*}
By \eqref{21}, $\mu^p_{\mc M}(\mc M) =1$ for all $\mc M \in \ms
S_p$. Thus, by definition of $\mu$,
$\mu(\mc E^p(k)) = \mu (\mc M_p(k)) = \omega(\mc M_p(k))$. Hence, the
previous sum is equal to
\begin{align*}
\sum_{k\in S_{p}}\frac{\lambda \delta}{h(\ms
M_p(k))+\delta} \, \omega(\mc M_p(k)).  
\end{align*}
Similarly, by Proposition \ref{resolvent_limit_point_2} and
\eqref{resolv_smooth}, the first term on the right hand side of
\eqref{l51} converges to
\begin{equation*}
-\sum_{k\in S_p}\frac{[\widehat{\bb L}_ph](\mc M_p(k))}{h(\ms
M_p(k))+\delta}\;\mu(\mc E^p(k)) = -\sum_{k\in S_p}\frac{[\widehat{\bb
L}_ph](\mc M_p(k))}{h(\mc M_p(k))+\delta}\;\omega(\mc M_p(k)),  
\end{equation*} 

Recollecting the previous estimates one obtains
\begin{equation*}
\liminf_{\epsilon \to 0}\theta_{\epsilon}^{(p)}\ms
I_\epsilon(\mu_{\epsilon})\geq -\sum_{k\in S_p}\frac{[\widehat{\bb
L}_ph](\mc M_p(k))}{h(\mc M_p(k))+\delta}\;\omega(\mc M_p(k))  -
\sum_{k\in S_{p}}\frac{\lambda \delta}{h(\mc M_p(k))+\delta}\omega(\ms
M_p(k)). 
\end{equation*}
By \eqref{25}, sending $\delta \to 0$ first and then taking the
supremum over all positive functions $h\colon\mc S_p \to \bb R$,
yields that
\begin{equation*}
\liminf_{\epsilon \to 0}\theta_{\epsilon}^{(p)}\ms
I_\epsilon(\mu_{\epsilon})\geq \bb I^{(p)}(\omega)\,.
\end{equation*}
This completes the proof of the proposition.
\end{proof}

\section{The $\Gamma-\limsup$ in the time-scales
$\theta^{(p)}_\epsilon$, $1\le p< \widehat{\mf q}$}
\label{sec6}

In this section, we prove the $\Gamma-\limsup$ for all metastable time
scales with the exception of the last
one.  
The main result reads as follows.

\begin{proposition}
\label{higher_speed_glimsup}
For every $1\leq p < \widehat{\mf q}$, the functional $\ms J^{(p)}$, defined
in \eqref{23}, \eqref{23b}, is a $\Gamma-\limsup$ of
$\theta_{\epsilon}^{(p)} \, \ms I_\epsilon$.
\end{proposition}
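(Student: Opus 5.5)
The plan is to build the recovery sequence out of absolutely continuous measures of Gibbs type and to evaluate $\ms I_\epsilon$ on them through the explicit formula of Lemma \ref{l03}. Fix $\omega\in\mss P(\mc S_p)$ with $\bb I^{(p)}(\omega)<\infty$; if $\ms J^{(p)}(\mu)=+\infty$ there is nothing to prove. I would look for $\mu_\epsilon(dx)=m_\epsilon(x)\,dx$ with
\begin{equation*}
m_\epsilon \,=\, \frac{1}{Z_\epsilon}\, \frac{f_\epsilon^2}{\mss a}\, e^{-S/\epsilon}
\end{equation*}
on the lifted segment $[0,1)$, where $f_\epsilon\ge 0$ is to be chosen. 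By the lower semicontinuity of $\ms I_\epsilon$, I would first replace $m_\epsilon$ by $(m_\epsilon+\delta)/(1+\delta)$, exactly as in Assertion \ref{estim_00_limsup1}. With this choice the $H$ of Lemma \ref{l02} has $H'=f_\epsilon'/f_\epsilon$ up to the $B$-correction. The formula of Lemma \ref{l03} then splits into two pieces. The first is a Dirichlet form,
\begin{equation*}
\frac{\epsilon}{Z_\epsilon}\int_{\bb T} (f_\epsilon')^2\, e^{-S/\epsilon}\,dx .
\end{equation*}
The second consists of the terms carrying $B/\<1/(\mss a m_\epsilon)\>$.

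\emph{Step 1.} Show that the $B$-terms are negligible after multiplication by $\theta^{(p)}_\epsilon$. If $f_\epsilon$ vanishes on a set of positive measure, the argument of Assertion \ref{estim_00_limsup1} applies verbatim. Otherwise $\<1/(\mss a m_\epsilon)\>$ is at least of order $Z_\epsilon e^{(\max S-\min S)/\epsilon}$ over a period. This barrier strictly exceeds $\mf h_p$ because $p<\widehat{\mf q}$, since otherwise $\widehat{\bb X}_p$ would already be irreducible around the circle. Hence this correction times $\theta_\epsilon^{(p)}$ vanishes. This is the one place where the non-reversibility $B\neq 0$ enters, and I expect a crude Laplace-type bound to suffice.

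\emph{Step 2.} Choose $f_\epsilon$. On each well $\mc E_1(\mf m)$ with $\mf m\in\mc M_p(k)$, set $f_\epsilon$ equal to the constant
\begin{equation*}
c_k\,=\,\sqrt{\frac{\omega(\mc M_p(k))}{\pi_p(k)}}\; e^{S(\mc M_p(k))/(2\epsilon)} .
\end{equation*}
This normalisation makes $\mu_\epsilon\to\sum_k\omega(\mc M_p(k))\,\mu^p_{\mc M_p(k)}$, since by the Laplace method the mass inside $\mc M_p(k)$ splits proportionally to $\pi_1$, as in \eqref{21}. Between consecutive level-$p$ minima, and across the lower-level minima lying in between, interpolate $f_\epsilon$ by the one-dimensional equilibrium potential of the reversible Dirichlet form, i.e.\ the function minimising $\int (f')^2e^{-S/\epsilon}$. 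Then the Dirichlet form is a sum over edges $(k,k+1)$ of $(c_k-c_{k+1})^2$ times the capacity, and the capacity is $\sim\sigma_p(k,k+1)^{-1}\sqrt{\epsilon}\,e^{-S(\mz W^{(p)}_{k,k+1})/\epsilon}$ (up to the $\epsilon$ and $Z_\epsilon$ normalisations). Wells with $\omega=0$ get $c_k=0$, so $f_\epsilon$ decays to $0$ on them.

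\emph{Step 3.} Compare with \eqref{reduced_ldf0}, edge by edge, after multiplying by $\theta^{(p)}_\epsilon$:
\begin{itemize}
\item Edges inside a class $\mf D_r^p$ have equal barriers on both sides. Since $\widehat{\bb X}_p$ is nearest-neighbour and, for $p<\widehat{\mf q}$, its classes are arcs rather than the whole circle, the reflected chain on $\mf D^p_r$ is a reversible birth--death chain. The edge contributions therefore reproduce $\sum_{\mf D_r^p}\omega(\mf D_r^p)\,\bb I^{(p)}_{\mf D_r^p}(\omega(\cdot|\mf D_r^p))$, the Dirichlet form of $\sqrt{\omega/\nu}$, with $\widehat h_r=\sqrt{\omega/\nu}$.
\item Edges leaving the support of $\omega$ give $c_k^2\cdot\mathrm{cap}$, which tends to $\omega(\mc M)R_p(\mc M,\mc M')$ when the barrier equals $\mf h_p$ and to $0$ otherwise.
\item Transient edges between different classes $\mf E_r,\mf E_s$ are the delicate asymmetric case. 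The deeper side $\mc M'$ has $c_{k'}$ exponentially smaller than $c_k$, so $(c_k-c_{k'})^2\approx c_k^2$. The cost is then again $\omega(\mc M)R_p(\mc M,\mc M')$ in the escape direction only, and it vanishes when the barrier exceeds $\mf h_p$.
\end{itemize}

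The main obstacle is Step 3: I must verify uniformly that the interpolation across the lower-level minima inside a level-$p$ saddle interval carries no extra cost at scale $\theta_\epsilon^{(p)}$. I would handle this by the hitting-time and capacity estimates behind Lemma \ref{hitting_1} and \eqref{27}, namely $\mf h_{q}<\mf h_p$ for $q<p$. I also need the $O(\epsilon)$ terms from $\mss a'$ and $Z_\epsilon$ to be negligible, which I would check as in Assertion \ref{a_estim_2}. A final diagonal argument over $\delta\to 0$ completes the $\Gamma-\limsup$.
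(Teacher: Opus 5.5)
Your proposal is correct in outline, but it takes a genuinely different route from the paper. For this proposition the paper does not use one global test function. After reducing to $\omega>0$ via Lemma B5 of \cite{l23}, it splits $\mu$ along the $\widehat{\bb X}_p$-equivalence classes $\mf D$. For each class it builds a density $J_{\mf D,\epsilon}^2e^{-S_{\mf D}/\epsilon}/(\mss a\,c_{\mf D}(\epsilon))$ that vanishes outside a neighbourhood $\Omega_{\mf D}$ cut at the outermost saddles $\sigma^\pm_{\mf D}$. The function $J_{\mf D,\epsilon}$ is assembled from Gaussian approximations of the equilibrium potential at each global saddle, constant plateaus, and binding sets. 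The paper then takes the $\omega$-convex combination \eqref{recovery_seq_inter} and uses convexity of $\ms I_\epsilon$. Localisation buys three things at no cost:
\begin{itemize}
\item the density vanishes on an open set, so the $B$-terms of Lemma \ref{l03} disappear as $\delta\to0$ (Assertion \ref{estim_limsup_high});
\item a local $C^2$ potential $S_{\mf D}$ sidesteps the non-periodicity of $S$;
\item each class sits at a single depth and $J_{\mf D,\epsilon}$ is bounded, so mass control reduces to Proposition \ref{deepest_min}.
\end{itemize}
Your single $f_\epsilon$, with $c_k\propto e^{S(\mc M_p(k))/(2\epsilon)}$, gives to leading order the same density on the wells as the paper's mixture. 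It replaces the two decays to zero at each inter-class saddle by one direct interpolation. Your edge-by-edge accounting is right:
\begin{itemize}
\item inner edges reproduce $\omega(\mf D^p_r)\,\bb I^{(p)}_{\mf D^p_r}(\omega(\cdot|\mf D^p_r))$;
\item one-way edges give $\omega(\mc M)R_p(\mc M,\mc M')$, because $c_{k+1}/c_k\to0$ by the strict inequality in $\mc P_8$;
\item edges blocked in both directions give $0$.
\end{itemize}
This avoids both the convexity step and Lemma B5. The exact one-dimensional minimiser gives the energy $\epsilon(c_k-c_{k+1})^2/\int_{m^+_{p,k}}^{m^-_{p,k+1}}e^{S/\epsilon}$ exactly. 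So the obstacle you single out, extra cost across lower-level minima, does not arise. For your ansatz the $\mss a'$-terms also cancel identically.

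The price of globality lies elsewhere.

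First, your Step 1 bound $\<1/(\mss a m_\epsilon)\>\gtrsim Z_\epsilon e^{(\max S-\min S)/\epsilon}$ is not correct in general. You do not need any such bound, however. Periodicity of $m_\epsilon$ on $\bb T$ is equivalent to $f_\epsilon(x+1)=e^{-B/(2\epsilon)}f_\epsilon(x)$ on $\bb R$. Your $c_k$ respect this, but you must impose it explicitly on the edge crossing the cut point. Then $\int_0^1 f_\epsilon'/f_\epsilon\,dx=-B/(2\epsilon)$. Expanding the square in Lemma \ref{l03}, the $B$-terms sum to $-B^2/(4\epsilon\<1/(\mss a m_\epsilon)\>)\le0$. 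Hence $\ms I_\epsilon(\mu_\epsilon)\le(\epsilon/Z_\epsilon)\int_0^1(f_\epsilon')^2e^{-S/\epsilon}\,dx$ outright.

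Second, weak convergence is not just Laplace's method on the wells. Write $f_\epsilon=c_k(1-h_\epsilon)+c_{k+1}h_\epsilon$ on $[m^+_{p,k},m^-_{p,k+1}]$ with $c_k\gg c_{k+1}$ (and symmetrically in the other case). Then $c_k(1-h_\epsilon)$ is carried past $\sigma^{p,+}_{k,k+1}$ into the basin of the deeper state, and you must show this mass is $o(\sqrt\epsilon)$. Up to polynomial factors, $c_k^2(1-h_\epsilon)^2e^{-S(x)/\epsilon}$ is bounded by $\exp\{[-h^{p,+}_k+\max_{[x,m^-_{p,k+1}]}S-S(x)]/\epsilon\}$. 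The bracket is at most $-h^{p,+}_k$ plus the depth of a sub-well between $\sigma^{p,+}_{k,k+1}$ and $m^-_{p,k+1}$. That depth is at most $\mf h_{p-1}<\mf h_p$, for instance via Lemma \ref{hitting_1}. Together with Lemmas \ref{l15}, \ref{l27} and Proposition \ref{l13} for the remaining regions, this closes the argument, but it has to be done.

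Finally, gluing constants to the exact equilibrium potential makes $f_\epsilon'$ jump at $m^\pm_{p,k}$. Take $f_\epsilon'\propto\chi\,e^{S/\epsilon}$ with a smooth cutoff $\chi$ to obtain $C^2$ regularity at negligible cost. No diagonal argument is needed, since lower semicontinuity of $\ms I_\epsilon$ at fixed $\epsilon$ already handles $\delta\to0$.
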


Fix $1\leq p < \widehat{\mf q}$. Since $p$ is fixed, most of the time it is
omitted from the notation. For every $\mu\in \mss P(\bb T)$ such that
$\ms J^{(p)}(\mu) = \infty$, and any sequence $\mu_{\epsilon}$ weakly
converging to $\mu$, the bound
\begin{equation*}
\limsup_{\epsilon\to 0}
\theta_{\epsilon}^{(p)} \,  \ms I_\epsilon(\mu_\epsilon)
\,\leq\,  \ms J^{(p)}(\mu)
\end{equation*}
holds trivially. Hence, we may assume that $\ms J^{(p)}(\mu)$ is
finite.  In view of \eqref{23b}, this means that it is enough to
consider measures $\mu \in \mss P(\bb T)$ of the form
\begin{equation}
\label{29}
\mu(\cdot) \,=\, \sum_{k\in S_p}\omega(\mc M_p(k))
\, \mu^p_{\mc M_p(k)}(\cdot) 
\end{equation} 
for some $\omega\in \mss P(\mc S_p)$, where
$\mu^p_{\mc M_p(k)}(\cdot),$ is the measure introduced in
\eqref{21}. By Lemma B5 in \cite{l23}, we can assume that
$\omega(\mc M)>0$ for every $\mc M\in \mc S_p$. In the next
subsections, we construct a recovery sequence for $\mu$.

The recovery sequence $(\mu_\epsilon)_{\epsilon>0}$ constructed below
is a convex combination of measures $\mu_{\mf D, \epsilon}$ supported
on neighborhoods of the equivalence classes $\mf D$ of
$\widehat{\bb X}_p$. For each $\mf D \subset \mc S_p$, the measure
$\mu_{\mf D, \epsilon}$ is absolutely continuous, with density
$m_{\mf D, \epsilon}$ given by the product of a localized version of
the Gibbs density
\begin{equation*}
\frac{1}{\mss a(x)}\exp\{-S(x)/\epsilon\}
\end{equation*}
and the square of a function $J_{\mf D, \epsilon}$. The function
$J_{\mf D, \epsilon}$ is defined as the harmonic extension of a
function whose values on neighborhoods of each $\mc M \in \mf D$ are
prescribed by the solution of the variational problem \eqref{20} for
$\bb I_{\mf D}^{(p)}$.

\begin{figure}
\centering
\begin{tikzpicture}[scale=0.8]
\draw[rounded corners] (0,1.5)--(1,3);
\draw[rounded corners] (1,3) .. controls (2.5,5) .. (4,3);  
\draw[rounded corners] (4,3) .. controls (5.5, 1) .. (7, -1);
\draw[rounded corners] (7,-1) .. controls (8.5, -3) .. (10,-1);
\draw[rounded corners] (10,-1) -- (11.5,1);
\draw[rounded corners] (11.5,1) .. controls (13,2.8) .. (14.5, 1);
\draw[rounded corners] (14.5,1)--(16.5,-2);
\draw[rounded corners] (16.5,-2)--(17.5,-3.5);

\fill(2.5,4.5)node[above]{$\sigma_{k-1,k}^1$};
\fill(13,2.4)node[above]{$\sigma_{k,k+1}^1$};
\fill(8.5,-2)node[above, font=\small]{$\mz M_1(k)$};

\draw[dashed, purple](1,3) -- (1,-4);
\draw[dashed, purple](1.3,3.3) -- (1.3,-4);
\draw[dashed, purple](3.7,3.3) -- (3.7,-4);
\draw[dashed, purple](4,3) -- (4,-4);

\draw[dashed, purple](11.5,1)--(11.5,-4);
\draw[dashed,purple](11.8,1.3)--(11.8,-4);
\draw[dashed, purple](14.2,1.3)--(14.2,-4);
\draw[dashed,purple](14.5,1)--(14.5,-4);

\draw[dashed, teal](7,-1)--(7,-4);
\draw[dashed, teal](10,-1)--(10,-4);

\draw[solid, thick, black](0,-4)--(17.5,-4);
\draw[line width=0.5mm, blue](1,-5.5)--(14.5,-5.5);
\fill(8,-5.5)node[below, blue, font=\small]{$\Omega_{\mf D}$};

\draw[line width=0.5mm, purple](1,-4.2)--(1.3,-4.2) node[midway, below, purple, font=\tiny]{$B_{k-1,k}^{1.-}$};
\draw[line width=0.5mm, brown](1.3,-3.8)--(3.7,-3.8) node[midway, above, brown, font=\tiny]{$\Lambda_{k-1,k}^1$};
\draw[line width = 0.5mm, purple](3.7,-4.2)--(4,-4.2) node[midway, below, purple, font=\tiny]{$B_{k-1,k}^{1,+}$};
\draw[line width = 0.5mm, cyan](4,-3.8)--(11.5,-3.8) node[midway, above, cyan, font=\tiny]{$C_{k-1,k}^{1,2}$};
\draw[line width=0.5mm, teal](7,-4.2)--(10,-4.2) node[midway, below, teal, font=\tiny]{$\mc E(m_k)$};
\draw[line width=0.5mm, purple](11.5, -4.2)--(11.8,-4.2) node[midway, below, purple, font=\tiny]{$B_{k,k+1}^{1,-}$};
\draw[line width=0.5mm, brown](11.8,-3.8)--(14.2,-3.8) node[midway, above, brown, font=\tiny]{$\Lambda_{k,k+1}^1$};
\draw[line width=0.5mm, purple](14.2,-4.2)--(14.5,-4.2) node[midway, below, purple, font=\tiny]{$B_{k,k+1}^{1,+}$};

\end{tikzpicture}
\caption{We illustrate an example of the intervals defined in
\eqref{28}, \eqref{max_neigh}, \eqref{core_neigh} and
\eqref{binding_sets} for the first metastable level $p=1$. Here,
$\mf D$ is a $\bb X_1(\cdot)-$equivalent class composed of only one
state, $\mz D = \mz M_1(k)$.}
\label{fig-f11}
\end{figure}

\begin{figure}
\centering
\begin{tikzpicture}[scale=0.6]

\draw[rounded corners](-2.6,3)..controls(-2.1,5)..(-1.6,3);
\draw[rounded corners](-1.6,3)--(-0.6,-1);
\draw[rounded corners](-0.6,-1)..controls(0,-3)..(0.5,-1);
\draw[rounded corners](0.5,-1)--(1,1);
\draw[rounded corners](1,1)--(1.5,3);

\draw[rounded corners] (1.5,3) .. controls (2.3,7) .. (3.1,3);  
\draw[rounded corners] (3.1,3) -- (3.3, 2.2);
\draw[rounded corners] (3.3,2.2) -- (3.6, 1);

\draw[rounded corners] (3.6, 1) -- (4.1, -1);
\draw[rounded corners] (4.1,-1) .. controls (4.6, -3) .. (4.9, -2);
\draw[rounded corners](4.9,-2) .. controls (5.2,-1)..(5.5,-2);
\draw[rounded corners](5.5,-2) .. controls (5.8,-3)..(6.3,-1);
\draw[rounded corners](6.3,-1)..controls (6.8,1)..(7.3,3);

\draw[rounded corners](7.3,3)..controls (7.8,5)..(8.3,3);
\draw[rounded corners](8.3,3)..controls (8.6,2)..(8.9,2.5);

\draw[rounded corners](8.9,2.5)..controls(9.2,3)..(9.5,2.5);
\draw[rounded corners](9.5,2.5)..controls(9.8,2)..(10.1,3);
\draw[rounded corners](10.1,3)..controls(10.6,5)..(11.1,3);
\draw[rounded corners](11.1,3)..controls (11.4,2).. (11.7,3);
\draw[rounded corners](11.7,3)..controls(12.2,5)..(12.7,3);

\draw[rounded corners](12.7,3)..controls (13.2,1)..(13.7,-1);
\draw[rounded corners](13.7,-1)..controls (14.3,-3)..(14.8,-1);

\draw[rounded corners](14.8,-1)--(15.3,1);
\draw[rounded corners](15.3,1)--(15.8,3);

\draw[rounded corners](15.8,3)..controls(16.5,6)..(17.2,3);

\draw[rounded corners](17.2,3)--(17.7,1);
\draw[rounded corners](17.7,1)--(18.2,-1);
\draw[rounded corners](18.2,-1)..controls(18.7,-3)..(19.2,-1);

\draw[rounded corners](19.2,-1)--(20.2,3);
\draw[rounded corners](20.2,3)..controls(20.7,5)..(21.2,3);

\draw[dashed, black](2.8,4.5)--(16.1,4.5);


\fill(2.3,6)node[above]{$\sigma_{\mf D}^{-}$};
\fill(16.5,5.3)node[above]{$\sigma_{\mf D}^{+}$};

\fill(7.8,4.5)node[above]{$\sigma_{k,k+1}^{p,1}$};
\fill(10.6,4.5)node[above]{$\sigma_{k,k+1}^{p,2}$};
\fill(12.2,4.5)node[above]{$\sigma_{k,k+1}^{p,3}$};

\fill(0,-2.5)node[below, font=\tiny]{$\mz M_p(k-1)$};
\fill(5.1,-2.5)node[below, font=\tiny]{$\mz M_p(k)$};
\fill(14.3,-2.5)node[below, font=\tiny]{$\mz M_p(k+1)$};
\fill(18.7,-2.5)node[below, font=\tiny]{$\mz M_p(k+2)$};


\draw[dashed](7.3,3.1)--(7.3,-5);
\draw[dashed](7.5,3.8)--(7.5,-5);

\draw[dashed](8.1,3.8)--(8.1,-5);
\draw[dashed](8.3,3.1)--(8.3,-5);

\draw[line width = 0.8mm, green](7.5,-4.9)--(8.1,-4.9);
\draw[line width = 0.8mm, red](7.3,-5.1)--(7.5,-5.1);
\draw[line width=0.8mm, red](8.1,-5.1)--(8.3,-5.1);

\draw[dashed](10.1,3.1)--(10.1,-5);
\draw[dashed](10.3,3.8)--(10.3,-5);

\draw[dashed](10.9,3.8)--(10.9,-5);
\draw[dashed](11.1,3.1)--(11.1,-5);

\draw[line width=0.8mm, red](10.1,-5.1)--(10.3,-5.1);
\draw[line width=0.8mm, green](10.3,-4.9)--(10.9,-4.9);
\draw[line width=0.8mm, red](10.9,-5.1)--(11.1,-5.1);

\draw[line width=0.8mm, blue](8.3,-4.9)--(10.1,-4.9);

\draw[dashed](11.7,3.1)--(11.7,-5);
\draw[dashed](11.9,3.8)--(11.9,-5);

\draw[dashed](12.5,3.8)--(12.5,-5);
\draw[dashed](12.7,3.1)--(12.7,-5);

\draw[line width=0.8mm, red](11.7,-5.1)--(11.9,-5.1);
\draw[line width=0.8mm, green](11.9,-4.9)--(12.5,-4.9);
\draw[line width=0.8mm, red](12.5,-5.1)--(12.7,-5.1);

\draw[line width=0.8mm, yellow](11.1,-4.9)--(11.7,-4.9);


\draw[dashed](1.6,3.2)--(1.6,-5);
\draw[dashed](1.8,4.2)--(1.8,-5);

\draw[dashed](2.8,4.2)--(2.8,-5);
\draw[dashed](3,3.2)--(3,-5);

\draw[line width=0.8mm, brown](3,-4.9)--(7.3,-4.9);

\draw[line width=0.8mm, red](1.6,-5.1)--(1.8,-5.1);
\draw[line width=0.8mm, green](1.8,-4.9)--(2.8,-4.9);
\draw[line width=0.8mm, red](2.8,-5.1)--(3,-5.1);

\draw[dashed](15.9,3.1)--(15.9,-5);
\draw[dashed](16.1,3.8)--(16.1,-5);
\draw[dashed](16.9,3.8)--(16.9,-5);
\draw[dashed](17.1,3.1)--(17.1,-5);

\draw[line width=0.8mm, red](15.9,-5.1)--(16.1,-5.1);
\draw[line width=0.8mm, green](16.1,-4.9)--(16.9,-4.9);
\draw[line width=0.8mm, red](16.9,-5.1)--(17.1,-5.1);
\draw[line width=0.8mm, pink](12.7,-4.9)--(15.9,-4.9);

\draw[line width=0.8mm](-2.6,-4.9)--(1.6,-4.9);
\draw[line width=0.8mm](17.1,-4.9)--(20.5,-4.9);

\draw[solid](-2.6,-5)--(20.5,-5);

\draw[line width=0.8mm, violet](1.6,-6)--(17.1,-6);

\fill(9.5,-6)node[below]{$\Omega_{\mf D}^p$};

\end{tikzpicture}
\caption{We illustrate the definition of the test function for a
higher-order metastable level $p\geq 2$. In this picture, $\mf D$ is a
closed irreducible class of $\bb X_p(\cdot)$, which is composed by the
states $\mz M_p(k)$ and $\mz M_p(k+1)$. The set of maxima
$\mz W_{\mf D}$, associated to $\mf D$, is equal to
$\{\sigma_{\mf D}^{-}, \sigma_{k,k+1}^{1}, \sigma_{k,k+1}^{2},
\sigma_{k,k+1}^{3}, \sigma_{\mf D}^{+}\}$. Here, the maxima
$\sigma_{\mf D}^{-}$ is the leftmost separating $\mz M_p(k-1)$ and
$\mz M_p(k)$, while $\sigma_{\mf D}^+$ is the rightmost maxima
separating $\mz M_p(k+1)$ and $\mz M_p(k+2)$. Since the set
$\mz W_{k-1,k}$ is a singleton,
$\sigma_{\mf D}^{-} = \sigma_{k-1,k}^{-} = \sigma_{k-1,k}^{+}$.
Similarly, since $\mz W_{k+1,k+2}$ is also a singleton,
$\sigma_{\mf D}^{+} = \sigma_{k+1,k+2}^{+} = \sigma_{k+1,k+2}^{-} =
\sigma_{k,k+1}^{\mf r_{k,k+1}+1}$.  The sets $\Lambda^r_{k-1,k}$
defined in \eqref{max_neigh}, are represented in green. The binding
sets, defined in \eqref{binding_sets}, are the ones in red.  The
intervals in brown, blue, yellow and pink represent the sets
$C^r_{k-1,k}$ defined in \eqref{core_neigh}. The set $\Omega_{\mf D}$,
represented in violet, is the neighborhood of $\mf D$ defined in
\eqref{28}. The test function $J_{\mf D,\epsilon}$ vanishes outside of
$\Omega_{\mf D}$. Inside the brown interval, $J_{\mf D}$ is constant
equal to $f_{\mf D}(\mc M_p(k))$. On the pink interval,
$J_{\mf D,\epsilon}$ is constant equal to $f_{\mf D}(\ms
M_p(k+1))$. On the intervals blue and yellow, the test function is a
convex combination of these two values, where the weights are given by
the function defined in \eqref{01}. In particular, restricted to the
blue interval, $J_{\mf D,\epsilon}$ is constant equal to
$\mss w_{k,k+1}^{1,2} \, f_{\mf D}(\mc M_p(k+1)) + [ 1 - \mss
w_{k,k+1}^{1,2} ]\, f_{\mf D}(\mc M_p(k))$; while on the yellow
interval, the test function is constant equal to
$\mss w_{k,k+1}^{2,3} \, f_{\mf D}(\mc M_p(k+1)) + [ 1 - \mss
w_{k,k+1}^{2,3} ]\, f_{\mf D}(\mc M_p(k))$. }
\label{fig7}
\end{figure}

\subsection*{The test function}

For convenience, we first define the test function on $\bb R$, then we
project it onto the torus. We refer to figures \ref{fig-f11} and \ref{fig7} for the
notation introduced below. Fix an $\widehat{\bb X}_p-$equivalent class
$\mf D \subset \mc S_p$. Let $\mz D\subset \mz S_p$ be the
$\bb X_p-$equivalence class, such that
\begin{equation}
\label{lift_equiv_1}
\{\Pi(\mz M): \mz M\in \mz D\} = \mf D   \,, 
\quad\text{and}\quad 
\mz M_p(0) \leq \mz M \leq \mz M_p(\mf u_p -1)
\quad \forall \; \mz M\in \mz D.
\end{equation}
Let
$D\subset \llbracket 0,\, \mf u_p -1\rrbracket = \{0, \dots, \mf u_p
-1\}$ be the set of indices associated to the states contained in
$\mz D$:
${\color{blue}D}:= \left\{k\in \lb 0, \dots, \mf u_p -1\rb : \mz
M_p(k)\in \mz D\right\} \,=\, \left\{k\in \lb 0, \dots, \mf u_p -1\rb
: \mc M_p(k)\in \mf D\right\}$. Denote by $l_{\mf D}, r_{\mf D}$ the
leftmost, rightmost element of $D$, respectively,
${\color{blue}l_{\mf D}} := \min\{k\in D\}$,
${\color{blue} r_{\mf D}} := \max\{k\in D\}$. Hence, 
\begin{equation*}
D \,=\, \lb  l_{\mf D} ,\, r_{\mf D}\rb \,.
\end{equation*}
Recall the definition of the sets $\mz W_{k-1,k}^{(p)}$, $k\in S_p$,
introduced in \eqref{max_br}. Let $\mz W_{\mf D}$ be the set of
absolute maxima separating the metastable states $\mz M_p(k)$,
$k\in \lb  l_{\mf D} -1 ,\, r_{\mf D} +1 \rb$:  
\begin{equation}
\label{max_equiv}
\cb{\mz W_{\mf D} } \,:=\,
\bigcup_{k\,=\,l_{\mf D}}^{r_{\mf D}+1}\mz W_{k-1,k}^{(p)}\,.
\end{equation}

We construct the test function $J_{\mf D,\epsilon}$ in several
steps. We start defining it at neighborhoods of each local maxima
belonging to $\mz W_{\mf D}$. For every
$l_{\mf D} \le k \le r_{\mf D} +1$, denote the maxima of
$\mz W_{k-1,k}^{(p)}$ by
\begin{equation}
\label{43}
\sigma^{1}_{k-1,k} <
\dots < \sigma^{\mf r_{k-1,k}}_{k-1,k};
\end{equation}
where {\color{blue}$\mf r_{k-1,k}$} is the number of elements of
$\mz W_{k-1,k}^{(p)}$.

For $1\leq r\leq \mf r_{k-1,k}$, denote by
$\ms L_{k-1,k}^{r,\epsilon}$ the linearization of the generator
$\ms L_{\epsilon}$ around $\sigma_{k-1,k}^{r}$ (centered at the origin
and acting on the space $C^{2}(\bb R))$.
\begin{equation*}
\label{equil_pot_0}
(\ms L_{k-1,k}^{r,\epsilon} \psi)(x) \;=\; x\,\mss
b'(\sigma_{k-1,k}^{r})\,\,\psi'(x) \, +\,\epsilon\,\mss
a(\sigma_{k-1,k}^{r})\,\psi''(x)\,.
\end{equation*}
Let
${\color{blue}\varrho_{k-1,k}^{r,\epsilon} } \colon \bb R\to \bb [0,1]$
be the solution of the Poisson equation
\begin{equation*}
\ms L_{k-1,k}^{r,\epsilon} u = 0,\;\;
u(-\infty)=0, \;\; u(+\infty)=1.
\end{equation*}
Clearly, 
\begin{equation}
\label{kth_bridge:gaussian_ap}
\varrho_{k-1,k}^{r,\epsilon} (x) =
\sqrt{\frac{-\, S''(\sigma_{k-1,k}^{r})}{2\pi\epsilon}}
\, \int_{-\infty}^{x}
e^{ S''(\sigma_{k-1,k}^{r}) y^{2} / 2\epsilon} \; dy \,. 
\end{equation}

Denote by $\delta_{k-1,k}^{r}$ the scale given by
\begin{equation}
\label{21b}
\cb {\delta_{k-1,k}^{r}} \,: = \, \delta_{k-1,k}^{r}
(\epsilon) \,=\,
\sqrt{\frac{2 \,\epsilon \log
\epsilon^{-1}}{-\, S''(\sigma_{k-1,k}^{r})}}\;.
\end{equation}
The next  assertion states that the function
$\varrho_{k-1,k}^{r,\epsilon} (\cdot)$ at $\pm \, \delta_{k-1,k}^{r}$ is
very close to its asymptotic values. The proof of the assertion relies
on elementary Gaussian bounds.

\begin{assertion}
\label{equil_pot_bound}
There exists a finite constant $C_0$ such that
\begin{equation*}
\varrho_{k-1,k}^{r,\epsilon}  (-\, \delta_{k-1,k}^{r})
\,\le \, C_0\, \frac{\epsilon}{\sqrt{\log \epsilon^{-1}} } \,,
\quad
\varrho_{k-1,k}^{r,\epsilon}  (\delta_{k-1,k}^{r})
\,\ge \, 1 \,-\, C_0 \, \frac{\epsilon}{\sqrt{\log \epsilon^{-1}} } 
\end{equation*}
for all $l_{\mf D} \le k \le r_{\mf D} +1$,
$1\leq r\leq \mf r_{k-1,k}$, and $\epsilon>0$.
\end{assertion}

Fix $l_{\mf D} \le k \le r_{\mf D} +1$. Denote by
$\mss w_{k-1,k}^{r, r+1}$, $0\le r \le \mf r_{k-1,k}$, the
(asymptotic) probability that the diffusion $X_\epsilon(\cdot)$,
starting from a point
$x\in (\sigma_{k-1,k}^{r},\sigma_{k-1,k}^{r+1})$, hits
$\mz M_p(k)$ before $\mz M_p(k-1)$. By equations (2.27), (2.28) and
Remark 3.4 in \cite{lm},
\begin{equation}
\label{01}
{\color{blue}\mss w_{k-1,k}^{r,r+1} } \,:=\, 
\frac{\sum_{l=1}^{r}1/\sqrt{-S''(\sigma_{k-1,k}^{l})}}
{\sum_{l=1}^{\mf r_{k-1,k}}1/\sqrt{-S''(\sigma_{k-1,k}^{l})}}\,. 
\end{equation} 
Clearly, $\mss w_{k-1,k}^{0,1} = 0$ and
$\mss w_{k-1,k}^{\mf r, \mf r+1} =1$, where $\mf r = \mf r_{k-1,k}$.

Recall that $\omega(\,\cdot\, |\, \mf D)$ represents the measure
$\omega$ conditioned to $\mf D$, and $\nu^p_{\mf D}(\cdot)$ the
stationary state of the Markov chain $\widehat{\bb X}_p$ reflected at
$\mf D$.  Lemma \ref{pro_red_stat} provides an explicit formula for
$\nu^p_{\mf D}(\cdot)$.  Let $f_{\mf D} \colon \mc S_p\to \bb R$ be
function given by
\begin{equation}
\label{equivtest}
{\color{blue} f_{\mf D}(\mc M) } \,:=\, 
\sqrt{\frac{\omega(\mc M|\mf D)}
{\nu^p_{\mf D}(\mc M)}} \,
1\{ \mc M \in \mf D \} \,.
\end{equation}

Fix $l_{\mf D} \le k \le r_{\mf D} +1$, $1\leq r\leq \mf r_{k-1,k}$,
and recall the weights $\mss w_{k-1,k}^{r,r+1}$ introduced in
\eqref{01}.  Let $f_{k-1,k}^{r,r+1}$ be the asymptotic expectation of
$f_{\mf D}(X_\epsilon(\tau))$, where $\tau$ is the hitting time of the
set $\{ \mc M_p(k-1) , \mc M_p(k)\}$, when the process starts from a
point between $\sigma_{k-1,k}^{r}$, and $\sigma_{k-1,k}^{r+1}$:
\begin{equation}
\label{08}
\cb{ f_{k-1,k}^{r,r+1} }  \,:=\, 
\mss w_{k-1,k}^{r,r+1} \, f_{\mf D}(\mc M_p(k))
\,+\,
[\, 1 - \mss w_{k-1,k}^{r,r+1} \, \,] \,
f_{\mf D}(\mc M_p(k-1)) \,.
\end{equation}
Note that, by the definition of $f_{\mf D}(\cdot)$,
$f_{\mf D}(\mc M_p(l_{\mf D} -1)) = f_{\mf D}(\mc M_p(r_{\mf D} +1))
=0$.

Denote by $\rho_{k-1,k}^{r,\epsilon} (x)$ a function similar to
$\varrho_{k-1,k}^{r,\epsilon} (x)$, but which interpolates between
$f_{k-1,k}^{r-1,r}$ and $f_{k-1,k}^{r,r+1}$ instead of interpolating
between $0$ and $1$:
\begin{align}
\label{equil_pot}
{\color{blue} \rho_{k-1,k}^{r,\epsilon} (x) }
:= f_{k-1,k}^{r,r+1} \,  \varrho_{k-1,k}^{r,\epsilon} (x)\,+\,
f_{k-1,k}^{r-1,r}\, [\, 1- \varrho_{k-1,k}^{r,\epsilon} (x)\,]
\end{align}

We are now in a position to define the test function
$J_{\mf D,\epsilon} (\cdot)$. This is done writing the torus as the
union of disjoint sets and defining the test function in each of these
sets. We refer to figures \ref{fig-f11} and \ref{fig7} for an illustration of the
definitions.  Below $k$ is fixed and belongs to the interval $\lb\, l_{\mf
D} ,\,  r_{\mf D} +1\,\rb $.

For $1\leq r\leq \mf r_{k-1,k}$, let the scale $\eta_{k-1,k}^{r}$ be
given by
\begin{equation*}
{\color{blue}  \eta_{k-1,k}^{r} }
\,:=\, \frac{\epsilon}{\delta_{k-1,k}^{r}}
\;=\;
\sqrt{\frac{-\, \epsilon \, S''(\sigma_{k-1,k}^{r})}
{2  \,  \log \epsilon^{-1}}}\;.
\end{equation*}
Denote by $\sigma^{-}_{\mf D}$, $\sigma^{+}_{\mf D}$ are the leftmost,
rightmost maxima in $\mz W_{\mf D}$, respectively, 
\begin{equation}
\label{39}
{\color{blue}\sigma^{-}_{\mf D}} \,:= \,
\min\{\sigma \in \mz W_{\mf D}\}\,,\quad
{\color{blue}\sigma^{+}_{\mf D}}:= \max\{\sigma \in \mz W_{\mf D}\}\,. 
\end{equation}
Thus, $\sigma^{-}_{\mf D} = \sigma_{l_{\mf D} -1, l_{\mf D}}^{1}$,
$\sigma^{+}_{\mf D} = \sigma_{r_{\mf D} , r_{\mf D} +1}^{\mf r_{ \mf
D}}$, where
$\cb {\mf r_{\mf D} } := \mf r_{ r_{\mf D} , r_{\mf D} +1}$, are
respectively the leftmost, rightmost global maxima separating the sets
$\{\mz M_p(k) : k\in D\}$ and $\{\mz M_p(k) : k\not\in D\}$.  Denote
by $\Omega_{\mf D}$ the neighborhood of $\mf D$ defined by
\begin{equation}
\label{28}
{\color{blue}\Omega_{\mf D}}
\,:=\, (\sigma^{-}_{\mf D} 
- \delta_{l_{\mf D} -1, l_{\mf D}}^{1} 
- \eta_{l_{\mf D} -1, l_{\mf D}}^{1} \,,\,
\sigma^{+}_{\mf D}  
+\delta_{r_{\mf D} , r_{\mf D} +1}^{\mf r_{\mf D} } 
+\eta_{r_{\mf D} , r_{\mf D} +1}^{\mf r_{\mf D} })\,.
\end{equation}
Set the test function to vanish outside the interval $\Omega_{\mf D}$:
\begin{equation*}
\label{comp_neigh_test_f}
J_{\mf D, \epsilon} (x) = 0\,,\quad x\in \bb R\setminus \Omega_{\mf D}\,.     
\end{equation*}

Inside the set $\Omega_{\mf D}$, we first define the test function
close to the global maxima $\sigma_{k-1,k}^{r}$ and far from
them. Then, we interpolate smoothly between these intervals.  For
$1\leq r\leq \mf r_{k-1,k}$, let $\Lambda_{k-1,k}^{r}$ be the
$\delta_{k-1,k}^{r}$-neighborhood of $\sigma_{k-1,k}^{r}$
\begin{equation}
\label{max_neigh} 
{\color{blue} \Lambda_{k-1,k}^{r}}
\,:=\, \big[\,  \sigma_{k-1,k}^{r} - \delta_{k-1,k}^{r} \,,\,
\sigma_{k-1,k}^{r} + \delta_{k-1,k}^{r} \,\big] \;.
\end{equation}
The test function $J_{\mf D,\epsilon}$ is defined on
$\Lambda_{k-1,k}^{r}$ as
\begin{equation}
\label{kth_core_test_f}
{\color{blue}J_{\mf D,\epsilon} (x)} \,:=\,
\rho_{k-1,k}^{r,\epsilon} (x-\sigma_{k-1,k}^{r})\,,
\quad x\in \Lambda_{k-1,k}^{r} \,.
\end{equation}

Let $C_{k-1,k}^{r,r+1}$ be the interval between successive
maxima given by
\begin{equation}
\label{core_neigh}
 {\color{blue} C_{k-1,k}^{r,r+1} }
\,:=\, \big[\, \sigma_{k-1,k}^{r}  + \delta_{k-1,k}^{r}
+ \eta_{k-1,k}^{r}  \,,\,
\sigma_{k-1,k}^{r+1}  - \delta_{k-1,k}^{r+1}  -
\eta_{k-1,k}^{r+1} \,\big] \;.   
\end{equation}
In the interval $C_{k-1,k}^{r,r+1}$ the test function
$J_{\mf D, \epsilon} (\cdot)$ is constant and takes the value
$f_{k-1,k}^{r,r+1}$ introduced in \eqref{08}:
\begin{equation}
\label{kth_core_test_f_2}
{\color{blue} J_{\mf D,\epsilon}  (x)}  \,:=\, f_{k-1,k}^{r,r+1}\,. 
\end{equation}
In particular, $J_{\mf D, \epsilon}  (x) = f_{\mf D}(\mc M_p(k))$ for
$x\in C_{k-1,k}^{\mf r , \mf r+1}$, where $\mf r = \mf r_{k-1,k}$,
and $J_{\mf D,\epsilon}  (x) = f_{\mf D}(\mc M_p(k-1))$ for
$x\in C_{k-1,k}^{0 ,1}$.

To complete the definition of $J_{\mf D, \epsilon}$, consider the
binding sets $B_{k-1,k}^{r,-}$ $B_{k-1,k}^{r,+}$, $B_{k-1,k}^{r}$,
$B_{\mf D}$ given by
\begin{equation}
\label{binding_sets}
\begin{gathered}
{\color{blue} B_{k-1,k}^{r,-}  }  \,:=\,
(\sigma_{k-1,k}^{r}  - \delta_{k-1,k}^{r}  -
\eta_{k-1,k}^{r}  \,,\,
\sigma_{k-1,k}^{r}  - \delta_{k-1,k}^{r} )\;,
\\
{\color{blue} B_{k-1,k}^{r,+} }  \,:=\,
(\sigma_{k-1,k}^{r}   + \delta_{k-1,k}^{r}  \,,\,
\sigma_{k-1,k}^{r}   + \delta_{k-1,k}^{r}  +
\eta_{k-1,k}^{r}   )\;,
\\
{\color{blue}B_{k-1,k}^{r} } := B_{k-1,k}^{r,-}  \cup
B_{k-1,k}^{r,+}  \,,\quad
{\color{blue} B_{\mf D}}  \,:=\,
\bigcup_{k = l_{\mf D}  }^{r_{\mf D} +1}
\bigcup_{r=1}^{\mf r_{k-1,1}} 
B_{k-1,k}^{r} \,.
\end{gathered}
\end{equation}
On these biding sets, let $J_{\mf D,\epsilon}$ be defined as a smooth
function ${\color{blue}\Phi_{\mf D,\epsilon}}$ in $C^{2}(B_{\mf D})$
that satisfies the following properties
\begin{itemize}
\item $\Phi_{\mf D,\epsilon}$ turns $J_{\mf D,\epsilon}$ into an
element in $C^{2}(\bb R)$,

\item $\Phi_{\mf D,\epsilon}'(x)\neq 0$ for every $x\in B_{\mf D}$.

\item
$\sup_{x\in B_{\mf D} }|\Phi_{\mf D, \epsilon}'(x)| =
O(\sqrt{\epsilon})$.
\end{itemize}
\noindent Assertion \ref{equil_pot_bound} guarantees the existence of
such a function since the length of each interval $B_{k-1,k}^{r, \pm}$
is of order $\eta_{k-1,k}^{r}$.

Recall from \eqref{28} the definition of the interval
$\Omega_{\mf D}$.  Since there is more than one $\bb X_p$-equivalent
class, there exist $a>0$ and $\epsilon_0>0$ such that
$\Omega_{\mf D} \subset (\sigma^-_{\mf D} - a, \sigma^-_{\mf D}-a +
1)$ for all $0<\epsilon<\epsilon_0$. Thus, $\Omega_{\mf D}$ can be
considered as a subset of the torus, and $J_{\mf D, \epsilon}$ as a
function on the torus of class $C^{2}(\bb T)$.

\subsection*{The local Gibbs measure}

In this subsection, $\mf D\subset \mc S_p$ is a
$\widehat{\bb X}_p-$equivalence class and $\mz D$ its lifted version
to $\bb R$ satisfying \eqref{lift_equiv_1}. Recall the notation
introduced in \eqref{28}.  Fix a positive number $a>0$ sufficiently
small such that
\begin{equation*}
\big( \, \sigma^-_{\mf D} - a \,,\,
\sigma^-_{\mf D} \,\big) \,
\cap \mz M \,=\, \varnothing\,,
\quad
\big(\, \sigma^+_{\mf D} \,,\,
\sigma^+_{\mf D} + a\,\big)
\,\cap \, \mz M = \varnothing\,.
\end{equation*}
Clearly,
$\Omega_{\mf D} \subset ( \sigma^-_{\mf D} - a, \sigma^+_{\mf D} + a)$
for $\epsilon$ sufficiently small. Let
$\cb {S_{\mf D}}\colon \bb T\to \bb R$ be a function of class
$C^2(\bb T)$ such that
\begin{equation*}
\label{local_potential}
S_{\mf D}(\Pi(x)) \,=\,  S(x)\;, \quad
x\,\in\,  ( \sigma^-_{\mf D} - a \,,\,  \sigma^+_{\mf D} + a) \,.
\end{equation*}

From now on, we work on the torus. Denote by
$\mc M_p(\mf D)\subset \mc M_p$ the set of local minima of level $p$
belonging to $\mf D$,
\begin{equation*}
{\color{blue}\mc M_p(\mf D)} := \bigcup_{\mc M\in \mf D}\mc M\,,
\end{equation*}
and by $\mc E(\mc M_p(\mf D))\subset \bb T$ the union of the
associated metastable wells,
\begin{equation*}
{\color{blue} \mc E(\mc M_p(\mf D))}:=
\bigcup_{m\in \mc M_p(\mf D)}\mc E(m)\,.
\end{equation*}
The next result follows directly from Proposition
\ref{depth_equivalent}, and the definitions of the sets
$\Omega_{\mf D}$, $\mc E(\mc M_p(\mf D))$.

\begin{proposition}
\label{deepest_min}
For every $\widehat{\bb X}_p(\cdot)-$equivalence class
$\mf D\subset \mc S_p$,
\begin{gather*}
\min\big\{\, S_{\mf D}(m)\;:\;
m\in \big[\, \Omega_{\mf D} \cap \mc M\, \big ]
\setminus \mc M_p(\mf D) \, \big \}\;
>\; S_{\mf D}(\mc M_p(\mf D)) \,,
\\
\inf_{x\,\in\, \Omega_{\mf D}\setminus \ms
E(\mc M_p(\mf D))} S_{\mf D} (x)
\,>\, S_{\mf D}(\mc M_p(\mf D))\,.
\end{gather*}
\end{proposition}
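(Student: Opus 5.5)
The plan is to read both inequalities off the structure of the equivalence class $\mz D$ and the monotonicity properties of $S$ between consecutive level-$p$ states. Proposition \ref{depth_equivalent} in the appendix gives the depth comparison, and the definition of $\Omega_{\mf D}$ tells us which minima and which points can occur. We work on the lift to $\bb R$, where $S_{\mf D}=S$ on $(\sigma^-_{\mf D}-a,\sigma^+_{\mf D}+a)\supset\Omega_{\mf D}$ for $\epsilon$ small. By property (b), $S$ is constant on each $\mz M_p(k)$. Since $\mz D$ is a $\bb X_p$-equivalence class, all states $\mz M_p(k)$, $k\in D$, have the same depth: a jump from $\mz M_p(k)$ to a neighbour requires the escape barrier to equal $\mf h_p$, and the barrier crossed is the common maximum $S(\mz W^{(p)}_{k,k+1})$. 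Going back and forth therefore forces $S(\mz M_p(k))=S(\mz M_p(k+1))$. This is the content of Proposition \ref{depth_equivalent}, and we denote the common value by $S_{\mf D}(\mc M_p(\mf D))$.

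\emph{First inequality.} Take a local minimum $m\in\Omega_{\mf D}\cap\mz M$ not in $\bigcup_{k\in D}\mz M_p(k)$. Because $\Omega_{\mf D}$ extends only $O(\sqrt{\epsilon\log\epsilon^{-1}})$ beyond $\sigma^\pm_{\mf D}$, and $a$ was chosen so that no minimum lies in $(\sigma^-_{\mf D}-a,\sigma^-_{\mf D})\cup(\sigma^+_{\mf D},\sigma^++a)$, such an $m$ lies in some interval $[m^+_{p,k-1},m^-_{p,k}]$ with $l_{\mf D}\le k\le r_{\mf D}+1$, or strictly between minima of $\mz M_p(k)$, $k\in D$. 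In the second case we use the recursive construction: $\mz M_p(k)$ is a union of lower-level closed classes, and by the induction in \cite{lm} (postulates $\mc P$ in Appendix \ref{secA1}) every minimum lying between elements of $\mz M_p(k)$ but not in it is strictly shallower, i.e.\ has $S(m)>S(\mz M_p(k))$. In the first case, $m$ lies in the basin between two level-$p$ states. By the definition of the level-$p$ partition (all minima are absorbed into some $\mz M_q(j)$ or into transient sets), a minimum strictly between $m^+_{p,k-1}$ and $m^-_{p,k}$ with $S(m)\le S(\mz M_p(k))$ would have formed its own well at a lower level. Such a well could not be escaped at height $\mf h_q\le\mf h_p$, and this contradicts the fact that it is not a level-$p$ state. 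Finiteness of the set of minima involved then turns the strict inequality for each $m$ into a strict minimum.

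\emph{Second inequality.} The set $\Omega_{\mf D}\setminus\mc E(\mc M_p(\mf D))$ is contained in a compact set $K$, namely the closure of $(\sigma^-_{\mf D}-a',\sigma^+_{\mf D}+a')$ minus the open wells, for a small fixed $a'$. On $K$, $S$ is continuous. Its minimum over $K$ is attained either at an interior local minimum $m\notin\mc M_p(\mf D)$, handled by the first part, or at a boundary point of some well $\mc E_1(m)$, $m\in\mc M_p(\mf D)$. At the latter, by definition \eqref{1_well}, $\mc E_1(m)=\mc N_{r_0}(m)$, so $S\ge S(m)+r_0$ on its boundary unless the boundary point comes from the ball constraint $B(m,r_0)$. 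In that case $S>S(m)$ strictly, since $m$ is a strict local minimum and $S$ is monotone on $B(m,\hat r)$ away from $m$. Alternatively, the minimum is attained at an endpoint near $\sigma^\pm_{\mf D}$, where $S$ is close to the maximal value, which exceeds $S(\mc M_p(\mf D))$ by at least $\mf h_p$. Hence the infimum, being a minimum over a compact set of values each strictly larger, is strictly larger.

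The main obstacle is the first case of the first inequality: making rigorous, from the postulates in Appendix \ref{secA1}, that every non-level-$p$ minimum inside the basin of $\mz D$ is strictly shallower than $\mz D$. My plan is to invoke Proposition \ref{depth_equivalent} directly for this, and otherwise to argue by induction on $p$ using property (d).
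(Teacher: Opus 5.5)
Your proposal is correct and follows the paper's route: the paper also obtains both inequalities directly from Proposition \ref{depth_equivalent} (every local minimum of $S$ in $[\sigma^-_{\mf D},\sigma^+_{\mf D}]$ outside $\mz M_p(\mf D)$ lies strictly above $S(\mz M_p(\mf D))$, which makes your heuristic case analysis in the first inequality unnecessary) and the definitions of $\Omega_{\mf D}$ and the wells, and your compactness argument is simply the second inequality written out. Two small corrections: the equal-depth fact is \eqref{depth_equiv_class} (from $\mc P_4$, $\mc P_8$), not Proposition \ref{depth_equivalent}; and $S>S(m)$ on the ball part of $\partial\mc E_1(m)$ does not follow from monotonicity on $B(m,\hat r)$, but from $r_0<\mf h_1$, which confines $N(m,r_0)$ between the two maxima adjacent to $m$, where $S$ is strictly monotone on each side of $m$.
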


Recall the definition of the weights $\pi_1(j)$, $\pi_p(k)$,
$j\in S_1$, $k\in S_p$, introduced in \eqref{22}, \eqref{p_weights},
respectively.

\begin{corollary}
\label{wells_conv}
Fix a $\widehat{\bb X}_p(\cdot)-$equivalence class
$\mf D\subset \mc S_p$. For every continuous function
$\phi\in C(\bb T)$,
\begin{equation*}
\int_{\Omega_{\mf D}}
\frac{\phi(x)}{\mss a(x)} \,
e^{-S_{\mf D}(x)/\epsilon}\, dx \,=\,
[1+O(\epsilon)]\,
\sqrt{\epsilon}\,
e^{-S_{\mf D}(\mc M_p(\mf D))/\epsilon}
\, \sum_{j:m_j\in\mc M_p(\mf D)}\phi(m_j)\pi_1(j)\,.
\end{equation*}
In particular,
\begin{equation*}
\int_{\Omega_{\mf D}}
\frac{1}{\mss a(x)}
\, e^{-S_{\mf D}(x)/\epsilon}dx \,=\,
[1+O(\epsilon)]\,
\sqrt{\epsilon}\,e^{-S_{\mf D}(\ms
M_p(\mf D))/\epsilon} \, \sum_{k\in D}\pi_p(k)\,.
\end{equation*}
\end{corollary}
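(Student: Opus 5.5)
The plan is a Laplace-type expansion of the integral around each deepest minimum $m_j\in\mc M_p(\mf D)$. Everything away from these minima will be exponentially negligible.

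\emph{Localization.} By Proposition \ref{deepest_min}, there exist $\kappa>0$ and a small radius $\varrho>0$ such that
\begin{equation*}
S_{\mf D}(x)\ \ge\ S_{\mf D}(\mc M_p(\mf D))+\kappa
\qquad\text{for } x\in\Omega_{\mf D}\setminus\bigcup_{m_j\in\mc M_p(\mf D)}B_\varrho(m_j)\,.
\end{equation*}
This covers both the non-deepest minima lying in $\Omega_{\mf D}$ and the region outside the wells $\mc E(\mc M_p(\mf D))$. Since $\phi/\mss a$ is bounded and $\Omega_{\mf D}$ has length at most one, the contribution of that region is at most
\begin{equation*}
C\,\Vert\phi\Vert_\infty\, e^{-[S_{\mf D}(\mc M_p(\mf D))+\kappa]/\epsilon}\,.
\end{equation*}
After dividing by $\sqrt\epsilon\, e^{-S_{\mf D}(\mc M_p(\mf D))/\epsilon}$, this is $O(e^{-\kappa/(2\epsilon)})$, which is far smaller than $O(\epsilon)$.

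\emph{Local Gaussian computation.} Fix $m=m_j$ and write $s=S''(m)>0$. On $B_\varrho(m)$, Taylor's formula gives
\begin{equation*}
S(x)-S(m)=\tfrac12 s (x-m)^2+R_3(x),\qquad |R_3(x)|\le C|x-m|^3 ,
\end{equation*}
and $1/\mss a(x)=1/\mss a(m)+O(|x-m|)$. The last bound uses that $\mss a\in C^2$ and $\mss b\in C^2$, so that $S\in C^3$.

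\begin{itemize}
\item Change variables $x=m+\sqrt\epsilon\, y$ and expand $e^{-R_3/\epsilon}=1-R_3/\epsilon+O(\epsilon|y|^6)$. The leading term is
\begin{equation*}
\sqrt\epsilon\, e^{-S(m)/\epsilon}\,\frac{\phi(m)}{\mss a(m)}\sqrt{2\pi/s}=\sqrt\epsilon\, e^{-S(m)/\epsilon}\,\phi(m)\,\pi_1(j)\,.
\end{equation*}
\item The first-order corrections are odd in $y$: the cubic term of $S$, the linear term of $1/\mss a$, and the linear term $\phi'(m)(x-m)$ of $\phi$. Each of them integrates to zero against the Gaussian.
\item The surviving corrections are even and carry a factor $\epsilon$ after scaling. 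This yields the relative error $[1+O(\epsilon)]$.
\item Summing over $m_j\in\mc M_p(\mf D)$ gives the first display. All these minima share the common value $S_{\mf D}(\mc M_p(\mf D))$ by property (b) and Proposition \ref{deepest_min}.
\end{itemize}

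\emph{The case $\phi\equiv1$.} Taking $\phi\equiv 1$ and grouping the $m_j$ by the states $\mc M_p(k)$, $k\in D$, the definition \eqref{p_weights} turns $\sum_j \pi_1(j)$ into $\sum_{k\in D}\pi_p(k)$. This gives the second display, and here the $O(\epsilon)$ is genuine, because $\phi$ is constant.

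\emph{Main obstacle: general continuous $\phi$.} The expansion above uses $\phi\in C^2$ near the minima: with only $\phi\in C^1$, the odd-term cancellation still gives $O(\epsilon)$, but the second-order remainder of $\phi$ is merely $o(\sqrt\epsilon)$. To match the statement as worded for every $\phi\in C(\bb T)$, I would write
\begin{equation*}
\phi=\phi(m_j)+[\phi-\phi(m_j)]\quad\text{on } B_\varrho(m_j).
\end{equation*}
The constant part is handled exactly by the second display. The difference part is bounded by the Gaussian average of $|\phi(m_j+\sqrt\epsilon y)-\phi(m_j)|$, which is controlled by the modulus of continuity $w_\phi$ of $\phi$ at scale $\sqrt\epsilon$.

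That bound is $O(\epsilon)$ exactly when $\phi$ is $C^{1,1}$ near the minima, which is the case for the test functions used later in the paper (the densities $J_{\mf D,\epsilon}^2$, $\mss a'$, $\mss b^2/\mss a$, which are smooth and constant on the wells). For a merely continuous $\phi$, this decomposition only yields the relative error $O(\epsilon)+O(w_\phi(\sqrt{\epsilon\log\epsilon^{-1}}))$, i.e. a factor $1+o(1)$. I expect this step, where the claimed uniform $O(\epsilon)$ depends on the local regularity of $\phi$, to be the delicate point. I would make explicit that the implicit constant depends on $\phi$ through its $C^2$ norm near $\mc M_p(\mf D)$, and verify that every later application of this corollary uses such a $\phi$.
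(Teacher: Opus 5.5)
This is the paper's argument. Proposition \ref{deepest_min} makes the part of $\Omega_{\mf D}$ outside the wells of $\mc M_p(\mf D)$ exponentially negligible, and Laplace's method is applied at each $m_j\in\mc M_p(\mf D)$; at that step the paper simply asserts the factor $1+O(\epsilon)$.

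Your objection is correct. For merely continuous $\phi$ only $o(1)$ is available, and the error must be read additively, as $\sqrt{\epsilon}\,e^{-S_{\mf D}(\mc M_p(\mf D))/\epsilon}\,o(1)$. Indeed, a nonnegative $\phi$ vanishing at every $m_j$ but not identically near them makes the multiplicative form false.

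This weaker form is all the paper needs: Assertion \ref{s05}, Lemma \ref{weak_conv_p_limsup}, and their analogues in Section \ref{sec2} only take limits. Note that Lemma \ref{weak_conv_p_limsup} applies the estimate to an arbitrary $f\in C(\bb T)$. So your plan to check that every application involves a $C^{1,1}$ function would fail there. What is needed is the $o(1)$ version, and your modulus-of-continuity argument does give it.

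Two of your own quantitative claims need correcting.

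First, the $O(\epsilon)$ you call genuine for $\phi\equiv 1$ does not follow under Condition H. From $S'=-\mss b/\mss a$ with $\mss a,\mss b\in C^2$ you only get $S\in C^3$. Hence $R_3(x)=\tfrac16 S'''(m)(x-m)^3+o(|x-m|^3)$, and only the cubic piece is odd. The remainder gives a relative error $o(\sqrt\epsilon)$, and nothing better holds in general. For example, take $\mss a\equiv 1$ and $S'''(x)-S'''(m)=|x-m|^{1/2}\,\mathrm{sgn}(x-m)$, which is compatible with $\mss b\in C^2$. Then $R_3-\tfrac16 S'''(m)(x-m)^3=\tfrac{8}{105}|x-m|^{7/2}$ is even, and the relative correction is of exact order $\epsilon^{3/4}$. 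An $O(\epsilon)$ bound requires $S'''$ to be Lipschitz near $\mc M_p(\mf D)$.

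Second, bounding through $|\phi(m_j+\sqrt\epsilon\, y)-\phi(m_j)|$ gives only $O(\sqrt\epsilon)$ for a Lipschitz $\phi$ with $\phi'(m_j)\neq 0$. For $C^{1,1}$ functions you get $O(\epsilon)$ only by keeping the signed linear term and using its oddness, as in your Gaussian expansion.

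Neither point affects the paper downstream, since only the leading-order limit is ever used.
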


\begin{proof}
Fix $\phi \in C(\bb T)$, and write
\begin{align*}
\int_{\Omega_{\mf D}}
\frac{\phi(x)}{\mss a(x)} \,
e^{-S_{\mf D}(x)/\epsilon}\,dx
\,=\,  \int_{\mc E(\mc M_p(\mf D))}\frac{\phi(x)}{\mss a(x)}
\, e^{-S_{\mf D}(x)/\epsilon}\,dx 
\,+\, \int_{\Omega_{\mf D}\setminus \mc E(\mc M_p(\mf D))}
\frac{\phi(x)}{\mss a(x)}
\, e^{-S_{\mf D}(x)/\epsilon}\,dx \,.
\end{align*}
We treat each integral on the right-hand side separately. Since $\phi$
is bounded and $\mss a$ is bounded below by a strictly positive
constant, the second integral is less than or equal to
\begin{align*}
C_0\,
e^{-S_{\mf D}(\mc M_p(\mf D))/\epsilon}
\sup_{x\in \Omega_{\mf D}\setminus \ms
E(\mc M_p(\mf D))}\,
e^{\,[S_{\mf D}(\mc M_p(\mf D))- S_{\mf D}(x)]\,/\,\epsilon}
\end{align*}
for some finite constant $C_0$. By Proposition \ref{deepest_min}, this
expression is bounded by
\begin{align*}
e^{-\,[\,S_{\mf D}(\ms
M_p(\mf D))\,+\,h'\,]\,/\,\epsilon}\,, 
\end{align*}
for some positive constant $h'>0$.

We turn to the first integral. As $\phi(\cdot)$, and $\mss a (\cdot)$
are continuous functions, by the definition of $\mc E(\mc M_p(\mf D))$
and Laplace's method, the first integral is equal to
\begin{align*}
\sum_{m\in \mc M_p(\mf D)}\int_{\mc E(m)}\frac{\phi(x)}{\mss a(x)}
e^{-S_{\mf D}(x)/\epsilon}\,dx
&=\, e^{-S_{\mf D}(\mc M_p(\mf D))/\epsilon}\sum_{m\in \ms
M_p(\mf D)}\frac{\phi(m)}{\mss
a(m)}\sqrt{\frac{2\pi\epsilon}{S''(m)}}\left(1+O(\epsilon)\right)\\
&=\, (1+O(\epsilon))\,\sqrt{\epsilon}\,
e^{-S_{\mf D}(\mc M_p(\mf D))/\epsilon}
\sum_{k\in D}\sum_{m\in \mc M_p(k)}\phi(m)\pi_1(k).
\end{align*}
To conclude the proof, in remains to recollect the previous estimates,
and recall the definition of $\pi_p(k)$ given in \eqref{p_weights}.
\end{proof}

\subsection*{A recovery sequence $\mu_\epsilon$ for $\mu$}

In this subsection we construct a recovery sequence $\mu_\epsilon$ for
the measure $\mu$. Fix a $\widehat{\bb X}_p(\cdot)-$equivalence class
$\mf D\subset \mc S_p$, and let
${\color{blue}\mu_{\mf D,\epsilon}}\in \mss P(\bb T)$ be the
probability measure given by the density
\begin{equation}
\label{recovery_seq_equiv}
{\color{blue}m_{\mf D,\epsilon}(x)}
\,:= \, \frac{\big[J_{\mf D, \epsilon} (x)\big]^2
\,e^{-S_{\mf D}(x)/\epsilon}}{\mss a(x) \, c_{\mf D}(\epsilon)}\,,
\end{equation}
where $J_{\mf D, \epsilon}$ and $S_{\mf D}$ are the
functions defined in the previous subsection, and
$c_{\mf D}(\epsilon)$ is a normalization constant that turns
$\mu_{\mf D, \epsilon}$ into a probability measure.

Recall the definition of the measure $\mu$ introduced in
\eqref{29} and that we assumed $\omega$ to be strictly
positive. Thus, with the notation introduced in Section
\ref{sec5}, $\mc S^{(\omega)}_p = \mc S_p$ and the sets $\mf D_r^p$,
$1\leq r\leq l_p$, are exactly the
$\widehat{\bb X}_p(\cdot)-$equivalent classes with two or more
elements. Let ${\color{blue}k_p}\geq 0$ be the number of
$\widehat{\bb X}_p-$equivalent classes having exactly one element, and
denote them by {\color{blue}$\mf C_r$}, $1\leq r\leq k_p$. Define
$\mu_\epsilon\in \mss P(\bb T)$ as the convex combination, with respect
to $\omega$, of the measures
$\{\mu_{\mf D_r}\}_{r=1}^{l_p}\cup \{\mu_{\mf C_r}\}_{r=1}^{k_p}$:
\begin{equation}
\label{recovery_seq_inter}
\mu_\epsilon := \sum_{r=1}^{l_p}\omega(\mf D_r)
\, \mu_{\mf D_r, \epsilon}\,+\,
\sum_{r=1}^{k_p}\omega(\mf C_r)\, \mu_{\mf C_r, \epsilon}\,.
\end{equation}

\begin{assertion}
\label{s05}
For every $\widehat{\bb X}_p(\cdot)-$equivalent class $\mf D$,
\begin{gather*}
\lim_{\epsilon\to 0}\, \frac{1}{\sqrt{\epsilon}}\,
e^{S_{\mf D}(\mc M_p(\mf D))/\epsilon}\,
c_{\mf D}(\epsilon)\, \, =\,
\sum_{k\in D}\pi_p(k)\,.     
\end{gather*}
\end{assertion}

\begin{proof}
Fix a $\widehat{\bb X}_p(\cdot)-$equivalence class $\mf D$. By the definitions of
$c_{\mf D}(\epsilon)$, $\Omega_{\mf D}$, and $J_{\mf D,\epsilon}$,
\begin{align*}
c_{\mf D}(\epsilon) = \sum_{k\in D}
f_{\mf D}(\mc M_p(k))^2\,
\int_{\mc E_p(k)}\frac{1}{\mss a(x)} \, e^{-S_{\mf D}(x)/\epsilon}\,dx
+ \int_{\Omega_{\mf D}\setminus\mc E(\mc M_p(\mf D))}
\frac{1}{\mss a(x)}\big[J_{\mf D,\epsilon}(x)\big]^2
\, e^{-S_{\mf D}(x)/\epsilon}\,dx\,.
\end{align*}
We treat each integral separately. Since $J_{\mf D,\epsilon}$ is
uniformy bounded, by the first part of the proof of Corollary
\ref{wells_conv}, the second integral is bounded above by
$e^{-[\,S_{\mf D}(\mc M_p(\mf D))\,+\,h'\,]\,/\,\epsilon}$, for some
positive constant $h'>0$.

We turn to the first integral. By the proof of Corollary
\ref{wells_conv}, this integral is equal to
\begin{gather*}
[\,1 + O(\epsilon)\,]\, e^{-S_{\mf D}(\mc M_p(\mf D))/\epsilon}\,
\sqrt{\epsilon}\,
\sum_{k\in D}f_{\mf D}(\mc M_p(k))^2 \, \pi_p(k)\,.
\end{gather*}
To complete the proof, it remains to recall the definitions of
$f_{\mf D}$ and $\nu_{\mf D}^p$, given in \eqref{equivtest} and Lemma
\ref{pro_red_stat}, respectively; and to recollect the previous
estimates.
\end{proof}

\begin{lemma}
\label{weak_conv_p_limsup}
The sequence $\{\mu_\epsilon\}_{\epsilon>0}$, given by
\eqref{recovery_seq_inter}, weakly converges to $\mu$ as $\epsilon \to 0$.
\end{lemma}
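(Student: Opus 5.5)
By linearity of $\phi\mapsto\int\phi\,d\mu_\epsilon$ and the definitions \eqref{29}, \eqref{recovery_seq_inter}, it is enough to show that, for each $\widehat{\bb X}_p$-equivalence class $\mf D$ (with one or more elements), $\mu_{\mf D,\epsilon}$ converges weakly to
\begin{equation*}
\sum_{\mc M\in\mf D}\omega(\mc M\,|\,\mf D)\,\mu^p_{\mc M}\,.
\end{equation*}
Multiplying by $\omega(\mf D)$ and summing over classes then gives $\sum_{\mc M\in\mc S_p}\omega(\mc M)\mu^p_{\mc M}=\mu$. So I fix $\mf D$ and a test function $\phi\in C(\bb T)$.

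I first split $\int\phi\, m_{\mf D,\epsilon}\,dx$ into two pieces: the integral over the wells $\mc E(\mc M_p(\mf D))$, and the integral over $\Omega_{\mf D}\setminus\mc E(\mc M_p(\mf D))$. Outside $\Omega_{\mf D}$ the density vanishes, since $J_{\mf D,\epsilon}=0$ there. On the complement piece, $J_{\mf D,\epsilon}$ is uniformly bounded; this holds because it is built from convex combinations of the values of $f_{\mf D}$ and from $\Phi_{\mf D,\epsilon}$. By Proposition \ref{deepest_min}, this piece is bounded by $C\,e^{-[S_{\mf D}(\mc M_p(\mf D))+h']/\epsilon}$, exactly as in the proof of Corollary \ref{wells_conv}. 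Dividing by $c_{\mf D}(\epsilon)$, which by Assertion \ref{s05} is of order $\sqrt\epsilon\,e^{-S_{\mf D}(\mc M_p(\mf D))/\epsilon}$, shows that this piece vanishes as $\epsilon\to0$.

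On each well $\mc E_1(m)\subset\mc E_p(k)$ with $k\in D$, the construction makes $J_{\mf D,\epsilon}$ identically equal to $f_{\mf D}(\mc M_p(k))$. The key point is that the wells lie inside the sets $C^{r,r+1}_{k-1,k}$ (or their analogues within $\mz M_p(k)$), where the test function is constant. Indeed, all maxima neighborhoods $\Lambda$ and binding sets $B$ shrink to the maxima, so they are disjoint from the wells for small $\epsilon$. This is the one step I would check carefully: the inner maxima of $\mz M_p(k)$ are not in $\mz W_{\mf D}$, and the definition must assign $J_{\mf D,\epsilon}=f_{\mf D}(\mc M_p(k))$ on the whole stretch between $\sigma$'s bounding $\mz M_p(k)$. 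This follows from \eqref{kth_core_test_f_2} with $r=\mf r_{k-1,k}$ together with the $r=0$ case for the next pair $(k,k+1)$, which both give $f_{\mf D}(\mc M_p(k))$. Laplace's method, as in the proof of Corollary \ref{wells_conv}, then gives
\begin{equation*}
\int_{\mc E_p(k)}\frac{\phi}{\mss a}\,e^{-S_{\mf D}/\epsilon}\,dx=[1+O(\epsilon)]\sqrt\epsilon\, e^{-S_{\mf D}(\mc M_p(\mf D))/\epsilon}\sum_{m\in\mc M_p(k)}\phi(m)\,\pi_1(m)\,.
\end{equation*}
This uses that all minima in $\mc M_p(\mf D)$ share the same value of $S_{\mf D}$. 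That property holds by Proposition \ref{deepest_min} and property (b) of the sets $\mz M_q(j)$, applied to the equivalence class.

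Finally, I combine this with Assertion \ref{s05}:
\begin{equation*}
\lim_{\epsilon\to0}\int\phi\,d\mu_{\mf D,\epsilon}=\frac{\sum_{k\in D} f_{\mf D}(\mc M_p(k))^2\,\pi_p(k)\,\mu^p_{\mc M_p(k)}(\phi)}{\sum_{k\in D}\pi_p(k)}\,.
\end{equation*}
Substituting $f_{\mf D}^2=\omega(\cdot|\mf D)/\nu^p_{\mf D}$, it remains to use the formula of Lemma \ref{pro_red_stat}, namely $\nu^p_{\mf D}(\mc M_p(k))=\pi_p(k)/\sum_{j\in D}\pi_p(j)$. This is the formula already used in the proof of Assertion \ref{s05}. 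With it, the right-hand side reduces to $\sum_{k\in D}\omega(\mc M_p(k)|\mf D)\,\mu^p_{\mc M_p(k)}(\phi)$, as required. For singleton classes, $f_{\mf D}\equiv1$ on $\mf D$, and the same computation gives $\mu^p_{\mc M}$. I expect the only real obstacle to be confirming this form of $\nu^p_{\mf D}$ from Lemma \ref{pro_red_stat} (the stationary measure of the reflected nearest-neighbour chain with rates \eqref{50b}, where equal barrier heights inside $\mf D$ make it proportional to $\pi_p$). Everything else is bookkeeping.
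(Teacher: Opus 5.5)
Your proposal is correct and is the paper's argument, with the details filled in that the paper leaves to ``a calculation similar to the one in the proof of Assertion \ref{s05}''. The steps match: the part of $\Omega_{\mf D}$ off the wells is negligible by Proposition \ref{deepest_min}; $J_{\mf D,\epsilon}\equiv f_{\mf D}(\mc M_p(k))$ on the wells; Laplace's method together with Assertion \ref{s05} gives the limit; and $f_{\mf D}^2=\omega(\cdot|\mf D)/\nu^p_{\mf D}$ with $\nu^p_{\mf D}\propto\pi_p$ from Lemma \ref{pro_red_stat} finishes it (that lemma applies because $\widehat{\bb X}_p$ is reducible for $p<\widehat{\mf q}$).

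Two harmless slips. First, the common depth of $\mc M_p(\mf D)$ comes from \eqref{depth_equiv_class} (postulates $\mc P_4$, $\mc P_8$), not from Proposition \ref{deepest_min}, which presupposes it. Second, for a merely continuous $\phi$, Laplace's method gives an additive $o(1)$ error rather than a factor $[1+O(\epsilon)]$.
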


\begin{proof}
Fix $f\in C(\bb T)$. By the definition of $\mu_\epsilon$,
\begin{align*}
\int_{\bb T}f(x)\;d\mu_\epsilon &
= \sum_{r=1}^{l_p}\omega(\mf D_r)\,
\frac{1}{c_{\mf D_r}(\epsilon)} \,
\int_{\Omega_{\mf D_r}}\;f(x)\; J_{\mf D_r,\epsilon} (x) ^2\;
\frac{e^{-S_{\mf Dr}(x)/\epsilon}}{\mss a(x)}\;dx \notag
\\  
&+ \sum_{l = 1}^{k_p}\omega(\mf C_l)\,
\frac{1}{c_{\mf C_l}(\epsilon)}\int_{\Omega_{\mf C_l}}f(x)\;
J_{\mf C_l,\epsilon} (x)^2 \;
\frac{e^{-S_{\mf C_l}(x)/\epsilon}}{\mss a(x)}\;dx. 
\end{align*}
A calculation, similar to the one performed in the proof of Assertion
\ref{s05}, yields that this sum is equal to
\begin{align*}
&\sum_{r = 1}^{l_p}\sum_{k\in D_r}
\omega(\mc M_p(k)) \, \frac{1}{\pi_1(\mc M_p(k)) }\, 
\sum_{\mf m\in \mc M_p(k)}f(\mf m) \, \pi_1(\{\mf m\}) 
\\
& + \sum_{r = 1}^{k_p}
\sum_{k\in C_r}
\omega(\mc M_p(k)) \, \frac{1}{\pi_1(\mc M_p(k)) }\, 
\sum_{\mf m\in \mc M_p(k)}f(\mf m) \, \pi_1(\{\mf m\}) 
+ r(\epsilon)\,,
\end{align*}
where $r(\epsilon)$ is an error that vanishes as $\epsilon \to 0$. By
\eqref{29} and the expression \eqref{21} for the measure
$\mu^p_{\mc M_p(k)}$, this expression is equal to
$\int_{\bb T}f(x)\;d\mu + r(\epsilon)$, which completes the proof
of the lemma.
\end{proof}

\subsection*{Proof of Proposition \ref{higher_speed_glimsup}}

The following lemma, whose proof is postponed to the next subsection,
collects the required asymptotic bounds to establish the proposition.

\begin{lemma}
\label{main_lemma_limsup_p}
Let $\mu_\epsilon$ be the sequence defined in
\eqref{recovery_seq_inter}. Fix a
$\widehat{\bb X}_p(\cdot)-$equivalence class $\mf D\subset \mc S_p$.

\begin{enumerate}
\item If $\mf D$ has two or more elements, then
\begin{align*}
& \limsup_{\epsilon\to 0}\theta_{\epsilon}^{(p)} \,
\ms I_\epsilon(\mu_{\mf D,\epsilon})\,
\leq\, \bb I_{\mf D}^{(p)}(\omega(\,\cdot\,|\,\mf D))
\\
&\quad 
\,+\, \omega(\mc M_p(l_{\mf D} ) \, |\, \mf D)\,
R_p(\mc M_p(l_{\mf D}) \,, \mc M_p(l_{\mf D}-1))
\,+\, \omega(\mc M_p(r_{\mf D} ) \, |\, \mf D)\,
R_p(\mc M_p(r_{\mf D}) \,,  \mc M_p(r_{\mf D}+1)) \,.
\end{align*}
    
\item If $\mf D$ is a singleton and its element is a transient state, then
\begin{equation*}
\limsup_{\epsilon \to 0}\theta_{\epsilon}^{(p)} \,
\ms I_\epsilon(\mu_{\mf D,\epsilon}) \,\leq
\, R_p(\mc M_p(l_{\mf D}) \,, \mc M_p(l_{\mf D}-1))
\,+\, R_p(\mc M_p(r_{\mf D}) \,, \mc M_p(r_{\mf
D}+1))\,. 
\end{equation*} 

\item If $\mf D$ is a singleton and its element is an absorbing state,
then
\begin{equation*}
\limsup_{\epsilon \to 0}\theta_{\epsilon}^{(p)} \,
\ms I_\epsilon(\mu_{\mf D,\epsilon}) = 0\,.
\end{equation*} 
\end{enumerate}  
\end{lemma}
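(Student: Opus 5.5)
The plan is to bound $\ms I_\epsilon(\mu_{\mf D,\epsilon})$ by a Dirichlet form, exploiting the fact that on $\Omega_{\mf D}$ the dynamics is locally reversible.

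\textbf{Step 1: local reversibility.} For $x\in(\sigma^-_{\mf D}-a,\sigma^+_{\mf D}+a)$ we have $\mss b/\mss a=-S'=-S_{\mf D}'$. Integrating by parts against the weight $e^{-S_{\mf D}/\epsilon}/\mss a$ gives
\begin{equation*}
\int g\,(\ms L_\epsilon f)\,\frac{e^{-S_{\mf D}/\epsilon}}{\mss a}\,dx
\;=\;-\,\epsilon\int f'\,g'\,e^{-S_{\mf D}/\epsilon}\,dx ,
\end{equation*}
provided $g$ is $C^1$ and supported in $\Omega_{\mf D}$. There are no boundary terms, because $J_{\mf D,\epsilon}$ is $C^2$ and vanishes off $\Omega_{\mf D}$.

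\textbf{Step 2: Dirichlet form bound.} Fix a positive $u\in C^2(\bb T)$ and take $g=J_{\mf D,\epsilon}^2/u$ in Step 1. This yields
\begin{equation*}
-\int\frac{\ms L_\epsilon u}{u}\,d\mu_{\mf D,\epsilon}
\;=\;\frac{\epsilon}{c_{\mf D}(\epsilon)}\int u'\Big(\frac{J^2}{u}\Big)'e^{-S_{\mf D}/\epsilon}\,dx .
\end{equation*}
Expanding, $u'(J^2/u)'=2JJ'u'/u-J^2(u'/u)^2\le (J')^2$ pointwise. Taking the supremum over $u$,
\begin{equation*}
\ms I_\epsilon(\mu_{\mf D,\epsilon})\;\le\;\frac{\epsilon}{c_{\mf D}(\epsilon)}\int_{\Omega_{\mf D}}(J_{\mf D,\epsilon}')^2\,e^{-S_{\mf D}/\epsilon}\,dx .
\end{equation*}
It therefore remains to compute $\theta^{(p)}_\epsilon$ times the right-hand side. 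By Assertion \ref{s05}, $c_{\mf D}(\epsilon)\approx\sqrt\epsilon\,e^{-S_{\mf D}(\mc M_p(\mf D))/\epsilon}\sum_{k\in D}\pi_p(k)$.

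\textbf{Step 3: localising $J'$.} The derivative $J'$ is supported on the sets $\Lambda^r_{k-1,k}$ and on the binding sets $B_{\mf D}$. On each $B^{r,\pm}_{k-1,k}$ we have $|J'|=O(\sqrt\epsilon)$, the length is $O(\eta)$, and $S\le S(\sigma^r_{k-1,k})$. Hence these contribute at most $\epsilon^{3/2}\eta\,e^{-S(\sigma)/\epsilon}/c$, which is negligible compared with the main term below.

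\textbf{Step 4: Laplace asymptotics at a saddle.} On $\Lambda^r_{k-1,k}$, $J'=(f^{r,r+1}_{k-1,k}-f^{r-1,r}_{k-1,k})\,(\varrho^{r,\epsilon}_{k-1,k})'$, with $(\varrho^{r,\epsilon}_{k-1,k})'$ the Gaussian density from \eqref{kth_bridge:gaussian_ap}. Laplace's method (Taylor-expanding $S$ to second order on the $\delta$-window, with $e^{\pm O(\delta^3/\epsilon)}=1+o(1)$) gives
\begin{equation*}
\epsilon\int_{\Lambda^r_{k-1,k}}(J')^2e^{-S/\epsilon}\,dx\;\approx\;\sqrt\epsilon\,e^{-S(\sigma^r_{k-1,k})/\epsilon}\,\sqrt{-S''(\sigma^r_{k-1,k})/2\pi}\;\big(\Delta f^{\,r}\big)^2 ,
\end{equation*}
where $\Delta f^{\,r}=f^{r,r+1}_{k-1,k}-f^{r-1,r}_{k-1,k}$.

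\textbf{Step 5: selecting the relevant gaps.} Multiplying by $\theta^{(p)}_\epsilon/c_{\mf D}$ produces the factor $e^{[\mf h_p-(S(\mz W^{(p)}_{k-1,k})-S(\mc M_p(\mf D)))]/\epsilon}$. The exponent vanishes exactly when the corresponding barrier $h^{p,\pm}$ equals $\mf h_p$; otherwise it is strictly negative and the term goes to $0$. This uses the definition of $\mf h_p$, the fact that the states of a class with two or more elements sit at equal depth (both barriers between them equal $\mf h_p$), and Proposition \ref{deepest_min}.

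\textbf{Step 6: summing over the saddles of one gap.} For a gap with barrier $\mf h_p$, the weights \eqref{01} give $\Delta f^{\,r}=w_r\,(f_{\mf D}(\mc M_p(k))-f_{\mf D}(\mc M_p(k-1)))$ with $w_r\propto 1/\sqrt{-S''(\sigma^r)}$ and $\sum_r w_r=1$. This is the series-resistance optimiser, and
\begin{equation*}
\sum_r\sqrt{-S''(\sigma^r)/2\pi}\;w_r^2\;=\;\frac{1}{\sigma_p(k-1,k)} .
\end{equation*}
Hence the gap contributes
\begin{equation*}
\frac{\pi_p(k)\,R_p(\mc M_p(k),\mc M_p(k-1))\,\big(f(k)-f(k-1)\big)^2}{\sum_{j\in D}\pi_p(j)} .
\end{equation*}
Here the nearest-neighbour reduced chain satisfies $\pi_p(k)R_p(k,k-1)=\pi_p(k-1)R_p(k-1,k)=1/\sigma_p(k-1,k)$ when both rates are nonzero, so the sum does not depend on which endpoint is used.

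\textbf{Step 7: identifying the limit.} With $\nu^p_{\mf D}(k)=\pi_p(k)/\sum_{j\in D}\pi_p(j)$ (Lemma \ref{pro_red_stat}), the internal gaps sum to the Dirichlet form $\langle f_{\mf D},-\widehat{\bb L}_{p,\mf D}f_{\mf D}\rangle_{\nu^p_{\mf D}}$. Because $f_{\mf D}=\sqrt{\omega(\cdot|\mf D)/\nu^p_{\mf D}}$ and the reflected birth–death chain is reversible, this Dirichlet form equals $\bb I^{(p)}_{\mf D}(\omega(\cdot|\mf D))$, by \eqref{reflected_equiv} and the Donsker–Varadhan formula. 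The two outer gaps have $f=0$ on the outside state. They contribute $\nu^p_{\mf D}(l_{\mf D})f(l_{\mf D})^2R_p(l_{\mf D},l_{\mf D}-1)=\omega(\mc M_p(l_{\mf D})|\mf D)\,R_p(\mc M_p(l_{\mf D}),\mc M_p(l_{\mf D}-1))$, and similarly on the right. This proves item (1).

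\textbf{Items (2) and (3).} For a singleton class, $f\equiv1$ and only the two outer gaps contribute, giving item (2). If the state is absorbing, both of its barriers exceed $\mf h_p$, every term vanishes by Step 5, and item (3) follows.

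\textbf{Main obstacle.} I expect the delicate step to be the uniform control in Steps 3–4: showing that the cubic Taylor remainder of $S$ on windows of width $\delta\sim\sqrt{\epsilon\log\epsilon^{-1}}$ and the binding-set error are $o(1)$ after multiplication by $\theta^{(p)}_\epsilon$. The other point needing care is the precise matching of the exponential prefactors with $\mf h_p$ in Step 5.
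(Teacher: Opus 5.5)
Your proposal is correct. From Step 3 on it follows the paper's computation: Assertion \ref{a2:sup_high}, then the final assertion of the proof, with Laplace's method at each saddle, the exponent $\mf h_p-(S(\mz W^{(p)}_{k-1,k})-S(\mc M_p(\mf D)))$ selecting the relevant gaps, the identity $\sum_r\sqrt{-S''(\sigma^r)/2\pi}\,w_r^2=1/\sigma_p(k-1,k)$, and the identification of the limit through \eqref{reflected_equiv} and Donsker--Varadhan.

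\textbf{Where you differ: the Dirichlet-form bound.} The key bound is $\ms I_\epsilon(\mu_{\mf D,\epsilon})\le \epsilon\, c_{\mf D}(\epsilon)^{-1}\int (J'_{\mf D,\epsilon})^2 e^{-S_{\mf D}/\epsilon}\,dx$, and you obtain it differently.
\begin{itemize}
\item The paper regularizes the density to $(m_{\mf D,\epsilon}+\delta)/(1+\delta)$ and applies the exact formula of Lemma \ref{l03}. It then shows the non-local terms involving $B$ vanish as $\delta\to0$ (Assertion \ref{estim_limsup_high}) and concludes by lower semicontinuity of $\ms I_\epsilon$.
\item You test the variational formula directly with $g=J^2/u$, integrate by parts against $e^{-S_{\mf D}/\epsilon}/\mss a$, and complete the square via $u'(J^2/u)'=(J')^2-(J'-Ju'/u)^2$.
\end{itemize}
Yours is the localized form of the classical inequality $I(f^2\pi)\le\mc D(f)$ for reversible dynamics. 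It needs no lower bound on the density, no limit in $\delta$ and no semicontinuity. It also makes transparent why $B\neq0$ plays no role: $\mu_{\mf D,\epsilon}$ lives where the drift is a gradient. The paper's route reuses the explicit formula from Sections \ref{sec1}--\ref{sec0004} and gives an identity for the regularized measures. For the upper bound, though, your argument is shorter. It also correctly produces the factor $\epsilon$ that is missing from the displayed statement of Assertion \ref{a1:sup_high}.

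\textbf{A slip in Step 3.} Near a maximum, $S\le S(\sigma^r_{k-1,k})$ gives $e^{-S/\epsilon}\ge e^{-S(\sigma)/\epsilon}$, which is the wrong direction for an upper bound. What you need follows from the Taylor expansion and the choice of $\delta$: on $B^{r,\pm}_{k-1,k}$, where $|x-\sigma|\le\delta+\eta$ and $\delta\eta=\epsilon$, one has $[S(\sigma)-S(x)]/\epsilon\le\log\epsilon^{-1}+O(1)$. Hence $e^{-S(x)/\epsilon}\le C\epsilon^{-1}e^{-S(\sigma)/\epsilon}$. The binding-set contribution is therefore $O(\epsilon\,\eta)\,e^{-S(\sigma)/\epsilon}/c_{\mf D}(\epsilon)$. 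After multiplying by $\theta^{(p)}_\epsilon$ and using Assertion \ref{s05} and \eqref{scale_bound}, it is $O(\sqrt\epsilon\,\eta^r_{k-1,k})\to0$. So your conclusion stands, but the $\epsilon^{-1}$ loss must be tracked; it is compensated by $|J'|^2=O(\epsilon)$ on the binding sets.

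\textbf{A minor point in Step 6.} The expression $\pi_p(k)R_p(\mc M_p(k),\mc M_p(k-1))$ is valid only for internal gaps. For the right outer gap the correct weight is $\pi_p(r_{\mf D})R_p(\mc M_p(r_{\mf D}),\mc M_p(r_{\mf D}+1))$, since the reverse rate vanishes across distinct classes. You already use this weight in Step 7.
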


We are now in position to prove Proposition \ref{higher_speed_glimsup}.

\begin{proof}[Proof of Proposition \ref{higher_speed_glimsup}]
Fix $1\leq p<\widehat{\mf q}$. By Lemma \ref{weak_conv_p_limsup}, and in view of
the definition \eqref{23b} of $\ms J^{(p)} (\cdot)$, and formula
\eqref{reduced_ldf0} for $\bb I^{(p)} (\cdot)$, to complete the proof
of Proposition \ref{higher_speed_glimsup} it remains to show that
\begin{align}
\label{31}
\limsup_{\epsilon\to 0}\theta_{\epsilon}^{(p)}\,
\ms I_\epsilon(\mu_\epsilon)
\, \leq \, \sum_{r=1}^{l_p}\omega(\mf D_{r})\;
\bb I^{(p)}_{\mf D_{r}}(\omega(\;\cdot\;|\;\mf D_{r}))
\;+\; \sum_{r=1}^{o_p}\sum_{s\neq r}
\sum_{\mc M\in \mf E_{r}}\sum_{\mc M'\in \mf E_s}
\omega(\mc M)\, R_p(\mc M,\mc M') \,.
\end{align}
Split the second sum distinguishing transient states belonging to an
equivalent class with two or more elements from the singletons to
rewrite it as
\begin{equation*}
\sum_{r=1}^{l_p}\sum_{\mc M\in \mf D_r}
\sum_{\mc M'\in \mc S_p\setminus \mf D_r}
\omega(\mc M)\,  R_p(\mc M, \mc M') \,+\, \sum_{r = 1}^{k_p}
\sum_{\mc M\in \mf C_r} \sum_{\mc M'\in \mc S_p\setminus \mc M}
\omega(\mc M) \, R_p(\mc M,\mc M')\,.
\end{equation*}

We turn to the proof of \eqref{31}.  By convexity and the definition
of $\mu_{\epsilon}$,
\begin{equation*}
\theta_{\epsilon}^{(p)} \, \ms I_\epsilon(\mu_\epsilon)
\,\leq\, \sum_{r=1}^{l_p}\omega(\mf D_r)\,
\theta_\epsilon^{(p)} \, \ms I_\epsilon(\mu_{\mf D_r, \epsilon})
\,+\,\sum_{r=1}^{k_p}\omega(\mf C_r)\,
\theta_\epsilon^{(p)} \, \ms I_\epsilon(\mu_{\mf C_r, \epsilon})\,.
\end{equation*}
By item (1) in Lemma \ref{main_lemma_limsup_p} and the fact that
$\widehat{\bb X}_p(\cdot)$ jumps to nearest neighbors only,
\begin{equation*}
\limsup_{\epsilon \to 0}\sum_{r =1}^{l_p}
\omega(\mf D_r) \, \theta_{\epsilon}^{(p)} \,
\ms I_\epsilon(\mu_{\mf D_r,\epsilon})
\,\leq\,
\sum_{r=1}^{l_p} \omega(\mf D_r)\,
\bb I^{(p)}_{\mf D_{r}}(\omega(\,\cdot\,|\,\mf D_r))
\,+\, \sum_{r=1}^{l_p} \sum_{\mc M\in \mf D_r}
\sum_{\mc M'\in \mc S_p\setminus \mf D_r^p}\,
\omega(\mc M) \,  R_p(\mc M, \mc M')\,.
\end{equation*} 
By items (2) and (3) of Lemma \ref{main_lemma_limsup_p}, 
\begin{equation*}
\limsup_{\epsilon \to 0} \sum_{r = 1}^{k_p}
\omega(\mf C_r) \, \theta_{\epsilon}^{(p)} \,
\ms I_\epsilon(\mu_{\mf C_r ,\epsilon})
\,\leq\, \sum_{r = 1}^{k_p} \sum_{\mc M\in \mf C_r}
\sum_{\mc M'\in \mc S_p\setminus \mc M}
\omega(\mc M) \, R_p(\mc M,\mc M') \,.
\end{equation*}
To complete the proof the proposition, it remains to recollect the
previous estimates.
\end{proof}

\subsection*{Proof of Lemma \ref{main_lemma_limsup_p}}

The proof requires some preliminary estimates. We only focus on item
(1) since a similar argument applies to items (2) and (3).  Fix an
equivalent class $\mf D\subset \mc S_p$ having two or more
elements. As in Section \ref{sec4.2}, we slightly perturb
$\mu_{\mf D,\epsilon}$ in order to apply Lemma \ref{l03} to estimate
$\ms I_\epsilon (\mu_{\mf D,\epsilon})$.

Given $\delta>0$, let $\mu^\delta_{\mf D,\epsilon}\in \mss P(\bb T)$ be
the probability measure given by the density
\begin{equation*}
m_{\mf D,\epsilon}^\delta(x) \,:=\,
\frac{m_{\mf D,\epsilon}(x)+\delta}{1+\delta},
\end{equation*}
where $m_{\mf D,\epsilon}$ has been introduced in
\eqref{recovery_seq_equiv}. The proof of Assertion
\ref{estim_00_limsup1} yields the next result.

\begin{assertion}
\label{estim_limsup_high}
For each $\epsilon>0$, 
\begin{equation*}
\lim_{\delta\to 0}
\Big|\; \ms I_\epsilon (\mu_{\mf D, \epsilon}^\delta) \;-\;
\frac{1}{4\epsilon}\int_{\bb T} \mss a\,
\Big\{\epsilon \Big[\frac{(m_{\mf D,\epsilon}^\delta)'}{
m_{\mf D,\epsilon}^\delta}
\,+\, \frac{\mss a'}{\mss a}\Big]
\,-\, \frac{\mss b}{\mss a} \Big\}^2
d\mu_{\mf D,\epsilon}^\delta \;\Big|
\;=\, 0\,.
\end{equation*}    
\end{assertion}

Next results follows from  this estimate and the definition of the
test function $J_{\mf D,\epsilon}$.

\begin{assertion}
\label{a1:sup_high}
For each $\epsilon>0$, 
\begin{align*}
\ms I_\epsilon(\mu_{\mf D,\epsilon})
\,\leq \,
\sum_{k = l_{\mf D} }^{\, r_{\mf D} +1} 
\sum_{r=1}^{\mf r_{k-1,k}}
\int_{B_{\sigma_{k-1,k}^{p,r}}^p\cup
\,\Lambda_{\sigma_{k-1,k}^{p,r}}^p}
\big[J'_{\mf D,\epsilon} (x) \big]^2\;
\frac{e^{-S_{\mf D}(x)/\epsilon}}
{c_{\mf D}(\epsilon)}\;dx \,.
\end{align*}
\end{assertion}

\begin{proof}
Since $\mu_{\mf D,\epsilon}^\delta$ converges weakly to
$\mu_{\mf D,\epsilon}$, by the lower-semicontinuity of
$\ms I_\epsilon (\cdot)$,
\begin{equation*}
\ms I_{\epsilon}(\mu_{\mf D,\epsilon})
\, \leq\,  \liminf_{\delta \to 0}
\ms I_\epsilon(\mu_{\mf D,\epsilon}^{\delta}). 
\end{equation*}
At this point, recall Assertion \ref{estim_limsup_high}.
Differentiate $m_{\mf D,\epsilon}^\delta$, expand the square and
apply the dominated convergence theorem, to get that
\begin{equation*}
\ms I_\epsilon(\mu_{\mf D,\epsilon} ) \,\leq\,
\epsilon \int_{\bb T} \big[ J'_{\mf D,\epsilon} (x) \big]^2 \;
\frac{e^{-S_{\mf D}(x)/\epsilon}}{c_{\mf D}(\epsilon)}\;dx\,.
\end{equation*}
To complete the proof, recall that $J_{\mf D,\epsilon}$ vanishes on
$\bb T\setminus \Omega_{\mf D}$ and that, by
\eqref{kth_core_test_f_2}, $J_{\mf D,\epsilon}'$ vanishes on the
intervals $C^{r,r+1}_{k-1,k}$.
\end{proof}

The next results states that in Assertion \ref{a1:sup_high} the
integrals over the binding sets are negligible.

\begin{assertion}
\label{a2:sup_high}
For each $l_{\mf D} \le k \le r_{\mf D} +1$, $1\leq r\leq \mf
r_{k-1,k}$, 
\begin{equation*}
\limsup_{\epsilon\to 0}\frac{\theta_{\epsilon}^{(p)}\,
\epsilon}{c_{\mf D}(\epsilon)} \,
\int_{B^r_{k-1,k} } \big[ J'_{\mf D,\epsilon} (x)\big]^2
\;e^{-S_{\mf D}(x)/\epsilon}\;dx \,=\, 0\,.
\end{equation*}
\end{assertion}

\begin{proof}
By \eqref{18}, \eqref{19} and \eqref{depth_equiv_class} 
\begin{equation}
\label{scale_bound}
\theta_\epsilon^{(p)} \,\leq\,
e^{\,[S_{\mf D}  (\sigma^r_{k-1,k} )- S_{\mf D} (\mc M_p(\mf
D))]\,/\,\epsilon}\,. 
\end{equation}
Mind that this inequality may be strict if the local maximum
$\sigma^r_{k-1,k}$ is the left or rightmost, that is, if it is equal
to $\sigma^-_{\mf D}$ or $\sigma^+_{\mf D}$.

By Assertion \ref{s05},  for each fixed $\epsilon>0$, the
expression appearing in the statement of the lemma is bounded above by 
\begin{align*}
C_0 \,\sqrt{\epsilon} \, 
\int_{B^r_{k-1,k} }  
\big[ J'_{\mf D,\epsilon} (x)\big]^2 
\;e^{\,[S_{\mf D} (\sigma^r_{k-1,k})-S_{\mf D}(x)]\,/\,\epsilon}\;dx \,.
\end{align*}
Here and below, $C_0$ represents a finite constant independent of
$\epsilon$, whose value may change from line to line.  By the
definition of $J_{\mf D,\epsilon} (\cdot)$ over the binding sets
$B^r_{k-1,k}$, given below \eqref{binding_sets}, this expression is
equal to
\begin{align*}
C_0 \,\sqrt{\epsilon} \, 
\int_{B^r_{k-1,k} }  
\big[ \Phi'_{\mf D,\epsilon} (x)\big]^2 
\;e^{\,[S_{\mf D} (\sigma^r_{k-1,k})-S_{\mf D}(x)]\,/\,\epsilon}\;dx \,.
\end{align*}

By a Taylor expansion of $S_{\mf D}(\cdot)$ around $\sigma^r_{k-1,k}$,
and by definition of $\delta^r_{k-1,k}$, $\eta^r_{k-1,k}$, on
$B^r_{k-1,k}$,
\begin{align*}
\exp \big\{\, [S_{\mf D} (\sigma^r_{k-1,k})-S_{\mf D}(x)]\,/\,\epsilon
\, \big \}
\, & = \,  [1+o(\epsilon)]\,
\exp \big\{\, -\, S''_{\mf D} (\sigma^r_{k-1,k}) \,
(\delta^r_{k-1,k} + \eta^r_{k-1,k})^2 / 2\epsilon \big\}
\\
& \le\,  C_0 \,
\exp \big\{\, -\, S''_{\mf D} (\sigma^r_{k-1,k}) \,
(\delta^r_{k-1,k}) ^2 / 2\epsilon \big\}\,.
\end{align*}
By definition of $\delta^r_{k-1,k}$, the last line is equal to
$C_0\, \epsilon^{-1}$. Thus, by the bound on the derivative of
$\Phi_{\mf D,\epsilon}$, and since the length of the interval
$B^{r, \pm}_{k-1,k}$ is $\eta^r_{k-1,k}$, the penultimate displayed
equation is bounded above by $C_0 \, \sqrt{\epsilon}\; \eta^r_{k-1,k}$,
which vanishes as $\epsilon\to 0$.
\end{proof}

The next Assertion completes the proof of Proposition
\ref{higher_speed_glimsup}. It requires a simple observation. As
$\mf D$ is a $\widehat{\bb X}_p(\cdot)$-equivalence class, the jump
rates from $\mc M_p(l_{\mf D})$, $\mc M_p(r_{\mf D})$ to
$\mc M_p(l_{\mf D}-1)$, $\mc M_p(r_{\mf D}+1)$, respectively, may be
positive or vanish. By \eqref{50b},
$R_p(\mc M_p(l_{\mf D}) , \mc M_p(l_{\mf D}-1))>0$ if 
\begin{equation}
\label{34}
S(\sigma_{l_{\mf D}-1,l_{\mf D}}^{r}) \,-\, S(\mc M_p(l_{\mf D})) \,=\, \mf h_p
\end{equation}
for some (an thus for all) $1\le r\le \mf r_{l_{\mf D}-1,l_{\mf
D}}$. It vanishes if
$S(\sigma_{l_{\mf D}-1,l_{\mf D}}^{r}) \,-\, S(\mc M_p(l_{\mf D}))
\,>\, \mf h_p$.  A similar observation holds for the jump rate
$R_p(\mc M_p(r_{\mf D}) , \mc M_p(r_{\mf D}+1))$.

\begin{assertion}
For each $\widehat{\bb X}_p(\cdot)-$equivalence class $\mf D\subset \ms
S_p$,
\begin{align*}
& \limsup_{\epsilon \to 0}\theta_{\epsilon}^{(p)}\, \epsilon\,
\sum_{k=  l_{\mf D}}^{r_{\mf D} +1}
\sum_{r=1}^{\mf r_{k-1,k}}
\int_{\Lambda_{k-1,k}^{r}}
\big[ \, J_{\mf D,\epsilon}' (x) \, \big]^2(x)\;
\frac{e^{-S_{\mf D}(x)/\epsilon}}
{c_{\mf D}(\epsilon)}\;dx
\, \leq \,
\bb I^{(p)}_{\mf D}(\omega(\,\cdot\,|\, \mf D))
\\
&\quad 
\,+\, \omega(\mc M_p(l_{\mf D}) \,|\, \mf D) \, R_p(\mc M_p(l_{\mf
D}),\mc M_p(l_{\mf D}-1))
\, +\,\omega(\mc M_p(r_{\mf D}) \,|\, \mf D)\,
R_p(\mc M_p(r_{\mf D} ), \mc M_p(\mf r_{\mf D}+1)) \,.
\end{align*}    
\end{assertion}

\begin{proof}
Fix $l_{\mf D} \le k\le r_{\mf D} +1$ and
$1\leq r\leq \mf r_{k-1,k-1}$. By \eqref{kth_core_test_f},
\eqref{equil_pot}, and \eqref{kth_bridge:gaussian_ap}, the integral
appearing in the statement of the assertion is equal to
\begin{align}
\label{35}
\cb{\bb W_{k-1,k}^{r} (\epsilon)} \,:=\,
K_{k-1,k}^{r}\, \sqrt{\frac{-S''(\sigma_{k-1,k}^{r})}{2\pi}} \,
\frac{\theta_{\epsilon}^{(p)}}
{c_{\mf D}(\epsilon)}\,
\int_{\Lambda_{k-1,k}^{r} }
e^{S''(\sigma_{k-1,k}^{r})(\sigma_{k-1,k}^{r}-x)^2/\epsilon}\,
e^{-S_{\mf D} (x)/\epsilon}\,dx\,, 
\end{align}
where
\begin{align*}
\label{constant_interm_lemma}
{\color{blue} K_{k-1,k}^{r} } \,  := \,
\sqrt{\frac{-S''(\sigma_{k-1,k}^{r})}{2\pi}}\,
\big[\, \mss w_{k-1,k}^{r,r+1} - \mss w_{k-1,k}^{r-1,r}\,\big]^2 
\, \big[\,  f_{\mf D}(\mc M_p(k)) \, -\, f_{\mf D}(\mc M_p(k-1))
\,\big]^2\,. 
\end{align*}
We consider two cases.

\smallskip\noindent{\it Case 1:} Assume that $k= l_{\mf D}$ and that
$R_p(\mc M_p(l_{\mf D}) , \mc M_p(l_{\mf D}-1))=0$. Under this
assumption, by \eqref{34},
$S(\sigma_{l_{\mf D}-1,l_{\mf D}}^{r}) \,-\, S(\mc M_p(\mf D )) > \mf
h_p$. Hence, by Assertion \ref{s05} and by the definition of
$\theta_{\epsilon}^{(p)}$, \eqref{35} is bounded above by
\begin{align*}
C_0\, e^{ [\mf h_p - S_{\mf D} (\sigma_{k-1,k}^{r})  + S_{\mf D} (\mc M_p(\mf D )) ] /\epsilon }
\, \sqrt{\frac{1}{\epsilon}}
\int_{\Lambda_{k-1,k}^{r}}\,
e^{S''(\sigma_{k-1,k}^{r})(\sigma_{k-1,k}^{r}-x)^2/\epsilon}\,
e^{[S_{\mf D} (\sigma_{k-1,k}^{r})  \,-\, S_{\mf D} (x) ] /\epsilon }\, dx\,.
\end{align*}
for some finite constant $C_0$ independent of $\epsilon$ and whose
value may change from line to line. We prove below that this
expression without the first exponential converges. Therefore, as
$S(\sigma_{l_{\mf D}-1,l_{\mf D}}^{r}) \,-\, S(\mc M_p(\mf D )) > \mf
h_p$ this expression with the first exponential vanishes in the
limit. Thus, we proved that
\begin{equation*}
\lim_{\epsilon\to 0}
\sum_{r=1}^{\mf r_{l_{\mf D}-1,l_{\mf D}}}
\bb W_{l_{\mf D}-1,l_{\mf D}}^{r} (\epsilon) \,=\, 0\,.
\end{equation*}
Since $R_p(\mc M_p(l_{\mf D}) , \mc M_p(l_{\mf D}-1))=0$, we may
rewrite this identity in the following form
\begin{align*}
& \lim_{\epsilon\to 0}
\sum_{r=1}^{\mf r_{l_{\mf D}-1,l_{\mf D}}}
\bb W_{l_{\mf D}-1,l_{\mf D}}^{r} (\epsilon)
\\
& \quad =\; - \,\nu_{\mf D}(\mc M_p(l_{\mf D} ))\, f_{\mf D}(\mc M_p(l_{\mf D}))\,
R_p(\mc M_p(l_{\mf D}), \mc M_p(l_{\mf D} -1)) \,
\big[f_{\mf D}(\mc M_p(l_{\mf D} -1 ))-f_{\mf D}(\mc M_p(l_{\mf D}))\big]
\end{align*}
The same argument applies to the case  $k= r_{\mf D}+1$ with
$R_p(\mc M_p(r_{\mf D}) , \mc M_p(r_{\mf D}+1))=0$.

\smallskip\noindent{\it Case 2:} Assume that $k= l_{\mf D}$ and 
$R_p(\mc M_p(l_{\mf D}) , \mc M_p(l_{\mf D}-1))>0$, or $l_{\mf D} <
k \le r_{\mf D}$, or $k= r_{\mf D} +1$ and 
$R_p(\mc M_p(r_{\mf D}) , \mc M_p(r_{\mf D}+1))>0$.

By Assertion \ref{s05} and \eqref{scale_bound}, the expression in
\eqref{35} is equal to
\begin{align*}
[\, 1 + o(\epsilon)\,]\,
\frac{K_{k-1,k}^{r}}{\sum_{k\in D}\pi_p(k)}
\, \sqrt{\frac{-S''(\sigma_{k-1,k}^{r})}{2\pi\epsilon}}
\int_{\Lambda_{k-1,k}^{r}}\,
e^{S''(\sigma_{k-1,k}^{r})(\sigma_{k-1,k}^{r}-x)^2/\epsilon}\,
e^{[S_{\mf D} (\sigma_{k-1,k}^{r})  \,-\, S_{\mf D} (x) ] /\epsilon }\, dx\,.
\end{align*}
A Taylor expansion of $S_{\mf D}$ around $\sigma^{r}_{k-1,k}$ yields that
\begin{equation*}
\frac{1}{\epsilon} \, \big[ \, S_{\mf D} (\sigma_{k-1,k}^{r})  \,-\, S_{\mf D} (x) \,\big] 
\,=\, -\, \frac{1}{2\epsilon} \, S''(\sigma^{r}_{k-1,k})\,
(\sigma^{r}_{k-1,k} - x)^2 \,+\, o_\epsilon(1)\;,
\end{equation*}
where the remainder is uniform on $x\in
\Lambda_{k-1,k}^{r}$. Moreover, by the definition of
$\delta_{k-1,k}^{r}$, introduced in \eqref{21b}, and a change of
variables,
\begin{equation*}
\label{eq3:lem_limsup_bound}
\lim_{\epsilon\to 0} \sqrt{\frac{-S''(\sigma_{k-1,k}^{r})}{2\pi\epsilon}}
\int_{\Lambda_{k-1,k}^{r}}\,
e^{S''(\sigma_{k-1,k}^{r}) (\sigma_{k-1,k}^{r}-x)^2/2\epsilon}\,\,dx \,=\, 1\,.
\end{equation*}
This proves that \eqref{35} converges to
\begin{align*}
& \frac{1}{\sum_{k\in D}\pi_p(k)} \, 
\sqrt{\frac{-S''(\sigma_{k-1,k}^{r})}{2\pi}}\,
\big[\, \mss w_{k-1,k}^{r,r+1} - \mss w_{k-1,k}^{r-1,r}\,\big]^2 
\, \big[\,  f_{\mf D}(\mc M_p(k)) \, -\, f_{\mf D}(\mc M_p(k-1))
\,\big]^2
\\
& \quad =\, \frac{1}{\sum_{k\in D}\pi_p(k)} \, 
\frac{1}{\sqrt{2\pi}}\,
\frac{1/\sqrt{-S''(\sigma_{k-1,k}^{r})}}
{\Big(\sum_{l=1}^{\mf
r_{k-1,k}}1/\sqrt{-S''(\sigma_{k-1,k}^{l})}\Big)^2}
\, \big[\,  f_{\mf D}(\mc M_p(k)) \, -\, f_{\mf D}(\mc M_p(k-1))
\,\big]^2 \,.
\end{align*}
where we used the definition \eqref{01} of $\mss w_{k-1,k}^{r,r+1}$ in
the last step.

Recall from \eqref{p_weights} the definition of $\sigma_p(j,j+1)$. It follows
from the just proved convergence of the expression in \eqref{35} and
from the identity
\begin{equation*}
\sum_{r=1}^{\mf r_{k-1,k}} \frac{1}{\sqrt{2\pi}}\,
\frac{1/\sqrt{-S''(\sigma_{k-1,k}^{r})}}
{\Big(\sum_{l=1}^{\mf
r_{k-1,k}}1/\sqrt{-S''(\sigma_{k-1,k}^{l})}\Big)^2} \,=\,
\frac{1}{\sigma_p(k-1,k)} 
\end{equation*}
that in Case 2,
\begin{align*}
\lim_{\epsilon\to 0} \sum_{r=1}^{\mf r_{k-1,k}}
\bb W_{k -1, k}^{r} (\epsilon) \,=\, 
\frac{1}{\sum_{l\in D}\pi_p(l)}\frac{1}{\sigma_p(k-1,k)}\,\big[f_{\mf
D}(\mc M_p(k))-f_{\mf D}(\mc M_p(k-1))\big]^{2}
\end{align*}
Expanding the square and since $\sigma_p(k,k-1) =
\sigma_p(k-1,k)$. the previous expression is equal to
\begin{align*}
& -\, \frac{1}{\sum_{l\in D}\pi_p(l)} \, \frac{1}{\sigma_p(k-1,k)}
\,f_{\mf D}(\mc M_p(k-1)) \, \big[ \, f_{\mf D}(\mc M_p(k))
- f_{\mf D}(\mc M_p(k-1)) \, \big]
\\ 
&  \quad - \, \frac{1}{\sum_{l\in D}\pi_p(l)}
\, \frac{1}{\sigma_p(k,k-1)}\,
f_{\mf D}(\mc M_p(k)) \,
\big[\, f_{\mf D}(\mc M_p(k-1))- f_{\mf D}(\mc M_p(k)) \, \big]\,.
\end{align*}
Multiply and divide the first line by $\pi_p(k-1)$. By the definition
of the function $f_{\mf D}$, the jump-rates $R_p$ and the stationary
state $\nu_{\mf D}^p$ introduced in \eqref{equivtest}, \eqref{50b},
and Lemma \ref{pro_red_stat}, respectively, the first term of the
right-hand side of the previous equation is equal to
\begin{equation*}
- \,\nu_{\mf D}(\mc M_p(k-1))\, f_{\mf D}(\mc M_p(k-1))\,
R_p(\mc M_p(k-1), \mc M_p(k)) \,
\big[f_{\mf D}(\mc M_p(k))-f_{\mf D}(\mc M_p(k-1))\big]\,.
\end{equation*}
Similarly,  the second line  is equal to
\begin{equation*}
- \,\nu_{\mf D}(\mc M_p(k))\,
f_{\mf D}(\mc M_p(k))\,R_p(\mc M_p(k), \mc M_p(k))
\, \big[f_{\mf D}(\mc M_p(k-1))-f_{\mf D}(\mc M_p(k))\big]\,.
\end{equation*}
Hence, in Case 2,
\begin{align*}
& \lim_{\epsilon\to 0} \sum_{r=1}^{\mf r_{k-1,k}}
\bb W_{k -1, k}^{r} (\epsilon) \,
\\ &\quad  =\,
- \,\nu_{\mf D}(\mc M_p(k-1))\, f_{\mf D}(\mc M_p(k-1))\,
R_p(\mc M_p(k-1), \mc M_p(k)) \,
\big[f_{\mf D}(\mc M_p(k))-f_{\mf D}(\mc M_p(k-1))\big]
\\
& \quad - \,\nu_{\mf D}(\mc M_p(k))\,
f_{\mf D}(\mc M_p(k))\,R_p(\mc M_p(k), \mc M_p(k-1))
\, \big[f_{\mf D}(\mc M_p(k-1))-f_{\mf D}(\mc M_p(k))\big]\,.
\end{align*}
Note that in the case $k=l_{\mf D}$, the first line vanishes because
$f_{\mf D}(\mc M_p(l_{\mf D}-1)) =0$. Hence we obtained the same
formula for the limit in both cases. We may thus proceed using only
the formula obtained in Case 2.

\smallskip\noindent{\it Conclusion:} Recall from \eqref{37} the
definition of the reflected generator $\widehat{\bb L}_{p, \mf D}$.
Since
$f_{\mf D}(\mc M_p(l_{\mf D}-1) = f_{\mf D}(\mc M_p(r_{\mf D}+1) =0$,
it follows from the computation performed in Case 2 that the
expression which appears on the left-hand side of the assertion is
equal to
\begin{align*}
& -\, \sum_{k \in D} \, \nu_{\mf D}(\mc M_p(k))\,
f_{\mf D}(\mc M_p(k))\, (\widehat{\bb L} _{p, \mf D} f_{\mf D} )(\ms
M_p(k))
\\
& \quad + \,\nu_{\mf D}(\mc M_p(l_{\mf D}))\, f_{\mf D}(\mc M_p(l_{\mf D}))^2\,
R_p(\mc M_p(l_{\mf D}), \mc M_p(l_{\mf D} -1)) 
\\
& \quad + \,\nu_{\mf D}(\mc M_p(r_{\mf D}))\, f_{\mf D}(\mc M_p(r_{\mf D}))^2\,
R_p(\mc M_p(r_{\mf D}), \mc M_p(r_{\mf D} +1))  \,. 
\end{align*}
By definition of $f_{\mf D} (\cdot)$, the last two lines are equal to
\begin{align*}
\omega (\mc M_p(l_{\mf D}) \,|\, \mf D )\,
R_p(\mc M_p(l_{\mf D}), \mc M_p(l_{\mf D} -1)) 
\, + \,
\omega (\mc M_p(r_{\mf D}) \,|\, \mf D )\,
R_p(\mc M_p(r_{\mf D}), \mc M_p(r_{\mf D} +1)) 
\,,
\end{align*}
and the first line can be rewritten as
\begin{align*}
& -\, \sum_{k \in D} \, \omega (\mc M_p(k) \,|\, \mf D )\,
\frac{1}{f_{\mf D}(\mc M_p(k))}
\, (\widehat{\bb L} _{p, \mf D} f_{\mf D} )(\ms
M_p(k)) \,. 
\end{align*}
To complete the proof of the assertion, it remains to recall the
formula \eqref{reflected_equiv} for
$\bb I^{(p)}_{\mf D}(\omega(\cdot|\mf D))$
\end{proof}

\section{The $\Gamma-\limsup$ in the last time-scale: The reversible
case}
\label{sec2}

In this section, we establish the $\Gamma-\limsup$ for the last
time-scale $\theta^{(\widehat{\mf q})}_\epsilon$ under the following hypothesis

\smallskip\noindent{\bf Hypothesis (Rev).}
Assume that one of the three conditions is fulfilled
\begin{itemize}

\item[(a)] The Markov chain $\widehat{\bb X}_{\widehat{\mf q}}(\cdot)$
is reducible.

\item[(b)] The Markov chain $\bb X_{\widehat{\mf q}}(\cdot)$ has one closed
irreducible class.

\item[(c)] The Markov chain $\widehat{\bb X}_{\widehat{\mf q}}(\cdot)$
is irreducible, and the Markov chain $\bb X_{\widehat{\mf q}}(\cdot)$
has infinitely many closed irreducible classes which are all
equivalent.
\end{itemize}

Before we proceed, we state some remarks on the cases (a)-(c).  Recall
that, above the statement of Postulate $\mc P_{10}$ in Appendix
\ref{secA1}, we denote by $\mf n_p$, $1\leq p\leq \mf q$, the number
of closed irreducible classes of the lifted chain $\bb X_p$, up to
\eqref{equiv_class}-equivalence. By definition of $\widehat{\mf q}$,
in all situations $\mf n_{\widehat{\mf q}} \le 1$.

\smallskip\noindent{\bf Condition (a):} Under condition (a), $S(1)$
may be strictly smaller than $S(0)$. Furthermore,
$\mf n_{\widehat{\mf q}} =1$, and the Markov chain
$\widehat{\bb X}_{\widehat{\mf q}}(\cdot)$ restricted to the unique
closed irreducible class is reversible.

\smallskip\noindent{\bf Condition (b):} Under condition (b), by
postulate $\mc P_4$ all local sets $\mz M_{\widehat{\mf q}}(k)$,
$k\in \bb Z$, are at the same depth. Thus, by periodicity,
$S(\mz M_{\widehat{\mf q}}(0)) = S(\mz M_{\widehat{\mf q}}(\mf
u_{\widehat{\mf q}})) = S(1 + \mz M_{\widehat{\mf q}}(0))$, so that
$S(1)=S(0)$. On the other hand, by definition,
$\mf n_{\widehat{\mf q}} =0$. Finally, the Markov chain
$\widehat{\bb X}_{\widehat{\mf q}}(\cdot)$ is irreducible and
reversible.

\smallskip\noindent{\bf Condition (c):} Under condition (c), $S(1)$
may be strictly smaller than $S(0)$, $\mf n_{\widehat{\mf q}} =1$, and
the Markov chain $\widehat{\bb X}_{\widehat{\mf q}}(\cdot)$ is
reversible.

\begin{proposition}
\label{p:g_limsup:rev}
Assume that condition {\rm (Rev)} is fulfilled.  Then, the functional
$\ms J^{(\widehat{\mf q})}$ is a $\Gamma-\limsup$ of
$\theta_\epsilon^{(\widehat{\mf q})}\, \ms I_\epsilon$.
\end{proposition}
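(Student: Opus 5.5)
We would reuse the construction of Section~\ref{sec6} essentially verbatim, with $p=\widehat{\mf q}$, and check that each step survives at the last time-scale under (Rev). As before, we may restrict to measures $\mu$ of the form \eqref{29} with $\omega$ strictly positive, since Lemma B5 of \cite{l23} permits this reduction. We then build $\mu_\epsilon$ by \eqref{recovery_seq_inter} from the densities $m_{\mf D,\epsilon}=J_{\mf D,\epsilon}^2e^{-S_{\mf D}/\epsilon}/(\mss a\,c_{\mf D}(\epsilon))$, with $J_{\mf D,\epsilon}$ interpolating the values $f_{\mf D}=\sqrt{\omega(\cdot|\mf D)/\nu_{\mf D}}$ through the Gaussian profiles \eqref{kth_bridge:gaussian_ap} near the saddles. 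The two properties used in Section~\ref{sec6} are these. First, $\Omega_{\mf D}$ embeds in an interval of length $<1$, so that $J_{\mf D,\epsilon}$ and the local potential $S_{\mf D}$ are well defined on $\bb T$. Second, the square-root function $f_{\mf D}$ is the optimizer in \eqref{reflected_equiv}, which requires $\omega(\cdot|\mf D)$ to be reversible for the tilted rates. Once both hold, Assertions~\ref{s05}--\ref{a2:sup_high}, Lemma~\ref{weak_conv_p_limsup}, Lemma~\ref{main_lemma_limsup_p} and the convexity argument proving \eqref{31} go through unchanged.

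Case (a) is handled as follows. The chain $\widehat{\bb X}_{\widehat{\mf q}}$ has several equivalence classes, so no class wraps around the torus and the first property holds as in Section~\ref{sec6}. On each class $\mf D$ the reflected chain is a nearest-neighbour birth--death chain on the interval $\lb l_{\mf D},r_{\mf D}\rb$. Such a chain is reversible, and any positive $\omega(\cdot|\mf D)$ is reversible for the tilted rates $R_{p,f_{\mf D}}$ with $f_{\mf D}$ as in \eqref{equivtest}. Lemma~\ref{pro_red_stat} supplies $\nu_{\mf D}$ as the restriction of $\pi_p$, which is exactly what the Case~2 computation needs.

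Cases (b) and (c) require more care, because $\widehat{\bb X}_{\widehat{\mf q}}$ is irreducible and its unique class $\mf D=\mc S_{\widehat{\mf q}}$ wraps around the circle. The saddle separating $\mz M_{\widehat{\mf q}}(\mf u-1)$ from $\mz M_{\widehat{\mf q}}(\mf u)=\mz M_{\widehat{\mf q}}(0)+1$ is crossed. We would not cut there; instead we define $J_\epsilon$ directly on $\bb T$ as a periodic function, using the interpolation profiles \eqref{equil_pot} at every saddle of every $\mz W^{(\widehat{\mf q})}_{k-1,k}$, $0<k\le\mf u$, including the wrap-around one. The density would be $J_\epsilon^2e^{-S/\epsilon}/\mss a$ with $S$ replaced by a periodic $C^2$ potential $\widetilde S$ that agrees with $S$ near all wells and saddles of level $\widehat{\mf q}$. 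This replacement is where the hypotheses enter. In case (b) we have $S(1)=S(0)$, so $S$ is itself periodic up to the constant $B$, and $B=0$ in the relevant sense. In case (c) all classes are equivalent, and we would use the depths given by the postulates in Appendix~\ref{secA1} to choose $\widetilde S$ with the same barriers; when $B\neq 0$ the monotone drift correction is absorbed away from the wells.

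The main obstacle is the cost estimate, because Lemma~\ref{l03} involves $B$. For a density with stationary tilt, the exact functional contains the term $\frac{B/(\mss a m)}{\<1/(\mss a m)\>}$, and with $m$ exponentially small between wells, $\<1/(\mss a m)\>$ is exponentially large. We would show that this correction is $O(e^{-c/\epsilon})$ relative to $1/\theta_\epsilon^{(\widehat{\mf q})}$, by bounding $\<1/(\mss a m)\>^{-1}$ through the maximal value of $\widetilde S - S$ on the saddle regions. After that, the Case~2 computation gives, for each of the $\mf u$ bonds,
\[
\frac{1}{\sum_k\pi_p(k)}\,\frac{1}{\sigma_p(k-1,k)}\bigl[f(\mc M_p(k))-f(\mc M_p(k-1))\bigr]^2 ,
\]
and summation yields $-\sum_k\omega(k)\,(\widehat{\bb L}f)(k)/f(k)=\bb I^{(\widehat{\mf q})}(\omega)$. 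The last identity uses the reversibility of $\widehat{\bb X}_{\widehat{\mf q}}$ in cases (b) and (c). Checking that $\nu$ is proportional to $\pi_p$ on the cycle under (b) and (c), so that the square-root formula is the true optimizer, is the step we expect to require the most verification.
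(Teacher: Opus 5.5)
Your treatment of cases (a) and (b) matches the paper. The gap is in case (c). There you assert that the class $\mc S_{\widehat{\mf q}}$ wraps around the circle and that the saddle between $\mz M_{\widehat{\mf q}}(\mf u_{\widehat{\mf q}}-1)$ and $\mz M_{\widehat{\mf q}}(\mf u_{\widehat{\mf q}})$ is crossed. It is not. By \eqref{46}, the rates across $\mz W^{(\widehat{\mf q})}_{\mf u_{\widehat{\mf q}}-1,\mf u_{\widehat{\mf q}}}$ vanish in both directions. Hence, by \eqref{50b}, this barrier lies strictly above $S_0+\mf h_{\widehat{\mf q}}$, where $S_0:=S(\mz M_{\widehat{\mf q}}(0))$, and $\widehat{\bb X}_{\widehat{\mf q}}$ is a birth--death chain on the path $\lb 0,\mf u_{\widehat{\mf q}}-1\rb$. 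This settles the verification you left open: detailed balance with $\pi_{\widehat{\mf q}}$ holds as in Lemma~\ref{pro_red_stat}. It also shows that the wrap-around bond contributes $0$, as in Case~1 of the last assertion of Section~\ref{sec6}. It does not contribute the Case~2 expression you write for all $\mf u_{\widehat{\mf q}}$ bonds.

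More importantly, this strict excess is exactly what your ``main obstacle'' needs, and your proposed mechanism does not supply it. Take $J_\epsilon$ bounded above and below and $m=J_\epsilon^2e^{-\widetilde S/\epsilon}/(\mss a\,c(\epsilon))$. The diagonal $B$-term of Lemma~\ref{l03} equals $B^2/(4\epsilon\<1/(\mss a m)\>)$, and $\<1/(\mss a m)\>\ge C\epsilon\,e^{[\max\widetilde S-S_0]/\epsilon}$. So its contribution to $\theta^{(\widehat{\mf q})}_\epsilon\ms I_\epsilon(\mu_\epsilon)$ is $O(\epsilon^{-2}e^{[S_0+\mf h_{\widehat{\mf q}}-\max\widetilde S]/\epsilon})$. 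This is small because $\max\widetilde S>S_0+\mf h_{\widehat{\mf q}}$, that is, because the wrap-around saddle is \emph{not} crossed. If it were crossed with $B>0$, you would be in the (NonRev) situation of Section~\ref{sec8}. There the square-root ansatz is not optimal and a different recovery sequence is needed. A bound ``through the maximal value of $\widetilde S-S$ on the saddle regions'' cannot capture this, since $\widetilde S=S$ there by your own choice.

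The same margin is needed for the drift mismatch, which you mention only in passing. Write $S'=-\mss b/\mss a$. Since $S'-\widetilde S'$ integrates to $-B\neq 0$ over $\bb T$, it cannot vanish identically. It adds $\frac{\theta^{(\widehat{\mf q})}_\epsilon}{4\epsilon}\int\mss a\,(S'-\widetilde S')^2\,d\mu_\epsilon$, plus cross terms, to the cost. This is negligible only under two conditions. First, the mismatch must sit where $\widetilde S\ge S_0+\mf h_{\widehat{\mf q}}+\kappa$ for some $\kappa>0$; ``away from the wells'' is not enough. Second, $\widetilde S$ must have no minima of depth $\le S_0$ outside the level-$\widehat{\mf q}$ wells, otherwise $\mu_\epsilon$ does not converge to $\mu$.

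Both requirements can be met. Assertion~\ref{q:rev:scale:as1:per_p} provides $x^-_{\widehat{\mf q}}<x^+_{\widehat{\mf q}}$ with $S(x^\pm_{\widehat{\mf q}})=S_0+\mf h_{\widehat{\mf q}}+\kappa>S$ in between. Take $\widetilde S=S$ on $(x^-_{\widehat{\mf q}},x^+_{\widehat{\mf q}})$ and a single bump above $S(x^\pm_{\widehat{\mf q}})$ on the complementary arc, and interpolate $J_\epsilon$ at its top. Then the wrap-around interpolation, the $B$-term and the mismatch term are all $O(\epsilon^{-2}e^{-\kappa/\epsilon})$. The cross terms vanish by Cauchy--Schwarz, and your periodic construction becomes a valid alternative to the paper's.

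The paper instead places an extra deep well, flanked by two high saddles, on that arc. Then $J_\epsilon$ vanishes on a set of positive measure, and the $B$-term disappears exactly, as in Assertion~\ref{estim_00_limsup1}. Only the mismatch term remains, and that is Assertion~\ref{q:scale:nestim:002}.
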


Fix a measure $\mu \in \mss P(\bb T)$. If
$\ms J^{(\widehat{\mf q})}(\mu)=+\infty$, then the bound
\begin{equation*}
\limsup_{\epsilon\to 0}\theta_\epsilon^{(\widehat{\mf q})}\, 
\ms I(\mu_\epsilon) \leq \ms J^{(\widehat{\mf q})} (\mu)
\end{equation*}
holds trivially for any sequence that converges weakly to $\mu$. Thus,
we may assume that $\ms J^{(\widehat{\mf q})} (\mu)<+\infty$.  By the
definition of $\ms J^{(\widehat{\mf q})}$, this entails that the
measure $\mu$ is given by
\begin{equation*}
\mu(\cdot)
= \sum_{k\in S_{\widehat{\mf q}}}\omega(\mc M_{\widehat{\mf q}}(k))
\, \mu_{\mc M_{\widehat{\mf q}}(k)}^{\widehat{\mf q}}(\cdot)\,,
\end{equation*}
for some fixed probability measure
$\omega\in \mss P(\mc S_{\widehat{\mf q}})$. By Lemma B5 in \cite{l23},
we can assume that $\omega(\mc M)>0$ for every
$\mc M\in \mc S_{\widehat{\mf q}}$. 

\begin{proof}[Proof of Proposition \ref{p:g_limsup:rev} under the
conditions (a) and (b)]

If the Markov chain $\widehat{\bb X}_{\widehat{\mf q}}(\cdot)$ is reducible,
that is, condition (a) is in force, then each
$\widehat{\bb X}_{\widehat{\mf q}}$-equivalent class is surrounded on the left
and on the right by another $\widehat{\bb X}_{\widehat{\mf q}}$-equivalent
class. We may thus repeat the proof of Proposition
\ref{higher_speed_glimsup} without modifications, constructing a
recovery sequence associated to each
$\widehat{\bb X}_{\widehat{\mf q}}$-equivalent class, to prove the
$\Gamma-\limsup$.

Suppose condition (b) holds.  As the Markov chain
$\bb X_{\widehat{\mf q}}(\cdot)$ is irreducible, $S(1)=S(0)$ so that $B=0$. In
particular, $S(\cdot)$ is periodic and well defined as a function on
the torus $\bb T$.  In this case also the proof of Proposition
\ref{higher_speed_glimsup} applies, this time with minor
modifications.

Fix a probability measure $\omega\in \mss P(\mc S_{\widehat{\mf q}})$
such that $\omega(\mc M)>0$ for every
$\mc M\in \mc S_{\widehat{\mf q}}$.  The construction of the test
function follows the one presented in the previous section starting at
equation \eqref{lift_equiv_1}. Here $\mf D = \mc S_{\widehat{\mf q}}$,
so that $l_{\mc S_{\widehat{\mf q}}} = 0$,
$r_{\mc S_{\widehat{\mf q}}} = \mf u_{\widehat{\mf q}} -1$.

Denote by
$\cb{f_{\mc S_{\widehat{\mf q}}} } \colon \mc S_{\widehat{\mf q}} \to
\bb R_+$ the function defined in \eqref{equivtest} (with $\mf D$
replaced by $\mc S_{\widehat{\mf q}}$). Let
$F_{\mz S_{\widehat{\mf q}}} \colon \mz S_{\widehat{\mf q}}\to \bb
R_+$ be the lifting of $f_{\mc S_{\widehat{\mf q}}}$:
\begin{equation*}
\cb{F_{\mz S_{\widehat{\mf q}}}(\mz M_{\widehat{\mf q}}(j))}
\,:=\,  f_{\mc S_{\widehat{\mf q}}}(\mc M_{\widehat{\mf q}}(k))\,,
\quad j=k \mod{\mf u_{\widehat{\mf q}}-1}\,.
\end{equation*}
Clearly, $F_{\mz S_{\widehat{\mf q}}}$ is
$\mf u_{\widehat{\mf q}}-$periodic. The construction presented from
equation \eqref{equivtest} to \eqref{binding_sets} provides a strictly
positive and $1-$periodic test function
{\color{blue}$J_{\widehat{\mf q}, \epsilon} = J_{\mc S_{\widehat{\mf
q}},\epsilon}$} of class $C^2(\bb R)$, provided that the smooth
functions $\Phi_{\mc S_{\widehat{\mf q}},\epsilon}$ defined below
\eqref{binding_sets} are chosen to enforce this $1-$periodicity
property. Thus, $J_{\widehat{\mf q}, \epsilon}$ can be considered as
defined on the torus.

Let ${\color{blue}\mu_{\widehat{\mf q},\epsilon}}\in \mss P(\bb T)$ be
the probability measure given by the density:
\begin{equation*}
{\color{blue}m_{\widehat{\mf q},\epsilon}(x)}
\,:= \, \frac{\big[J_{\widehat{\mf q}, \epsilon} (x)\big]^2
\,e^{-S(x)/\epsilon}}{\mss a(x) \, c_{\widehat{\mf q}}(\epsilon)}\,,    
\end{equation*}
where {\color{blue} $c_{\widehat{\mf q}}(\epsilon)$} is a
normalization constant. Note that we exchanged the local potential
$S_{\mf D}$ that appear in \eqref{recovery_seq_equiv} by $S$.

By the proof of Lemma \ref{weak_conv_p_limsup}, the sequence
$\mu_{\widehat{\mf q},\epsilon}$ weakly converges to $\mu$. By the one
of Lemma \ref{main_lemma_limsup_p} item (1),
\begin{equation}
\label{36}
\limsup_{\epsilon\to 0}\theta_{\epsilon}^{(\widehat{\mf q})} \,
\ms I_\epsilon(\mu_{\widehat{\mf q},\epsilon})\,
\leq\, \bb I^{(\widehat{\mf q})}(\omega)
\,=\, \ms J^{(\widehat{\mf q})} (\mu)\,.
\end{equation}
\end{proof}

We turn the condition (c). Up to the end of the section, we assume
that the Markov chain $\bb X_{\widehat{\mf q}}(\cdot)$ has infinitely
many closed irreducible classes which are all equivalent, and that the
Markov chain $\widehat{\bb X}_{\widehat{\mf q}}(\cdot)$ is
irreducible.  As observed above, in this case $S(1)$ may be strictly
smaller than $S(0)$, see Figure \ref{fig8}.

It follows from the condition (c) that there exists $k\in \bb Z$ for
which
$\{\mz M_{\widehat{\mf q}}(k) , \dots, \mz M_{\widehat{\mf q}}(k+\mf
u_{\widehat{\mf q}}-1) \}$ is a $\bb X_{\widehat{\mf q}}$-closed
irreducible class. Without loss of generality we may assume that
$k=0$. As
$\{\mz M_{\widehat{\mf q}}(0) , \dots, \mz M_{\widehat{\mf q}}(\mf
u_{\widehat{\mf q}}-1) \}$ is a $\bb X_{\widehat{\mf q}}$-closed
irreducible class, by translation-invariance of the jump rates,
\begin{equation}
\label{46}
R_{\widehat{\mf q}} (
\mz M_{\widehat{\mf q}}(\mf u_{\widehat{\mf q}}-1) ,
\mz M_{\widehat{\mf q}}(\mf u_{\widehat{\mf q}}))
\,=\,
R_{\widehat{\mf q}} (\mz M_{\widehat{\mf q}}(\mf u_{\widehat{\mf q}})
, \mz M_{\widehat{\mf q}}(\mf u_{\widehat{\mf q}}-1)) \,=\, 0 \,.    
\end{equation}
A similar identity holds for jump rates between
$\mz M_{\widehat{\mf q}}(0)$ and $\mz M_{\widehat{\mf q}}(-1)$

We start with a description of the energy landscape. We refer to
Figure \ref{fig8} for an illustration.  Recall the definition of the
maxima $\sigma_{k,k+1}^{\widehat{\mf q},\pm}$, $k\in \bb Z$, given in
\eqref{rl_max:R}. Without loss of generality, we may assume that
\begin{equation*}
\label{q:rev:asump0}
\{0\} < \{\sigma_{-1,0}^{\widehat{\mf q},+}\}
< \mz M_{\widehat{\mf q}}(0)
< \mz M_{\widehat{\mf q}}(\mf u_{\widehat{\mf q}}-1)
< \{1\}\,,\quad (0,\sigma_{-1,0}^{\widehat{\mf q},+})
\cap \mc C = \varnothing \,. 
\end{equation*}
That is, $\sigma_{-1,0}^{\widehat{\mf q},+}$ is the leftmost critical
point, to the right of $0$.

As
$\{\mz M_{\widehat{\mf q}}(0) , \dots, \mz M_{\widehat{\mf q}}(\mf
u_{\widehat{\mf q}}-1) \}$ is a $\bb X_{\widehat{\mf q}}$-closed
irreducible class, by postulate $\mc P_8$, the sets
$\mz M_{\widehat{\mf q}}(k)$, $0\leq k\leq \mf u_{\widehat{\mf q}}-1$
have all the same depth,
\begin{equation*}
S(\mz M_{\widehat{\mf q}}(k)) = S(\mz M_{\widehat{\mf q}}(j))\,,
\quad 0\leq j\leq k\leq \mf u_{\widehat{\mf q}}-1\,.
\end{equation*}
This equality and definitions \eqref{18}, \eqref{19}, \eqref{50b}
imply that the barriers separating $\mz M_{\widehat{\mf q}}(0)$,
$\mz M_{\widehat{\mf q}}(1),\dots, \mz M_{\widehat{\mf q}}(\mf
u_{\widehat{\mf q}}-1)$ have all the same height,
\begin{equation*}
S(\sigma) = S(\sigma')\,,\quad
\sigma,\sigma'\in \bigcup_{j=0}^{\mf u_{\widehat{\mf q}}-2}
\mz W_{j,j+1}^{(\widehat{\mf q})}\,.
\end{equation*}
By \eqref{50b}, \eqref{46}, the barriers at the boundary of the closed
irreducible 
class are higher:
\begin{equation}
\label{49}
S(\sigma_{-1,0}^{\widehat{\mf q},+})
\wedge
S(\sigma_{\mf u_{\widehat{\mf q}}-1,\mf u_{\widehat{\mf q}}}^{\widehat{\mf q},-})
> S(\sigma) \quad
\text{for all}\;\;
\sigma \in \bigcup_{j=0}^{\mf u_{\widehat{\mf q}}-2}
\mz W_{j,j+1}^{(\widehat{\mf q})}  \,.
\end{equation}

\begin{assertion}
\label{q:rev:scale:as1:per_p}
Assume that case condition Rev(c) is in force. Then, there exist
$\kappa >0$, and points
$x_{\widehat{\mf q}}^- < x_{\widehat{\mf q}}^+$ such that
$\mss b(x_{\widehat{\mf q}}^-), \mss b(x_{\widehat{\mf q}}^+) \neq 0$;
\begin{equation*}
m_{\widehat{\mf q},\mf u_{\widehat{\mf q}}-1}^+ -1
\,<\,x_{\widehat{\mf q}}^-
\,<\, m_{\widehat{\mf q},0}^-
\,<\,m_{\widehat{\mf q},\mf u_{\widehat{\mf q}}-1}^+
\,<\, x_{\widehat{\mf q}}^+
\,<\, m_{\widehat{\mf q},0}^- +1
\,;
\end{equation*} 
\begin{equation*}
S(x_{\widehat{\mf q}}^-) = S(x_{\widehat{\mf q}}^+)
= S(\mz M_{\widehat{\mf q}}(0))
+ \mf h_{\widehat{\mf q}} + \kappa > S(x)
\quad \text{for all}\;\;
x\in (x_{\widehat{\mf q}}^-,x_{\widehat{\mf q}}^+)\,.
\end{equation*}
\end{assertion}

\begin{proof}
By condition Rev(c), the lifted chain $\bb X_{\widehat{\mf q}}(\cdot)$
has more than one closed irreducible class. Consequently, we may apply
the recursive procedure one more time to build a new layer at level
$\widehat{\mf q}+1$. The states of this new chain are the sets
$\{\mz M_{\widehat{\mf q}}(0), \dots, \mz M_{\widehat{\mf q}}(\mf
u_{\widehat{\mf q} -1})\}$ and their translate. The assertion follows
by Lemma 4.6 in \cite{lm}.
\end{proof}

By translation invariance,
$\sigma_{\mf u_{\widehat{\mf q}} -1,\mf u_{\widehat{\mf
q}}}^{\widehat{\mf q},+} = 1+\sigma_{-1,0}^{\widehat{\mf q},+}$, and
by definition,
$\sigma_{\mf u_{\widehat{\mf q}} -1,\mf u_{\widehat{\mf
q}}}^{\widehat{\mf q},-} \le \sigma_{\mf u_{\widehat{\mf q}} -1,\mf
u_{\widehat{\mf q}}}^{\widehat{\mf q},+}$. Thus, by moving
$\sigma_{-1,0}^{\widehat{\mf q},+}$ to $0+$ we move
$\sigma_{\mf u_{\widehat{\mf q}} -1,\mf u_{\widehat{\mf
q}}}^{\widehat{\mf q},+}$ (and therefore
$\sigma_{\mf u_{\widehat{\mf q}} -1,\mf u_{\widehat{\mf
q}}}^{\widehat{\mf q},-} $) to $1+$. Thus, by \eqref{49}, and by
changing the value of $\kappa$ if needed, we may assume
that
\begin{equation}
\label{47}
0\,<\,\sigma_{-1,0}^{\widehat{\mf q},+}
\,<\,x_{\widehat{\mf q}}^-
\,<\,m_{\widehat{\mf q},0}^-
\,<\, m_{\widehat{\mf q},\mf u_{\widehat{\mf q}}-1}^{+}
\,<\,x_{\widehat{\mf q}}^+
\,<\,1\,.
\end{equation}

\begin{figure}
\centering
\begin{tikzpicture}[scale=0.8]


\draw[rounded corners] (1,3) .. controls (2,6.3) .. (3,3);

\draw[rounded corners] (3,3) .. controls (3.5, 1) .. (4, -1);
\draw[rounded corners] (4,-1) .. controls (4.5, -3) .. (5,-1);
\draw[rounded corners] (5,-1) -- (5.5,1);
\draw[rounded corners] (5.5,1) .. controls (5.8,2) .. (6.1, 1);

\draw[rounded corners](6.1,1)--(6.5,-1);
\draw[rounded corners](6.5,-1)..controls(7,-3)..(7.5,-1);
\draw[rounded corners](7.5,-1)--(7.9,1);

\draw[rounded corners] (7.9, 1) .. controls (8.2, 2) .. (8.5, 1);

\draw[rounded corners] (8.5, 1) -- (9, -1);
\draw[rounded corners] (9,-1) .. controls (9.5, -3) .. (10, -1);
\draw[rounded corners] (10,-1) .. controls (10.5, 1) .. (11, -1);
\draw[rounded corners] (11,-1) .. controls (11.5,-3) .. (12.2, -1);
\draw[rounded corners] (12.2,-1) -- (12.9, 1);
\draw[rounded corners] (12.9,1) .. controls (14, 4.3) .. (15, 1);


\fill(2,5.5)node[above]{$\sigma_{-1,0}^{\widehat{\mf q},\pm}$};

\fill(14.2,3.5)node[above]{$\sigma_{1,0}^{\widehat{\mf q},\pm}+1$};

\fill(4.5,-2.6)node[below,red, font=\small]{$\mc M_{\widehat{\mf q}}(0)$};
\fill(7.1,-2.6)node[below,red,font=\small]{$\mc M_{\widehat{\mf q}}(1)$};
\fill(10.5,-2.6)node[below,red, font=\small]{$\mc M_{\widehat{\mf q}}(2)$};


\draw[to-to,line width=0.5mm, red](5.8,1.7)--(5.8,-2.6);
\fill(6.2,-1)node[below,red,font=\small]{$\mf h_{\widehat{\mf q}}$};

\draw[dashed](4.5,-2.6)--(11.6,-2.6);



\draw[dashed,black](3.2,2) -- (13.2,2);

\draw[dashed](3.2,2)--(3.2,-3.5);
\draw[dashed](13.2,2)--(13.2,-3.5);

\draw[line width=0.5mm, cyan](3.2,-3.5)--(13.2,-3.5);

\fill(3.2,-3.5)node[below, cyan]{$x_{\widehat{\mf q}}^-$};
\fill(13.2,-3.5)node[below,cyan]{$x_{\widehat{\mf q}}^+$};

\draw[to-to, blue](9,1.72)--(9,2);

\draw[dashed](5.75,1.72)--(9,1.72);

\fill(9.3,2.05)node[below, blue]{$\kappa$};



\draw[brown, rounded corners](1.3,1.9)..controls(1.9,4)..(2.1,2.1);
\draw[brown](2.1,2.1)--(2.4,-2.5);
\draw[,brown,rounded corners](2.4,-2.5)..controls(2.5,-3.5)..(2.6,-2.5);
\draw[brown](2.6,-2.5)--(2.85,0.8);
\draw[brown, rounded corners](2.85,0.8) .. controls (3,3) .. (3.1,2.6);

\fill(1.9,3.5)node[above, brown]{$\hat{\sigma}_1$};
\fill(3.1,3)node[above,brown]{$\hat{\sigma}_2$};
\fill(2.5,-3.2)node[below,brown]{$\hat{m}$};



\draw[brown, rounded corners](13.2,1.9)..controls(13.8,4)..(14,2.1);
\draw[brown](14,2.1)--(14.3,-2.5);
\draw[,brown,rounded corners](14.3,-2.5)..controls(14.4,-3.5)..(14.5,-2.5);
\draw[brown](14.5,-2.5)--(14.75,0.8);
\draw[brown, rounded corners](14.75,0.8) .. controls (14.9,3) .. (15,2.6);



\draw[line width=0.5mm](1,-4.5)--(15,-4.5);
\fill(1.7,-4.5)node[below]{$0$};
\fill(13.7,-4.5)node[below]{$1$};

\end{tikzpicture}
\caption{This picture illustrates an example of a
non-reversible energy landscape as well as the periodic correction
$\widehat{S}$ for the case where $B>0$ and the reduced chain
$\widehat{\bb X}_{\widehat{\mf q}}(\cdot)$ is reversible. In this
example, the state space of the reduced chain
$\widehat{\bb X}_{\widehat{\mf q}}$ is composed by the three states
$\mc M_{\widehat{\mf q}}(0), \mc M_{\widehat{\mf q}}(1)$ and
$\mc M_{\widehat{\mf q}}(2)$. The Markov chain cannot jump to the
right of $\mc M_{\widehat{\mf q}}(2)$, nor to the left of
$\mc M_{\widehat{\mf q}}(0)$. The points
$x_{\widehat{\mf q}}^-, x_{\widehat{\mf q}}^+$ and the height $\kappa$
are chosen according to Assertion \ref{q:rev:scale:as1:per_p}. Thus,
$S(x_{\widehat{\mf q}}^-) - S(\mc M_{\widehat{\mf q}}(0)) =
S(x_{\widehat{\mf q}}^+)- S(\mc M_{\widehat{\mf q}}(0)) = \mf
h_{\widehat{\mf q}}+\kappa$. Finally, the behaviour of the reversible
potential $\widehat{S}$ outside of the set
$[x_{\widehat{\mf q}}^-, x_{\widehat{\mf q}}^+]$ is represented in
brown. }
\label{fig8}
\end{figure}

The idea of the proof is simple and goes as follows. We define a
$1-$periodic potential $\widehat{S}$ that is equal to $S$ on the set
$(x_{\widehat{\mf q}}^-, x_{\widehat{\mf q}}^+)$, but has an extra
well that is deeper than all of the wells of $S$ on
$(x_{\widehat{\mf q}}^-, x_{\widehat{\mf q}}^+)$. See Figure
\ref{fig8}. Intuitively, this new potential induces a reversible
diffusion $X_{\epsilon}^{*}$ on the torus having $\widehat{\mf q}+1$
metastable levels and whose metastable behaviour on the interval
$(x_{\widehat{\mf q}}^-,x_{\widehat{\mf q}}^+)$ matches the one of
$X_{\epsilon}$, up to level $\widehat{\mf q}$. The state space
$\mc S_{\widehat{\mf q}}$ of
$\widehat{\bb X}_{\widehat{\mf q}}(\cdot)$ can then be regarded as a
properly contained closed irreducible class of the reduced chain at
level $\widehat{\mf q}$ of $X_{\epsilon}^{*}$. The desired recovery
sequence will be the recovery sequence for $X_{\epsilon}^{*}$
restricted to $\Omega_{\mc S_{\widehat{\mf q}}}$, where
$\Omega_{\mc S_{\widehat{\mf q}}}$ is the set defined in \eqref{28}.

Let {\color{blue}$\widehat{S}:\bb R\to \bb R$} be a $1-$periodic
function of class $C^2(\bb R)$ satisfying the following conditions:
\begin{enumerate}
    \item $\widehat{S}(x) = S(x)$, for all $x\in (x_{\widehat{\mf q}}^-,x_{\widehat{\mf q}}^+)$;
    \item on the interval $(x_{\widehat{\mf q}}^+-1, x_{\widehat{\mf q}}^-)$, the function $\widehat{S}$ has exactly three critical points, denoted by $\hat{\sigma}_1, \hat{m}$ and $ \hat{\sigma}_2$, each of them being nondegenerate, and satisfying
    \begin{equation}\label{test_q_torus}
        0\,<\,\hat{\sigma}_1 \,<\,\hat{m}\,<\,\hat{\sigma}_2\,<\, x_{\widehat{\mf q}}^-\,<\,1\,;
    \end{equation}
    \item $\widehat{S}(\hat{m})\leq S(\mz M_{\widehat{\mf q}}(0))$;
    \item $\widehat{S}(\hat{\sigma}_1) = \widehat{S}(\hat{\sigma}_2)$ and $\widehat{S}(\hat{\sigma}_1) >  S(x_{\widehat{\mf q}}^-)$.
\end{enumerate}
Let ${\color{blue}\sigma_{\mc S_{\widehat{\mf q}}}^-}:= \hat{\sigma}_1$ and ${\color{blue}\sigma_{\mc S_{\widehat{\mf q}}}^+}:= \hat{\sigma}_2$; and denote by $\widehat{\mz W}_{\mc S_{\widehat{\mf q}}}\subset (0,1)$ be the set given by
\begin{equation*}
    {\color{blue}\widehat{\mz W}_{\mc S_{\widehat{\mf q}}}}:= \bigcup_{k=1}^{\mf u_{\widehat{\mf q}}-1}\mz W_{k-1,k}^{(\widehat{\mf q})}\cup\{\sigma_{\mc S_{\widehat{\mf q}}}^-,\sigma_{\mc S_{\widehat{\mf q}}}^+\}\,.
\end{equation*}
Let $\delta_{\mc S_{\widehat{\mf q}}}^{\pm}$, $\eta_{\mc S_{\widehat{\mf q}}}^{\pm}$, be the scales defined as
\begin{equation*}
    {\color{blue}\delta_{\mc S_{\widehat{\mf q}}}^{\pm}}:= \sqrt{\frac{2\epsilon\log \epsilon^{-1}}{-\widehat{S}''(\sigma_{\mc S_{\widehat{\mf q}}}^{\pm})}}\,,\quad {\color{blue}\eta_{\mc S_{\widehat{\mf q}}}^{\pm}}:= \frac{\epsilon}{\delta_{\mc S_{\widehat{\mf q}}}^{\pm}}\,,
\end{equation*}
and let $\Omega_{\mc S_{\widehat{\mf q}}}$ be the $\mc S_{\widehat{\mf q}}-$neighborhood given by
\begin{equation*}
    {\color{blue}\Omega_{\mc S_{\widehat{\mf q}}}}:= (\sigma_{\mc S_{\widehat{\mf q}}}^- - \delta_{\mc S_{\widehat{\mf q}}}^- - \eta_{\mc S_{\widehat{\mf q}}}^-\,, \sigma_{\mc S_{\widehat{\mf q}}}^+ + \delta_{\mc S_{\widehat{\mf q}}}^+ + \eta_{\mc S_{\widehat{\mf q}}}^+)\,.
\end{equation*}
Let {\color{blue}$J_{\mc S_{\widehat{\mf q}},\epsilon}$} be the test function constructed via the exact same procedure introduced in the previous subsection, but changing $S$ by $\widehat{S}$. By construction, $J_{\mc S_{\widehat{\mf q}},\epsilon}$ is a function of class $C^{2}(\bb R)$ whose support in contained in the set $\Omega_{\mc S_{\widehat{\mf q}}}$. By \eqref{test_q_torus}, this set is properly contained in the interval $(0,1)$. Thus, $J_{\mc S_{\widehat{\mf q}},\epsilon}$ can be understood as defined on the torus. 

Let $\mu_{\epsilon}\in \mss P(\bb T)$ be the probability measure given by the density
\begin{equation}\label{q:rev:recov_seq_2}
    {\color{blue}m_{\epsilon}(x)}:= \frac{[J_{\mc S_{\widehat{\mf q}},\epsilon}(x)]^2e^{\widehat{S}(x)/\epsilon}}{\mss a(x)c(\epsilon)}\,,
\end{equation}
where ${\color{blue}c(\epsilon)}$ is a normalization constant that turns $m_{\epsilon}$ into the density of a probability measure. The same proof of Lemma \ref{weak_conv_p_limsup} yields that $\mu_{\epsilon}$ weakly converges to $\mu$, provided that we set $l_{\widehat{\mf q}} = 1$ and $k_{\widehat{\mf q}}=0$. To conclude, it remains to show that 
\begin{equation*}
    \limsup_{\epsilon\to 0}\theta_{\epsilon}^{(\widehat{\mf q})}\ms I_\epsilon(\mu_\epsilon) \leq \bb I^{(\widehat{\mf q})}(\omega)\,.
\end{equation*}
The proof is similar to the one of item (1) in Lemma \ref{main_lemma_limsup_p}, provided that we replace the statement in Assertion \ref{a1:sup_high} by the following
\begin{proposition}\label{q:scale:nestim:ls}
Let $\mu_\epsilon$ be the sequence of measures defined in \eqref{q:rev:recov_seq_2}. Then,
\begin{equation}\label{q:scale:nestim:eq:ls}
\theta_\epsilon^{(\widehat{\mf q})}\ms I_\epsilon(\mu_\epsilon)
\,\leq \,
\sum_{k = 1}^{\mf u_{\widehat{\mf q}}-1} 
\sum_{r=1}^{\mf r_{k-1,k}}
\theta_\epsilon^{(\widehat{\mf q})}\int_{B_{\sigma_{k-1,k}^{\widehat{\mf q},r}}^{\widehat{\mf q}}\,\cup
\,\Lambda_{\sigma_{k-1,k}^{\widehat{\mf q},r}}^{\widehat{\mf q}}}
\big[J'_{\mc S_{\widehat{\mf q}},\epsilon} (x) \big]^2\;
\frac{e^{-S(x)/\epsilon}}
{c(\epsilon)}\;dx \,+\, R(\epsilon)\,,
\end{equation}
where $R(\epsilon)$ is an error term that vanishes as $\epsilon\to 0$.
\end{proposition}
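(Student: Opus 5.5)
The plan is to follow the proof of Assertion \ref{a1:sup_high}, keeping track of the discrepancy between the drift $\mss b$ of the original diffusion and the reversible potential $\widehat S$ used to build $\mu_\epsilon$. I read the exponent in \eqref{q:rev:recov_seq_2} as $e^{-\widehat S(x)/\epsilon}$.

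\textbf{Step 1: regularise and pass to the explicit formula.} As in Assertion \ref{estim_limsup_high}, replace $m_\epsilon$ by $m^\delta_\epsilon=(m_\epsilon+\delta)/(1+\delta)$ and apply Lemma \ref{l03}, then let $\delta\to 0$ using the lower semicontinuity of $\ms I_\epsilon$. The hypothesis that makes the $B$-term disappear is that $m_\epsilon$ vanishes on a set of positive Lebesgue measure. Here this holds because $J_{\mc S_{\widehat{\mf q}},\epsilon}$ vanishes on $\bb T\setminus\Omega_{\mc S_{\widehat{\mf q}}}$. Moreover, on the interval between $\hat\sigma_1$ and $\hat\sigma_2$ containing $\hat m$, $J$ takes the boundary value $f(\mc M_{\widehat{\mf q}}(l-1))=0$. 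So the terms carrying $B/(\mss a\, m^\delta_\epsilon)\,/\,\<1/(\mss a m^\delta_\epsilon)\>$ vanish exactly as in the proof of Assertion \ref{estim_00_limsup1}.

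Since $\mss b/\mss a=-S'$ and $m_\epsilon= J^2e^{-\widehat S/\epsilon}/(\mss a\, c(\epsilon))$, we have
\begin{equation*}
\epsilon\Big[\frac{m_\epsilon'}{m_\epsilon}+\frac{\mss a'}{\mss a}\Big]-\frac{\mss b}{\mss a}
\;=\;2\epsilon\,\frac{J'}{J}\;+\;\big(S'-\widehat S'\big).
\end{equation*}
Therefore
\begin{equation*}
\ms I_\epsilon(\mu_\epsilon)\;\le\;\frac{1}{4\epsilon}\int_{\bb T}\mss a\,\Big\{2\epsilon\frac{J'}{J}+(S'-\widehat S')\Big\}^2 d\mu_\epsilon .
\end{equation*}
Expanding the square produces three pieces:
\begin{itemize}
\item the reversible piece $\epsilon\int \mss a\,(J')^2 e^{-\widehat S/\epsilon}/(\mss a\, c(\epsilon))\,dx$;
\item a cross term $\int \mss a\, J'J(S'-\widehat S')\,e^{-\widehat S/\epsilon}/(\mss a\, c)$;
\item the term $(4\epsilon)^{-1}\int \mss a\,(S'-\widehat S')^2\,d\mu_\epsilon$.
\end{itemize}

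\textbf{Step 2: the reversible piece.} Where $J$ is constant, $J'$ vanishes. Its support therefore splits into two groups.
\begin{itemize}
\item The first group is the sets $\Lambda\cup B$ around the maxima in $\mz W^{(\widehat{\mf q})}_{k-1,k}$, $1\le k\le \mf u_{\widehat{\mf q}}-1$. These lie in $(x^-_{\widehat{\mf q}},x^+_{\widehat{\mf q}})$, where $\widehat S=S$, and they give the main sum in \eqref{q:scale:nestim:eq:ls}.
\item The second group is the sets around $\hat\sigma_1,\hat\sigma_2$. They go into $R(\epsilon)$.
\end{itemize}
For the second group, the argument of Assertion \ref{s05} together with item (3) gives $c(\epsilon)\ge C^{-1}\sqrt\epsilon\, e^{-S(\mz M_{\widehat{\mf q}}(0))/\epsilon}$. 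On these sets $|J'|$ is of order at most $\epsilon^{-1/2}$, and the interval has length of order $\sqrt{\epsilon\log\epsilon^{-1}}$. By item (4) and Assertion \ref{q:rev:scale:as1:per_p}, $\widehat S(\hat\sigma_i)-S(\mz M_{\widehat{\mf q}}(0))>\mf h_{\widehat{\mf q}}+\kappa$. A Laplace bound then shows that $\theta^{(\widehat{\mf q})}_\epsilon$ times this contribution is at most $C\,\epsilon^{-a}e^{-\kappa/\epsilon}\to0$ for some fixed $a$.

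\textbf{Step 3: the non-reversible error terms (the main obstacle).} Both remaining terms are supported on $\mathrm{supp}\,J\cap\{S'\ne\widehat S'\}$, which is contained in $\mathrm{supp}\,J\setminus(x^-_{\widehat{\mf q}},x^+_{\widehat{\mf q}})$. On this set $|S'-\widehat S'|$ is bounded, so the cross term is controlled by Cauchy--Schwarz using the other two terms.

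The key point is to arrange that on this set $\widehat S\ge S(\mz M_{\widehat{\mf q}}(0))+\mf h_{\widehat{\mf q}}+\kappa'$ for some $\kappa'>0$. There are two ways I would try.
\begin{itemize}
\item First, choose $\widehat S$ outside $(x^-,x^+)$ so that it stays above $S(x^\pm)$ except inside the well of $\hat m$.
\item Then, place the constant-zero region $C^{0,1}$ of $J$ so that it covers that well. This uses the freedom in (1)--(4) defining $\widehat S$, and the fact that $J\equiv0$ between the binding sets of $\hat\sigma_2$ and $\hat\sigma_1$.
\end{itemize}
With that in place, $\theta_\epsilon^{(\widehat{\mf q})}\epsilon^{-1}\mu_\epsilon(\mathrm{supp}\,J\setminus(x^-,x^+))\le C\epsilon^{-3/2}e^{-\kappa'/\epsilon}\to0$. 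This also handles the cross term, and all the error terms are collected into $R(\epsilon)$.

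The delicate part is checking that the geometric choice of $\widehat S$ and of the cut-off scales $\delta^\pm,\eta^\pm$ is compatible with $J$ being $C^2$, while keeping $\widehat S$ high on every point where $J\neq0$ and $\widehat S\neq S$.
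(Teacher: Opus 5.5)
Your proposal is correct and follows the paper's own argument. You regularize and apply Lemma~\ref{l03}, and observe that on $(x^-_{\widehat{\mf q}},x^+_{\widehat{\mf q}})$ the integrand reduces to $\epsilon\,[J']^2e^{-S/\epsilon}/c(\epsilon)$. You then dispose of everything on $\mathrm{supp}\,J\setminus(x^-_{\widehat{\mf q}},x^+_{\widehat{\mf q}})$ (the pieces near $\hat\sigma_1,\hat\sigma_2$, the cross term and the $(S'-\widehat S')^2$ term, which the paper lumps into one crude bound) using $\widehat S\ge S(\mz M_{\widehat{\mf q}}(0))+\mf h_{\widehat{\mf q}}+\kappa$ there, which is exactly Assertion~\ref{q:scale:nestim:002}. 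The geometric property you flag as delicate needs no extra arrangement: by conditions (1), (2) and (4), $\widehat S$ is monotone from $S(x^{\pm}_{\widehat{\mf q}})$ up to $\widehat S(\hat\sigma_i)$ on the two arcs joining $x^\pm_{\widehat{\mf q}}$ to $\hat\sigma_i$, and $J_{\mc S_{\widehat{\mf q}},\epsilon}$ is supported in a $(\delta+\eta)$-neighbourhood of the arc running from $\hat\sigma_2$ through $[x^-_{\widehat{\mf q}},x^+_{\widehat{\mf q}}]$ to $\hat\sigma_1+1$, so it vanishes on the well of $\hat m$ and you can simply take $\kappa'=\kappa$.
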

Note that in the right-hand side of \eqref{q:scale:nestim:eq:ls} we removed the integrals over the neighborhoods associated to the local maxima $\sigma_{\mc S_{\widehat{\mf q}}}^{\pm}$ of $\widehat{S}$. Consequently, 
\begin{equation*}
    \bigcup_{k=1}^{\mf u_{\widehat{\mf q}}-1}\bigcup_{r=1}^{\mf r_{k-1,k}}B_{\sigma_{k-1,k}^{\widehat{\mf q},r}}^{\widehat{\mf q}}\cup
\,\Lambda_{\sigma_{k-1,k}^{\widehat{\mf q},r}}^{\widehat{\mf q}} \subset (x_{\widehat{\mf q}}^-,x_{\widehat{\mf q}}^+)\,.
\end{equation*}
Since the functions $S$ and $\widehat{S}$ coincide on $(x_{\widehat{\mf q}}^-,x_{\widehat{\mf q}}^+)$, this explains why we replaced $\widehat{S}$ by $S$ on the right-hand side of \eqref{q:scale:nestim:ls}. The remaining of the section focuses on proving this proposition.

Recall that the test function vanishes outside the set $\Omega_{\mc S_{\widehat{\mf q}}}$. The following result states that the discrepancy among the potentials $S$ and $\widehat{S}$ on the set $\Omega_{\mc S_{\widehat{\mf q}}}\setminus [x_{\widehat{\mf q}}^-, x_{\widehat{\mf q}}^+]$ is asymptotically negligible.
\begin{assertion}\label{q:scale:nestim:002}
$\limsup_{\epsilon \to 0}\frac{\theta_\epsilon^{(\widehat{\mf q})}}{\epsilon^2}\int_{\Omega_{\mc S_{\widehat{\mf q}}}\setminus [x_{\widehat{\mf q}}^-,x_{\widehat{\mf q}}^+]}\,\frac{e^{-\widehat{S}(x)/\epsilon}}{c(\epsilon)}\,dx\, =\, 0$.
\end{assertion}
\begin{proof}
    By item (4) in the definition of $\widehat{S}$ and \eqref{47},
    \begin{equation}\label{q:scale:as717:eq1}
        \inf_{x\in \Omega_{\mc S_{\widehat{\mf q}}}}\widehat{S}(x) = S(\mz M_{\widehat{\mf q}}(0)) + \mf h_{\widehat{\mf q}} + \kappa\,.
    \end{equation}
    This estimate, the fact that the test function vanishes outside of the set $\Omega_{\mc S_{\widehat{\mf q}}}$ and the proof of Assertion \ref{s05} imply that the normalization constant $c(\epsilon)$ given in \eqref{q:rev:recov_seq_2} satisfies the following asymptotic estimate
    \begin{equation}\label{q:scale:as717:eq2}
        \lim_{\epsilon \to 0}\frac{1}{\sqrt{\epsilon}}e^{S(\mz M_{\widehat{\mf q}}(0))/\epsilon}\,c(\epsilon) = \sum_{k\in S_{\widehat{\mf q}}}\pi_{\widehat{\mf q}}(k)\,.
    \end{equation}
    To conclude the proof, observe that \eqref{q:scale:as717:eq1} and \eqref{q:scale:as717:eq2} yield
    \begin{equation*}
        \frac{\theta_\epsilon^{(\widehat{\mf q})}}{\epsilon^2}\int_{\Omega_{\mc S_{\widehat{\mf q}}}\setminus [x_{\widehat{\mf q}}^-,x_{\widehat{\mf q}}^+]}\,\frac{e^{-\widehat{S}(x)/\epsilon}}{c(\epsilon)}\,dx\,\leq\, \frac{C}{\epsilon^{5/2}}e^{-\kappa/\epsilon}\,\to\,0\,,
    \end{equation*}
    as $\epsilon\to 0$, where $C>0$ is a finite and positive constant which doesn't depend on $\epsilon$.
\end{proof}

Given $\delta>0$, let $\mu_\epsilon^\delta\in \mss P(\bb T)$ be the probability measure given by the following density
\begin{equation*}
m_{\epsilon}^\delta(x) \,:=\,
\frac{m_{\epsilon}(x)+\delta}{1+\delta}\,.
\end{equation*}
By Assertion \ref{estim_limsup_high} and the first part of the proof of Assertion \ref{a1:sup_high}, it follows that
\begin{equation*}
    \theta_{\epsilon}^{(\widehat{\mf q})}\ms I_{\epsilon}(\mu_{\epsilon}) \leq \liminf_{\delta\to 0} \frac{\theta_\epsilon^{(\widehat{\mf q})}}{4\epsilon}\int_{\bb T} \mss a\,
\Big\{\epsilon \Big[\frac{(m_\epsilon^\delta)'}{
m_\epsilon^\delta}
\,+\, \frac{\mss a'}{\mss a}\Big]
\,-\, \frac{\mss b}{\mss a} \Big\}^2
d\mu_\epsilon^\delta\,.
\end{equation*}
By \eqref{kth_bridge:gaussian_ap} in the definition of the test function, $\sup_{x\in \bb T}|J_{\mc S_{\widehat{\mf q}},\epsilon}'(x)| = O(\epsilon^{-1/2})$. Then, differentiating $m_\epsilon^\delta$, expanding the square, applying the dominated convergence theorem and using the definition of $\widehat{S}$ yields 
\begin{equation*}
    \theta_{\epsilon}^{(\widehat{\mf q})}\ms I_{\epsilon}(\mu_{\epsilon}) \,\leq\,
\epsilon\, \theta_{\epsilon}^{(\widehat{\mf q})}\,\int_{x_{\widehat{\mf q}}^-}^{x_{\widehat{\mf q}}^+} \big[ J'_{\mc S_{\widehat{\mf q}},\epsilon} (x) \big]^2 \;
\frac{e^{-S(x)/\epsilon}}{c(\epsilon)}\;dx\,+\, C\,\frac{\theta_{\epsilon}^{(\widehat{\mf q})}}{\epsilon^2}\int_{\Omega_{\mc S_{\widehat{\mf q}}}\setminus [x_{\widehat{\mf q}}^-,x_{\widehat{\mf q}}^+]}\,\frac{e^{-S(x)/\epsilon}}{c(\epsilon)}\;dx,
\end{equation*}
for some finite and positive constant $C>0$ that doesn't depend on $\epsilon$. To conclude the proof of Proposition \ref{q:scale:nestim:ls}, it remains to apply Assertion \ref{q:scale:nestim:002} and the definition of $J_{\mc S_{\widehat{\mf q}},\epsilon}$ to the right-hand side of the previous displayed inequality.

\section{The $\Gamma-\limsup$ in the last time-scale: the
non-reversible case}
\label{sec8}

In this section, we prove the $\Gamma-\limsup$ in the remaining case.
Once this has been done, at the end of the section,  we prove Proposition \ref{s03} and Theorem
\ref{mt3}. 

In this section, we assume that in the last time-scale the reduced
Markov chain is non-reversible. Since we supposed that $B\ge 0$, in
view of Claims 2.A-2.C in \cite{lm}, this corresponds to the case
where
\begin{equation}
\tag{NonRev}
\begin{gathered}
B>0\,, \quad 
R_{\widehat{\mf q}}(\mz M_{\widehat{\mf q}}(k),
\mz M_{\widehat{\mf q}}(k+1))>0
\quad \text{for all $k\in \bb Z$}\,,
\\
\text{
$R_{\widehat{\mf q}}(\mz M_{\widehat{\mf q}}(j),
\mz M_{\widehat{\mf q}}(j-1))=0$ for some
$j\in \bb Z$}\, .
\end{gathered}
\end{equation}

The next lemma asserts that in the layer $\widehat{\mf q}$ condition
(Rev) or condition (Nonrev) hold. In particular, Section \ref{sec2}
and \ref{sec8} cover all cases of the last time scale.

\begin{lemma}
\label{s11}
The Markov chains $\bb X_{\widehat{\mf q}}(\cdot)$, $\widehat{\bb
X}_{\widehat{\mf q}}(\cdot)$ satisfy one of the four conditions (Rev),
(Nonrev). Moreover, $\mf q = \widehat{\mf q}$ if conditions (Rev)(b)
or (Nonrev) are in force, and $\mf q = \widehat{\mf q} +1$, otherwise.
\end{lemma}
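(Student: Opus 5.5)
The plan is a case analysis on the structure of the lifted chain $\bb X_{\widehat{\mf q}}$, using only the definition of $\widehat{\mf q}$ (the first level at which $\widehat{\bb X}_p$ has a unique closed irreducible class), periodicity (property (c)), nearest-neighbour jumps, and the postulates $\mc P_1$--$\mc P_{10}$ of Appendix \ref{secA1}, together with Remark \ref{rm3} on when the recursion stops.

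First suppose $\widehat{\bb X}_{\widehat{\mf q}}$ is reducible. Then (Rev)(a) holds. By definition of $\widehat{\mf q}$ it has a unique closed class, and that class is a proper subset of $\mc S_{\widehat{\mf q}}$. Lifting it to $\bb R$, the closed irreducible classes of $\bb X_{\widehat{\mf q}}$ are bounded intervals of indices, all translates of one another. Hence $\bb X_{\widehat{\mf q}}$ has infinitely many closed classes and is not the terminal layer, so one more layer is built. Its states are these translates, each reachable from its neighbours only through transient states. By periodicity the new chain is either everywhere transient or a single irreducible class, and Remark \ref{rm3} then gives $\mf q=\widehat{\mf q}+1$.

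Now suppose $\widehat{\bb X}_{\widehat{\mf q}}$ is irreducible. Then every projected state communicates with every other. Since jumps are nearest-neighbour and the rates are $\mf u_{\widehat{\mf q}}$-periodic, the lifted chain falls into one of three sub-cases.

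\begin{enumerate}
\item[(i)] All rates $R(\mz M(k),\mz M(k\pm1))$ are positive. Then $\bb X_{\widehat{\mf q}}$ is irreducible, which is (Rev)(b), and $\mf q=\widehat{\mf q}$ because the whole $\mz S$ is one class.
\item[(ii)] The rates are positive in one direction only, say rightward for every $k$, while some leftward rate vanishes. With $B\ge0$, the sign of $S(1)-S(0)$ together with Claims 2.A--2.C of \cite{lm} forces the only possible direction to be rightward and forces $B>0$. This is (NonRev). Here every state is transient in the lifted chain, which has no closed class, so the recursion stops and $\mf q=\widehat{\mf q}$.
\item[(iii)] Some rightward rate and some leftward rate both vanish. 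Because the projection is irreducible, in each period block there must be a cut where both rates vanish (as in \eqref{46}). Otherwise the blocks would connect in one direction and we would be back in (ii). Thus the lifted chain splits into infinitely many equivalent finite closed classes, which is (Rev)(c). As in the reducible case, one more layer is built, and $\mf q=\widehat{\mf q}+1$.
\end{enumerate}

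The main obstacle is sub-case (iii) and the exclusion of mixed configurations. I must show that if the projected chain is irreducible but the lifted chain is not, then either all positive "wrap-around" rates point one way, giving (ii), or there is a doubly blocked edge, giving (iii). This needs a careful use of nearest-neighbour structure and periodicity: a block in which some internal edge is one-way yet the block still forms a projected cycle is forced into the transient-chain picture. I also need to confirm, via $\mc P_8$ and \eqref{27}, that in case (ii) the projected chain is genuinely non-reversible, since the cycle carries a nonzero current, and that it is not simultaneously covered by (b). This identification of (ii) with $B>0$ is where I would lean on \cite{lm}.
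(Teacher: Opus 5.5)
Your case analysis is correct, but it is organised differently from the paper's proof.

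\textbf{How the paper argues.} The paper splits only on whether the unique closed class of $\widehat{\bb X}_{\widehat{\mf q}}$ is all of $\mc S_{\widehat{\mf q}}$; your reducible case is its last paragraph. In the full case, it treats the lifted block $\mz M_{\widehat{\mf q}}(0),\dots,\mz M_{\widehat{\mf q}}(\mf u_{\widehat{\mf q}}-1)$ as internally irreducible at a common depth. This gives $S(\mz M_{\widehat{\mf q}}(\mf u_{\widehat{\mf q}}-1))=S(\mz M_{\widehat{\mf q}}(\mf u_{\widehat{\mf q}}))+B$. The paper then inspects only the single wrap-around edge: $\mc P_8$ kills the leftward rate there when $B>0$, and $\mc P_7$ makes that edge symmetric when $B=0$.

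\textbf{How yours differs.} You classify every edge of the period cycle as two-way, one-way or doubly blocked, and use $B$ only to fix the orientation of the one-way edges. The paper's route is shorter, since everything reduces to one edge and one depth comparison, and it ties $B=0$ directly to (Rev)(b). Yours does not need the block to be two-way internally. It therefore also covers (NonRev) landscapes such as Figure~\ref{fig10}, where two edges per period are right-only and the minima of one period are not at a common depth. The paper's common-depth step does not literally handle that configuration.

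\textbf{Closing the steps you deferred.} Both are short.
\begin{itemize}
\item \emph{Excluding mixed configurations.} Suppose the edge between $k$ and $k+1$ (indices mod $\mf u_{\widehat{\mf q}}$) is right-only and the edge between $l$ and $l+1$ is left-only. Then the arc $k+1,\dots,l$ cannot be exited, contradicting irreducibility of $\widehat{\bb X}_{\widehat{\mf q}}$. Hence, if no edge is doubly blocked, all one-way edges point the same way. If some edge is doubly blocked, irreducibility on the remaining path forces it to be the only one per period and every other edge to be two-way. That is exactly what makes the lifted blocks closed irreducible classes in your sub-case (iii); your phrase ``back in (ii)'' should be replaced by this contradiction.
\item \emph{Orientation in (ii).} You do not need \cite{lm}. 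Along every rightward jump, $\mc P_8$ gives $S(\mz M_{\widehat{\mf q}}(j+1))\le S(\mz M_{\widehat{\mf q}}(j))$, with strict inequality at a right-only edge. Summing over one period and using $S(x+1)=S(x)-B$ yields $B>0$. The mirror configuration would force $B<0$, which is excluded since $B\ge 0$.
\end{itemize}
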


\begin{proof}
By postulate $\mc P_7$, the Markov chain
$\bb X_{\widehat{\mf q}}(\cdot)$ jumps only to nearest-neighbor sites,
and its jump rates are periodic in the sense of postulate $\mc
P_9$. By definition of the last layer $\widehat{\mf q}$, the projected
chain $\widehat{\bb X}_{\widehat{\mf q}}(\cdot)$ has only one
irreducible class.

Denote by $\{j, \dots, k\}$, the unique
$\widehat{\bb X}_{\widehat{\mf q}}(\cdot)$-irreducible class.  We may
assume that $\{j, \dots, k\}$ is contained in
$\{0, \dots, \mf u_{\widehat{\mf q}} -1 \}$.

First, suppose that these two sets coincide:
$\{j, \dots, k\} = \{0, \dots, \mf u_{\widehat{\mf q}} -1 \}$. Being a
$\widehat{\bb X}_{\widehat{\mf q}}(\cdot)$ closed irreducible set, it
is also a $\bb X_{\widehat{\mf q}}(\cdot)$ closed irreducible set.
Thus, by postulate $\mc P_4$, the sets
$\mz M_{\widehat{\mf q}} (0), \dots, \mz M_{\widehat{\mf q}} (\mf
u_{\widehat{\mf q}} -1)$ are at the same depth. By definition of $B$,
$S(\mz M_{\widehat{\mf q}} (\mf u_{\widehat{\mf q}} -1)) = S(\mz
M_{\widehat{\mf q}} (0)) = S(\mz M_{\widehat{\mf q}} (\mf
u_{\widehat{\mf q}} )) +B$.

Suppose that $B>0$. In this case, by postulate $\mc P_8$, the Markov
chain $\bb X_{\widehat{\mf q}}(\cdot)$ does not jump from
$\mz M_{\widehat{\mf q}} (\mf u_{\widehat{\mf q}} )$ to
$\mz M_{\widehat{\mf q}} (\mf u_{\widehat{\mf q}} -1)$. By
periodicity, it also does not jump from $\mz M_{\widehat{\mf q}} (0)$
to $\mz M_{\widehat{\mf q}} (-1)$.  If it jumps from
$\mz M_{\widehat{\mf q}} (\mf u_{\widehat{\mf q}} -1)$ to
$\mz M_{\widehat{\mf q}} (\mf u_{\widehat{\mf q}})$, condition
(Nonrev) is fulfilled. Moreover, the Markov chain
$\bb X_{\widehat{\mf q}}(\cdot)$ has no closed irreducible class so
that $\mf n_{\widehat{\mf q}} =0$ and $\widehat{\mf q} = \mf q$.

It the Markov chain $\bb X_{\widehat{\mf q}}(\cdot)$ does not jump,
condition (Rev)(c) is fulfilled. The Markov chain
$\bb X_{\widehat{\mf q}}(\cdot)$ has infinitely many closed
irreducible classes which are all equivalent. In this case,
$\mf n_{\widehat{\mf q}} =1$ and $\mf q = \widehat{\mf q} +1$.

Suppose that $B=0$, so that
$S(\mz M_{\widehat{\mf q}} (\mf u_{\widehat{\mf q}} -1)) = S(\mz
M_{\widehat{\mf q}} (\mf u_{\widehat{\mf q}} ))$.  By postulate
$\mc P_7$, the Markov chain $\bb X_{\widehat{\mf q}}(\cdot)$ jumps
from $\mz M_{\widehat{\mf q}} (\mf u_{\widehat{\mf q}} )$ to
$\mz M_{\widehat{\mf q}} (\mf u_{\widehat{\mf q}} -1)$ if, and only if,
it jumps from $\mz M_{\widehat{\mf q}} (\mf u_{\widehat{\mf q}} -1)$
to $\mz M_{\widehat{\mf q}} (\mf u_{\widehat{\mf q}})$.  Suppose it
jumps. In this case, the Markov chain $\bb X_{\widehat{\mf q}}(\cdot)$
is irreducible, condition (Rev)(b) is fulfilled, and
$\mf q = \widehat{\mf q}$.

If it does not jump, the Markov chain
$\bb X_{\widehat{\mf q}}(\cdot)$ has infinitely many closed
irreducible classes which are all equivalent. Thus, (Rev)(c) holds, 
$\mf n_{\widehat{\mf q}} =1$ and $\mf q = \widehat{\mf q} +1$.

Assume now that the unique
$\widehat{\bb X}_{\widehat{\mf q}}(\cdot)$-closed irreducible class
$\{j, \dots, k\}$ is a proper subset of
$\{0, \dots, \mf u_{\widehat{\mf q}} -1 \}$. In this case, condition
(Rev)(a) is fulfilled, $\bb X_{\widehat{\mf q}}(\cdot)$ has infinitely many closed
irreducible classes which are all equivalent, 
$\mf n_{\widehat{\mf q}} =1$ and $\mf q = \widehat{\mf q} +1$.
\end{proof}

The main result of this section reads as follows.

\begin{proposition}
\label{l10}
Under the hypothesis {\rm (NonRev)}, the functional $\ms J^{(\widehat{\mf q})}$
is a $\Gamma-\limsup$ of $\theta_\epsilon^{(\widehat{\mf q})}\ms I_\epsilon$.
\end{proposition}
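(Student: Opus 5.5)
Under (NonRev) the reduced chain $\widehat{\bb X}_{\widehat{\mf q}}$ is irreducible on $\mc S_{\widehat{\mf q}}$ but not reversible: there is a net current around the torus. The harmonic-extension construction of Sections \ref{sec6} and \ref{sec2} therefore fails, because the density $J^2 e^{-S/\epsilon}$ requires a periodic $S$, or a closed class bounded by higher barriers. I would follow the strategy announced in the introduction.

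Fix $\mu=\sum_k\omega(\mc M_{\widehat{\mf q}}(k))\,\mu^{\widehat{\mf q}}_{\mc M_{\widehat{\mf q}}(k)}$ with $\ms J^{(\widehat{\mf q})}(\mu)<\infty$. By Lemma B5 of \cite{l23} we may take $\omega>0$. By Lemma A.3 of \cite{l23} and \eqref{reflected_equiv}, \eqref{h_tilted_invariant_m} with $\mf D=\mc S_{\widehat{\mf q}}$, there is a positive $\widehat h$ such that $\omega$ is the stationary state of the tilted chain $\widehat{\bb L}_{\widehat{\mf q},\widehat h}$ and
\[
\bb I^{(\widehat{\mf q})}(\omega)=-\sum_{\mc M}\omega(\mc M)\,(\widehat{\bb L}_{\widehat{\mf q}}\widehat h)(\mc M)/\widehat h(\mc M).
\]

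Next I would lift $\widehat h$ to a smooth positive function on $\bb T$. The natural candidate is $u_\epsilon=\phi_{\widehat{\mf q},\epsilon}$, the solution of the resolvent equation \eqref{smooth_resolvent_eq} with $g=(\lambda-\widehat{\bb L}_{\widehat{\mf q}})\widehat h$, where $\lambda$ is large enough that $g>0$. By Proposition \ref{resolvent_limit_point_2}, $u_\epsilon\to\widehat h(\mc M)$ uniformly on each well $\mc E_{\widehat{\mf q}}(k)$. By Lemma \ref{s10}, $u_\epsilon$ is bounded below by a positive constant uniformly. Since $\theta^{(\widehat{\mf q})}_\epsilon\ms L_\epsilon u_\epsilon=\lambda u_\epsilon-G_g$, we have $-\theta\ms L_\epsilon u_\epsilon/u_\epsilon=(G_g-\lambda u_\epsilon)/u_\epsilon$. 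On the wells this converges to $-(\widehat{\bb L}\widehat h)/\widehat h$.

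I then define the tilted generator $\ms L^{u}_\epsilon f=u_\epsilon^{-1}\ms L_\epsilon(u_\epsilon f)-u_\epsilon^{-1}(\ms L_\epsilon u_\epsilon)f$. This is the diffusion with extra drift $2\epsilon\mss a\,u_\epsilon'/u_\epsilon$, i.e. $\ms L_{H,\epsilon}$ with $H=\log u_\epsilon$. Let $\mu_\epsilon$ be its unique stationary state; it has a $C^2$ positive density. By Lemma \ref{l03} and the completing-the-square argument, $u_\epsilon$ is then the optimizer, so
\[
\theta_\epsilon^{(\widehat{\mf q})}\ms I_\epsilon(\mu_\epsilon)=\int (G_g-\lambda u_\epsilon)/u_\epsilon\,d\mu_\epsilon.
\]
Given the bounds on $u_\epsilon$, it suffices to prove $\mu_\epsilon\to\mu$ weakly. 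The integrand is then bounded, continuous in the limit on the wells and converges there, and $\mu_\epsilon$ puts vanishing mass off the wells. The limit is $\bb I^{(\widehat{\mf q})}(\omega)=\ms J^{(\widehat{\mf q})}(\mu)$.

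The main obstacle is the weak convergence $\mu_\epsilon\to\mu$. I would attack it through the metastability of the tilted diffusion, $\ms L^u_\epsilon$ sped up by $\theta^{(\widehat{\mf q})}_\epsilon$. Its drift differs from $\mss b$ by $O(\epsilon)$ times $u'_\epsilon/u_\epsilon$, so the landscape is unchanged to leading order. The claim to prove is that its reduced chain on the same wells has rates $R_{\widehat{\mf q}}(\mc M,\mc M')\widehat h(\mc M')/\widehat h(\mc M)$. I would verify this via the resolvent characterization of \cite{lms25}, in the form of Proposition \ref{resolvent_limit_point_2}: for $v_\epsilon$ solving $(\lambda'-\theta\ms L^u_\epsilon)v_\epsilon=G_{g'}$, the product $u_\epsilon v_\epsilon$ solves a resolvent-type equation for $\theta\ms L_\epsilon$ with a perturbed right-hand side. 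That right-hand side is controlled through the identity for $\theta\ms L_\epsilon u_\epsilon$ and the flatness of $u_\epsilon$ near the wells.

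Granting this, the stationary state of $\widehat{\bb L}_{\widehat{\mf q},\widehat h}$ is $\omega$, and standard metastability arguments give $\mu_\epsilon(\mc E_{\widehat{\mf q}}(k))\to\omega(\mc M_{\widehat{\mf q}}(k))$. Within each well, $\mu_\epsilon$ is locally Gibbs, which yields the internal weights $\pi_1$ of \eqref{21}. The delicate points are the uniform control of $u'_\epsilon$ near saddles and the escape-time bounds of Lemma \ref{hitting_1} for the tilted process. If the latter proves hard, I would fall back on an explicit one-dimensional formula: the stationary density of a 1D diffusion on the torus with given drift is explicit, so the well masses can be computed by Laplace asymptotics directly.
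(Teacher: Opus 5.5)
Your architecture is the paper's. You tilt by the resolvent solution $\phi_{\widehat{\mf q},\epsilon}$ of \eqref{q:resolv_limsup}, take the stationary state of the tilted generator as recovery sequence, and use the argument of Lemma \ref{l03} to get $\theta^{(\widehat{\mf q})}_\epsilon\ms I_\epsilon(\mu_\epsilon)=\int (G_g-\lambda u_\epsilon)/u_\epsilon\,d\mu_\epsilon$. You then conclude once $\mu_\epsilon\to\mu$. All of this is correct and is the proof of Proposition \ref{prop:q:glimsup}.

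The gap is the weak convergence (Proposition \ref{prop:q:wlimit}). It is the real content of the statement, and you do not prove it. Your main route does not close as described: if $(\lambda'-\theta^{(\widehat{\mf q})}_\epsilon\ms L^u_\epsilon)v_\epsilon=G_{g'}$, then
\[
(\lambda+\lambda'-\theta^{(\widehat{\mf q})}_\epsilon\ms L_\epsilon)(u_\epsilon v_\epsilon)\,=\,u_\epsilon\,G_{g'}+G_g\,v_\epsilon\,.
\]
The term $G_g v_\epsilon$ is of order one on the wells and contains the unknown $v_\epsilon$. So this is not a perturbation of a right-hand side $G_{\tilde g}$ covered by Proposition \ref{resolvent_limit_point_2}, and flatness of $u_\epsilon$ does not control it. Closing it would need three things:
\begin{itemize}
\item an a priori proof that $v_\epsilon$ is asymptotically constant on each well;
\item a version of Proposition \ref{resolvent_limit_point_2} for $\epsilon$-dependent right-hand sides;
\item a compactness/uniqueness step.
\end{itemize}
Passing from the reduced chain to $\mu_\epsilon(\mc E_{\widehat{\mf q}}(k))\to\omega(\mc M_{\widehat{\mf q}}(k))$, and to the $\pi_1$-split inside the wells, further needs $\mu_\epsilon(\mc E_{\widehat{\mf q}}^c)\to 0$. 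Your phrases ``standard metastability arguments'' and ``locally Gibbs'' carry all of this unproved. Your claimed tilted rates $R_{\widehat{\mf q}}\,\widehat h(\mc M')/\widehat h(\mc M)$ are correct. A completed version of this route would identify the limit directly through the stationarity \eqref{h_tilted_invariant_m} of $\omega$. But computing those rates already requires the saddle asymptotics described next.

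The paper proves the weak convergence by your fallback, and ``Laplace asymptotics directly'' hides its two non-routine steps. The tilted drift is $-\mss a\,(S-2\epsilon\log\phi_{\widehat{\mf q},\epsilon})'$, so the stationary density is \eqref{q:recovery:seq}. Only $\phi_{\widehat{\mf q},\epsilon}$ enters, bounded above and below by Lemmas \ref{l09} and \ref{s10}, and never its derivative; control of $u'_\epsilon$ is not the issue. Using the (NonRev) landscape (\eqref{58}, Assertions \ref{s09} and \ref{s08}), Lemma \ref{q:scale:mu_1est} localizes $\int_x^{x+1}e^{S(y)/\epsilon}\phi_{\widehat{\mf q},\epsilon}(y)^{-2}dy$, for $x\in\mc E_{\widehat{\mf q}}(k)$. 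The integral concentrates on neighbourhoods of the saddles in $\mz W^{(\widehat{\mf q})}_{j,j+1}$, $j\in\mf I_k$.

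\begin{enumerate}
\item At those saddles $\phi_{\widehat{\mf q},\epsilon}$ is \emph{not} asymptotically constant. It passes from $\widehat h(\mc M_{\widehat{\mf q}}(j))$ to $\widehat h(\mc M_{\widehat{\mf q}}(j+1))$ exactly in the $\sqrt\epsilon$-window that carries the Laplace mass, so freezing it gives the wrong constant. Lemma \ref{l07} replaces it there by the hitting-probability interpolation $\psi$ of Lemma \ref{s10}. It writes $\psi^{-2}=-(\psi')^{-1}(\psi^{-1})'$ with $\psi'\propto e^{S/\epsilon}$, and obtains $\sigma_{\widehat{\mf q}}(j,j+1)/[\widehat h(\mc M_{\widehat{\mf q}}(j))\,\widehat h(\mc M_{\widehat{\mf q}}(j+1))]$.
\item The resulting limiting mass of $\mc M_{\widehat{\mf q}}(k)$ is proportional to $\pi_{\widehat{\mf q}}(k)\,\widehat h(\mc M_{\widehat{\mf q}}(k))^2\sum_{j\in\mf I_k}\sigma_{\widehat{\mf q}}(j,j+1)/[\widehat h(\mc M_{\widehat{\mf q}}(j))\,\widehat h(\mc M_{\widehat{\mf q}}(j+1))]$. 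This is not visibly proportional to $\omega(\mc M_{\widehat{\mf q}}(k))$. Lemma \ref{s07} proves it: stationarity of $\omega$ for the tilted nearest-neighbour chain on the cycle makes the current $Z$ of \eqref{53} constant, and $Z$ is positive under (NonRev).
\end{enumerate}

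Without these two steps the weak convergence, and hence the $\Gamma$-$\limsup$, is not established.
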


The proof of the $\Gamma-\limsup$ for this non-reversible dynamics
follows the strategy proposed in \cite{l23} for finite-state Markov
chains and can be applied to other models.  Fix a measure
$\mu \in \mss P(\bb T)$. If $\ms J^{(\widehat{\mf q})}(\mu)=+\infty$, then the
bound
\begin{equation*}
\label{q_limsup:trivial_bound}
\limsup_{\epsilon\to 0}\theta_\epsilon^{(\widehat{\mf q})}\,
\ms I_\epsilon (\mu_\epsilon) \leq \ms J^{(\widehat{\mf q})} (\mu)
\end{equation*}
holds trivially for any sequence $\mu_\epsilon$ that converges weakly
to $\mu$. Thus, by the definition of $\ms J^{(\widehat{\mf q})} (\cdot)$, we may
assume that the measure $\mu$ is given by
\begin{equation}
\label{q_limsup:mu_convex}
{\color{blue}\mu(\cdot)} = \sum_{k\in S_{\widehat{\mf q}}}
\omega(\mc M_{\widehat{\mf q}}(k)) \,
\mu_{\mc M_{\widehat{\mf q}}(k)}^{\widehat{\mf q}}(\cdot)\,,
\end{equation}
for some probability measure $\omega\in \mss P(\mc S_{\widehat{\mf q}})$.  The
measures $\mu_{\mc M_{\widehat{\mf q}}(k)}^{\widehat{\mf q}}(\cdot)$ appearing in this
formula were introduced in \eqref{21}.  By Lemma B5 in \cite{l23}, we
can assume that $\omega(\mc M _{\widehat{\mf q}}(k))>0$ for every
$k \in S_{\widehat{\mf q}}$. \smallskip

Fix a strictly positive measure $\omega \in \mss P(\mc S_{\widehat{\mf q}})$.
Let {\color{blue}$\widehat{h} \colon \mc S_{\widehat{\mf q}}\to \bb R_{+}$} be a
strictly positive function that satisfies \eqref{reflected_equiv} with
$l_{\widehat{\mf q}}=1$ and $\mf D_1^{\widehat{\mf q}} = \mc S_{\widehat{\mf q}}$. This function is
unique up to a multiplicative constant. Denote by {\color{blue}
$\bb L_{\widehat{\mf q},\widehat{h}}$} the tilted generator given by
\eqref{51}. By \eqref{h_tilted_invariant_m}, $\omega$
is the stationary state of the Markov chain with generator
$\bb L_{\widehat{\mf q},\widehat{h}}$.

Denote by $\cb{\zeta^{\widehat{\mf q},k}\colon \bb T \to \bb R}$,
$k\in S_{\widehat{\mf q}}$, the smooth functions satisfying
\eqref{resolv_smooth}.  Fix $\lambda>0$, and denote by
${\color{blue}\phi_{\widehat{\mf q},\epsilon}\colon \bb T \to \bb R}$ the unique
solution of the resolvent equation
\begin{equation}
\label{q:resolv_limsup}
(\lambda - \theta_{\epsilon}^{(\widehat{\mf q})} \ms L_{\epsilon}) \, \phi_{\widehat{\mf q},\epsilon}
= \sum_{k\in S_{\widehat{\mf q}}}\, [\, (\lambda - \widehat{\bb L}_{\widehat{\mf q}})\, 
\widehat{h}\, ](\mc M_{\widehat{\mf q}}(k)) \, \zeta^{\widehat{\mf q},k}\,.
\end{equation}
The function $\phi_{\widehat{\mf q},\epsilon}$ is clearly smooth.  As
$\widehat{h}$ is strictly positive, by Lemma
\ref{s10}, there exists $c_0>0$ and $\epsilon_0>0$
such that
\begin{equation*}
\phi_{\widehat{\mf q}, \epsilon} (x) \,\ge\, c_0 \quad \text{for all}\;\; x\in
\bb T\,, \;\; 0<\epsilon < \epsilon_0\,.
\end{equation*}

Denote by $\ms L_{\widehat{\mf q},\epsilon}$ the generator tilted by $\phi_{\mf
q, \epsilon}$, which already appeared just before the statement of
Lemma \ref{l02}, 
\begin{equation*}
\label{q:tilted:gen}
\begin{aligned}
{\color{blue} (\ms L_{\widehat{\mf q}, \epsilon}F)(x)} & :=
\frac{1}{\phi_{\widehat{\mf q}, \epsilon} (x)}\, \Big\{
[ \ms L_{\epsilon} ( F \phi_{\widehat{\mf q}, \epsilon}) ] (x) - F(x)\,
(\ms L_{\epsilon} \phi_{\widehat{\mf q}, \epsilon})  (x)\, \big\}
\\
\, &=\, 
\Big\{\mtt b (x)\,
+\,2\,\epsilon \,\mtt a (x) \,
\frac{\phi_{\widehat{\mf q},\epsilon}' (x)}{\phi_{\widehat{\mf q}, \epsilon} (x)}\Big\}
F'(x)\,+\,\epsilon\,\mtt a(x)\,F''(x)\,,
\end{aligned}
\end{equation*}
and by ${\color{blue}\mu_{\widehat{\mf q}, \epsilon}}\in \mss P(\bb T)$ the
unique stationary state of $\ms L_{\widehat{\mf q},\epsilon}$. By equation
\eqref{q:recovery:seq} below, the density of $\mu_{\widehat{\mf q},\epsilon}$ is
a strictly positive function of class $C^2(\bb T)$. Consequently, by
Lemma \ref{l03},
\begin{equation*}
\label{q:rate:estimate}
\theta_{\epsilon}^{(\widehat{\mf q})}\ms I_{\epsilon}(\mu_{\widehat{\mf q},\epsilon})
= -\int_{\bb T}\frac{\theta_\epsilon^{(\widehat{\mf q})}
\ms L_{\epsilon}\phi_{\widehat{\mf q},\epsilon}}{\phi_{\widehat{\mf q}, \epsilon}}\,
d\mu_{\widehat{\mf q}, \epsilon}\,.
\end{equation*}

In the remaining of the section, we show that $\mu_{\widehat{\mf q},\epsilon}$
is a recovery sequence for $\mu$: 

\begin{proposition}
\label{prop:q:wlimit}
The probability measure $\mu_{\widehat{\mf q},\epsilon}$ converges weakly to the
measure $\mu$.
\end{proposition}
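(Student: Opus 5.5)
The proof identifies every weak limit point of $\mu_{\widehat{\mf q},\epsilon}$ in two stages. First, a uniform bound on the rate functional together with the lower-level $\Gamma-\liminf$'s forces any limit to lie in $\mss P_{\mc M_{\widehat{\mf q}}}$. Second, stationarity of $\mu_{\widehat{\mf q},\epsilon}$ for $\ms L_{\widehat{\mf q},\epsilon}$, tested against functions built from resolvent solutions, forces the weights of the limit to be stationary for the tilted reduced generator, hence equal to $\omega$. Since $\mss P(\bb T)$ is compact, this suffices.

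\emph{Step 1 (localisation).} By the identity displayed before the proposition,
$\theta^{(\widehat{\mf q})}_\epsilon \ms I_\epsilon(\mu_{\widehat{\mf q},\epsilon}) = -\int \theta^{(\widehat{\mf q})}_\epsilon \ms L_\epsilon\phi_{\widehat{\mf q},\epsilon}/\phi_{\widehat{\mf q},\epsilon}\, d\mu_{\widehat{\mf q},\epsilon}$.
By \eqref{q:resolv_limsup}, $\theta^{(\widehat{\mf q})}_\epsilon\ms L_\epsilon\phi_{\widehat{\mf q},\epsilon} = \lambda\phi_{\widehat{\mf q},\epsilon} - G_{(\lambda-\widehat{\bb L}_{\widehat{\mf q}})\widehat h}$. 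This is uniformly bounded by Lemma \ref{l09}, and $\phi_{\widehat{\mf q},\epsilon}\ge c_0$. Hence $\sup_\epsilon\theta^{(\widehat{\mf q})}_\epsilon\ms I_\epsilon(\mu_{\widehat{\mf q},\epsilon})<\infty$. Let $\nu$ be a limit point along a subsequence. For every shorter scale $\alpha_\epsilon\in\{\epsilon,1,\theta^{(1)}_\epsilon,\dots,\theta^{(\widehat{\mf q}-1)}_\epsilon\}$ we have $\alpha_\epsilon\ms I_\epsilon(\mu_{\widehat{\mf q},\epsilon})\to0$. The $\Gamma-\liminf$ statements (Propositions \ref{l01}, \ref{p4.1}, \ref{p_liminf}) then give $\ms J^{(-1)}(\nu)=\ms J^{(0)}(\nu)=\dots=\ms J^{(\widehat{\mf q}-1)}(\nu)=0$. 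By Proposition \ref{higher_ord:0level} applied with $p=\widehat{\mf q}-1$, $\nu\in\mss P_{\mc M_{\widehat{\mf q}}}$, that is, $\nu=\sum_k\omega'(\mc M_{\widehat{\mf q}}(k))\mu^{\widehat{\mf q}}_{\mc M_{\widehat{\mf q}}(k)}$ for some $\omega'\in\mss P(\mc S_{\widehat{\mf q}})$. (For $\widehat{\mf q}=1$ the same argument uses only Propositions \ref{l01}, \ref{s04}.) In particular, $\mu_{\widehat{\mf q},\epsilon}(\mc N^{\widehat{\mf q}}_r(k)\setminus\mc E_{\widehat{\mf q}}(k))\to0$, since the closure of this set contains no point of $\mc M_{\widehat{\mf q}}$.

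\emph{Step 2 (identifying $\omega'$).} Fix $f\colon\mc S_{\widehat{\mf q}}\to\bb R$. Let $u_\epsilon$ solve \eqref{smooth_resolvent_eq} with right-hand side $G_{(\lambda-\widehat{\bb L}_{\widehat{\mf q}})(f\widehat h)}$, and put $F_\epsilon=u_\epsilon/\phi_{\widehat{\mf q},\epsilon}\in C^2(\bb T)$. Stationarity gives $\int\ms L_{\widehat{\mf q},\epsilon}F_\epsilon\, d\mu_{\widehat{\mf q},\epsilon}=0$. Substituting the two resolvent equations, the $\lambda$-terms cancel and we obtain
\begin{equation*}
\theta^{(\widehat{\mf q})}_\epsilon\ms L_{\widehat{\mf q},\epsilon}F_\epsilon
=\frac{1}{\phi_{\widehat{\mf q},\epsilon}}\Big\{-G_{(\lambda-\widehat{\bb L}_{\widehat{\mf q}})(f\widehat h)}+\frac{u_\epsilon}{\phi_{\widehat{\mf q},\epsilon}}\,G_{(\lambda-\widehat{\bb L}_{\widehat{\mf q}})\widehat h}\Big\}.
\end{equation*}
This function vanishes off $\bigcup_k\mc N^{\widehat{\mf q}}_r(k)$ and is uniformly bounded by Lemmas \ref{l09}, \ref{s10}. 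By Proposition \ref{resolvent_limit_point_2}, $u_\epsilon\to f\widehat h$ and $\phi_{\widehat{\mf q},\epsilon}\to\widehat h$ uniformly on each $\mc E_{\widehat{\mf q}}(k)$, while $\zeta^{\widehat{\mf q},k}=1$ there. So on $\mc E_{\widehat{\mf q}}(k)$ the expression converges uniformly to
\begin{equation*}
\widehat h^{-1}\big[\widehat{\bb L}_{\widehat{\mf q}}(f\widehat h)-f\,\widehat{\bb L}_{\widehat{\mf q}}\widehat h\big](\mc M_{\widehat{\mf q}}(k))=(\widehat{\bb L}_{\widehat{\mf q},\widehat h}f)(\mc M_{\widehat{\mf q}}(k)).
\end{equation*}
On the transition regions $\mc N_r\setminus\mc E$, boundedness and Step 1 make the contribution vanish. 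Passing to the limit along the subsequence yields $\sum_k\omega'(\mc M_{\widehat{\mf q}}(k))(\widehat{\bb L}_{\widehat{\mf q},\widehat h}f)(\mc M_{\widehat{\mf q}}(k))=0$ for all $f$. The tilted chain is irreducible because $\widehat{\bb X}_{\widehat{\mf q}}$ is irreducible under (NonRev), and by \eqref{h_tilted_invariant_m} its unique stationary state is $\omega$. Hence $\omega'=\omega$ and $\nu=\mu$.

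The main obstacle I expect is justifying Step 1 cleanly: I must check that the lower-scale $\Gamma-\liminf$'s apply to this specific sequence. This requires that the bound on $\theta^{(\widehat{\mf q})}_\epsilon\ms I_\epsilon$ really follows from the displayed formula, i.e.\ positivity and $C^2$ regularity of the density, which the text defers to \eqref{q:recovery:seq}. I also need $\mc N^{\widehat{\mf q}}_r(k)\setminus\mc E_{\widehat{\mf q}}(k)$ to stay at positive distance from $\mc M_{\widehat{\mf q}}$, so that weak convergence gives vanishing mass there. If the rate-function bound turns out delicate, the fallback is to derive the concentration directly from the explicit one-dimensional stationary density \eqref{q:recovery:seq} via Laplace asymptotics, as in Corollary \ref{wells_conv}.
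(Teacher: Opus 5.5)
Your proof is correct, and it takes a genuinely different route from the paper.

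\emph{The paper's route.} The paper argues by direct Laplace asymptotics on the explicit density \eqref{q:recovery:seq}. Lemma \ref{q:scale:mu_1est}, through the quasi-potential bounds of Assertion \ref{s08}, localizes the outer integral to the wells $\mc E_{\widehat{\mf q}}(k)$ and the inner one to neighbourhoods of the saddles in $\mz W^{(\widehat{\mf q})}_{j,j+1}$, $j\in\mf I_k$. Lemma \ref{l07} evaluates these saddle integrals using hitting probabilities and Proposition \ref{resolvent_limit_point_2}. Finally, the combinatorial Lemma \ref{s07}, which exploits the constant nonzero net current of the tilted reduced chain, shows that the resulting well masses are proportional to $\omega$.

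\emph{Your route.} You replace all of this with two soft steps. First, the bound $-\theta^{(\widehat{\mf q})}_\epsilon\ms L_\epsilon\phi_{\widehat{\mf q},\epsilon}/\phi_{\widehat{\mf q},\epsilon}=G_{(\lambda-\widehat{\bb L}_{\widehat{\mf q}})\widehat h}/\phi_{\widehat{\mf q},\epsilon}-\lambda=O(1)$, combined with the $\Gamma$-$\liminf$ one scale below and Proposition \ref{higher_ord:0level}, fixes the support of every limit point and the $\pi_1$-weights inside each $\mc M_{\widehat{\mf q}}(k)$. Second, the stationarity identity tested against $u_\epsilon/\phi_{\widehat{\mf q},\epsilon}$ identifies the well masses through uniqueness of the stationary law of the irreducible tilted chain. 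Your computation of $\theta^{(\widehat{\mf q})}_\epsilon\ms L_{\widehat{\mf q},\epsilon}F_\epsilon$ and of its limit $\widehat{\bb L}_{\widehat{\mf q},\widehat h}f$ is right.

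\emph{What each buys.} Your argument never touches the explicit invariant density or the fine structure of the landscape at level $\widehat{\mf q}$. It uses only the resolvent estimates of Section \ref{sec5} and results already proved at lower scales, which is closer to the program sketched in the introduction for settings without explicit stationary states. The paper's computation, in exchange, does not depend on the lower-scale $\Gamma$-$\liminf$ results. It also yields the sharp asymptotics $Z_\epsilon\sim Z_{\widehat{\mf q}}\,\epsilon\,e^{\mf h_{\widehat{\mf q}}/\epsilon}$ of the normalizing constant, with $Z_{\widehat{\mf q}}$ the inverse of that current.

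\emph{Minor points.} The obstacles you anticipated do not materialize, so no fallback is needed. You should state explicitly that the $\Gamma$-$\liminf$ applies along your subsequence; this follows by extending it with any family converging to $\nu$ at the remaining $\epsilon$. You should also note that $\mu_{\epsilon_n}(\mc E_{\widehat{\mf q}}(k))\to\omega'(\mc M_{\widehat{\mf q}}(k))$, because the boundary of the open well $\mc E_{\widehat{\mf q}}(k)$ carries no $\nu$-mass.
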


\begin{proposition}
\label{prop:q:glimsup}
$\limsup_{\epsilon\to 0} \theta_{\epsilon}^{(\widehat{\mf q})}\, \ms
I_{\epsilon}(\mu_{\widehat{\mf q},\epsilon}) \leq \ms J^{(\widehat{\mf q})}(\mu)$.
\end{proposition}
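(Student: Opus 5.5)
The plan is to exploit the fact that, by construction, $\mu_{\widehat{\mf q},\epsilon}$ is the stationary state of the generator tilted by $\phi_{\widehat{\mf q},\epsilon}$. The identity obtained from Lemma \ref{l03} then turns $\theta_\epsilon^{(\widehat{\mf q})}\ms I_\epsilon(\mu_{\widehat{\mf q},\epsilon})$ into an explicit integral of a bounded function. That integral can be passed to the limit using Proposition \ref{prop:q:wlimit} (weak convergence, taken as given) together with Proposition \ref{resolvent_limit_point_2}.

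\textbf{Rewriting the integrand.} Write $g := (\lambda-\widehat{\bb L}_{\widehat{\mf q}})\widehat h$ and $G:=\sum_k g(\mc M_{\widehat{\mf q}}(k))\,\zeta^{\widehat{\mf q},k}$. The resolvent equation \eqref{q:resolv_limsup} gives $\theta_\epsilon^{(\widehat{\mf q})}\ms L_\epsilon\phi_{\widehat{\mf q},\epsilon} = \lambda\phi_{\widehat{\mf q},\epsilon} - G$. Hence
\begin{equation*}
\theta_{\epsilon}^{(\widehat{\mf q})}\ms I_{\epsilon}(\mu_{\widehat{\mf q},\epsilon})
\,=\, \int_{\bb T}\frac{G}{\phi_{\widehat{\mf q},\epsilon}}\, d\mu_{\widehat{\mf q},\epsilon} \,-\,\lambda\,.
\end{equation*}
By the lower bound $\phi_{\widehat{\mf q},\epsilon}\ge c_0$ for $\epsilon<\epsilon_0$ (from Lemma \ref{s10}) and the boundedness of $G$, the integrand $G/\phi_{\widehat{\mf q},\epsilon}$ is bounded uniformly in $\epsilon$.

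\textbf{Passing to the limit.} Split $\bb T$ into the wells $\mc E_{\widehat{\mf q}}(k)$, $k\in S_{\widehat{\mf q}}$, and the complement $\mc K:=\bb T\setminus\bigcup_k \mc E_{\widehat{\mf q}}(k)$.
\begin{itemize}
\item On $\mc E_{\widehat{\mf q}}(k)$ we have $\zeta^{\widehat{\mf q},k}=1$, and the other cut-offs vanish there because the neighbourhoods are disjoint. So $G = g(\mc M_{\widehat{\mf q}}(k))$ on this well.
\item By Proposition \ref{resolvent_limit_point_2}, $\phi_{\widehat{\mf q},\epsilon}\to f$ uniformly on $\mc E_{\widehat{\mf q}}(k)$, where $(\lambda-\widehat{\bb L}_{\widehat{\mf q}})f = g$. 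Uniqueness of that solution gives $f=\widehat h$. So the integrand converges uniformly on each well to $g(\mc M_{\widehat{\mf q}}(k))/\widehat h(\mc M_{\widehat{\mf q}}(k))$.
\item The limit $\mu$ is supported on the finitely many points of $\mc M_{\widehat{\mf q}}$, which lie in the interiors of the wells. So $\mu(\partial \mc E_{\widehat{\mf q}}(k))=0$, and the portmanteau theorem with Proposition \ref{prop:q:wlimit} gives $\mu_{\widehat{\mf q},\epsilon}(\mc E_{\widehat{\mf q}}(k))\to\mu(\mc E_{\widehat{\mf q}}(k))=\omega(\mc M_{\widehat{\mf q}}(k))$.
\item For the same reason $\limsup_\epsilon \mu_{\widehat{\mf q},\epsilon}(\overline{\mc K})\le \mu(\overline{\mc K})=0$. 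Since the integrand is bounded, the contribution of $\mc K$ vanishes.
\end{itemize}

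\textbf{Identifying the limit.} Combining these facts,
\begin{equation*}
\lim_{\epsilon\to0}\theta_{\epsilon}^{(\widehat{\mf q})}\ms I_{\epsilon}(\mu_{\widehat{\mf q},\epsilon})
\,=\,\sum_{k}\omega(\mc M_{\widehat{\mf q}}(k))\,\frac{[(\lambda-\widehat{\bb L}_{\widehat{\mf q}})\widehat h](\mc M_{\widehat{\mf q}}(k))}{\widehat h(\mc M_{\widehat{\mf q}}(k))}-\lambda
\,=\,-\sum_k \omega(\mc M_{\widehat{\mf q}}(k))\,\frac{(\widehat{\bb L}_{\widehat{\mf q}}\widehat h)(\mc M_{\widehat{\mf q}}(k))}{\widehat h(\mc M_{\widehat{\mf q}}(k))}\,.
\end{equation*}
Under (NonRev) the chain $\widehat{\bb X}_{\widehat{\mf q}}$ is irreducible and $\omega>0$, so in \eqref{reduced_ldf0} only the first term survives, with $\mf D_1=\mc S_{\widehat{\mf q}}$. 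By \eqref{reflected_equiv} and the choice of $\widehat h$, the right-hand side above therefore equals $\bb I^{(\widehat{\mf q})}(\omega)=\ms J^{(\widehat{\mf q})}(\mu)$. This yields the claim, in fact with equality.

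\textbf{Main obstacle.} The genuinely delicate inputs are the weak convergence of $\mu_{\widehat{\mf q},\epsilon}$ (Proposition \ref{prop:q:wlimit}) and the uniform lower bound on $\phi_{\widehat{\mf q},\epsilon}$; both are taken from earlier results. Within this argument, the only point needing care is that the test function $G/\phi_{\widehat{\mf q},\epsilon}$ depends on $\epsilon$. This is handled above by uniform convergence on the wells, and by boundedness of the integrand together with vanishing mass off the wells.
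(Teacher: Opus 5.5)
Your proof is correct and follows the paper's argument. You rewrite the rate via the resolvent equation, use the uniform lower bound on $\phi_{\widehat{\mf q},\epsilon}$ together with Proposition \ref{resolvent_limit_point_2} to replace $\phi_{\widehat{\mf q},\epsilon}$ by $\widehat h$ on the wells, identify the masses of the wells through Proposition \ref{prop:q:wlimit}, and conclude with \eqref{reflected_equiv}. The only difference is cosmetic: you obtain the vanishing mass off the wells and the convergence of $\mu_{\widehat{\mf q},\epsilon}(\mc E_{\widehat{\mf q}}(k))$ from the portmanteau theorem (the wells are open and $\mu$ charges none of their boundary points), whereas the paper uses the direct estimate \eqref{45} and integrates the continuous cut-offs $\zeta^{\widehat{\mf q},k}$ against $\mu_{\widehat{\mf q},\epsilon}$.
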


The argument is organized as follows. First, we present an explicit
expression for the measure $\mu_{\widehat{\mf q},\epsilon}$. Then, we briefly
describe the main features of the energy landscape needed to estimate
the asymptotics of $\mu_{\widehat{\mf q},\epsilon}$. We complete the section
proving Propositions \ref{prop:q:wlimit} and \ref{prop:q:glimsup}, as
well as Theorem \ref{mt3}.

\subsection*{The invariant measure of $\ms L_{\widehat{\mf q},\epsilon}$}

In \cite{fg12}, the stationary measure of the process
$X_\epsilon(\cdot)$, for $\mtt a \equiv 1$, has been explicitly
computed. A simple adaptation gives the invariant measure for the
general situation. Let $\pi_\epsilon$,
$m_\epsilon : \bb R \to \bb R_+$ be given by
\begin{equation}
\label{79}
{\color{blue}m_\epsilon(x)} \;=\; \frac{1}{\mtt a(x)}\, \int_x^{x+1} 
e^{[S(y)-S(x)]/\epsilon}\, dy
\;, \quad
{\color{blue}\pi_\epsilon(x)} \;=\; \frac 1{c(\epsilon)} \, m_\epsilon(x)\;,
\end{equation}
where ${\color{blue}c(\epsilon)} := \int_{[0,1]} m_\epsilon(x)\, dx$.
As $m_\epsilon$ is $1$-periodic, it can be considered as defined on
$\bb T$, and $\pi_\epsilon$ as the density of a probability measure on
$\bb T$.  A simple computation shows that this measure
is the stationary state of the diffusion \eqref{03}. Similarly, let
$m_{\widehat{\mf q},\epsilon} \colon \bb T\to \bb R_+$ be given by
\begin{equation}
\label{q:recovery:seq}
{\color{blue}m_{\widehat{\mf q},\epsilon}(x)\,}:=\,
\frac{1}{Z_\epsilon} \, \frac{1}{\mtt a(x)} \, \phi_{\widehat{\mf q},\epsilon}(x)^2
\int_x^{x+1}\frac{e^{[S(y) - S(x)] /\epsilon}}{\phi_{\widehat{\mf q},\epsilon}(y)^2}\,dy \,.
\end{equation}
As $\phi_{\widehat{\mf q},\epsilon}$, $m_{\widehat{\mf q},\epsilon}$ is a strictly
positive function of class $C^2(\bb T)$. Integrating by parts gives
that
\begin{equation*}
\int_{\bb T}(\ms L_{\widehat{\mf q}, \epsilon}f)(x)\,m_{\widehat{\mf q},\epsilon}(x)\,dx\, =\,0\,,
\end{equation*}
for every $f\in C^2(\bb T)$. This proves the next claim. Recall that
$\mu_{\widehat{\mf q},\epsilon}$ represents the stationary state of the
diffusion induced by the generator $\ms L_{\widehat{\mf q}, \epsilon}$.

\begin{assertion}
\label{q:scale:as0}
It holds that
$\mu_{\widehat{\mf q},\epsilon} (dx) = m_{\widehat{\mf q},\epsilon} (x)\, dx$.
\end{assertion}

\begin{figure}
\centering
\begin{tikzpicture}[scale=0.4]

\draw[rounded corners] (-1,3) .. controls (0,7) .. (1,3);  
\draw[rounded corners] (1,3) -- (1.2, 2.2);
\draw[rounded corners] (1.2,2.2) -- (1.5, 1);

\draw[rounded corners] (1.5, 1) -- (2, -1);
\draw[rounded corners] (2,-1) .. controls (2.5, -3) .. (3, -1);

\draw[rounded corners](3,-1)..controls (3.5,1) .. (4,-1);
\draw[rounded corners](4,-1).. controls (4.5,-3)..(5.1,-1);
\draw[rounded corners](5.1,-1)..controls (5.6,1)..(6.1,3);

\draw[rounded corners](6.1,3)..controls (6.6,5)..(7.1,3);
\draw[rounded corners](7.1,3)..controls (7.4,2)..(7.7,2.5);
\draw[rounded corners](7.7,2.5)..controls(8,3)..(8.3,2.5);
\draw[rounded corners](8.3,2.5)..controls(8.6,2)..(8.9,3);
\draw[rounded corners](8.9,3)..controls(9.4,5)..(9.9,3);

\draw[rounded corners](9.9,3)..controls (10.4,1)..(10.9,-1);
\draw[rounded corners](10.9,-1)..controls (11.5,-3)..(12,-1);

\draw[rounded corners](12,-1)--(12.5,1);
\draw[rounded corners](12.5,1)--(13,3);

\draw[rounded corners](13,3)..controls(13.5,5)..(14,3);

\draw[rounded corners]
(14,3)--(14.6,0);
\draw[rounded corners]
(14.6,0)--(15.2,-3);
\draw[rounded corners]
(15.2,-3)--(15.8,-6);
\draw[rounded corners]
(15.8,-6)..controls(16,-7)..(16.2,-6);
\draw[rounded corners]
(16.2,-6)--(16.4,-5);
\draw[rounded corners]
(16.4,-5)--(17,-1);
\draw[rounded corners]
(17,-1)..controls(17.3,1)..(17.6,-1);
\draw[rounded corners]
(17.6,-1)--(18.1,-4);
\draw[rounded corners]
(18.1,-4)--(18.4,-6);
\draw[rounded corners]
(18.4,-6)..controls(18.6,-7)..(18.8,-6);
\draw[rounded corners]
(18.8,-6)..controls(19.3,-4)..(19.8,-6);
\draw[rounded corners]
(19.8,-6)..controls(20,-7)..(20.2,-6);
\draw[rounded corners]
(20.2,-6)--(21,-1);
\draw[rounded corners]
(21,-1)..controls(21.4,1)..(21.8,-1);
\draw[rounded corners]
(21.8,-1)--(22.3,-3.5);

\draw[rounded corners]
(22.3,-3.5)--(22.6,-5);
\draw[rounded corners]
(22.6,-5)..controls(23.1,-7)..(23.4,-6);
\draw[rounded corners]
(23.4,-6)--(24.1,-3);
\draw[rounded corners]
(24.1,-3)..controls(25.1,1.6)..(26.1,-3);
\draw[rounded corners]
(26.1,-3)--(26.3,-3.8);
\draw[rounded corners]
(26.3,-3.8)--(26.6,-5);
\draw[rounded corners]
(26.6,-5)--(27.1,-7);
\draw[rounded corners]
(27.1,-7)..controls(27.6,-9)..(28.1,-7);
\draw[rounded corners]
(28.1,-7)..controls(28.6,-5)..(29.1,-7);
\draw[rounded corners]
(29.1,-7)..controls(29.6,-9)..(30.1,-7);

\draw[solid](-1,-11)--(30.6,-11);

\fill(-0.5,-11)node[below]{$0$};
\fill(24.6,-11)node[below]{$1$};

\fill(3.5,-3)node[below,font=\small]{$\mc M_{\widehat{\mf q}}(0)$};
\fill(11.5,-3)node[below,font=\small]{$\mc M_{\widehat{\mf q}}(1)$};

\fill(16,-7)node[below,font=\small]{$\mc M_{\widehat{\mf q}}(2)$};
\fill(19.5,-7)node[below,font=\small]{$\mc M_{\widehat{\mf q}}(3)$};
\fill(23,-7)node[below,font=\small]{$\mc M_{\widehat{\mf q}}(4)$};

\draw[dashed](0.6,4.5)--(30.6,4.5);
\draw[dashed](14.6,0.5)--(30.6,0.5);

\fill(0,6)node[above,font=\small]{$\sigma_{-1,0}^{\widehat{\mf q},\pm}$};
\fill(6.6,4.3)node[above,font=\small]{$\sigma_{0,1}^{\widehat{\mf q},-}$};
\fill(9.6,4.3)node[above,font=\small]{$\sigma_{0,1}^{\widehat{\mf q},+}$};

\fill(13.6,4.3)node[above,font=\small]{$\sigma_{1,2}^{\widehat{\mf q},\pm}$};

\fill(17.6,0.3)node[above,font=\small]{$\sigma_{2,3}^{\widehat{\mf q},\pm}$};
\fill(21.6,0.3)node[above,font=\small]{$\sigma_{3,4}^{\widehat{\mf q},\pm}$};
\fill(25.6,0.3)node[above,font=\small]{$\sigma_{-1,0}^{\widehat{\mf q},\pm}+1$};

\draw[line width=0.5mm, purple](13.5,4.4)--(13.5,-2.5);
\draw[dashed](2.4,-2.5)--(13.6,-2.5);

\draw[line width=0.5mm, purple](25.1, 0.4)--(25.1,-6.8);
\draw[dashed](16,-6.8)--(25.1,-6.8);

\end{tikzpicture}
\caption{Here we illustrate an example of an energy landscape of a
non-reversible diffusion whose reduced Markov chain
$\widehat{\bb X}_{\widehat{\mf q}}(\cdot)$, at the last level of the
recursive construction, is irreducible and non-reversible. The depth
of the landscape, denoted by $\mf h_{\widehat{\mf q}}$, is represented
in purple. The state space of $\bb X_{\widehat{\mf q}}(\cdot)$ is
composed by five states:
$\mc S_{\widehat{\mf q}} = \{\mc M_{\widehat{\mf q}}(0), \dots, \mc
M_{\widehat{\mf q}}(4)\}$. Among them, $\mc M_{\widehat{\mf q}}(0)$
and $\mc M_{\widehat{\mf q}}(2)$ are the only ones from which
$\widehat{\bb X}_{\widehat{\mf q}}(\cdot)$ cannot jump to the left. In
this example, $\mf I_{0}=\{0,1\}$, $\mf I_{1}=\{1\}$,
$\mf I_{2}=\{2,3,4\}$, $\mf I_{3}=\{3,4\}$ and $\mf I_{4}=\{4\}$.  }
\label{fig10}
\end{figure}

\subsection*{The energy landscape at level $\widehat{\mf q}$}

In this subsection we present some properties of the energy landscape
for the diffusion $X_\epsilon(\cdot)$ in the last time-scale
$\theta_\epsilon^{(\widehat{\mf q})}$. We refer to Figure \ref{fig10} for an
illustration of the notation introduced below.

Recall that $\mz M_{\widehat{\mf q}}(i)$, $i\in \bb Z$, represent the lifting of
the sets $\mc M_{\widehat{\mf q}}(j)$, $j\in S_{\widehat{\mf q}}$, to $\bb R$:
\begin{equation*}
\mz M_{\widehat{\mf q}}(j) < \mz M_{\widehat{\mf q}}(j+1)\,, \quad
\Pi (\mz M_{\widehat{\mf q}}(j)) \,=\, \Pi (\mz M_{\widehat{\mf q}}(i)) 
\;\;\text{if} \;\;  j\equiv i \;\;\text{mod}\; \mf u_{\widehat{\mf q}} \,.
\end{equation*}
We may assume, without loss of generality, that
\begin{equation*}
\{0\} < \{\sigma_{-1,0}^{\widehat{\mf q},+}\} < \mz M_{\widehat{\mf q}}(0)
< \mz M_{\widehat{\mf q}}(\mf u_{\widehat{\mf q}}-1) < \{1\}\,,
\end{equation*}
where $\sigma_{0,-1}^{\widehat{\mf q},+}$ is given by \eqref{rl_max:R}.

We start with three simple observations. Recall from \eqref{max_br} the
definition of the set of absolute maxima $\mz W_{j,j+1}^{(\widehat{\mf q})}$,
$j\in \bb Z$.  As the process may always jump to the right, by
postulate $\mc P_8(\widehat{\mf q})$, and the definition \eqref{50b} of the jump
rates, 
\begin{equation}
\label{60}
S (\mz M_{\widehat{\mf q}}(j+1)) \,\le\, S(\mz M_{\widehat{\mf q}}(j))  \quad
\text{and}\quad
S(\mz W_{j,j+1}^{(\widehat{\mf q})}) - S(\mz M_{\widehat{\mf q}}(j)) = h^{\widehat{\mf q}, +}_j
= \mf h_{\widehat{\mf q}}
\quad 
\text{for all}\;\; j\in \bb Z\,.
\end{equation}
As $B>0$, $S(x+1) < S(x)$ so that
\begin{equation}
\label{59}
\sup_{y \ge x } S(y) \,=\, \sup_{y \in [x, x+1) } S(y) \quad 
\text{for all}\;\; x\in \bb R\,.
\end{equation}

We claim that
\begin{equation}
\label{62}
S(\sigma_{-1,0}^{\widehat{\mf q},+}) =
\sup_{x \ge \sigma_{-1,0}^{\widehat{\mf q},+} } S(x) \,.
\end{equation}
Assume by contradiction that this is not the case. Then, we may choose
$\hat{\sigma}\in (\sigma_{-1,0}^{\widehat{\mf q},+},\sigma_{-1,0}^{\widehat{\mf q},+}+1)$
such that $S(\hat{\sigma})>S(\sigma_{-1,0}^{\widehat{\mf q},+})$. Recall the
definition of $m_{\widehat{\mf q}}^{\pm}(\hat{\sigma})$ given below Lemma
\ref{l09}. Let $0\leq k\leq \mf u_{\widehat{\mf q}}-1$ be such that
$m_{\widehat{\mf q}}^{-}(\hat{\sigma})\in \mz M_{\widehat{\mf q}}(k)$.  By \eqref{60},
$S(\mz M_{\widehat{\mf q}}(0))\geq S(\mz M_{\widehat{\mf q}}(k)))$. Then
\begin{equation*}
\mf h_{\widehat{\mf q}} \,\leq\, h_0^{\widehat{\mf q},-}
\, =\, S(\sigma_{-1,0}^{\widehat{\mf q},+})
- S(\mz M_{\widehat{\mf q}}(0))
\,<\, S(\hat{\sigma})
- S(m_{\widehat{\mf q}}^-(\mz M_{\widehat{\mf q}}(k))
\,\leq\, h_k^{\widehat{\mf q},+}\,.
\end{equation*}
By \eqref{50b}, this implies that
$R_{\widehat{\mf q}}(\mz M_{\widehat{\mf q}}(k),\mz M_{\widehat{\widehat{\mf q}}}(k+1))= 0$,
contradicting {\rm (NonRev)}. This proves the claim. \smallskip

Recall from \eqref{17} the definition of the points $m_{\widehat{\mf q},j}^\pm$,
$j\in \bb Z$.
We claim that
\begin{equation}
\label{58}
\sup_{x \ge m_{\widehat{\mf q},j}^+} S(x)
\,=\, S(\sigma)   
\quad
\text{for all} \;\;
\sigma\in \mz W_{j,j+1}^{(\widehat{\mf q})}\,,\;\;
j\in \bb Z\,.
\end{equation}

Fix $j\in \bb Z$ and $\sigma\in \mz W_{j,j+1}^{(\widehat{\mf q})}$. As,
$\sigma \ge m_{\widehat{\mf q},j}^+$,
$S(\sigma) \le \sup_{x \ge m_{\widehat{\mf q},j}^+} S(x)$.

Conversely, fix $x_0> m^+_{\widehat{\mf q},j}$. Recall from Lemma \ref{s10} the
definition of the points $\cb{m^\pm_{\widehat{\mf q}} (x_0)}$.  By definition,
$m^+_{\widehat{\mf q}} (x_0) \ge x_0$, and
$m^+_{\widehat{\mf q}} (x_0)\in \mz M_{\widehat{\mf q}}(k)$ for some $k\in \bb Z$. As
$x_0> m^+_{\widehat{\mf q},j}$, $k>j$. Clearly,
$x_0 \in [m^+_{\widehat{\mf q}, k-1}, m^+_{\widehat{\mf q}, k}]$.  Thus,
\begin{equation*}
S(x_0) \le \, \sup \big\{ S(y) : m^+_{\widehat{\mf q}, k-1} \le y \le m^-_{\mf
q, k}\,\big\}
\,\vee\,
\sup \big\{ S(y) : m^-_{\widehat{\mf q}, k} \le y \le m^+_{\widehat{\mf q}, k}
\,\big\}\,.
\end{equation*}
The first term on the right-hand side is $S(\mz W^{(\widehat{\mf q})}_{k-1,k})$
by definition of this set. By Lemma \ref{l15}, the second term is also
bounded by $S(\mz W^{(\widehat{\mf q})}_{k-1,k})$, so that $S(x_0) \le S(\mz
W^{(\widehat{\mf q})}_{k-1,k})$.

Therefore, as $k>j$, by both properties stated in \eqref{60},
\begin{equation*}
S(x_0) \le S(\mz W^{(\widehat{\mf q})}_{k-1,k})
= \mf h_{\widehat{\mf q}} + S(\mz M_{\widehat{\mf q}}(k-1))
\le \mf h_{\widehat{\mf q}} + S(\mz M_{\widehat{\mf q}}(j))
= S(\mz W^{(\widehat{\mf q})}_{j,j+1}) = S(\sigma)\,.
\end{equation*}
This completes the proof of \eqref{58}. \smallskip

Fix $k\in S_{\widehat{\mf q}}$. Let $\mf I_{k} \subset \bb Z$ be the largest
component to the right of $k$, which contains $k$ and in which it is
always possible to jump to the left:
\begin{equation}
\label{48}
{\color{blue} \mf I_k}:= \lb k \,, \mf b_k\rb\, \subset \bb Z
\end{equation}
if $R_{\widehat{\mf q}}(\mz M_{\widehat{\mf q}}(\mf b_k+1),\mz M_{\widehat{\mf q}}(\mf b_k))=0$, and
$R_{\widehat{\mf q}}(\mz M_{\widehat{\mf q}}(j),\mz M_{\widehat{\mf q}}(j-1))>0$ for all
$k+1\le j\le \mf b_k$. By definition, if
$R_{\widehat{\mf q}}(\mz M_{\widehat{\mf q}}(k+1),\mz M_{\widehat{\mf q}}(k))=0$, $\mf b_k=k$
and $\mf I_{k} =\{k\}$.  

Let $\mc M_{\widehat{\mf q}}(\mf I_k)\subset \mc M_{\widehat{\mf q}}$ be
the set of minima belonging to the states with indices in
$\mf I_k$, and denote by $\mz M_{\widehat{\mf q}}(\mf I_k)$ its
lifting:
\begin{equation*}
{\color{blue} \mc M_{\widehat{\mf q}}(\mf I_k)}
:= \bigcup_{j\in \mf I_k}\mc M_{\widehat{\mf q}}(j)\,,
\quad
{\color{blue} \mz M_{\widehat{\mf q}}(\mf I_k)}
:= \bigcup_{j\in \mf I_k}\mz M_{\widehat{\mf q}}(j)\,.
\end{equation*}
By construction (which guarantees that the jump rates between
nearest-neighbor elements of $\mf I_k$ are positive), and postulates
$\mc P_8(\widehat{\mf q})$ in Appendix \ref{secA1}, all the local minima in
$\mz M_{\widehat{\mf q}}(\mf I_k)$ have the same depth:
\begin{equation}
\label{56}
S(m) = S(m') \quad \text{for all}\;\;
m\,, m'\in \mz M_{\widehat{\mf q}}(\mf I_k)\,.
\end{equation}
As the jump rate to the right is always positive, by \eqref{50b},
$h_k^{\widehat{\mf q},+} = \mf h_{\widehat{\mf q}}$ for all $k\in S_{\widehat{\mf q}}$. Hence, by
\eqref{56},
\begin{equation}
\label{57}
S(\sigma) = S(\sigma') \quad \text{for all}\;\;
\sigma\,, \sigma'\in \bigcup_{j\in \mf I_k} \mz W^{\widehat{\mf q}}_{j,j+1}\,.
\end{equation}

Let $\mss M_{\widehat{\mf q}}(\mf I_k)\subset \mz M$ be the set of minima that
are in the interval
$(\sigma_{k-1,k}^{\widehat{\mf q},+},\sigma_{\mf b_k ,\mf b_k+1}^{\widehat{\mf q},-})$ but do not
belong to the set $\mz M_{\widehat{\mf q}}(\mf I_k)$,
\begin{equation*}
\label{q:scale:higher_min_equiv}
{\color{blue}\mss M_{\widehat{\mf q}}(\mf I_k)} :=
\big[ \, \mz M \cap
(\sigma_{k-1,k}^{\widehat{\mf q},+},\sigma_{\mf b_k,\mf b_k+1}^{\widehat{\mf q},-}) \,\big]
\setminus \mz M_{\widehat{\mf q}}(\mf I_k) \;.
\end{equation*}

Denote by $V_{\widehat{\mf q}}:\bb R\to \bb R_+$ the quasi-potential given by
\begin{equation*}
\label{quasi_potential}
{\color{blue}V_{\widehat{\mf q}}(x)} := S(x) \,-\, \sup_{y\in [x,+\infty)}S(y)\,.
\end{equation*}
By \eqref{59}, the quasi-potential is well defined, and it is a
$1-$periodic function. By \eqref{62},
$V_{\widehat{\mf q}} (\sigma_{-1,0}^{\widehat{\mf q},+})=0$.

The next result follows directly by the definition of $\mf I_k$,
$k\in S_{\widehat{\mf q}}$, and \eqref{58}. It relates the depth of the energy
landscape with the dynamics of the reduced Markov chain
$\widehat{\bb X}_{\widehat{\mf q}}(\cdot)$.

\begin{assertion}
\label{s09}
For every $j\in S_{\widehat{\mf q}}$, $m\in \mz M_{q}(j)$,
$V_{\widehat{\mf q}} (m) = V_{\widehat{\mf q}} (m_{\widehat{\mf q},j}^+)$. Denote this common value
by $\cb{V_{\widehat{\mf q}}(\mz M_{q}(j))}$.  For every $j\in S_{\widehat{\mf q}}$,
\begin{equation*}
V_{\widehat{\mf q}}(\mz M_{q}(j)) 
= S(\mz M_{\widehat{\mf q}}(j)) - S(\mz W_{j,j+1}^{(\widehat{\mf q})})
\,=\, -\, \mf h_{\widehat{\mf q}} \,.
\end{equation*}
\end{assertion}

\begin{proof}
Fix $j\in \mz S_{\widehat{\mf q}}$. By \eqref{58} and
the definition of $V_{\widehat{\mf q}}$,
\begin{align*}
\label{q:scale:as87:eq1}
V_{\widehat{\mf q}}(m_{\widehat{\mf q}, j}^+)
\,=\, S(m_{\widehat{\mf q},j}^+) \,-\, S(\mz W_{j,j+1}^{(\widehat{\mf q})})
\,=\, S(\mz M_{\widehat{\mf q}}(j)) \,-\, S(\mz W_{j,j+1}^{(\widehat{\mf q})})\,.     
\end{align*}

Assume that $\mz M_{\widehat{\mf q}}(j)$ has more than one element. Choose
$m\in \mz M_{\widehat{\mf q}}(j)\setminus \{m_{\widehat{\mf q}, j}^+\}$. By postulate
$\mc P_5$, \eqref{27} and \eqref{58},
\begin{equation}
\label{54}
S(\mz W_{j,j+1}^{(\widehat{\mf q})}) = \sup_{x\ge m} S(x)\,.
\end{equation}
Consequently, since all of the minima in $\mz M_{\widehat{\mf q}}(j)$ have the
same depth, by the definition of $V_{\widehat{\mf q}}$ and \eqref{54},
$V_{\widehat{\mf q}}(m) = V_{\widehat{\mf q}} (m_{\widehat{\mf q},j}^+)$.  To complete the proof, it
remains to recall the second identity in \eqref{60}.
\end{proof}

\subsection*{Proof of Proposition \ref{prop:q:wlimit}}

In this subsection, we establish Proposition \ref{prop:q:wlimit}. The
proof relies on two lemmata which characterize the asymptotic behavior
of the measure $\mu_{\widehat{\mf q},\epsilon}$. The proofs are based on the
properties of the solution of the resolvent equation presented in
Subsection \ref{sec_Meta}.

We first show that the measure $\mu_{\widehat{\mf q},\epsilon}$ is concentrated
on the neighborhoods of the local minima of order $\widehat{\mf q}$, and that
the second integral in its definition can be carried over small
neighborhoods of the global maxima separating the minima.

Recall the definition of $r_0$, the height of the metastable wells,
given slightly above \eqref{1_well}. By Lemma \ref{l15}, there exists
$0<h_0<+\infty$ such that
\begin{equation}
{\color{blue} h_0} \,<\, \inf_{k\in  \lb 0,\mf u_{\widehat{\mf q}}-1 \rb}
S(\sigma_{k,k+1}^{\widehat{\mf q},-})\,-\,S_{\widehat{\mf q},k}^+\,,
\quad \text{where}\;\; {\color{blue} S^+_{\widehat{\mf q} ,k}} \,:=\,
\max \{ S(x) : x\in  [m^-_{\widehat{\mf q} ,k} , \sigma^{\widehat{\mf q},-}_{k,k+1} ) \cap \mz W\}\,.
\end{equation}
Fix $0< {\color{blue}s_0} <r_0\wedge h_0$, and let
{\color{blue}$\eta_0>0$} be a positive constant such that
\begin{equation*}
0\leq \sup_{x\in (\sigma-\eta_0,\sigma+\eta_0)}S(\sigma)-S(x) < s_0
\quad \text{for every $\sigma\in \mz W$} \,.
\end{equation*}
As $s_0 + r_0 < 2r_0 < \mf h_1$, and $s_0<h_0$,
\begin{equation*}
\label{q:scale:prop_eta2}
\begin{gathered}
(\sigma-\eta_0 \,, \sigma+\eta_0)\cap \mc E = \varnothing
\quad \forall\, \sigma\in \mz W  \,, \;\;
\text{so that}\;\; 
(\sigma-\eta_0 \,,\sigma+\eta_0)\cap \mz C
= \{\sigma\}  \,, 
\\
\text{and}\quad 
\inf_{x\in (\sigma - \eta_0,\sigma+\eta_0)}S(x)
> S_{\widehat{\mf q},k}^{+}  \;\; \text{for all}\;\;
\sigma\in \mz W_{k,k+1}^{(\widehat{\mf q})}\,,\;
k \in \bb Z\,.
\end{gathered}
\end{equation*}

For each $k\in \bb Z$ and $x\in \mc E_{\widehat{\mf q}}(k)$, let
$\mc W_{\widehat{\mf q},k}^+$ and $\widehat{\mc W}_{\widehat{\mf q},k}^+(x)$ be the sets
given by
\begin{equation*}
{\color{blue}\mc W_{\widehat{\mf q},k}^{+}}:= \bigcup_{j\in \mf I_k}
\bigcup_{\sigma\in \mz W_{j,j+1}^{(\widehat{\mf q})}}
(\sigma-\eta_0 \,,\sigma+\eta_0)\,,
\quad {\color{blue}\widehat{\mc W}_{\widehat{\mf q},k}^+(x)}
= [x,x+1)\setminus \mc W_{\widehat{\mf q},k}^{+}\,.
\end{equation*}

\begin{assertion}
\label{s08}
There exists a positive constant $r_1> 0$, independent of $\epsilon$
such that
\begin{gather*}
\inf_{x\in [0,1)\setminus
\bigcup_{k\in S_{\widehat{\mf q}}}\mc E_{\widehat{\mf q}}(k)} V_{\widehat{\mf q}}(x)
> -  \, \mf h_{\widehat{\mf q}} + r_1
\\
\max_{k\in S_{\widehat{\mf q}}}
\sup_{x\in \mc E_{\widehat{\mf q}}(k) }
\sup_{y\in \widehat{\mc W}_{\widehat{\mf q},k}^{+}(x)}
\big[\, S(y)-S(x) \,\big] < \mf h_{\widehat{\mf q}} - r_1\, .
\end{gather*}
\end{assertion}

\begin{proof}
Fix $x\in [0,1)\setminus \bigcup_{k\in S_{\widehat{\mf q}}}\mc E_{\mf
q}(k)$. Let $0\leq j \leq \mf u_{\widehat{\mf q}}-1$ such that
$m_{\widehat{\mf q}}^-(x)\in \mz M_{\widehat{\mf q}}(j)$. Either
$m_{\widehat{\mf q}}^-(x)\leq x\leq \sigma_{j,j+1}^{\widehat{\mf q},+}$ or
$\sigma_{j,j+1}^{\widehat{\mf q},+} < x$. In the second case,
$m_{\widehat{\mf q}}^+(x)\in \mz M_{\widehat{\mf q}}(j+1)$ and
$\sigma_{j,j+1}^{\widehat{\mf q},+}\leq x\leq m_{\widehat{\mf q}}^+(x)$.

Assume that the first case is in force. The second one can be treated
similarly. Recall the definition of the set $\mtt M_{\widehat{\mf q}}(j)$ given
below Lemma \ref{l15}. Let
\begin{equation*}
{\color{blue}M_j^1}:= \min_{m\in \mtt M_{\widehat{\mf q}}(j)}S(m)\,,
\quad {\color{blue}M_{j,j+1}^2}:=
\inf\{S(z): z\in [\sigma_{j,j+1}^{\widehat{\mf q},-},\,
\sigma_{j,j+1}^{\widehat{\mf q},+}]\}\,,
\quad {\color{blue}M_j^3}:= S(\mz M_{\widehat{\mf q}}(j)) + r_0\,,
\end{equation*}
where $r_0$ has been introduced above \eqref{1_well}.  Denote by $M_j$
the minimum over these quantities:
\begin{equation*}
{\color{blue}M_j}:= \min\{M_j^1, M_{j,j+1}^2, M_j^3\}\,.
\end{equation*}
If $x\in [\sigma_{j,j+1}^{\widehat{\mf q},-},\, \sigma_{j,j+1}^{\widehat{\mf q},+}]$, then
$S(x) \ge M_{j,j+1}^2 \ge M_j$, while if
$x\in [m_{\widehat{\mf q}}^-(x),\, \sigma_{j,j+1}^{\widehat{\mf q},-}]$, then
\begin{equation*}
S(x) \,\ge\,  \inf \Big\{ S(y) : y \in [m_{\widehat{\mf q}}^-(x),\, \sigma_{j,j+1}^{\mf
q,-}] \setminus \bigcup_{k\in S_{\widehat{\mf q}}} \mc E_{\widehat{\mf q}}(k) \, \Big\}\,.
\end{equation*}
As $m_{\widehat{\mf q}}^-(x)$ belongs to $\mz M_{\widehat{\mf q}}(j)$, the infimum on the
right-hand side is attained either at an element of
$\mtt M_{\widehat{\mf q}}(j)$, or at the boundary of a set $\mc E_{1}(m)$,
$m\in \mz M_{\widehat{\mf q}}(j)$. The right-hand side is thus bounded below by
$M_j^1 \wedge M_{j,j+1}^2 \ge M_j$. Thus, $S(x) \geq M_j$. On the
other hand, by Proposition \ref{l13}, Lemma \ref{l27}, and the
definition of the wells,
${\color{blue}s_j}:= M_j - S(\mz M_{\widehat{\mf q}}(j))>0$. In conclusion,
\begin{equation}
\label{-q:scale:eq_1}
S(x) \geq M_j = S(\mz M_{\widehat{\mf q}}(j)) + s_j\,.
\end{equation}

Fix $y\ge x$. As $x\ge m^-_{\widehat{\mf q}} (x)\in \mz M_{\widehat{\mf q}}(j)$, by
\eqref{54},
\begin{equation*}
S(y) - S(x) \le \sup_{z\ge x} S(z) - S(x) \le 
\sup_{z\ge  m^-_{\widehat{\mf q}} (x)} S(z) - S(x) 
\leq S(\mz W_{j,j+1}) - S(x)\,.
\end{equation*}
Recall from the second identity of Assertion \ref{s09} that
$S(\mz W_{j,j+1}) - S(\mz M_{\widehat{\mf q}}(j)) = \mf h_{\widehat{\mf q}}$.  Adding and
subtracting $S(\mz M_{\widehat{\mf q}}(j))$, we can write the right-hand side of
the previous expression as
\begin{equation*}
\mf h_{\widehat{\mf q}} + S(\mz M_{\widehat{\mf q}}(j)) - S(x)
\leq \mf h_{\widehat{\mf q}} - s_j\,,
\end{equation*}
where the inequality follow by \eqref{-q:scale:eq_1}. Therefore,
\begin{equation*}
-\, V_{\widehat{\mf q}}(x) = \sup_{y\ge x} S(y) - S(x) \leq \mf h_{\widehat{\mf q}} - s_j\,.
\end{equation*}
This proves the first assertion with $r_1= \min_j s_j$.

We turn to the second one. Recall the definition of $\eta_0$ given at
the beginning of this subsection. Fix $k\in S_{\widehat{\mf q}}$ and
$x\in \mc E_{\widehat{\mf q}}(k)$. By the proof of \eqref{54},
\begin{equation*}
\sup_{y\in [x,x+1]}S(y) = \sup_{y\in [m_{\widehat{\mf q},k}^+, m_{\widehat{\mf q},k}^++1]}S(y)\,.
\end{equation*}
Let $W_k$ be the positive number given by
\begin{equation*}
{\color{blue}W_k}:= \max\Big\{\, S(\sigma-\eta_0) ,\,
S(\sigma +\eta_0) \,: \sigma\in
\bigcup_{j\in \mf I_k}\mz W_{j,j+1}^{(\widehat{\mf q})}\Big\}\,.
\end{equation*}
By \eqref{57}, there exists a positive constant $s_k^{*}>0$ such that
$S(\mz W_{k,k+1}^{(\widehat{\mf q})}) - W_k > s_k^{*}$. By the definition
of the set $\widehat{\mz W}_{\widehat{\mf q}, k}^+(x)$,
\begin{equation*}
\sup_{y\in \widehat{\mz W}_{\widehat{\mf q}, k}^+(x)} S(y)\,
\leq\, W_k \,<\, S(\mz W_{k,k+1}^{(\widehat{\mf q})}) -s_k^{*}\,.
\end{equation*}
Subtracting $S(x)$ in both sides of the previous expression, and
since, by the definition of $\mc E_{\widehat{\mf q}}(k)$,
$S(x)\geq S(\mz M_{\widehat{\mf q}}(k))$,
\begin{equation*}
\sup_{y\in \widehat{\mz W}_{\widehat{\mf q}, k}^+(x)} S(y) - S(x)
< S(\mz W_{k,k+1}^{(\widehat{\mf q})}) - S(x) - s_k^{*}
\le S(\mz W_{k,k+1}^{(\widehat{\mf q})}) - S(\mz M_{\widehat{\mf q}}(k)) - s_k^{*}
= \mf h_{\widehat{\mf q}} - s_{k}^{*}\,,
\end{equation*}
where the last identity follows from the second identity of Assertion
\ref{s09}. This proves the second claim of the assertion with $r_1=
\min\{s_k^{*}: k\in S_{\widehat{\mf q}}\}$. 
\end{proof}

\begin{lemma}
\label{q:scale:mu_1est}
Fix a continuous function $f\in C(\bb T)$. Then, there exists a positive constant
$\mf h^*>0$, independent of $\epsilon$ such that
\begin{align*}
e^{-\mf h_{\widehat{\mf q}}/\epsilon}\,
\Big|\, & \int_{\bb T}f(x) \, \frac{1}{\mtt a(x)} \,
\phi_{\widehat{\mf q},\epsilon}(x)^2
\int_x^{x+1}\frac{e^{[S(y) - S(x)] /\epsilon}}{\phi_{\widehat{\mf q},\epsilon}(y)^2}
\,dy\,dx \\
&\quad - \sum_{k\in S_{\widehat{\mf q}}} \int_{\mc E_{\widehat{\mf q}}(k)} f(x)
\, \frac{1}{\mtt a(x)} \, \phi_{\widehat{\mf q},\epsilon}(x)^2
\sum_{j\in \mf I_k}\sum_{\sigma\in \mc W_{j,j+1}^{(\widehat{\mf q})}}
\int_{\sigma-\eta_0}^{\sigma+\eta_0}
\frac{e^{[S(y)-S(x)] /\epsilon}}{\phi_{\widehat{\mf q},\epsilon}(y)^2}\,dy\,dx\,\Big|
\,\leq\, e^{-\mf h^{*}/\epsilon}\,.
\end{align*}
\end{lemma}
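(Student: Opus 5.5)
The plan is to split the double integral into pieces according to where $x$ and $y$ lie. Each discarded piece will be shown to be exponentially smaller than $e^{\mf h_{\widehat{\mf q}}/\epsilon}$.

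Two uniform facts carry the whole argument. First, $f$ and $1/\mtt a$ are bounded. Second, by Lemma \ref{l09} and Lemma \ref{s10} (applied to the resolvent equation \eqref{q:resolv_limsup}), there are constants $0<c_0\le C_0<\infty$ such that $c_0\le \phi_{\widehat{\mf q},\epsilon}\le C_0$ on $\bb T$ for all small $\epsilon$. Consequently every ratio $\phi_{\widehat{\mf q},\epsilon}(x)^2/\phi_{\widehat{\mf q},\epsilon}(y)^2$ is bounded by $(C_0/c_0)^2$. Each error term is then bounded by a constant times $\int\!\int e^{[S(y)-S(x)]/\epsilon}\,dy\,dx$ over the relevant region.

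\emph{Step 1: $x$ outside the wells.} Take $x\in[0,1)\setminus\bigcup_k\mc E_{\widehat{\mf q}}(k)$ and $y\in[x,x+1]$. Then
\[
S(y)-S(x)\le \sup_{z\ge x}S(z)-S(x) = -V_{\widehat{\mf q}}(x) < \mf h_{\widehat{\mf q}}-r_1
\]
by the first bound in Assertion \ref{s08}. After multiplying by $e^{-\mf h_{\widehat{\mf q}}/\epsilon}$, this contribution is at most $C e^{-r_1/\epsilon}$.

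\emph{Step 2: $x$ inside a well, $y$ away from the relevant maxima.} Take $x\in\mc E_{\widehat{\mf q}}(k)$ and $y\in\widehat{\mc W}^+_{\widehat{\mf q},k}(x)$, that is, $y\in[x,x+1)$ outside the $\eta_0$-neighbourhoods of the maxima $\sigma\in\mz W^{(\widehat{\mf q})}_{j,j+1}$, $j\in\mf I_k$. The second bound of Assertion \ref{s08} gives $S(y)-S(x)<\mf h_{\widehat{\mf q}}-r_1$, so this contribution is again $O(e^{-r_1/\epsilon})$ after normalization.

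What remains is the integral over $x\in\mc E_{\widehat{\mf q}}(k)$ and $y\in[x,x+1)\cap\mc W^+_{\widehat{\mf q},k}$. This set should coincide with $\bigcup_{j\in\mf I_k}\bigcup_{\sigma}(\sigma-\eta_0,\sigma+\eta_0)$, which is exactly the term kept in the statement. The point requiring care is the bookkeeping at the ends of the window. One must check that for $x$ in the well, every neighbourhood $(\sigma-\eta_0,\sigma+\eta_0)$ with $j\in\mf I_k$ lies inside $(x,x+1)$, and that no such neighbourhood belongs to the next period. I plan to use \eqref{47}-type placement together with $\eta_0<$ the distance from the wells to the critical points (the choice $s_0<r_0$). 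The right end also needs attention: $\mf I_k$ ends at $\mf b_k$, where the left jump vanishes, and the maxima beyond $\mf b_k+1$ lie past $x+1$ or are covered by Step 2. Any leftover sliver of a neighbourhood that sticks out of $[x,x+1)$ is absorbed by the same bound as Step 2. Indeed, on such a sliver the relevant maxima belong to another period, where $S$ is lower by $B>0$, so the exponent is below $\mf h_{\widehat{\mf q}}-B$.

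\emph{Conclusion and main obstacle.} Collecting the terms gives the claim with any $\mf h^*<r_1\wedge B$, where the constant is absorbed by decreasing $\mf h^*$ slightly. The main obstacle is this geometric matching step: the precise correspondence between $[x,x+1)\cap\mc W^+_{\widehat{\mf q},k}$ and the index set $\mf I_k$. I would handle it using \eqref{58} and \eqref{57}, which together show that the maxima for $j\in\mf I_k$ are exactly those reaching height $S(\mz M_{\widehat{\mf q}}(k))+\mf h_{\widehat{\mf q}}$ within one period to the right of $x$.
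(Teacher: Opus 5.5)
This is correct and is essentially the paper's own proof. The paper reduces the lemma to the same two error terms ($x$ off the wells, and $x\in\mc E_{\widehat{\mf q}}(k)$ with $y\in\widehat{\mc W}^+_{\widehat{\mf q},k}(x)$), bounds $\phi_{\widehat{\mf q},\epsilon}$ above and below by Lemmas \ref{l09} and \ref{s10}, and applies the two bounds of Assertion \ref{s08} to get $C_0e^{-r_1/\epsilon}$.

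The matching step, which the paper leaves implicit, is easier than you fear. The wells are intervals disjoint from the $\eta_0$-neighbourhoods. Periodicity together with (NonRev) gives $\mf b_k+1\le k+\mf u_{\widehat{\mf q}}$. Hence every $(\sigma-\eta_0,\sigma+\eta_0)$ with $j\in\mf I_k$ lies inside $(x,x+1)$, no slivers arise, and any $\mf h^*<r_1$ suffices. Your $B$-based sliver argument is therefore unnecessary, and it would not have worked anyway, because on such a sliver the exponent is close to $\mf h_{\widehat{\mf q}}$, not below $\mf h_{\widehat{\mf q}}-B$.
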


\begin{proof}
We need to show that
\begin{equation*}
e^{- \mf  h_{\widehat{\mf q}}/\epsilon}\,
\Big|\,
\int_{\bb T\,\setminus\, \bigcup_{k\in S_{\widehat{\mf q}}}\mc E_{\widehat{\mf q}}(k)}
f(x) \, \frac{1}{\mtt a(x)} \, e^{-S(x)/\epsilon}
\phi_{\widehat{\mf q},\epsilon}(x)^2 \, \int_x^{x+1}
\frac{e^{S(y)/\epsilon}}{\phi_{\widehat{\mf q},\epsilon}(y)^2} \,dy\,dx
\, \Big| \,\leq\, e^{-\mf h^{*}/\epsilon}
\end{equation*}
and
\begin{equation*}
e^{- \mf  h_{\widehat{\mf q}}/\epsilon}\, \Big|\, \sum_{k\in S_{\widehat{\mf q}}}\int_{\mc E_{\widehat{\mf q}}(k)}
f(x) \frac{1}{\mtt a(x)}e^{-S(x)/\epsilon}\,
\phi_{\widehat{\mf q},\epsilon}(x)^2\, \int_{\widehat{\mc W}_{\widehat{\mf q},k}^+(x)}
\frac{e^{S(y)/\epsilon}}{\phi_{\widehat{\mf q},\epsilon}(y)^2}\,dy\,dx
\, \Big| \,\leq\, e^{-\mf h^{*}/\epsilon}
\end{equation*}
for some $\mf h^{*}>0$.

We consider only the first term, as the second one can be estimated in
the same way. The argument relies on Assertion
\ref{s08}.  By Lemmata
\ref{l09} and \ref{s10}, and the
fact that $f$ and $\mtt a$ are bounded functions, the left-hand side
of the penultimate displayed equation is bounded above by
\begin{align*}
C_0 \, e^{- \mf  h_{\widehat{\mf q}}/\epsilon}\,
\int_{\bb T\,\setminus\, \bigcup_{k\in S_{\widehat{\mf q}}}\mc E_{\widehat{\mf q}}(k)}\,
e^{-S(x)/\epsilon}\int_x^{x+1}e^{S(y)/\epsilon}\,dy\,\,dx 
\leq C_0\, e^{- \mf h_{\widehat{\mf q}}/\epsilon}
\sup_{x\in [0,1)\setminus \bigcup_{k\in S_{\widehat{\mf q}}}\mc E_{\widehat{\mf q}}(k)}
e^{-V_{\widehat{\mf q}}(x)}\,. 
\end{align*}
for some finite and positive constant $C_0$, independent from
$\epsilon$. By Assertion \ref{s08}, the
right-hand side of the previous expression is bounded by
$C_0 \, \exp\{ - r_1/\epsilon\}$, which completes the proof.
\end{proof}

\begin{lemma}
\label{l07}
For each $k\in S_{\widehat{\mf q}}$, $j\in \mf I_k$,
\begin{align*}
\lim_{\epsilon\to 0} \frac{1}{\sqrt{\epsilon}} \,
e^{-[ \mf h_{\widehat{\mf q}}+S(\mz M_{\widehat{\mf q}}(k))]/\epsilon}\,
\sum_{\sigma\in \mc W_{j,j+1}^{(\widehat{\mf q})}}
\int_{\sigma-\eta_0}^{\sigma+\eta_0}
\frac{e^{S(y)/\epsilon}}{\phi_{\widehat{\mf q},\epsilon}(y)^2}\,dy
\,=\, 
\frac{\sigma_{\widehat{\mf q}}(j,j+1)}
{\widehat{h}(\mc M_{\widehat{\mf q}}(j)) \, \widehat{h}(\mc M_{\widehat{\mf q}}(j+1))} \,,
\end{align*}
where $\sigma_{\widehat{\mf q}}(j,j+1)$ has been introduced in
\eqref{p_weights}. 
\end{lemma}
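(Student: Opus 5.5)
The plan is to treat each $\sigma\in\mz W^{(\widehat{\mf q})}_{j,j+1}$ separately by Laplace's method. This requires two ingredients: a uniform limit for $\phi_{\widehat{\mf q},\epsilon}$ on $(\sigma-\eta_0,\sigma+\eta_0)$, and the identification of $S(\sigma)$ with $\mf h_{\widehat{\mf q}}+S(\mz M_{\widehat{\mf q}}(k))$. The second ingredient is immediate. By \eqref{60}, $S(\sigma)=\mf h_{\widehat{\mf q}}+S(\mz M_{\widehat{\mf q}}(j))$, and by \eqref{56} all minima in $\mz M_{\widehat{\mf q}}(\mf I_k)$ share the same depth. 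Hence $S(\mz M_{\widehat{\mf q}}(j))=S(\mz M_{\widehat{\mf q}}(k))$ for $j\in\mf I_k$, and the prefactor equals $e^{-S(\sigma)/\epsilon}$.

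For $\phi_{\widehat{\mf q},\epsilon}$ near a saddle $\sigma$, I use Lemma \ref{s10} with $p=\widehat{\mf q}$. For $y$ close to $\sigma$ we have $m^-_{\widehat{\mf q}}(y)=m^+_{\widehat{\mf q},j}$ and $m^+_{\widehat{\mf q}}(y)=m^-_{\widehat{\mf q},j+1}$. The error term $\bb E_y[\tau]/\theta^{(\widehat{\mf q})}_\epsilon$ vanishes uniformly (Lemma \ref{hitting_1}, as in the proof of Lemma \ref{s10}). Combined with Proposition \ref{resolvent_limit_point_2}, this gives, uniformly on $(\sigma-\eta_0,\sigma+\eta_0)$,
\[
\phi_{\widehat{\mf q},\epsilon}(y)=\widehat h(\mc M_{\widehat{\mf q}}(j))\,p_\epsilon(y)+\widehat h(\mc M_{\widehat{\mf q}}(j+1))\,[1-p_\epsilon(y)]+o(1),
\]
where $p_\epsilon(y)=\bb P^\epsilon_y[\tau(m^+_{\widehat{\mf q},j})<\tau(m^-_{\widehat{\mf q},j+1})]$. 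In one dimension this hitting probability is explicit: $1-p_\epsilon(y)=\int_{m^+_{j}}^{y}e^{S/\epsilon}\big/\int_{m^+_j}^{m^-_{j+1}}e^{S/\epsilon}$. By Laplace's method the denominator equals $\sqrt{\epsilon}\,e^{S(\sigma)/\epsilon}\sum_{l}\sqrt{2\pi/(-S''(\sigma^l))}\,(1+o(1))$, with the sum over the saddles $\sigma^1<\dots<\sigma^{\mf r}$ of $\mz W^{(\widehat{\mf q})}_{j,j+1}$. Near $\sigma^r$, therefore, $1-p_\epsilon$ passes from $\mss w^{r-1,r}$ to $\mss w^{r,r+1}$ through a Gaussian error-function profile on the scale $\sqrt\epsilon$, with the weights defined in \eqref{01}.

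Next I substitute this into the integral. Rescaling $y=\sigma^r+\sqrt{\epsilon}\,z$ and writing $S(y)=S(\sigma)-\tfrac12|S''(\sigma^r)|\epsilon z^2+o(\epsilon)$, the $r$-th term becomes
\[
\sqrt{\epsilon}\,e^{S(\sigma)/\epsilon}\int_{\bb R} e^{-|S''|z^2/2}\,\big[a+(b-a)\Psi_r(z)\big]^{-2}\,dz,
\]
with $a,b$ the two flanking values of $\phi$ and $\Psi_r$ the normalized Gaussian CDF. Writing $g=a+(b-a)\Psi_r$, the Gaussian density is proportional to $g'$, so the integral is $\frac{c}{b-a}\int_a^b g^{-2}\,dg=\frac{c}{ab}$. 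Here $a,b$ are the convex combinations of $\widehat h(j),\widehat h(j+1)$ weighted by $\mss w^{r-1,r},\mss w^{r,r+1}$, and $c=\sqrt{2\pi/|S''(\sigma^r)|}/(\mss w^{r,r+1}-\mss w^{r-1,r})$. This is the main obstacle. A single saddle telescopes exactly to $\sqrt{2\pi/|S''|}/(\widehat h(j)\widehat h(j+1))$. With several saddles, the sum of the terms $c_r/(a_rb_r)$ must still telescope, which holds because $\frac{\Delta \mss w}{a b}=\frac{1}{\widehat h(j+1)-\widehat h(j)}\big(\frac1a-\frac1b\big)$ and $c_r\,\Delta\mss w_r$ is the common denominator $\sum_l\sqrt{2\pi/|S''(\sigma^l)|}=\sigma_{\widehat{\mf q}}(j,j+1)$. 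Summing over $r$ gives $\sigma_{\widehat{\mf q}}(j,j+1)\big(\frac1{\widehat h(j)}-\frac1{\widehat h(j+1)}\big)/(\widehat h(j+1)-\widehat h(j))$, which equals the claimed right-hand side. The degenerate case $\widehat h(j)=\widehat h(j+1)$ follows directly.

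Finally, the $o(1)$ errors are harmless because $\phi$ is bounded below by $c_0$ and above by Lemma \ref{l09}. The tails outside $|z|\lesssim\sqrt{\log\epsilon^{-1}}$ are exponentially negligible, since on $(\sigma-\eta_0,\sigma+\eta_0)$ the potential $S$ stays below $S(\sigma)$ away from the saddles.
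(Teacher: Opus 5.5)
Your argument is correct. After the first step, which you share with the paper, it takes a different route.

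\textbf{Common first step.} Both proofs replace $\phi_{\widehat{\mf q},\epsilon}$ on $(\sigma-\eta_0,\sigma+\eta_0)$ by the committor interpolation between its values at $m^+_{\widehat{\mf q},j}$ and $m^-_{\widehat{\mf q},j+1}$. This is the paper's $\psi_{j,\sigma,\epsilon}$, obtained from Lemmas \ref{l09}, \ref{s10} and \ref{hitting_1}.

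\textbf{The paper's route.} The paper then works at fixed $\epsilon$. Since $\psi_{j,\sigma,\epsilon}'$ is proportional to $e^{S/\epsilon}$ (Lemma \ref{s06}), one has the exact identity $e^{S/\epsilon}\,\psi_{j,\sigma,\epsilon}^{-2} = -\,Z_\epsilon\,\Delta_\epsilon^{-1}\,(1/\psi_{j,\sigma,\epsilon})'$. Here $Z_\epsilon=\int_{m^+_{\widehat{\mf q},j}}^{m^-_{\widehat{\mf q},j+1}} e^{S(y)/\epsilon}\,dy$ and $\Delta_\epsilon=\phi_{\widehat{\mf q},\epsilon}(m^-_{\widehat{\mf q},j+1})-\phi_{\widehat{\mf q},\epsilon}(m^+_{\widehat{\mf q},j})$.
\begin{itemize}
\item Each saddle integral is therefore a difference of boundary values of $1/\psi_{j,\sigma,\epsilon}$.
\item Several saddles are handled by telescoping these boundary values; the gap terms between consecutive saddles are exponentially small.
\item The only asymptotics needed are Laplace's method for $Z_\epsilon$ and the plateau values of $\psi_{j,\sigma,\epsilon}$.
\end{itemize}
The price is a case split according to whether $\widehat h(\mc M_{\widehat{\mf q}}(j))=\widehat h(\mc M_{\widehat{\mf q}}(j+1))$. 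In addition, because the paper bounds $e^{S(y)/\epsilon}$ crudely by $e^{S(\sigma)/\epsilon}$ in the replacement step, it needs an $o(\sqrt{\epsilon})$ bound on $|\phi_{\widehat{\mf q},\epsilon}-\psi_{j,\sigma,\epsilon}|$, which Lemma \ref{hitting_1} supplies.

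\textbf{Your route.} You pass to the limit first: Laplace rescaling at each saddle, the error-function profile of the committor with the weights \eqref{01}, and then the substitution $g=a+(b-a)\Psi_r$. That substitution is exactly the limiting form of the paper's antiderivative identity. Your route has three features:
\begin{itemize}
\item It needs only an $o(1)$ approximation of $\phi_{\widehat{\mf q},\epsilon}$.
\item It is most cleanly closed by dominated convergence, using $S(\sigma^r)-S(y)\ge c\,(y-\sigma^r)^2$ on each window together with $\phi_{\widehat{\mf q},\epsilon}\ge c_0$.
\item It shows the contribution of each individual saddle.
\end{itemize}
The paper's route is shorter and needs no local profile analysis.

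\textbf{A bookkeeping slip to fix.} Since $e^{-|S''(\sigma^r)|z^2/2}\,dz=\sqrt{2\pi/|S''(\sigma^r)|}\;dg/(b-a)$, the $r$-th limit is $\sqrt{2\pi/|S''(\sigma^r)|}\,/(a_rb_r)$. So $c_r=\sqrt{2\pi/|S''(\sigma^r)|}$, without the factor $1/\Delta\mss w_r$, where $\Delta \mss w_r=\mss w^{r,r+1}_{j,j+1}-\mss w^{r-1,r}_{j,j+1}$. What makes the sum telescope is $c_r/\Delta\mss w_r=\sigma_{\widehat{\mf q}}(j,j+1)$, not $c_r\,\Delta\mss w_r$. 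With this correction,
\[
\sum_r \frac{c_r}{a_rb_r}=\sigma_{\widehat{\mf q}}(j,j+1)\,\frac{\widehat h(\mc M_{\widehat{\mf q}}(j))^{-1}-\widehat h(\mc M_{\widehat{\mf q}}(j+1))^{-1}}{\widehat h(\mc M_{\widehat{\mf q}}(j+1))-\widehat h(\mc M_{\widehat{\mf q}}(j))},
\]
which is the claimed value.
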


\begin{proof}
Fix $k\in S_{\widehat{\mf q}}$, $j\in \mf I_k$.  We start with a remark. By
the identity \eqref{56},
$S(\mz M_{\widehat{\mf q}}(j)) = S(\mz M_{\widehat{\mf q}}(k))$, and by Assertion
\ref{s09},
$S(\sigma) = \mf h_{\widehat{\mf q}}+S(\mz M_{\widehat{\mf q}}(j))$ for all
$\sigma\in \mz W_{j,j+1}^{(\widehat{\mf q})}$, so that
\begin{equation}
\label{41}
S(\sigma) \,=\, \mf h_{\widehat{\mf q}} \,+\, S(\mz M_{\widehat{\mf q}}(j))
\,=\, \mf h_{\widehat{\mf q}} \,+\, S(\mz M_{\widehat{\mf q}}(k))
\quad \text{for all}\;\; 
\sigma\in \mz W_{j,j+1}^{(\widehat{\mf q})} \,.
\end{equation}

The proof is performed in four steps.  Let
\begin{equation*}
{\color{blue}H_{\epsilon}} := 
\sum_{\sigma\in \mc W_{j,j+1}^{(\widehat{\mf q})}}
\int_{\sigma-\eta_0}^{\sigma+\eta_0}
\frac{e^{S(y)/\epsilon}}{\phi_{\widehat{\mf q},\epsilon}(y)^2}\,dy\,.
\end{equation*}

\smallskip
\noindent{\bf Step 1: Replacing $H_{\epsilon}$ by $\widehat{H}_{\epsilon}$}
\smallskip

Fix $\sigma\in \mz W_{j,j+1}^{(\widehat{\mf q})}$, and recall the definition of
the minima $m_{\widehat{\mf q},j}^{\pm}$ given in \eqref{17} and let
$\psi_{j,\sigma,\epsilon} \colon [\sigma-\eta_0,\sigma+\eta_0]\to \bb
R$ be the function defined by
\begin{equation*}
{\color{blue}\psi_{j,\sigma,\epsilon}(x)} :=
\phi_{\widehat{\mf q},\epsilon}(m_{\widehat{\mf q},j}^+)\,
\bb P_x^\epsilon\big[\tau(m_{\widehat{\mf q},j}^+)<\tau(m_{\widehat{\mf q}, j+1}^-)\big]
+ \phi_{\widehat{\mf q}, \epsilon}(m_{\widehat{\mf q},j+1}^-)
\, \bb P_x^\epsilon\big[\tau(m_{\widehat{\mf q}, j+1}^-)<\tau(m_{\widehat{\mf q}, j}^+)\big]\,.
\end{equation*}
We claim that we can replace $\phi_{\widehat{\mf q},\epsilon} (\cdot)$ by
$\psi_{j,\sigma,\epsilon}(\cdot)$ in the definition of
$H_{\epsilon}$: 
\begin{equation}
\label{40}
\lim_{\epsilon\to 0} \frac{1}{\sqrt{\epsilon}} \,
e^{-[ \mf h_{\widehat{\mf q}}+S(\mz M_{\widehat{\mf q}}(k))]/\epsilon}\,
\big|\, H_{\epsilon} - \widehat{H}_{\epsilon}\,\big|
\,=\,  0\,.
\end{equation}
where
\begin{align*}
{\color{blue} \widehat{H}_{\epsilon}} := 
\sum_{\sigma\in \mc W_{j,j+1}^{(\widehat{\mf q})}}
\int_{\sigma-\eta_0}^{\sigma+\eta_0}
\frac{e^{S(y)/\epsilon}}{\psi_{j,\sigma,\epsilon}(y)^2}\,dy\,.
\end{align*}

To prove \eqref{40}, fix $\sigma\in \mc W_{j,j+1}^{(\widehat{\mf q})}$.  By the
definition of $\eta_0$, $\sigma$ is the unique maxima in the interval
$(\sigma-\eta_0,\sigma+\eta_0)$.  Thus,
\begin{align*}
& \int_{\sigma-\eta_0}^{\sigma+\eta_0}
\Big|\, \frac{1}{\phi_{\widehat{\mf q},\epsilon}(y)^2}
-\frac{1} {\psi_{j,\sigma,\epsilon}(y)^2}  \,\Big| \, 
e^{S(y)/\epsilon}\,dy
\,\le\, e^{S(\sigma)/\epsilon}
\int_{\sigma-\eta_0}^{\sigma+\eta_0}
\Big|\, \frac{1}{\phi_{\widehat{\mf q},\epsilon}(y)^2}
-\frac{1} {\psi_{j,\sigma,\epsilon}(y)^2}  \,\Big| \, 
\,dy\,.
\end{align*}

Therefore, by \eqref{41},
\begin{align*}
&\frac{e^{-[ \mf h_{\widehat{\mf q}}+S(\mz M_{\widehat{\mf q}}(k))]/\epsilon}}
{\sqrt{\epsilon}} \int_{\sigma-\eta_0}^{\sigma+\eta_0}
\Big|\, \frac{1}{\phi_{\widehat{\mf q},\epsilon}(y)^2}
-\frac{1} {\psi_{j,\sigma,\epsilon}(y)^2}  \,\Big| \, 
e^{S(y)/\epsilon}\,dy \notag
\\
&\quad
\le\, \frac{1}{\sqrt{\epsilon}}\,
\int_{\sigma-\eta_0}^{\sigma+\eta_0}
\Big| \, \frac{(\phi_{\widehat{\mf q},\epsilon}(y)\,
-\,\psi_{j,\sigma,\epsilon}(y)) \,
(\phi_{\widehat{\mf q},\epsilon}(y)+\psi_{j,\sigma,\epsilon}(y))}
{\psi_{j,\sigma,\epsilon}(y)^2 \, \phi_{\widehat{\mf q},\epsilon}(y)^2}\, \Big|\, dy\,.
\end{align*}
By Lemmata \ref{l09} and \ref{s10},
and since
$\bb E_x^{\epsilon}[\tau(m_{\widehat{\mf q},j}^+,m_{\widehat{\mf q},j+1}^-) ] \leq \bb
E_x^\epsilon[\tau(\mc M_{\widehat{\mf q}})]$, the previous expression is less
than or equal to
\begin{align*}
\frac{C_0}{\sqrt{\epsilon}} \int_{\sigma-\eta_0}^{\sigma+\eta_0}
\big| \, \phi_{\widehat{\mf q},\epsilon}(y)\,-\,\psi_{j,\sigma,\epsilon}(y)) \,
\big|\,dy
\, \leq \, \frac{C_0\, \eta_0 }{\sqrt{\epsilon}}
\,\sup_{x\in \bb T}\bb E_{x}^{\epsilon}[\tau(\mc M_{\mf
q})/\theta_{\epsilon}^{(\widehat{\mf q})}]
\end{align*}
for some finite constant $C_0$ whose value changed from from the left
to the right-hand side. By Lemma \ref{hitting_1}, the last expression
is bounded above by
$C_0\, \eta_0 \, \epsilon^{-1/2} \, \exp\{ -[\mf h_{\widehat{\mf q}}- \mf h_{\mf
q -1}]/\epsilon \}$. This completes the proof of \eqref{40}.  \smallskip

By \eqref{41} and \eqref{40}, to prove the lemma it is enough to
show that 
\begin{equation}
\label{q:scale:wconv_est}
\lim_{\epsilon \to 0}\,\Big|\,
\sum_{\sigma\in \mc W_{j,j+1}^{(\widehat{\mf q})}}
\frac{1}{\sqrt{\epsilon}} \,
\int_{\sigma-\eta_0}^{\sigma+\eta_0}
\frac{e^{[S(y) - S(\sigma)] /\epsilon}}
{\psi_{j,\sigma,\epsilon}(y)^2}\,dy
\,-\, \frac{\sigma_{\widehat{\mf q}}(j,j+1)}
{\widehat{h}(\mc M_{\widehat{\mf q}}(j)) \, \widehat{h}(\mc M_{\widehat{\mf q}}(j+1))}
\, \Big| = 0\,.    
\end{equation}

Since the diffusion $X_{\epsilon} (\cdot)$ is irreducible,
\begin{equation}
\label{q:scale:psi_2}
\psi_{j,\sigma,\epsilon}(x) =
\phi_{\widehat{\mf q},\epsilon}(m_{\widehat{\mf q}, j}^+) +
\bb P_{x}^\epsilon\big[\, \tau(m_{\widehat{\mf q},j+1}^-)
<\tau(m_{\widehat{\mf q},j}^+)\,\big]
\,
\big[ \, \phi_{\widehat{\mf q},\epsilon}(m_{\widehat{\mf q},j+1}^-)
\,-\,\phi_{\widehat{\mf q},\epsilon}(m_{\widehat{\mf q},j}^+)\, \big]\,.
\end{equation}
The argument is splited in two cases.

\smallskip
\noindent{\bf Step 2: Assume that
$\widehat{h}(\mc M_{\widehat{\mf q}}(j)) = \widehat{h}(\mc M_{\widehat{\mf q}}(j+1))$.}
\smallskip

Recall from \eqref{p_weights} the definition of
$\sigma_{\widehat{\mf q}}(j,j+1)$.  Fix $\sigma\in \mz W_{j,j+1}^{(\widehat{\mf q})}$.
Add and subtract $1/\phi_{\widehat{\mf q},\epsilon}(m_{\widehat{\mf q}, j}^+)^2$ from
$1/\psi_{j,\sigma,\epsilon}(y)^2$ to estimate the expression appearing
on the left-hand side of \eqref{q:scale:wconv_est} by
\begin{align*}
& \frac{1}{\sqrt{\epsilon}}\,
\, \int_{\sigma-\eta_0}^{\sigma+\eta_0}
e^{[S(y) - S(\sigma)] /\epsilon} \, 
\Big|\, \frac{1}{\psi_{j,\sigma,\epsilon}(y)^2} -
\frac{1}{ \phi_{\widehat{\mf q},\epsilon}(m_{\widehat{\mf q},j}^+)^2}\, \Big| \,dy
\\
&\quad + \,
\Big|\, 
\frac{1}{\phi_{\widehat{\mf q},\epsilon}(m_{\widehat{\mf q},j}^+)^2}
\frac{1}{\sqrt{\epsilon}}\, 
\int_{\sigma -\eta_0}^{\sigma+\eta_0} \,
e^{[S(y) - S(\sigma)] /\epsilon}  \,dy
\,-\, \frac{1} {\widehat{h}(\mc M_{\widehat{\mf q}}(j))^2}\, 
\sqrt{\frac{2\pi}{-S''(\sigma)}} \, \Big| \,.
\end{align*}
By the definition \eqref{q:resolv_limsup} of
$\phi_{\widehat{\mf q},\epsilon}(\cdot)$, Proposition
\ref{resolvent_limit_point_2}, and by Laplace's method, the second
term vanishes as $\epsilon\to 0$.  By Lemmata
\ref{l09} and \ref{s10}, the first
one is bounded by
\begin{equation*}
\frac{C_0}{\sqrt{\epsilon}}\,
\, \int_{\sigma-\eta_0}^{\sigma+\eta_0}
e^{[S(y) - S(\sigma)] /\epsilon} \,
\big|\, \psi_{j,\sigma,\epsilon}(y)  -
\phi_{\widehat{\mf q},\epsilon}(m_{\widehat{\mf q},j}^+) \, \big| \,dy
\end{equation*}
for some finite constant $C_0$. By \eqref{q:scale:psi_2}, Proposition
\ref{resolvent_limit_point_2}, and since, by hypothesis,
$\widehat{h}(\mc M_{\widehat{\mf q}}(j)) = \widehat{h}(\mc M_{\widehat{\mf q}}(j+1))$,
this expression vanishes as $\epsilon\to 0$. This proves
\eqref{q:scale:wconv_est} in the case where $\widehat{h}(\mc M_{\mf
q}(j)) = \widehat{h}(\mc M_{\widehat{\mf q}}(j+1))$.

We turn to the case
$\widehat{h}(\mc M_{\widehat{\mf q}}(j)) \neq \widehat{h}(\mc M_{\mf
q}(j+1))$. By \eqref{q:scale:psi_2} and Lemma \ref{s06}, 
\begin{equation*}
\label{q:scale:der:psi_2}
\psi_{j,\sigma,\epsilon}'(x) =
\frac{e^{S(x)/\epsilon}}
{\int_{m_{\widehat{\mf q},j}^+}^{m_{\widehat{\mf q},j+1}^-} \, e^{S(y)/\epsilon}\,dy}
\, \big[ \, \phi_{\widehat{\mf q},\epsilon}(m_{\widehat{\mf q},j+1}^-)
\,-\,\phi_{\widehat{\mf q},\epsilon}(m_{\widehat{\mf q},j}^+)\, \big]\,. 
\end{equation*} 
As
$\widehat{h}(\mc M_{\widehat{\mf q}}(j)) \neq \widehat{h}(\mc M_{\widehat{\mf q}}(j+1))$,
by Proposition \ref{resolvent_limit_point_2},
$\psi_{j,\sigma,\epsilon}'$ is bounded away from $0$. We may thus
write $\psi_{j,\sigma,\epsilon}^{-2}$ as
$-[\psi_{j,\sigma,\epsilon}']^{-1}[\psi_{j,\sigma,\epsilon}^{-1}]'$.
This identity together with \eqref{41} yields that
\begin{align}
\label{42}
& \frac{1} {\sqrt{\epsilon}}\,
\sum_{\sigma\in \mz W_{j,j+1}^{(\widehat{\mf q})}}
\int_{\sigma-\eta_0}^{\sigma+\eta_0}
\frac{e^{[S(y) - S(\sigma)] /\epsilon}}
{\psi_{j,\sigma,\epsilon}(y)^2}\,dy\\
& \quad = - \, \frac{1} {\sqrt{\epsilon}}\,
\frac{\int_{m_{\widehat{\mf q},j}^+}^{m_{\widehat{\mf q},j+1}^-}
e^{[S(y) - S(\sigma)] /\epsilon} \,dy}
{\phi_{\widehat{\mf q},\epsilon}(m_{\widehat{\mf q}, j+1}^-)
-\phi_{\widehat{\mf q},\epsilon}(m_{\widehat{\mf q},j}^+)}
\sum_{\sigma\in \mz W_{j,j+1}^{(\widehat{\mf q})}}
\int_{\sigma-\eta_0}^{\sigma+\eta_0}
\Big(\frac{1}{\psi_{j,\sigma,\epsilon}(y)}\Big)'\,dy
\nonumber
\\
& \quad = - \, \frac{1} {\sqrt{\epsilon}}\,
\frac{\int_{m_{\widehat{\mf q},j}^+}^{m_{\widehat{\mf q},j+1}^-}
e^{[S(y) - S(\sigma)] /\epsilon}  \,dy}
{\phi_{\widehat{\mf q},\epsilon}(m_{\widehat{\mf q}, j+1}^-)
-\phi_{\widehat{\mf q},\epsilon}(m_{\widehat{\mf q},j}^+)}
\sum_{\sigma\in \mz W_{j,j+1}^{(\widehat{\mf q})}}
\Big(\frac{1}{\psi_{j,\sigma,\epsilon}(\sigma+\eta_0)}
\,-\,\frac{1}{\psi_{j,\sigma,\epsilon}(\sigma - \eta_0)}\Big)\,.
\nonumber
\end{align}

\smallskip
\noindent{\bf Step 3: Assume that
$\widehat{h}(\mc M_{\widehat{\mf q}}(j)) \neq  \widehat{h}(\mc M_{\widehat{\mf q}}(j+1))$
and $\mz W_{j,j+1}^{(\widehat{\mf q})}$ is a singleton.}
\smallskip

If $\mz W_{j,j+1}^{(\widehat{\mf q})}$ has exactly one maxima $\sigma$, by
\eqref{41} and the definition \eqref{q:scale:psi_2} of
$\psi_{j,\sigma,\epsilon}(\cdot)$, the previous expression is equal to
\begin{align*}
-\, \frac{1}{\sqrt{\epsilon}} \, 
\frac{\int_{m_{\widehat{\mf q},j}^+}^{m_{\widehat{\mf q},j+1}^-}
e^{[S(y) - S(\sigma)] /\epsilon}  \,dy}
{\psi_{j,\sigma,\epsilon}(\sigma+\eta_0)\, \psi_{j,\sigma,\epsilon}(\sigma-\eta_0)}
\, \Big[\, \bb P_{\sigma - \eta_0}^\epsilon \big[\,
\tau(m_{\widehat{\mf q},j+1}^-)<\tau(m_{\widehat{\mf q}, j}^+)\, \big]
\,-\,\bb P_{\sigma+\eta_0}^\epsilon \big[\, \tau(m_{\widehat{\mf q},j+1}^-)<\tau(m_{\mf
q, j}^+\, \big]\, \Big]\,. 
\end{align*}
By Laplace method, Lemma \ref{s06}, Proposition
\ref{resolvent_limit_point_2}; and the definition \eqref{p_weights} of
$\sigma_{\widehat{\mf q}}(j,j+1)$, the previous expression is equal to
\begin{align}
\label{q:scale:wconv:eq8}
\sqrt{\frac{2\pi}{-S''(\sigma)}}\,
\frac{1}{\widehat{h}(\mc M_{\widehat{\mf q}}(j+1))
\, \widehat{h}(\mc M_{\widehat{\mf q}}(j))}\,
+ R(\epsilon) 
= \frac{\sigma_{\widehat{\mf q}}(j,j+1)}
{\widehat{h}(\mc M_{\widehat{\mf q}}(j+1))\, \widehat{h}(\mc M_{\widehat{\mf q}}(j))}\,
\,+\, R(\epsilon)\,, 
\end{align}
where $R(\epsilon)$ denotes an error that vanishes as $\epsilon\to 0$.
This proves \eqref{q:scale:wconv_est} when
$\widehat{h}(\mc M_{\widehat{\mf q}}(j)) \neq \widehat{h}(\mc M_{\widehat{\mf q}}(j+1))$,
and $\mz W_{j,j+1}^{(\widehat{\mf q})}$ is a singleton.

\smallskip
\noindent{\bf Step 4: Assume that
$\widehat{h}(\mc M_{\widehat{\mf q}}(j)) \neq  \widehat{h}(\mc M_{\widehat{\mf q}}(j+1))$
and $\mz W_{j,j+1}^{(\widehat{\mf q})}$ is not a singleton.}
\smallskip

Recall from \eqref{43} that we denote by
$\sigma^{1}_{j,j+1} < \dots < \sigma^{\mf r_{j,j+1}}_{j, j+1}$ the
elements of $\mz W_{j,j+1}^{(\widehat{\mf q})}$.  Re-ordering the terms in
the sum appearing in the last line of \eqref{42} yields that the first
line of \eqref{42} is equal to the sum $I_1(\epsilon) +
I_2(\epsilon)$, where 
\begin{equation*}
I_1(\epsilon)  \,=\, - \, \frac{1}{\sqrt{\epsilon}}
\frac{\int_{m_{\widehat{\mf q},j}^+}^{m_{\widehat{\mf q},j+1}^-}
e^{[S(y - S(\sigma)] )/\epsilon}\,dy}
{\phi_{\widehat{\mf q},\epsilon}(m_{\widehat{\mf q},j+1}^-)
-\phi_{\widehat{\mf q},\epsilon}(m_{\widehat{\mf q},j}^+)}
\,\Big( \,
\frac{1}{\psi_{j,\sigma,\epsilon}(\sigma_{j,j+1}^{\widehat{\mf q},+}+\eta_0)}
\,-\,
\frac{1}{\psi_{j,\sigma,\epsilon}(\sigma_{j,j+1}^{\widehat{\mf q},-} - \eta_0)}
\, \Big ) 
\end{equation*}
and
\begin{align*}
I_2(\epsilon) \, =\, - \, \frac{1}{\sqrt{\epsilon}}
\frac{\int_{m_{\widehat{\mf q},j}^+}^{m_{\widehat{\mf q},j+1}^-}
e^{[S(y - S(\sigma)] )/\epsilon}\,dy}
{\phi_{\widehat{\mf q},\epsilon}(m_{\widehat{\mf q},j+1}^-)
-\phi_{\widehat{\mf q},\epsilon}(m_{\widehat{\mf q},j}^+)}
\,\sum_{r=1}^{\mf r_{j,j+1}-1}
\Big\{ \frac{1}{\psi_{j,\sigma,\epsilon}(\sigma_{j,j+1}^{\widehat{\mf q},r}+\eta_0)}
- \frac{1}{\psi_{j,\sigma,\epsilon}(\sigma_{j,j+1}^{\widehat{\mf q},r+1} -
\eta_0)}\, \Big\}\,.
\end{align*}
By \eqref{q:scale:wconv:eq8}, 
\begin{align*}
\lim_{\epsilon\to 0} I_1(\epsilon) \, =\,
\frac{\sigma_{\widehat{\mf q}}(j,j+1)}
{\widehat{h}(\mc M_{\widehat{\mf q}}(j+1))\, \widehat{h}(\mc M_{\widehat{\mf q}}(j))} \,.
\end{align*}
By Laplace's method and Proposition \ref{resolvent_limit_point_2}, the
factor multiplying the sum in $I_2(\epsilon)$ is of order $1$ since we
assumed that $\widehat{h}(\mc M_{\widehat{\mf q}}(j))\neq \widehat{h}(\mc M_{\mf
q}(j+1))$.

We turn to the remaining term. Fix $1\leq r\leq \mf r_{j,j+1}-1$. By
the definition \eqref{q:scale:psi_2} of
$\psi_{j,\sigma,\epsilon} (\cdot)$,
\begin{align*}
\big|\, \psi_{j,\sigma,\epsilon}(\sigma_{j,j+1}^{\widehat{\mf q},r}+\eta_0)^{-1}
\,-\,\psi_{j,\sigma,\epsilon}(\sigma_{j,j+1}^{\widehat{\mf q},r+1}-\eta_0)^{-1}
\, \big|
= \Big| \, \frac{\phi_{\widehat{\mf q},\epsilon}(m_{\widehat{\mf q},j+1}^-)
- \phi_{\widehat{\mf q},\epsilon}(m_{\widehat{\mf q},j}^+)}
{\psi_{j,\sigma,\epsilon}(\sigma_{j,j+1}^{\widehat{\mf q},r}+\eta_0) \,
\psi_{j,\sigma,\epsilon}(\sigma_{j,j+1}^{\widehat{\mf q},r+1}-\eta_0)} \,
\Big|\, A_\epsilon
\end{align*}
where
\begin{align*}
A_\epsilon \,=\,
\Big|\, \bb P_{\sigma_{j,j+1}^{\widehat{\mf q},r}+\eta_0}^\epsilon
\Big(\tau(m_{\widehat{\mf q},j+1}^-)<\tau(m_{\widehat{\mf q},j}^+)\Big)
\,-\,\bb P_{\sigma_{j,j+1}^{\mf
q,r+1}-\eta_0}^\epsilon\Big(\tau(m_{\widehat{\mf q},j+1}^-)
<\tau(m_{\widehat{\mf q},j}^+)\Big)\, \Big| \,.
\end{align*}
By Lemmata \ref{l09} and \ref{s10},
the first term on the right-hand side of the penultimate displayed
equation is bounded, uniformly in $\epsilon$. We turn to $A_\epsilon$.
By Lemma \ref{s06}, 
\begin{align*}
A_\epsilon \,=\, 
\int_{\sigma_{j,j+1}^{\widehat{\mf q},r}+\eta_0}
^{\sigma_{j,j+1}^{\widehat{\mf q},r+1}-\eta_0}
e^{S(y)/\epsilon}\,dy\,\Bigg/
\,\int_{m_{\widehat{\mf q},j}^{+}}^{m_{\widehat{\mf q},j+1}^{-}}
e^{S(y)/\epsilon}\,dy\,.
\end{align*}
Multiply the numerator and the denominator by
$\epsilon^{-1/2}\, \exp \{- S(\sigma)/\epsilon\}$ for some
$\sigma\in \mz W_{j,j+1}^{(\widehat{\mf q})}$.  Laplace's method yields that the
denominator converges to a positive constant. Since the torus has
length $1$, and $\sigma_{j,j+1}^{\widehat{\mf q},r}$,
$\sigma_{j,j+1}^{\widehat{\mf q},r+1}$ are consecutive global maxima in the
interval $[m_{\widehat{\mf q},j}^+ ,\, m_{\widehat{\mf q},j+1}^-]$, the numerator is
bounded by
\begin{equation*}
\frac{1}{\sqrt{\epsilon}}\exp\Big(\,
\big[S(\sigma_{j,j+1}^{\widehat{\mf q},r}+\eta_0)\vee S(\sigma_{j,j+1}^{\widehat{\mf q},r+1}-\eta_0)
\,-\,S(\sigma)\big]\,/\,\epsilon\,\Big) 
\end{equation*}
As $\epsilon\to 0$, this expression vanishes, which concludes the
proof of the lemma.
\end{proof}

For each $k\in S_{\widehat{\mf q}}$, let 
\begin{equation*}
{\color{blue}H_{k,\epsilon}} := \sum_{j\in \mf I_k}
\sum_{\sigma\in \mc W_{j,j+1}^{(\widehat{\mf q})}}
\int_{\sigma-\eta_0}^{\sigma+\eta_0}
\frac{e^{S(y)/\epsilon}}{\phi_{\widehat{\mf q},\epsilon}(y)^2}\,dy\,.
\end{equation*}
By the previous lemma,
\begin{align}
\label{44}
\lim_{\epsilon\to 0} \frac{1}{\sqrt{\epsilon}} \,
e^{-[ \mf h_{\widehat{\mf q}}+S(\mz M_{\widehat{\mf q}}(k))]/\epsilon}\, H_{k,\epsilon}
\,=\, \sum_{j\in \mf I_k}
\frac{\sigma_{\widehat{\mf q}}(j,j+1)}
{\widehat{h}(\mc M_{\widehat{\mf q}}(j)) \, \widehat{h}(\mc M_{\widehat{\mf q}}(j+1))} \,\cdot
\end{align}

\begin{corollary}
\label{l08}
For each  continuous function $f\in C(\bb T)$,
\begin{align*}
& \lim_{\epsilon \to 0}
\frac{1}{\epsilon}e^{-\mf h_{\widehat{\mf q}}/\epsilon}
\sum_{k\in S_{\widehat{\mf q}}}\int_{\mc E_{\widehat{\mf q}}(k)}f(x)
\frac{1}{\mtt a(x)} \, \phi_{\widehat{\mf q},\epsilon}(x)^2
\sum_{j\in \mf I_k }\sum_{\sigma\in \mc W_{j,j+1}^{(\widehat{\mf q})}}
\int_{\sigma-\eta_0}^{\sigma+\eta_0}\frac{e^{ [S(y) - S(x)] /\epsilon}}{
\phi_{\widehat{\mf q},\epsilon}(y)^2}\,dy\,dx \notag
\\
& \quad = \sum_{k\in S_{\widehat{\mf q}}}\widehat{h}(\mc M_{\widehat{\mf q}}(k))^2
\sum_{j\in \mf I_k}\frac{\sigma_{\widehat{\mf q}}(j,j+1)}
{\widehat{h}(\mc M_{\widehat{\mf q}}(j)) \, \widehat{h}(\mc M_{\widehat{\mf q}}(j+1))}
\sum_{m\in\mc M_{\widehat{\mf q}}(k)}\pi_1 (\{m\}) \, f(m)
\end{align*}
\end{corollary}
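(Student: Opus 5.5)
Corollary \ref{l08} follows by combining the inner limit \eqref{44} with a Laplace-type evaluation of the outer integral over each well $\mc E_{\widehat{\mf q}}(k)$. For each $k\in S_{\widehat{\mf q}}$ the inner sum over $j\in\mf I_k$ and $\sigma$ does not depend on $x$; it equals $H_{k,\epsilon}$. So the $k$-th term factorizes as
\begin{equation*}
\frac{1}{\epsilon}\,e^{-\mf h_{\widehat{\mf q}}/\epsilon}\, H_{k,\epsilon}\int_{\mc E_{\widehat{\mf q}}(k)} \frac{f(x)}{\mtt a(x)}\,\phi_{\widehat{\mf q},\epsilon}(x)^2\, e^{-S(x)/\epsilon}\,dx .
\end{equation*}
Write $\epsilon^{-1} = \epsilon^{-1/2}\cdot\epsilon^{-1/2}$ and insert $e^{\pm S(\mz M_{\widehat{\mf q}}(k))/\epsilon}$. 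The product then splits into two factors.

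\textbf{First factor.} It is
\begin{equation*}
\epsilon^{-1/2}\, e^{-[\mf h_{\widehat{\mf q}}+S(\mz M_{\widehat{\mf q}}(k))]/\epsilon}\, H_{k,\epsilon},
\end{equation*}
which by \eqref{44} converges to $\sum_{j\in\mf I_k}\sigma_{\widehat{\mf q}}(j,j+1)/[\widehat h(\mc M_{\widehat{\mf q}}(j))\,\widehat h(\mc M_{\widehat{\mf q}}(j+1))]$.

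\textbf{Second factor.} It is
\begin{equation*}
\epsilon^{-1/2}\, e^{S(\mz M_{\widehat{\mf q}}(k))/\epsilon}\int_{\mc E_{\widehat{\mf q}}(k)} \frac{f}{\mtt a}\,\phi_{\widehat{\mf q},\epsilon}^2\, e^{-S/\epsilon}\,dx .
\end{equation*}
First replace $\phi_{\widehat{\mf q},\epsilon}$ by its limit. The right-hand side of \eqref{q:resolv_limsup} is $G_g$ with $g=(\lambda-\widehat{\bb L}_{\widehat{\mf q}})\widehat h$. Since $f_{\widehat{\mf q}}=\widehat h$ solves the reduced resolvent equation, Proposition \ref{resolvent_limit_point_2} gives
\begin{equation*}
\sup_{x\in\mc E_{\widehat{\mf q}}(k)}\big|\phi_{\widehat{\mf q},\epsilon}(x)-\widehat h(\mc M_{\widehat{\mf q}}(k))\big|\to 0 .
\end{equation*}
By Lemma \ref{l09}, $\phi_{\widehat{\mf q},\epsilon}$ is uniformly bounded, so the same uniform convergence holds for $\phi_{\widehat{\mf q},\epsilon}^2$ toward $\widehat h(\mc M_{\widehat{\mf q}}(k))^2$. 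The replacement error is therefore $o(1)$ times $\epsilon^{-1/2}e^{S(\mz M_{\widehat{\mf q}}(k))/\epsilon}\int_{\mc E_{\widehat{\mf q}}(k)}|f|\,\mtt a^{-1}e^{-S/\epsilon}$, and this last quantity is $O(1)$.

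After the replacement, $\mc E_{\widehat{\mf q}}(k)$ is a disjoint union of the first-order wells $\mc E_1(m)$, $m\in\mc M_{\widehat{\mf q}}(k)$. All these minima have the common depth $S(\mz M_{\widehat{\mf q}}(k))$ by property (b), and each is nondegenerate. Laplace's method, exactly as in the proof of Corollary \ref{wells_conv}, then gives
\begin{equation*}
\epsilon^{-1/2}\, e^{S(\mz M_{\widehat{\mf q}}(k))/\epsilon}\int_{\mc E_1(m)}\frac{f}{\mtt a}\,e^{-S/\epsilon}\,dx\;\to\;\frac{f(m)}{\mtt a(m)}\sqrt{\frac{2\pi}{S''(m)}}=f(m)\,\pi_1(\{m\}).
\end{equation*}
Summing over $m\in\mc M_{\widehat{\mf q}}(k)$, the second factor converges to $\widehat h(\mc M_{\widehat{\mf q}}(k))^2\sum_{m\in\mc M_{\widehat{\mf q}}(k)}\pi_1(\{m\})f(m)$.

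Multiplying the two limits and summing over the finitely many $k\in S_{\widehat{\mf q}}$ gives the claimed identity.

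The one point needing care is that $S$ here is the non-periodic lifted potential. Each well must be read in a fixed lift, so that $S(\mz M_{\widehat{\mf q}}(k))$ and the exponent in $H_{k,\epsilon}$ refer to the same copy. This holds because, under the normalization $\{0\}<\{\sigma^{\widehat{\mf q},+}_{-1,0}\}<\mz M_{\widehat{\mf q}}(0)<\dots<\mz M_{\widehat{\mf q}}(\mf u_{\widehat{\mf q}}-1)<\{1\}$, the wells sit inside $(0,1)$. Apart from this bookkeeping the argument is routine; the real difficulty was already handled in Lemma \ref{l07}.
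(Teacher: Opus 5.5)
Your proposal is correct and follows the paper's proof. You factor out the $x$-independent quantity $H_{k,\epsilon}$, split $\epsilon^{-1}=\epsilon^{-1/2}\epsilon^{-1/2}$ after inserting $e^{\pm S(\mz M_{\widehat{\mf q}}(k))/\epsilon}$, and then apply \eqref{44} to one factor and Laplace's method with Proposition \ref{resolvent_limit_point_2} to the other. You also spell out two steps the paper leaves implicit: the uniform bound from Lemma \ref{l09} used to pass from $\phi_{\widehat{\mf q},\epsilon}$ to $\phi_{\widehat{\mf q},\epsilon}^2$, and the choice of a common lift of $S$.
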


\begin{proof}
Multiplying and dividing by $\exp \{-S(\mz M_{\widehat{\mf q}}(k))/\epsilon\}$
we can rewrite the left-hand side of the statement of the lemma as
\begin{equation}
\label{q:scale:lastlem:eq1}
\sum_{k\in S_{\widehat{\mf q}}}\frac{1}{\sqrt{\epsilon}} \,
e^{-[ \mf h_{\widehat{\mf q}}+S(\mz M_{\widehat{\mf q}}(k))]/\epsilon}
\,H_{k,\epsilon}\,\int_{\mc E_{\widehat{\mf q}}(k)} f(x) \,
\frac{1}{\mtt a(x)} \, 
\frac{1}{\sqrt{\epsilon}}\, 
e^{[S(\mz M_{\widehat{\mf q}}(k))-S(x)]/\epsilon}
\, \phi_{\widehat{\mf q},\epsilon}(x)^2\,dx \,.
\end{equation}

For each $k\in S_{\widehat{\mf q}}$, by Laplace's method, the definition
\eqref{q:resolv_limsup} of $\phi_{\widehat{\mf q},\epsilon}(\cdot)$, and
Proposition \ref{resolvent_limit_point_2},
\begin{align*}
& \lim_{`e\to 0} \int_{\mc E_{\widehat{\mf q}}(k)} f(x) \,
\frac{1}{\mtt a(x)} \, 
\frac{1}{\sqrt{\epsilon}}\, 
e^{[S(\mz M_{\widehat{\mf q}}(k))-S(x)]/\epsilon}
\, \phi_{\widehat{\mf q},\epsilon}(x)^2\,dx
\\
&\quad \,=\, \widehat{h}(\mc M_{\widehat{\mf q}}(k))^2 \, 
\sum_{m\in\mc M_{\widehat{\mf q}}(k)} f(m)\, \frac{1}{\mtt a(m)}\,
\sqrt{\frac{2\pi}{S''(m)}} \,.
\end{align*}
Thus, by \eqref{44}, as $\epsilon\to 0$,
\eqref{q:scale:lastlem:eq1} converges to
\begin{align*}
\sum_{k\in S_{\widehat{\mf q}}} \Big( \sum_{j\in \mf I_k}
\frac{\sigma_{\widehat{\mf q}}(j,j+1)}
{\widehat{h}(\mc M_{\widehat{\mf q}}(j)) \,
\widehat{h}(\mc M_{\widehat{\mf q}}(j+1))}  \Big)
\, \widehat{h}(\mc M_{\widehat{\mf q}}(k))^2
\sum_{m\in\mc M_{\widehat{\mf q}}(k)}\frac{1}{\mtt a(m)}
\, \sqrt{\frac{2\pi}{S''(m)}} \, f(m) \,.
\end{align*}
To complete the proof of the  corollary it remains to recall from
\eqref{22} the definition of the weights $\pi_1(\{m\})$.
\end{proof}

\subsection*{Proof of the main propositions}

The proof of the next result is presented in Appendix \ref{secA4}.

\begin{lemma}
\label{s07}
There exists a constant $Z_{\widehat{\mf q}}>0$, such that, for every
$k\in S_{\widehat{\mf q}}$
\begin{equation*}
\frac{1}{\omega(\mc M_{\widehat{\mf q}}(k))} \,
\pi_{\widehat{\mf q}}(k)\, \widehat{h}(\mc M_{\widehat{\mf q}}(k))^2
\sum_{j\in \mf I_k} \frac{\sigma_{\widehat{\mf q}}(j,j+1)}
{\widehat{h}(\mc M_{\widehat{\mf q}}(j))\widehat{h}(\mc M_{\widehat{\mf q}}(j+1))}
\,=\, Z_{\widehat{\mf q}}\,.
\end{equation*}
\end{lemma}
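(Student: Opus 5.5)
The plan is to read the identity off the stationarity of $\omega$ for the tilted chain. Under (NonRev) the tilted chain is a nearest-neighbour chain on a cycle, so its stationary current is the same across every edge. Dividing that current by the right quantity turns the sum over $\mf I_k$ into a telescoping sum.

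\smallskip\noindent\emph{Step 1: constant current.} By \eqref{h_tilted_invariant_m}, with $\mf D^{\widehat{\mf q}}_1=\mc S_{\widehat{\mf q}}$, the measure $\omega$ is stationary for $\widehat{\bb L}_{\widehat{\mf q},\widehat h}$, whose rates are $R_{\widehat{\mf q},\widehat h}(\mc M,\mc M')=\widehat h(\mc M)^{-1}R_{\widehat{\mf q}}(\mc M,\mc M')\widehat h(\mc M')$. The projected chain $\widehat{\bb X}_{\widehat{\mf q}}$ only jumps between cyclic neighbours $k$ and $k\pm1$, with indices taken mod $\mf u_{\widehat{\mf q}}$. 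For each $j$, set
\begin{equation*}
\Phi_j:=\omega(\mc M_{\widehat{\mf q}}(j))\,R_{\widehat{\mf q},\widehat h}(j,j+1)-\omega(\mc M_{\widehat{\mf q}}(j+1))\,R_{\widehat{\mf q},\widehat h}(j+1,j).
\end{equation*}
The stationarity equation at the state $j$ reads $\Phi_{j}-\Phi_{j-1}=0$, so $\Phi_j\equiv\Phi$ is independent of $j$. Periodicity of the lifted rates (postulate $\mc P_9$) lets us extend $\omega$, $\widehat h$ and all indices periodically to $\bb Z$.

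\smallskip\noindent\emph{Step 2: rewrite the current.} By \eqref{50b} and (NonRev), $R_{\widehat{\mf q}}(j,j+1)=1/(\pi_{\widehat{\mf q}}(j)\sigma_{\widehat{\mf q}}(j,j+1))$ for every $j$. In the other direction, $R_{\widehat{\mf q}}(j+1,j)=\varepsilon_{j+1}/(\pi_{\widehat{\mf q}}(j+1)\sigma_{\widehat{\mf q}}(j,j+1))$, where $\varepsilon_{j+1}\in\{0,1\}$ indicates that a left jump from $j+1$ is allowed. Put $a_j:=\omega(\mc M_{\widehat{\mf q}}(j))/(\pi_{\widehat{\mf q}}(j)\widehat h(\mc M_{\widehat{\mf q}}(j))^2)$. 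A direct substitution gives
\begin{equation*}
a_j-\varepsilon_{j+1}\,a_{j+1}\;=\;\Phi\,\frac{\sigma_{\widehat{\mf q}}(j,j+1)}{\widehat h(\mc M_{\widehat{\mf q}}(j))\,\widehat h(\mc M_{\widehat{\mf q}}(j+1))}\qquad\text{for all } j\,.
\end{equation*}

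\smallskip\noindent\emph{Step 3: telescope over $\mf I_k$.} By the definition \eqref{48}, $\varepsilon_{j+1}=1$ for $k\le j<\mf b_k$ and $\varepsilon_{\mf b_k+1}=0$. The endpoint $\mf b_k$ is finite because (NonRev) provides some $j$ with vanishing left rate, and by periodicity such $j$ recur with period $\mf u_{\widehat{\mf q}}$. Summing the identity of Step 2 over $j\in\mf I_k$, the left-hand side telescopes and
\begin{equation*}
a_k\;=\;\Phi\sum_{j\in\mf I_k}\frac{\sigma_{\widehat{\mf q}}(j,j+1)}{\widehat h(\mc M_{\widehat{\mf q}}(j))\,\widehat h(\mc M_{\widehat{\mf q}}(j+1))}\,.
\end{equation*}
Since $a_k>0$ (because $\omega$ and $\widehat h$ are strictly positive) and the sum is positive, we get $\Phi>0$. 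The quantity in the statement of the lemma is exactly $a_k^{-1}$ times this sum, so it equals $Z_{\widehat{\mf q}}:=1/\Phi$, independently of $k$.

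\smallskip The only delicate point is Step 1. One must make sure that the stationarity equation for the projected chain on the cycle $\mc S_{\widehat{\mf q}}$ really involves only the two edges adjacent to each state, including the wrap-around edge $\mf u_{\widehat{\mf q}}-1\to 0$. One must also make sure that the periodic lifting is compatible with the index range of $\mf I_k$, which may cross $\mf u_{\widehat{\mf q}}$. Both follow from postulates $\mc P_7$ and $\mc P_9$, and everything after that is algebra.
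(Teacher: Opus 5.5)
Your proof is correct and follows essentially the same route as the paper. Stationarity of $\omega$ for the tilted nearest-neighbour chain makes the current across every edge constant ($\Phi$ for you, $Z$ in the paper), and a recursion along $\mf I_k$, stopping at $\mf b_k+1$ where the left rate vanishes, identifies the sum with $1/\Phi$. Your telescoping in the normalized variables $a_j$ is the paper's backward induction on $\mtt F$ done in one stroke. It skips the rewriting \eqref{52} in products of tilted rates, and it gives $\Phi>0$ directly rather than through a separate non-reversibility argument.
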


\begin{proof}[Proof of Proposition \ref{prop:q:wlimit}]
By the definition of $Z_\epsilon$, Lemma \ref{q:scale:mu_1est} and
Corollary \ref{l08}, with $f=1$,
\begin{align*}
\lim_{\epsilon\to 0} \frac{1}{\epsilon}e^{- \mf h_{\widehat{\mf q}}/\epsilon} \, Z_\epsilon
= \sum_{k\in S_{\widehat{\mf q}}}\pi_{\widehat{\mf q}}(k)\, 
\widehat{h}(\mc M_{\widehat{\mf q}}(k))^2 \sum_{j\in \mf I_k}
\frac{\sigma_{\widehat{\mf q}}(j,j+1)}
{\widehat{h}(\mc M_{\widehat{\mf q}}(j)) \, \widehat{h}(\mc M_{\widehat{\mf q}}(j+1))}\,. 
\end{align*}
By Lemma \ref{s07}, and since $\omega$ is a
probability measure on $\mc S_{\widehat{\mf q}}$, this expression is equal to
$Z_{\widehat{\mf q}}$. 
    
Fix a continuous function $f\in C(\bb T)$. By \eqref{q:recovery:seq},
\begin{align*}
\int_{\bb T}f(x)\,d\mu_{\widehat{\mf q},\epsilon}(x)
= \frac{1}{Z_\epsilon} \, \int_{\bb T}
f(x)\, \frac{1}{\mtt a(x)} \, e^{-S(x)/\epsilon}
\phi_{\widehat{\mf q},\epsilon}(x)^2 \int_x^{x+1}
\frac{e^{S(y)/\epsilon}}{\phi_{\widehat{\mf q},\epsilon}(y)^2}\,dy\,dx\,.
\end{align*}
By the first part of the proof,
$Z_\epsilon = [\, 1 + o(\epsilon)\,]\, Z_{\widehat{\mf q}}\, \epsilon \, e^{\mf
h_{\widehat{\mf q}}/\epsilon} $. On the other hand, by Lemma
\ref{q:scale:mu_1est} and Corollary \ref{l08},
\begin{align*}
& \lim_{\epsilon\to 0}
\frac{1}{\epsilon}\, e^{- \mf h_{\widehat{\mf q}}/\epsilon} \int_{\bb T}
f(x)\, \frac{1}{\mtt a(x)} \, e^{-S(x)/\epsilon}
\phi_{\widehat{\mf q},\epsilon}(x)^2 \int_x^{x+1}
\frac{e^{S(y)/\epsilon}}{\phi_{\widehat{\mf q},\epsilon}(y)^2}\,dy\,dx
\\
&\quad =\,
\sum_{k\in S_{\widehat{\mf q}}}\omega(\mc M_{\widehat{\mf q}}(k))\, 
\frac{\pi_{\widehat{\mf q}}(k) \, \widehat{h}(\mc M_{\widehat{\mf q}}(k)^2)}
{\omega(\mc M_{\widehat{\mf q}}(k))}
\sum_{j\in \mf I_k}
\frac{\sigma_{\widehat{\mf q}}(j,j+1)}
{\widehat{h}(\mc M_{\widehat{\mf q}}(j)) \, \widehat{h}(\mc M_{\widehat{\mf q}}(j+1))}
\sum_{m\in\mc M_{\widehat{\mf q}}(k)}
\frac{\pi_1(\{m\})}{\pi_{\widehat{\mf q}}(k)}f(m)
\end{align*}
By Lemma \ref{s07}, the definition
\eqref{q_limsup:mu_convex} of $\mu$, and \eqref{21}, this expression
is equal to
\begin{align*}
Z_{\widehat{\mf q}}\, \sum_{k\in S_{\widehat{\mf q}}}\omega(\mc M_{\widehat{\mf q}}(k))\,
\sum_{m\in\mc M_{\widehat{\mf q}}(k)}\frac{\pi(\{m\})}{\pi_{\widehat{\mf q}}(k)}f(m)
= Z_{\widehat{\mf q}}\,  \int_{\bb T}f(x)\,d\mu(x)\,.
\end{align*} 
The assertion of the proposition follows from the two previous
estimates.
\end{proof}

The first part of the previous proof characterizes the asymptotic
behavior of $Z_\epsilon$. This result together with Lemma
\ref{q:scale:mu_1est} and the formula \eqref{q:recovery:seq} for the
measure $\mu_{\widehat{\mf q},\epsilon}$ yields that for any bounded function
$g\colon \bb T \to \bb R$,
\begin{equation}
\label{45}
\lim_{\epsilon\to 0} \int_{\mc E_{\widehat{\mf q}}^c}g(x)\,d\mu_{\widehat{\mf q},\epsilon}(x) \,=\, 0\,.
\end{equation}
where $\cb{ \mc E_{\widehat{\mf q}}} := \cup_{k\in S_{\widehat{\mf q}}} \mc E_{\widehat{\mf q}}(k)$.

\begin{proof}[Proof of Proposition \ref{prop:q:glimsup}]
By equation \eqref{q:resolv_limsup}, adding and subtracting $\lambda$
gives that
\begin{align*}
\label{q:scale:limsup:lastequation}
-\int_{\bb T}
\frac{\theta_\epsilon^{(\widehat{\mf q})} (\ms L_\epsilon
\phi_{\widehat{\mf q},\epsilon}) (x)}
{\phi_{\widehat{\mf q},\epsilon}(x)}\,d\mu_{\widehat{\mf q},\epsilon}(x)
\, & =\,
-\sum_{k\in S_{\widehat{\mf q}}}\int_{\bb T}
\frac{ ( \widehat{\bb L}_{\widehat{\mf q}}\widehat{h}) (\mc M_{\widehat{\mf q}}(k))}
{\phi_{\widehat{\mf q},\epsilon}(x)} \, \zeta^{\widehat{\mf q}, k}(x)\,
d\mu_{\widehat{\mf q},\epsilon}(x)
\\
& +\, \lambda\,\sum_{k\in S_{\widehat{\mf q}}}
\int_{\bb T}\frac{\widehat{h}(\mc M_{\widehat{\mf q}}(k))
}{\phi_{\widehat{\mf q},\epsilon}(x)}\zeta^{\widehat{\mf q}, k}(x)\,
d\mu_{\widehat{\mf q},\epsilon}(x)\,-\,\lambda \,,
\nonumber
\end{align*}
where $\zeta^{\widehat{\mf q},k}$, $k\in S_{\widehat{\mf q}}$, was defined in
\eqref{resolv_smooth}. By Lemma \ref{l09} and \eqref{45}, we may
restrict these integrals to $\mc E_{\widehat{\mf q}}$. By Proposition
\ref{resolvent_limit_point_2}, on the set $\mc E_{\widehat{\mf q}} (k)$, the
function $\phi_{\widehat{\mf q},\epsilon}(\cdot)$ is uniformly close to
$\widehat{h}(\mc M_{\widehat{\mf q}}(k))$. We may thus replace in the previous
integrals $\phi_{\widehat{\mf q},\epsilon}(x)$ by $\widehat{h}(\mc M_{\mf
q}(k))$ at a cost which vanishes as $\epsilon\to 0$. After the
replacement, we may apply \eqref{45} to extend the integral to the
torus. In conclusion, the right-hand side is equal to
\begin{align*}
-\sum_{k\in S_{\widehat{\mf q}}}
\frac{ ( \widehat{\bb L}_{\widehat{\mf q}}\widehat{h}) (\mc M_{\widehat{\mf q}}(k))}
{\widehat{h} (\mc M_{\widehat{\mf q}}(k))}
\int_{\bb T} \zeta^{\widehat{\mf q}, k}(x)\,
d\mu_{\widehat{\mf q},\epsilon}(x)
\, +\, \lambda\,\sum_{k\in S_{\widehat{\mf q}}}
\int_{\bb T} \zeta^{\widehat{\mf q}, k}(x)\,
d\mu_{\widehat{\mf q},\epsilon}(x)\,-\,\lambda  \,+\, o(\epsilon)\,.
\end{align*}
By Proposition \ref{prop:q:wlimit}, as $\epsilon\to 0$, this
expression converges to
\begin{align*}
-\sum_{k\in S_{\widehat{\mf q}}}
\frac{ ( \widehat{\bb L}_{\widehat{\mf q}}\widehat{h}) (\mc M_{\widehat{\mf q}}(k))}
{\widehat{h} (\mc M_{\widehat{\mf q}}(k))}
\int_{\bb T} \zeta^{\widehat{\mf q}, k}(x)\,
d\mu (x)
\, +\, \lambda\,\sum_{k\in S_{\widehat{\mf q}}}
\int_{\bb T} \zeta^{\widehat{\mf q}, k}(x)\,
d\mu (x)\,-\,\lambda  \,.
\end{align*}
By definitions of the measure $\mu$ and the function $\zeta^{\widehat{\mf q},
k}$, $\int_{\bb T} \zeta^{\widehat{\mf q}, k}\, d\mu = \omega(\mc M_{\mf
q}(k))$. The previous expression is thus equal to
\begin{align*}
-\sum_{k\in S_{\widehat{\mf q}}}
\frac{ ( \widehat{\bb L}_{\widehat{\mf q}}\widehat{h}) (\mc M_{\widehat{\mf q}}(k))}
{\widehat{h} (\mc M_{\widehat{\mf q}}(k))} \,
\omega(\mc M_{\widehat{\mf q}}(k))\,. 
\end{align*}
By definition of $\widehat{h}$, it satisfies \eqref{reflected_equiv},
and the previous expression is equal to $\bb I^{(\widehat{\mf q})}(\omega) $,
which completes the proof of the proposition.
\end{proof}

\subsection*{Proof of Proposition \ref{s03} and Theorem \ref{mt3}}

Recall the Definition \ref{gamma_expansion_def}. By \eqref{27}, condition
1 in the definition of $\Gamma-$expansion is in force. By Propositions
\ref{l01} and \ref{s04}, condition 2 is in force for $p=-1$ and
$p=0$. By Propositions \ref{p_liminf}, \ref{higher_speed_glimsup}, \ref{p:g_limsup:rev},
\ref{l10}, and Lemma \ref{s11}, condition 2 also holds for $1\leq p\leq \widehat{\mf q}$. In particular, we have established Proposition \ref{s03}. Condition 3
follows by Propositions \ref{l01}, \ref{s04}, \ref{higher_ord:0level}
and \eqref{23}, \eqref{23b}. Condition 4 is part of the content of
Proposition \ref{l01}.

To complete the proof, it remains to show that condition 5 is in
force. This follows by \eqref{25} and the fact that
$\widehat{\bb X}_{\widehat{\mf q}}(\cdot)$ has only one closed irreducible
class. In fact, denote by $\mf R_{\widehat{\mf q}}$ the unique closed
irreducible class of $\widehat{\bb X}_{\widehat{\mf q}}(\cdot)$. Let
$\omega^{*}\in \mss P(\mc S_{\widehat{\mf q}})$ be the probability measure
defined by
\begin{equation} 
\omega^{*}(\mc M) \;:=\; \begin{cases}
\nu^{\widehat{\mf q}}_{\mf R_{\widehat{\mf q}}}(\mc M),
&\text{if } \mc M\in \mf R_{\widehat{\mf q}},\\ 
0, &\text{ otherwise}\;,
\end{cases}
\end{equation} 
where $\nu^{\widehat{\mf q}}_{\mf R_{\widehat{\mf q}}}(\cdot)$ is the
unique invariant measure of the process
$\widehat{\bb X}_{\widehat{\mf q}}(\cdot)$ restricted to
$\mf R_{\widehat{\mf q}}$. Note that
$\nu^{\widehat{\mf q}}_{\mf R_{\widehat{\mf q}}} = \omega^{*}$ if
$\widehat{\bb X}_{\widehat{\mf q}}(\cdot)$ is irreducible. Since,
$\bb I^{(\widehat{\mf q})}(\omega) = 0$ if, and only if,
$\omega = \omega^{*}$; then,
\begin{equation*}
\mu(\cdot) = \sum_{\mc M\in \mc S_{\widehat{\mf q}}}
\omega^{*}(\mc M)\mu_{\mc M}^{\widehat{\mf q}}(\cdot)
\end{equation*}
is the unique element in the $0-$level set of
$\ms J^{(\widehat{\mf q})}$.  \qed

\appendix

\section{Metastable structure}
\label{secA1}

In this section we present some of the properties of the hierarchical
metastable structure used in the article. The proofs of these properties
can be found in \cite[Appendix A]{lm}.

At each step $q\geq 1$, the recursive procedure generates a layer 
\begin{equation*}
\Gamma_q = (\mtt P_q, \mf h_q, \bb L_q),
\end{equation*}
where $\mtt P_q:= \mz S_q\cup \{\mz T_q\}$ is a partition of the set
of $S-$local minima $\mz M$, $\mz S_q:= \{\mz M_q(j): j\in \bb Z\}$,
$\mf h_q>0$ is an energy barrier, and $\bb L_q$ is the generator of a
$\mz S_q-$valued Markov chain, denoted by $\bb X_q(\cdot)$. If the
Markov chain $\bb X_q$ has two or more closed irreducible classes, we
may iterate the procedure. The recursion ends at the first step
$\mf q$ at which the method yields a Markov chain $\bb X_{\mf q}$ with
only one or no closed irreducible classes.

In Section \ref{secNR} we explicitly defined the energy barrier
$\mf h_q$ and the reduced generator $\bb L_q$. Before stating the main
properties of the metastable structure, we give an explicit definition
of the partition $\mz S_q$ for the case $q\geq 2$. We refer the reader
to Figure \ref{fig-1f} for an illustration. Denote by
$\color{blue} \mf R_{q-1}(k)$, $k\in K$, where $\color{blue} K$ is a
subset of $\bb N$, the closed irreducible classes of the Markov chain
$\bb X_{q-1}(\cdot)$. Let
\begin{equation}
\label{24}
{\color{blue}\mz M^*_{q}(k)} \,:=\,
\bigcup_{i\colon \mz M_{q-1}(i)  \in \widehat{\mf
R}_{q-1}(k)} \mz M_{q-1}(i) \;, \quad
{\color{blue}\mz M_{q}} \,:=\, \bigcup_{k\in K} \mz M^*_{q}(k)
\;.
\end{equation}
Hence, $\mz M_{q}$ contains all the $S$-local minima $m$ belonging to
some $\bb X_{q-1}$-recurrent state $\mz M_{q}(k)$. In \cite[Section
2]{lm} we showed that $K$ is either empty, a singleton set or
countably infinite. In the later case, the family
$\{\mz M_{q}^{*}(k)\}_{k\in K}$ can be relabeled in such a way that
its members are ordered and the set $\mz M_{q}^{*}(0)$ contains the
leftmost nonnegative minima in $\mz M_{q}$. Denote this new relabeled
family by $\mz M_{q}(k)$, $k\in \bb Z$. The state space of $\bb X_q$
is this relabeled family,
\begin{equation*}
\mz S_{q} = \{\mz M_{q}(k)\}_{k\in \bb Z}.
\end{equation*}

We may now introduce the properties of the metastable structure. We
start with the ones related to each partition $\mz S_q$ generated by
the recursion.

\begin{enumerate}
\item[[$\mc P_1(q)$\!\!\!]] For every $k\in \bb Z$,
$\mz M_{q}(k)\neq \varnothing$ and $\mz M_{q}(k) \subset \mz M$.
\end{enumerate}

Two subsets $C$, $D$ of $\bb R$ are said to be {\it well  ordered}, if
$C\cap D = \varnothing$ and $x<y$ or $x>y$ for every $x\in C$ and
$y\in D$. In the first case this relation is represented by $\color{blue}
C<D$, while in the second one it is denoted by $\color{blue} C>D$.
\begin{definition}
\label{order}
Let $I$ be a subset of $\bb Z$ and consider a family $\mz F$ of
subsets of $\bb R$ indexed by $I$
\begin{equation*}
\mz F = \{D_{k}\}_{k\in I}, \hspace{5mm} D_{k}\subset \bb R,
\hspace{1.5mm} \forall k\in I.
\end{equation*}
We say that $\mz F$ is well ordered if $ D_{k}<D_{l}$ for every
$k,l\in \bb I$ with $k<l$.
\end{definition}

\begin{enumerate}

\item[[$\mc P_2(q)$\!\!\!]] $\mz S_q$ is well ordered according to
Definition \ref{order} and its labeling is such that
$m_{\j_q} \in \mz M_q(0)$, where
\begin{equation}
\label{24b}
{\color{blue} j_q } \,:=\, \min\{k\ge 0 : m_k \in \mz M_q\}\;.
\end{equation}
\end{enumerate}

Two subsets $A, B$ of $\bb R$ are said to be equivalent, a relation
represented by $A\sim B$, if
\begin{equation}
\label{equiv_class}
B=A+k,    
\end{equation}
for some $k\in \bb Z$. Here $A+k = \{x+k: x\in A\}$. Recall that $\mf u_q\in \bb N$ represents the number of elements in the state space $\mc S_p$ of the projected chain $\widehat{\bb X}_q$. In other words, $\mf u_q\in \bb N$ is the number of \eqref{equiv_class}-equivalent classes of the set $\mz S_q$.

\begin{enumerate}

\item[[$\mc P_3(q)$\!\!\!]]  $\mz M_q(j + \mf u_q) = \mz M_q(j) + 1$
for all $j\in \bb Z$. In particular,
$\mz M_q(j + \mf u_q) \sim \mz M_q(j)$.

\end{enumerate}

Let $C$ be a nonempty subset of $\bb R$. We say that the elements in
$C$ have the same \emph{depth} if $S(x) = S(y)$ for all $x$,
$y \in C$. For any set $A\subset \bb R$ where all elements share the
same depth, let $S(A)$ denote the common value of $S$ at $A$:
\begin{equation}
\label{16}
{\color{blue} S(A)} \,:=\,  S(x),\;\; \forall\;x\in A.
\end{equation}

\begin{enumerate}

\item[[$\mc P_4(q)$\!\!\!]] For all $k\in \bb Z$, the local minima in
$\mz M_{q}(k)$ have the same \emph{depth}.

\end{enumerate}

We turn to the properties related to the energy barrier. Given two
subsets $A$, $B$ of $\bb R$ such that $A<B$, define the \emph{barrier}
between $A$ and $B$ as
\begin{equation*}
{\color{blue}\Lambda_0 (A,B)} \,:= \,
\sup_{x\in [x^{+}_{A}, x^{-}_{B}]}S(x),
\end{equation*}
where $x^{+}_{A}$ is the rightmost element of the closure of $A$, and
$x^{-}_{B}$ is the leftmost element on the closure of $B$:
${\color{blue}x^{+}_{A}}:= \sup\{x\in A\}$,
${\color{blue}x^{-}_{B}}:= \inf\{x\in B\}$.  For subsets $C$, $D$ of
$\bb R$ such that $C<D$ or $D<C$, let
\begin{equation}
\label{09}
{\color{blue} \Lambda (C,D)}  \,:= \, \Lambda_0  (C,D)\;\;\text{if}\;\;
C<D\quad\text{and}\quad 
\Lambda (C,D)  \,:= \, \Lambda_0  (D,C)\;\;\text{if}\;\; D<C\;.
\end{equation}

\begin{enumerate}

\item[[$\mc P_5(q)$\!\!\!]]  For every $k\in \bb Z$ for which
$\mz M_{q}(k)$ has two or more elements, and for every
$m',m''\in \mz M_{q}(k)$
\begin{equation*}
\Lambda (\{m'\},\{m''\}) - S(\mz M_{q}(k))\, \le\, \mf h_{q-1} \;,
\end{equation*}
where $\Lambda(\cdot,\cdot)$ is as in \eqref{09}. 
\item[[$\mc P_6(q)$\!\!\!]] For every $k\in \bb Z$,
$\displaystyle \Lambda (\mz M_{q}(k),\mz M_{q}(k\pm
1))-S(\mz M_{q}(k)) \ge \mf h_{q}.$

\end{enumerate}

Now, we turn to the properties associated to the Markov chains
constructed through the recursive procedure.

\begin{enumerate}

\item[[$\mc P_7(q)$\!\!\!]] For every $k$, $l \in \bb Z$,
$R_q(\mz M_{q}(k),\mz M_{q}(l))>0$ if, and only if, $l = k \pm 1$
and
$\displaystyle \Lambda (\mz M_{q}(k), \mz M_{q}(k\pm 1))-S(\mz
M_{q}(k)) = \mf h_{q}$.

\end{enumerate}

\begin{enumerate}

\item[[$\mc P_8(q)$\!\!\!]] Let $a=\pm 1$. If
$R_q(\mz M_{q}(k),\mz M_{q}(k+a))>0$ for some $k\in \bb Z$, then
\begin{equation*}
S(\mz M_{q}(k))\geq S(\mz M_{q}(k+a)).
\end{equation*}
Moreover, if $\widehat{R}_q(\mz M_{q}(k+a),\mz M_{q}(k)) = 0$, then the
previous inequality is strict.

\item[[$\mc P_9(q)$\!\!\!]]
The jump rates are $\mf u_q$-periodic:
$R_q(\mz M_{q}(k+ \mf u_q),\mz M_{q}(k+ \mf u_q \pm 1)) = R_q(\mz
M_{q}(k),\mz M_{q}(k\pm 1))$ for all $k\in \bb Z$.

\end{enumerate}

Let $\mz K_q$ be the set of $\bb X_q$-closed irreducible classes, and
denote by {\color{blue}$\mf n_q$} the number of elements in $\mz K_q$,
modulo \eqref{equiv_class}. Let $\mf n_0 = N$. Then,
\begin{enumerate}

\item[[$\mc P_{10}(q)$\!\!\!]] $\mf n_{q} < \mf n_{q-1}$. 

\end{enumerate}

\begin{figure}
\centering
\begin{tikzpicture}[scale=0.6]
\draw[rounded corners](-0.3,2)--(0.2,4);
\draw [rounded corners] (0.2,4) .. controls (0.7,6) .. (1.2,4);
\draw[rounded corners] (1.2,4) .. controls (1.6,2) .. (2,4);
\draw[rounded corners] (2,4) .. controls (2.3,6.1) .. (3,3);  
\draw[rounded corners] (3,3) .. controls (3.5, 1) .. (4, -1);
\draw[rounded corners] (4,-1) .. controls (4.5, -3) .. (5,-1);
\draw[rounded corners] (5,-1) -- (5.5,1);
\draw[rounded corners] (5.5,1) .. controls (5.8,2) .. (6.1, 1);
\draw[rounded corners] (6.1, 1) .. controls (6.4, 0) .. (6.7, 0.5);
\draw[rounded corners] (6.7, 0.5) .. controls (7, 1) .. (7.3, 0.5);
\draw[rounded corners] (7.3, 0.5) .. controls (7.6, 0) .. (7.9, 1);
\draw[rounded corners] (7.9, 1) .. controls (8.2, 2) .. (8.5, 1);
\draw[rounded corners] (8.5, 1) -- (9, -1);
\draw[rounded corners] (9,-1) .. controls (9.5, -3) .. (10, -1);
\draw[rounded corners] (10,-1) .. controls (10.5, 1) .. (11, -1);
\draw[rounded corners] (11,-1) .. controls (11.5,-3) .. (12, -1);
\draw[rounded corners] (12,-1) .. controls (12.5, 1) .. (13, 3);
\draw[rounded corners] (13,3) .. controls (13.3, 4) .. (13.6, 3);
\draw[rounded corners] (13.6,3) .. controls (14.1,1) .. (14.6,-1);
\draw[rounded corners] (14.6,-1) .. controls (15.1,-3).. (15.6,-1);
\draw[rounded corners] (15.6,-1) .. controls (16.1,1) .. (16.6,3);
\draw[rounded corners](16.6,3) .. controls (17,4.8) .. (17.4,3);

\draw[to-to, line width=0.5mm, blue](13.3,3.5)--(13.3,-2.4);
\draw[to-to, line width = 0.5mm, teal](7,0.7)--(7,0.2);
\draw[to-to, line width = 0.5mm, cyan](8.2,1.54)--(8.2,0.2);
\draw[to-to, line width=0.5mm, violet](5.8,1.54)--(5.8,-2.4);
\draw[to-to, line width=0.5mm, red](10.5,0.3)--(10.5,-2.4);
\draw[to-to, line width = 0.5mm, gray](2.33,5.2)--(2.33,-2.4);
\draw[to-to, line width=0.5mm, purple](17,4.2) -- (17,-2.4);

\draw[to-to,line width=0.5mm, red](0.7,5.3)--(0.7,2.5);

\draw[-to, dashed, gray](1.5,0.5)--(2.2,0); \fill(1.5,0.5)node[above,
gray, font=\footnotesize]{$h_0^{6,-}$}; 

\draw[-to, dashed, violet](4.8,1)--(5.7,0.5);
\fill(4.8,1)node[above,violet,font=\footnotesize]{$\mf h_4$};

\draw[-to, dashed, teal](7.8,-0.7)--(7.2,0.3);
\fill(8.1,-0.5)node[below, teal, font=\footnotesize]{$\mf h_1$};

\draw[-to, dashed, cyan](9.2,1.8)--(8.25,1);
\fill(9.3,1.8)node[above, cyan, font=\footnotesize]{$\mf h_2$};

\draw[-to, dashed, red](11.5,0.1)--(10.7,-1);
\fill(11.5,0.1)node[above,red,font=\footnotesize]{$\mf h_3$};

\draw[-to, dashed, blue](11.9,2)--(13.2,1);
\fill(11.9,2)node[above, blue, font=\footnotesize]{$\mf h_5$};

\draw[-to, dashed, purple](15.6,3)--(16.9,2);
\fill(15.4,3)node[above,purple,font=\footnotesize]{$h_0^{6,+} = \mf h_6$};

\draw[dashed,black](0.65,5.5)--(2.4,5.5);
\draw[dashed, black](5.8,1.8)--(8.25,1.8);
\draw[dashed, black](2.5,-2.5)--(17.1,-2.5);
\draw[dashed, black](0.7,2.4)--(1.7,2.4);
\draw[dashed, black](6.4,0.15)--(8.28,0.15);

\draw[solid, thick, black](-0.4,-4)--(17.6,-4);
\fill(0,-4)node[below,font=\footnotesize]{$0$};
\fill(16.3,-4)node[below,font=\footnotesize]{$1$};

\fill(6.4,-4)node[below, font=\footnotesize]{$m_2$};
\fill(7.6,-4)node[below, font=\footnotesize]{$m_3$};
\fill(9.5,-4)node[below, font=\footnotesize]{$m_4$};
\fill(11.5,-4)node[below, font=\footnotesize]{$m_5$};
\fill(15.1,-4)node[below, font=\footnotesize]{$m_6$};
\fill(4.5,-4)node[below, font=\footnotesize]{$m_{1}$};
\fill(1.6,-4)node[below, font=\footnotesize]{$m_{0}$};

\fill(0.7,-4)node[above, font=\footnotesize]{$\sigma_{0}$};
\fill(2.5,-4)node[above, font=\footnotesize]{$\sigma_{1}$};
\fill(5.8,-4)node[above, font=\footnotesize]{$\sigma_2$};
\fill(7,-4)node[above, font=\footnotesize]{$\sigma_3$};
\fill(8.2,-4)node[above, font=\footnotesize]{$\sigma_4$};
\fill(10.5,-4)node[above, font=\footnotesize]{$\sigma_5$};
\fill(13.3,-4)node[above, font=\footnotesize]{$\sigma_6$};
\fill(17.4,-4)node[above, font=\footnotesize]{$1+\sigma_0$};

\draw[-to, dashed](6.4,-4.7)--(6.4,-6);
\fill(6.1,-6)node[below, font=\footnotesize]{$\mz M_1(2)$};
\draw[-to, dashed](7.6,-4.7)--(7.6,-6);
\fill(7.9,-6)node[below, font=\footnotesize]{$\mz M_1(3)$};

\draw[to-to, teal, line width=0.5mm] (6.75,-6.3) -- (7.25,-6.3); 
\fill(7.05,-6.3)node[above, teal, font=\footnotesize]{$\bb X_1$};

\draw[-to, dashed](1.5,-4.7)--(1.5,-6);
\fill(1.6,-6)node[below, font=\footnotesize]{$\mz M_1(0)$};
\draw[-to, dashed](4.5,-4.7)--(4.5,-6);
\fill(4.5,-6)node[below, font=\footnotesize]{$\mz M_1(1)$};
\draw[-to, dashed](9.5,-4.7)--(9.5,-6);
\fill(9.5,-6)node[below, font=\footnotesize]{$\mz M_1(4)$};
\draw[-to, dashed](11.5,-4.7)--(11.5,-6);
\fill(11.5,-6)node[below, font=\footnotesize]{$\mz M_1(5)$};
\draw[-to, dashed](15.1,-4.7)--(15.1,-6);
\fill(15.1,-6)node[below, font=\footnotesize]{$\mz M_1(6)$};

\draw[-to, dashed](6.4,-6.7)--(6.7,-8);
\draw[-to, dashed](7.6,-6.7)--(7.3,-8);
\fill(7,-8)node[below, font=\footnotesize]{$\mz M_2(2)$};

\draw[-to, cyan, line width=0.5mm](7.8,-8.3)--(8.7,-8.3);
\fill(8.2,-8.3)node[above, cyan, font=\footnotesize]{$\bb X_2$};
\draw[-to, cyan, line width=0.5mm](6.2,-8.3)--(5.4,-8.3);
\fill(5.9,-8.3)node[above, cyan, font=\footnotesize]{$\bb X_2$};

\draw[-to, dashed](1.5,-6.7)--(1.5,-8);
\fill(1.6,-8)node[below, font=\footnotesize]{$\mz M_2(0)$};
\draw[-to, dashed](4.5,-6.7)--(4.5,-8);
\fill(4.5,-8)node[below, font=\footnotesize]{$\mz M_2(1)$};
\draw[-to, dashed](9.5,-6.7)--(9.5,-8);
\fill(9.5,-8)node[below, font=\footnotesize]{$\mz M_2(3)$};
\draw[-to, dashed](11.5,-6.7)--(11.5,-8);
\fill(11.5,-8)node[below, font=\footnotesize]{$\mz M_2(4)$};
\draw[-to, dashed](15.1,-6.7)--(15.1,-8);
\fill(15.1,-8)node[below, font=\footnotesize]{$\mz M_2(5)$};

\draw[-to, dashed](1.5,-8.7)--(1.5,-10);
\fill(1.6,-10)node[below, font=\footnotesize]{$\mz M_3(0)$};

\draw[-to, red, line width=0.5mm](2.4,-10.3)--(3.6,-10.3);
\fill(3,-10.3)node[above, red, font=\footnotesize]{$\bb X_3$};

\draw[-to, red, line width=0.5mm](0.8,-10.3)--(-0.4,-10.3);
\fill(0.3,-10.3)node[above, red, font=\footnotesize]{$\bb X_3$};

\draw[-to, dashed](4.5,-8.7)--(4.5,-10);
\fill(4.5,-10)node[below, font=\footnotesize]{$\mz M_3(1)$};
\draw[-to, dashed](9.5,-8.7)--(9.5,-10);
\fill(9.5,-10)node[below,font=\footnotesize]{$\mz M_3(2)$};

\draw[to-to, red, line width=0.5mm](10.2,-10.3)--(10.8,-10.3);
\fill(10.5,-10.3)node[above, red, font=\footnotesize]{$\bb X_3$};

\draw[-to, dashed](11.5, -8.7)--(11.5,-10);
\fill(11.5,-10)node[below, font=\footnotesize]{$\mz M_3(3)$};
\draw[-to, dashed](15.1,-8.7)--(15.1,-10);
\fill(15.1,-10)node[below, font=\footnotesize]{$\mz M_3(4)$};

\draw[-to, red, line width=0.5mm](17,-10.3)--(15.8,-10.3);
\fill(16.4,-10.3)node[above, red, font=\footnotesize]{$\bb X_3$};

\draw[-to, dashed](4.5,-10.7)--(4.5,-12);
\fill(4.5,-12)node[below, font=\footnotesize]{$\mz M_4(0)$};
\draw[-to, dashed](9.5,-10.7)--(10.2,-12);
\draw[-to, dashed](11.5,-10.7)--(10.7,-12);
\fill(10.5,-12)node[below, font=\footnotesize]{$\mz M_4(1)$};

\draw[to-to, violet, line width=0.5mm](5.3,-12.3)--(9.7,-12.3);
\fill(7.5,-12.3)node[above, violet, font=\footnotesize]{$\bb X_4$};

\draw[-to, dashed](15.1,-10.7)--(15.1,-12);
\fill(15.1,-12)node[below, font=\footnotesize]{$\mz M_4(2)$};

\draw[-to, dashed](4.5,-12.7)--(7.2,-14);
\draw[-to,dashed](10.5,-12.7)--(7.7,-14);
\fill(7.5,-14)node[below,font=\footnotesize]{$\mz M_5(0)$};
\draw[-to, dashed](15.1,-12.7)--(15.1,-14);
\fill(15.1,-14)node[below,font=\footnotesize]{$\mz M_5(1)$};
\draw[to-to, blue, line width=0.5mm](8.3,-14.3)--(14.3,-14.3);
\fill(11.3,-14.3)node[above, blue, font=\footnotesize]{$\bb X_5$};

\draw[-to,dashed](7.5,-14.8)--(11,-16.1);
\draw[-to,dashed](15.1,-14.8)--(11.6,-16.1);
\fill(11.3,-16.1)node[below, font=\footnotesize]{$\mz M_6(0)$};

\draw[-to, purple, line width=0.5mm](12.5,-16.5)--(18.5,-16.5);
\draw[-to, purple, line width=0.5mm](3.9,-16.5)--(10.1,-16.5);
\fill(7.1,-16.5)node[above, purple, font=\footnotesize]{$\bb X_6$};
\fill(15.2,-16.5)node[above,purple,font=\footnotesize]{$\bb X_6$};

\end{tikzpicture}
\caption{Here we represent an example of a metastable structure with 6
levels on $\bb R$, but 5 projected levels on $\bb T$. At level 1, the states $\mz M_1(k)$, $0\leq k\leq 6$,
are given by $\mz M_1(k) = \{m_k\}$. These comprise all of the states contained in the interval $[0,1)$. The sets $\mz M_1(2)$ and
$\mz M_1(3)$ belong to the same $\bb X_1(\cdot)-$recurrent
class, while the remaining states are absorbing. For
level 2, $\mz M_2(0) = \{m_{0}\}$, $\mz M_2(1)=\{m_{1}\}$,
$\mz M_2(2) = \{m_2, m_3\}$, and $\mz M_2(k) = \{m_{k+1}\}$, for
$k\in \{3,4,5\}$. At this level, $\mz M_2(2)$ is a
$\bb X_2(\cdot)-$transient state, while the remaining states
are absorbing. The
states of level 3 contained in the interval $[0,1)$ are
$\mz M_3(0)=\{m_{0}\}$, $\mz M_3(1)=\{m_{1}\}$,
$\mz M_3(2) = \{m_4\}$, $\mz M_3(3)=\{m_5\}$ and $\mz
M_3(4)=\{m_6\}$. At this level, $\mz M_3(0)$ is
$\bb X_3(\cdot)-$transient, the states $\mz M_3(1)$ and
$\mz M_3(4)$ are absorbing, and $\{\mz M_3(2), \mz M_3(3)\}$ is a
closed irreducible class. Observe that, the projected chain $\widehat{\bb X}_3$ can jump from $\mc M_3(0) = \Pi(\mz M_3(0))$ to $\mc M_3(4) = \Pi(\mz M_3(4))$. Turning to level 4, we see the first relabeling (recall
property $\mc P_2$ in Appendix \ref{secA1}). Here $\mss j_4 = 1$ and we find the states
$\mz M_4(0) = \{m_{1}\}$, $\mz M_4(1) = \{m_4, m_5\}$ and
$\mz M_4(2) = \{m_6\}$. The state $\mz M_4(2)$ is absorbing, while the set
$\{\mz M_4(0), \mz M_4(1)\}$ is a closed
irreducible class. At level 5,
$\mz M_5(0) = \{m_{1},\; m_4,\; m_5\}$ and $\mz M_5(1) = \{m_6\}$
compose the unique close irreducible class inside the interval $[0,1)$. By periodicity, $\bb X_5$ has infinite closed irreducible classes and all of them are equivalent. Therefore, $\mf n_5 = 1$ and this is the last level for the projected metastable structure on $\bb T$, $\widehat{\mf q}=5$, while there is one more level for the structure in $\bb R$, $\mf q = 6$. The state space of the Markov chain $\bb X_6$ is $\ms S_6 = \{\ms M_6(0)+k: k\in \bb Z\}$, the chain is transient and can only jump to the right.}
\label{fig-1f}
\end{figure}

\subsection*{Further properties of the metastable structure}

Recall the definition of the set $\mz W^{(p)}_{k,k+1}$,
$\sigma_{k, k+1}^{p,-}$, $\sigma_{k, k+1}^{p,+}$, $k\in \bb Z$, given
in \eqref{max_br} and \eqref{rl_max:R}, respectively.

The following result, \cite[Lemma A5]{lm}, states that the energetic
barriers at $\sigma^{p,+}_{k-1,k}$, $\sigma^{p,-}_{k,k+1}$ are higher
than the ones in the interior of the interval
$(\sigma^{p,+}_{k-1,k}\,,\, \sigma^{p,-}_{k,k+1})$.  This result
implies that the diffusion $X_\epsilon (t)$, in time-scales of smaller
order than $\theta^{(p)}_\epsilon$, cannot escape the interval
$(\sigma^{p,+}_{k-1,k}\,,\, \sigma^{p,-}_{k,k+1})$. 

\begin{lemma}
\label{l15}
For each $2\le p \le \mf q$, $k\in \bb Z$,
\begin{gather*}
S(\sigma^{p,+}_{k-1,k}) \,>\, {\color{blue} S^-_{p,k}}
\,:=\, \max \{ \, S(x) :
x\in (\sigma^{p,+}_{k-1,k} ,  m^+_{p,k}] \cap \mz W\, \big\}\;, 
\\
S(\sigma^{p,-}_{k,k+1})  
\,>\, {\color{blue} S^+_{p,k}} \,:=\,
\max \{ S(x) : x\in  [m^-_{p,k} , \sigma^{p,-}_{k,k+1} ) \cap \mz W\} \;.
\end{gather*}
\end{lemma}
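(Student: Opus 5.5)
The statement to prove is Lemma~\ref{l15}. I would prove it by induction on the level $p$, using the recursive construction of the partitions $\mz S_p$ and the postulates $\mc P_1$--$\mc P_{10}$. The two inequalities are mirror images of each other. I will argue only the second one,
\begin{equation*}
S(\sigma^{p,-}_{k,k+1}) \,>\, S^+_{p,k},
\end{equation*}
since the first follows by reflecting $x\mapsto -x$. The reflection exchanges left and right escape barriers, and the construction is symmetric under it.

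Fix $p$ and $k$. Take a local maximum $\sigma\in\mz W$ with $m^-_{p,k}\le\sigma<\sigma^{p,-}_{k,k+1}$. I distinguish two cases.

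\emph{Case 1: $\sigma\in(m^+_{p,k},\sigma^{p,-}_{k,k+1})$.} Then $\sigma$ lies in $[m^+_{p,k},m^-_{p,k+1}]$, so $S(\sigma)\le S(\mz W^{(p)}_{k,k+1})$. Since $\sigma^{p,-}_{k,k+1}$ is the leftmost point of the argmax set, and $\sigma$ lies strictly to its left, $\sigma$ is not in the argmax. Hence the inequality $S(\sigma)<S(\mz W^{(p)}_{k,k+1})=S(\sigma^{p,-}_{k,k+1})$ is strict.

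\emph{Case 2: $\sigma\in[m^-_{p,k},m^+_{p,k}]$.} Here $\sigma$ separates two minima $m',m''$ of $\mz M_p(k)$. By $\mc P_5(p)$,
\begin{equation*}
S(\sigma)\,\le\,\Lambda(\{m'\},\{m''\})\,\le\, S(\mz M_p(k))+\mf h_{p-1}.
\end{equation*}
By $\mc P_6(p)$ together with \eqref{27},
\begin{equation*}
S(\sigma^{p,-}_{k,k+1})-S(\mz M_p(k))\,=\,\Lambda(\mz M_p(k),\mz M_p(k+1))-S(\mz M_p(k))\,\ge\,\mf h_p\,>\,\mf h_{p-1}.
\end{equation*}
Combining the two displays gives $S(\sigma)<S(\sigma^{p,-}_{k,k+1})$.

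Since $\mz W$ is locally finite, the maximum defining $S^+_{p,k}$ runs over finitely many points. Each of them is strictly below $S(\sigma^{p,-}_{k,k+1})$ by one of the two cases, so the strict inequality survives taking the maximum.

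The main obstacle is making $\mc P_5$ rigorous as used in Case 2. It bounds the barrier between two minima of $\mz M_p(k)$ by $\mf h_{p-1}$, and I need this for every interior maximum, not just some. This holds because any interior maximum $\sigma$ sits between two consecutive elements $m'<m''$ of $\mz M_p(k)$, and the barrier $\Lambda(\{m'\},\{m''\})$ is by definition the supremum of $S$ over the whole interval $[m',m'']$, so it already dominates $S(\sigma)$. Should $\mc P_5$ be unavailable as stated, I would instead induct on $p$ via property~(d). Within a $\bb X_{p-1}$ closed irreducible class, the separating barriers have height exactly $\mf h_{p-1}$ by $\mc P_7(p-1)$. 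The inductive hypothesis at level $p-1$ controls the maxima inside each $\mz M_{p-1}(i)$. Finally, $\mf h_{p-1}<\mf h_p$ closes the argument.
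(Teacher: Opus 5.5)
The paper gives no proof of this lemma; it imports it from \cite[Lemma A5]{lm}. Your two-case argument is correct, and it derives the lemma directly from facts the paper records in Appendix~\ref{secA1}.

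In Case~1, strictness comes from $\sigma^{p,-}_{k,k+1}$ being the leftmost maximizer of $S$ on $[m^+_{p,k},m^-_{p,k+1}]$. In Case~2 you combine three ingredients:
\begin{itemize}
\item the postulate $\mc P_5(p)$;
\item the identity $\Lambda(\mz M_p(k),\mz M_p(k+1))=S(\sigma^{p,-}_{k,k+1})$ together with $\mc P_6(p)$, which is nothing more than the definition \eqref{19} of $\mf h_p$;
\item the strict increase $\mf h_p>\mf h_{p-1}$ from \eqref{27}.
\end{itemize}
Compared with the bare citation, your route shows that the lemma is an immediate consequence of $\mc P_5$ and \eqref{27}. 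It also shows that the restriction $p\ge 2$ enters only through $\mc P_5(p)$; for $p=1$, Case~2 is vacuous.

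Some of the scaffolding can be dropped.
\begin{itemize}
\item No induction on $p$ is used; the argument is direct for each fixed $p$.
\item The fallback paragraph is unnecessary, since $\mc P_5$ is among the postulates the paper takes as established.
\item In Case~2 you can apply $\mc P_5(p)$ directly with $m'=m^-_{p,k}$, $m''=m^+_{p,k}$, without locating consecutive minima.
\end{itemize}

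For the first inequality, it is cleaner to run the same two cases directly than to reflect $x\mapsto -x$. The reflection flips the sign of $B$, which the paper normalizes to $B\ge 0$, and it requires checking that the recursive construction is reflection-equivariant. The direct argument goes as follows. Local maxima in $(\sigma^{p,+}_{k-1,k},m^-_{p,k})$ lie strictly below $S(\sigma^{p,+}_{k-1,k})$, because $\sigma^{p,+}_{k-1,k}$ is the rightmost maximizer of $S$ on $[m^+_{p,k-1},m^-_{p,k}]$. Local maxima in $[m^-_{p,k},m^+_{p,k}]$ satisfy $S(\sigma)\le S(\mz M_p(k))+\mf h_{p-1}<S(\mz M_p(k))+\mf h_p\le \Lambda(\mz M_p(k-1),\mz M_p(k))=S(\sigma^{p,+}_{k-1,k})$, by $\mc P_5(p)$, \eqref{27}, and $\mc P_6(p)$ applied with $k-1$.
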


Let $\mss M_p(k)$ be the set of local
minima of $S(\cdot)$ that belong to the interval
$[\sigma^{p,+}_{k-1,k} , \sigma^{p,-}_{k,k+1}]$, but are not contained in
$\mz M_{p}(k)$ (see Figure \ref{fig-f3}):
\begin{equation*}
{\color{blue} \mss M_p(k) } \,:=\,
\big(\, \mz M \cap [\sigma^{p,+}_{k-1,k} ,
\sigma^{p,-}_{k,k+1}] \,\big)  \setminus \mz M_{p}(k) \;.
\end{equation*}

\begin{figure}
\centering
\begin{tikzpicture}[scale=0.5]
\draw [rounded corners] (0,-3.5) -- (0.5,-1.5);
\draw[rounded corners] (0.5,-1.5) .. controls (1,0.5) .. (1.5,3);
\draw[rounded corners] (1.5,3) .. controls (2.1,6.5) .. (3,3);  
\draw[rounded corners] (3,3) -- (3.2, 2.2);
\draw[rounded corners] (3.2,2.2) -- (3.5, 1);
\draw[rounded corners] (3.5, 1) .. controls (3.8, 0) .. (4.1, 0.5);
\draw[rounded corners] (4.1, 0.5) .. controls (4.4, 1) .. (4.7, 0.5);
\draw[rounded corners] (4.7, 0.5) .. controls (5, 0) .. (5.3, 1);
\draw[rounded corners] (5.3, 1) .. controls (5.6, 2) .. (5.9, 1);
\draw[rounded corners] (5.9, 1) -- (6.4, -1);
\draw[rounded corners] (6.4,-1) .. controls (6.9, -3) .. (7.4, -1);
\draw[rounded corners] (7.4,-1) .. controls (7.9, 1) .. (8.4, -1);
\draw[rounded corners](8.4,-1) .. controls (8.7,-2)..(9,-1);
\draw[rounded corners](9,-1).. controls (9.3,0)..(9.6,-1);
\draw[rounded corners] (9.6,-1) .. controls (10.1,-3) .. (10.6, -1);
\draw[rounded corners](10.6,-1)--(11.1,1);
\draw[rounded corners](11.1,1)..controls (11.4,2)..(11.7,1);
\draw[rounded corners](11.7,1)--(12.2,-1);
\draw[rounded corners](12.2,-1).. controls (12.7,-3)..(13.2,-1);
\draw[rounded corners](13.2,-1)..controls (13.7,1)..(14.2,3);
\draw[rounded corners](14.2,3)..controls (14.5,4)..(14.8,3);
\draw[rounded corners](14.8,3)..controls (15.3,1)..(15.8,3);
\draw[rounded corners](15.8,3)..controls (16.3,5)..(16.8,3);
\draw[rounded corners](16.8,3)..controls (17.1,2)..(17.4,2.5);
\draw[rounded corners](17.4,2.5)..controls(17.7,3)..(18,2.5);
\draw[rounded corners](18,2.5)..controls(18.3,2)..(18.6,3);
\draw[rounded corners](18.6,3)..controls(19.1,5)..(19.6,3.5);
\draw[rounded corners](19.6,3.5)..controls (20.1,2)..(20.5,3.5);
\draw[rounded corners](20.5,3.5)..controls (21,5)..(21.5,3.5);
\draw[rounded corners](21.5,3.5)--(23,-2);
\draw[rounded corners](23,-2)--(23.5,-3.5);

\draw[dashed, black](2.5,4.6)-- (16.3,4.6);

\fill(16.3,-5.7)node[below, font=\small]{$\sigma^{p,-}_{k,k+1}$};
\fill(21,-5.7)node[below, font=\small]{$\sigma^{p,+}_{k,k+1}$};
\draw[thick] (16.3,-5.7) -- (16.3,-5.3);
\draw[thick] (21,-5.7) -- (21,-5.3);

\draw[thick,fill,cyan](6.9,-5.5)circle(.2);
\draw[thick,fill,cyan](12.7,-5.5) circle(.2);
\draw[thick,fill,cyan](10.1,-5.5)circle(.2);

\fill(6.9,-5.7)node[below, font=\small]{$m^{p,-}_k$};
\fill(12.7,-5.7)node[below, font=\small]{$m^{p,+}_k$};

\draw[thick, fill, teal](3.8,-5.5) circle(.2);
\draw[thick, fill, teal](5,-5.5) circle(.2);
\draw[thick, fill, teal](8.7,-5.5)circle(.2);
\draw[thick, fill, teal](15.3,-5.5) circle(.2);

\draw[solid, thick, black](0,-5.5)--(24,-5.5);

\draw (2.15,-5.7) node[below, font=\small]{$\sigma^{p,+}_{k-1,k}$};
\draw[thick] (2.15,-5.7) -- (2.15,-5.3);

\draw[thick, fill, orange](17.1,-5.5)circle(.2);
\draw[thick, fill, orange](18.3,-5.5)circle(.2);
\draw[thick, fill, orange](20.1,-5.5)circle(.2);

\end{tikzpicture}
\caption{Fix $p>1$ and $k\in \bb Z$. The state $\mz M_p(k)$ is
represented by the cyan circles, while the set $\mss M_p(k)$ is
composed by the green circles.  As stated in Proposition \ref{l13},
the local minima in $M_p(k)$ have higher energy than the ones
contained in $\mz M_p(k)$, which are all of the same height. With
respect to the escape barriers of the state $\mz M_p(k)$, we have only
one global maxima separating $\mz M_p(k)$ and $\mz M_p(k-1)$, while
there are three between $\mz M_p(k)$ and $\mz M_p(k+1)$. Here,
$\sigma_{k-1,k}^{p,+}=\sigma_{k-1,k}^{p,-}$, while
$\sigma_{k,k+1}^{p,-}\,<\,\sigma_{k,k+1}^{p,+}$. The orange circles
represent the local minima in between the left-most and right-most
maxima separating the states $\mz M_p(k)$ and $\mz M_p(k+1)$. As
stated in Lemma \ref{l27}, these minima are higher than the ones
contained in $\mz M_p(k)$.  }

\label{fig-f3}
\end{figure}

The following proposition, \cite[Proposition A2]{lm}, states that
every local minima in $\mss M_p(k)$ is strictly \textit{higher} than the
ones contained in $\mz M_p(k)$. See Figure \ref{fig-f3}.

\begin{proposition}
\label{l13}
For all $k\in \bb Z$,
\begin{equation*}
\min \{ S(m) : m\in \mss M_p(k)\,\} \,>\, S(\mz M_{p}(k))\;.
\end{equation*}
\end{proposition}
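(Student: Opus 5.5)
We argue by induction on $p$, following the recursive construction of Appendix~\ref{secA1}.

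\emph{Base case.} For $p=1$ we have $\mz M_1(k)=\{m_k\}$ and $\sigma^{1,\pm}_{k-1,k}=\sigma_k$. Hence $[\sigma^{1,+}_{k-1,k},\sigma^{1,-}_{k,k+1}]=[\sigma_k,\sigma_{k+1}]$. Since local minima and maxima of $S$ alternate, this interval contains only the minimum $m_k$. So $\mss M_1(k)=\varnothing$ and the claim holds vacuously.

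\emph{Inductive step: splitting $\mss M_p(k)$.} Assume the statement at level $p-1$. By property (d) and \eqref{24}, $\mz M_p(k)=\bigcup_{i\in A}\mz M_{p-1}(i)$, where $\{\mz M_{p-1}(i):i\in A\}$ is an $\bb X_{p-1}$-closed irreducible class and $A=\lb a,b\rb$ is an interval of integers. By $\mc P_4(p)$ all these sets share the depth $S(\mz M_p(k))$. Write $I=[\sigma^{p,+}_{k-1,k},\sigma^{p,-}_{k,k+1}]$. Every $m\in\mss M_p(k)$ is of one of two types:
\begin{enumerate}
\item[(i)] $m\in \mss M_{p-1}(i)$ for some $i$ such that $\mz M_{p-1}(i)$ meets $I$;
\item[(ii)] $m\in \mz M_{p-1}(i)$ for some $i\notin A$ such that $\mz M_{p-1}(i)\subset I$.
\end{enumerate}
For type (i), the induction hypothesis gives $S(m)>S(\mz M_{p-1}(i))$. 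It therefore suffices to prove
\begin{equation*}
S(\mz M_{p-1}(i))\,\ge\, S(\mz M_p(k))
\end{equation*}
for every such $i$. When $i\in A$ this is an equality by $\mc P_4$. When $i\notin A$ we are in the same situation as type (ii), but now we need the strict inequality. So everything reduces to the following claim.

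\emph{Key claim.} If $i\notin A$ and $\mz M_{p-1}(i)\subset I$, then $S(\mz M_{p-1}(i))>S(\mz M_p(k))$.

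\emph{Proof plan for the key claim.} First, $\mz M_{p-1}(i)$ cannot lie in another $\bb X_{p-1}$-closed class. Such a class would itself be a state $\mz M_p(j)$ with $j\ne k$, and the well-ordering $\mc P_2(p)$, together with the definition of $\sigma^{p,\pm}$ as the extreme absolute maxima separating $\mz M_p(k)$ from its neighbours, forbids such a state from lying inside $I$. Hence $\mz M_{p-1}(i)$ is $\bb X_{p-1}$-transient.

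Next, follow a path of positive $\bb X_{p-1}$-jumps from $i$ that ends in a closed class. By $\mc P_7$ the path moves by nearest-neighbour steps. By $\mc P_8$ the depth is non-increasing along the path. Because $i$ is transient, some step on the path is one-directional, and by $\mc P_8$ that step strictly decreases the depth. If the path reaches $A$ without leaving $I$, we get $S(\mz M_{p-1}(i))>S(\mz M_p(k))$, as required.

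\emph{Main obstacle: the path cannot leave $I$.} A step of the path that exits $I$ must cross the barrier $S(\sigma^{p,+}_{k-1,k})$ or $S(\sigma^{p,-}_{k,k+1})$. By $\mc P_7$, a jump from a state $\mz M_{p-1}(j)$ crosses a barrier of height exactly $\mf h_{p-1}+S(\mz M_{p-1}(j))$. I would first try to compare heights using two facts:
\begin{itemize}
\item Lemma~\ref{l15}: $S(\sigma^{p,\pm})$ strictly exceeds every interior maximum, in particular the barriers $\le \mf h_{p-1}+S(\mz M_p(k))$ allowed by $\mc P_5$;
\item $\mf h_p>\mf h_{p-1}$ together with $\mc P_6(p)$.
\end{itemize}
If the direct comparison fails, the fallback is a secondary induction on the distance $|i-A|$. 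The state adjacent to $A$ must jump into $A$, because the barrier towards $A$ is interior to $I$ and hence lower. Once every state closer to $A$ is known to be strictly deeper than $S(\mz M_p(k))$ and to drain into $A$, the next state inherits the same property.
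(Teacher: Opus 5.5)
The paper does not prove this statement; it imports it from \cite[Proposition A2]{lm}. I therefore judge your argument on its own. The overall strategy is sound: induction on $p$, then $\mc P_7$ and $\mc P_8$ along a path of $\bb X_{p-1}$-jumps ending in a closed class. The induction step, however, has two genuine holes.

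\textbf{The split (i)/(ii) is not exhaustive for $p\ge 3$.} Relative to level $p-1$, the line is covered by two kinds of sets. The first are the intervals $[\sigma^{p-1,+}_{i-1,i},\sigma^{p-1,-}_{i,i+1}]$. The second are the open gaps $(\sigma^{p-1,-}_{i,i+1},\sigma^{p-1,+}_{i,i+1})$ between the leftmost and rightmost absolute maxima separating $\mz M_{p-1}(i)$ from $\mz M_{p-1}(i+1)$. Suppose $\mz W^{(p-1)}_{i,i+1}$ has two or more elements. Then such a gap contains a local minimum, since critical points alternate. This minimum lies in no $\mz M_{p-1}(\cdot)$ and in no $\mss M_{p-1}(\cdot)$. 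If moreover $i,i+1\in A$, it lies in $I\setminus\mz M_p(k)$, that is, in $\mss M_p(k)$.

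For a concrete case, take three wells $a<c<b$ with $S(a)=S(b)=0$ and $S(c)=1$. Put both maxima adjacent to $c$ at height $2$, and choose the outer barriers large enough that $\{a\}$ and $\{b\}$ merge at level $3$. Then $c$ is $\bb X_1$-transient, and $\{a\},\{b\}$ are level-$2$ states with $\sigma^{2,-}<c<\sigma^{2,+}$. Consequently $c\in\mss M_3(k)$, yet your level-$2$ hypothesis says nothing about $c$.

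To close this case you need Lemma~\ref{l27} at level $p-1$, which gives $S(c)>\max\{S(\mz M_{p-1}(i)),S(\mz M_{p-1}(i+1))\}$. Combine it with your key claim, or carry such a statement in the induction hypothesis. This is exactly the extra case the paper handles with Lemma~\ref{l27} in the proof of Proposition~\ref{depth_equivalent}.

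\textbf{The exit step is not proved, and your fallback is wrong as stated.} The state $b+1$ adjacent to $A$ generally has \emph{both} its barriers interior to $I$. So the claim that ``the barrier towards $A$ is lower'' is unfounded. Moreover, a state that only jumps away from $A$ inherits nothing from the states closer to $A$. The $\mc P_5$ bound on barriers inside $\mz M_p(k)$ plays no role here.

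The comparison that works is made at the \emph{last} state. Let $d>b$ be the largest index with $\mz M_{p-1}(d)\subset I$. Its right barrier $\Lambda(\mz M_{p-1}(d),\mz M_{p-1}(d+1))$ is at least $S(\sigma^{p,-}_{k,k+1})$. Its left barrier is attained at a local maximum in $[m^-_{p,k},\sigma^{p,-}_{k,k+1})$, so by Lemma~\ref{l15} it is strictly smaller than $S(\sigma^{p,-}_{k,k+1})$. Now $\mf h_{p-1}\le h^{p-1}_d$, which equals the left barrier minus $S(\mz M_{p-1}(d))$. Hence the right barrier minus $S(\mz M_{p-1}(d))$ exceeds $\mf h_{p-1}$, and $\mc P_7(p-1)$ forbids the jump $d\to d+1$. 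The same argument applies to the first state after $\sigma^{p,+}_{k-1,k}$.

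It follows that the states inside $I$ form an $\bb X_{p-1}$-closed finite set. The closed class reached from $i$ must therefore be $A$, and $\mc P_8(p-1)$ together with one one-way step gives the strict inequality.

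Alternatively, argue by contradiction. If $S(\mz M_{p-1}(i))\le S(\mz M_p(k))$, every state on the path is at least that deep. An exit jump would then require a barrier at most $S(\mz M_p(k))+\mf h_{p-1}<S(\mz M_p(k))+\mf h_p\le S(\sigma^{p,\pm})$, using $\mc P_6(p)$ and \eqref{27}. This is why your direct comparison only appears to fail for states that are high anyway.

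Finally, type (i) only requires $\mz M_{p-1}(i)$ to \emph{meet} $I$, whereas your key claim assumes $\mz M_{p-1}(i)\subset I$. You must rule out a level-$(p-1)$ state straddling $\sigma^{p,\pm}$. This follows from $\mc P_5(p-1)$, $\mc P_6(p-1)$ and $\mf h_{p-1}>\mf h_{p-2}$, but it needs to be stated.
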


The next result, \cite[Lemma A11]{lm}, states that the local minima in
between the leftmost and rightmost absolute maxima separating any two
states, say $\mz M_p(k)$ and $\mz M_p(k+1)$, are higher than the ones
belonging $\mz M_p(k) \cup \mz M_p(k+1)$. See Figure \ref{fig-f3}. The
case $p=1$ is excluded because
$\sigma_{k,k+1}^{1,-}=\sigma_{k,k+1}^{1,+}$, for all $k\in \bb Z$.

\begin{lemma}
\label{l27}
Fix $2\le p\le \mf q$, $k\in \bb Z$, and suppose that
$\sigma^{p,-}_{k,k+1} <\sigma^{p,+}_{k,k+1}$. Then,
\begin{equation*}
\min \big\{ S(x) : x \in
[\sigma^{p,-}_{k,k+1} \,,\, \sigma^{p,+}_{k,k+1} ] \,\big\} \, >\,
\max\{S(\mc M_{p}(k)),S(\mc M_p(k+1))\} \;.
\end{equation*}
\end{lemma}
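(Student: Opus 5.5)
We need to prove Lemma \ref{l27}: the minimum of $S$ over $[\sigma^{p,-}_{k,k+1},\sigma^{p,+}_{k,k+1}]$ exceeds $\max\{S(\mz M_p(k)),S(\mz M_p(k+1))\}$.

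The plan is a contradiction argument based on the recursive construction of the partitions $\mz S_q$, $q\le p$.

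\textbf{Step 1 (reduction to a local minimum).} Suppose the claim fails. The endpoints $\sigma^{p,\pm}_{k,k+1}$ are absolute maxima of $S$ on $[m^+_{p,k},m^-_{p,k+1}]$. Since $S(\mz W^{(p)}_{k,k+1}) - S(\mz M_p(k))\ge \mf h_p>0$ by $\mc P_6(p)$, the minimum of $S$ over the closed interval is attained at an interior point. Under condition H this point is a critical point, hence a local minimum $m^*\in\mz M$ lying strictly between two absolute maxima, with $S(m^*)\le S(\mz M_p(k))$ (the case of $k+1$ is symmetric).

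\textbf{Step 2 (tracking $m^*$ through the layers).} Since $m^*$ lies strictly between $m^+_{p,k}$ and $m^-_{p,k+1}$, it does not belong to $\mz M_p(j)$ for any $j$. So at some level $q<p$ the state $\mz M_q(i)$ containing $m^*$ (for $q=1$, $\{m^*\}$) was transient for $\bb X_q$, and $m^*$ was discarded. By $\mc P_7(q)$ and $\mc P_8(q)$, that state $\mz M_q(i)$ has a jump toward a neighbour at depth $\le S(m^*)$ (strictly lower if the return rate vanishes), across a barrier of height exactly $\mf h_q\le\mf h_{p-1}<\mf h_p$ above $S(m^*)$. The escape from $m^*$ toward either side must cross $\sigma^{p,-}_{k,k+1}$ or $\sigma^{p,+}_{k,k+1}$. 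This costs at least $S(\mz W^{(p)}_{k,k+1})-S(m^*)\ge S(\mz W^{(p)}_{k,k+1})-S(\mz M_p(k))\ge \mf h_p$, unless the jump reaches a deeper minimum still inside $(\sigma^{p,-},\sigma^{p,+})$. In the latter case we replace $m^*$ by that deeper minimum and iterate. Finiteness (periodicity bounds the number of minima in the interval) yields a deepest minimum $\bar m$ of the open interval with $S(\bar m)\le S(\mz M_p(k))$.

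For $\bar m$, every state containing it that is built before level $p$ is confined between $\sigma^{p,-}$ and $\sigma^{p,+}$. So all of its exit barriers are $\ge \mf h_p>\mf h_q$, and it is absorbing (closed) for every $\bb X_q$ with $q<p$. By the recursive definition \eqref{24}, $\bar m$ therefore survives into some $\mz M_p(j)$. This contradicts the fact that $\bar m$ lies strictly between $m^+_{p,k}$ and $m^-_{p,k+1}$, where $\mz S_p$ is well ordered by $\mc P_2(p)$.

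\textbf{Step 3 (main obstacle).} The hardest step is the barrier bookkeeping in Step 2. One must show that a state containing $\bar m$ cannot merge at some level $q<p$ with minima outside $(\sigma^{p,-},\sigma^{p,+})$. Such a merger would require an irreducible class joined through barriers of height $\le\mf h_{p-1}$ above the common depth, by $\mc P_5$. However, any connection across $\sigma^{p,\pm}$ has height $\ge S(\sigma^{p,\pm})-S(\bar m)\ge \mf h_p$.

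I would handle this by induction on $q$, using $\mc P_5(q)$ and $\mc P_6(q)$ together with \eqref{27}. The inductive claim is that every $\mz M_q(i)$ meeting the open interval $(\sigma^{p,-}_{k,k+1},\sigma^{p,+}_{k,k+1})$ is contained in it. The lemma then follows, since such a state is never recurrent-merged outward before level $p$, which gives the contradiction above.
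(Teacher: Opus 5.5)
The paper does not prove this lemma; it quotes \cite[Lemma A11]{lm}, so I judge your argument on its own. Your strategy is the right one: let $\bar m$ be a deepest minimum of $S$ in $(\sigma^{p,-}_{k,k+1},\sigma^{p,+}_{k,k+1})$, put $s^*:=S(\bar m)$, and show that $\bar m\in\mz M_q$ for every $q\le p$, which contradicts $\mc P_2(p)$. Note first that $s^*\le\max\{S(\mz M_p(k)),S(\mz M_p(k+1))\}$, and $\mc P_6(p)$ applied at $k$ and at $k+1$ gives $S(\mz W^{(p)}_{k,k+1})-s^*\ge\mf h_p$; no symmetry reduction is needed.

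\textbf{The gap.} The step that carries the argument does not follow. Confining $\mz M_q(i)\ni\bar m$ to the interval only controls barriers across $\sigma^{p,\pm}_{k,k+1}$. It does not give ``all exit barriers $\ge\mf h_p$'', and the state need not be absorbing. A neighbouring state inside the interval at the same depth $s^*$ may be reached through a barrier of exactly $\mf h_q$, and nothing you say excludes this. What you must show instead is that the $\bb X_q$-communicating class of $\mz M_q(i)$ is closed.

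\textbf{Repairing closedness.} Suppose $R_q(\mz M_q(i),\mz M_q(i+1))>0$. By $\mc P_7(q)$ the barrier is $s^*+\mf h_q<s^*+\mf h_p\le S(\mz W^{(p)}_{k,k+1})$. Hence $\sigma^{p,+}_{k,k+1}$ does not separate the two states, and the leftmost element of $\mz M_q(i+1)$ is a minimum in the interval. By $\mc P_8(q)$ its depth is $\le s^*$, hence equal to $s^*$. The strict clause of $\mc P_8(q)$ then forces $R_q(\mz M_q(i+1),\mz M_q(i))>0$; the case $i-1$ is symmetric. The new state again has depth $s^*$ and is confined (see the next paragraph), so iterating shows that every state reachable from $\mz M_q(i)$ communicates with it. The class is therefore closed and irreducible, and $\bar m\in\mz M_{q+1}$. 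This equal-depth case is exactly what your ``replace $m^*$ by a deeper minimum and iterate'' misses. That iteration is vacuous anyway, since $m^*$ already minimizes $S$ on the whole interval.

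\textbf{Confinement is easier than your Step~3.} You need much less than the inductive claim there. For $q\ge2$, suppose a state of depth $s^*$ contains a minimum $m'$ of the interval and a minimum $m''$ outside it. Then $\mc P_4(q)$ and $\mc P_5(q)$ give $S(\mz W^{(p)}_{k,k+1})-s^*\le\Lambda(\{m'\},\{m''\})-s^*\le\mf h_{q-1}<\mf h_p$, which is a contradiction. This is your own remark in Step~3, and it needs no induction.

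The general statement that every $\mz M_q(i)$ meeting the interval is contained in it is not needed, and your sketch does not prove it. For states of depth above $S(\mz W^{(p)}_{k,k+1})-\mf h_{q-1}$ the $\mc P_5$ bound yields nothing. One would have to use that $\sigma^{p,-}_{k,k+1}$ and $\sigma^{p,+}_{k,k+1}$ are the leftmost and rightmost absolute maxima.

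With these repairs, induction from $\bar m\in\mz M$ at level $1$ gives $\bar m\in\mz M_p$. This contradicts $\mc P_2(p)$, because no element of $\mz M_p$ lies in $(m^+_{p,k},m^-_{p,k+1})$.
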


We conclude this subsection with a result regarding the depth of the
local minima belonging to the same $\bb X_p-$equivalent class, for any
intermediate layer $1\leq p<\mf q$. Fix a $\bb X_p-$equivalent class
$\mf D\subset \mz S_p$. Recall the definition of the set
$\mz W_{\mf D}^{(p)}$ given in \eqref{max_equiv}, and the definitions
of the local maxima $\sigma_{\mf D}^-, \sigma_{\mf D}^{+}$ given in
\eqref{39}. Let $\mz M_p(\mf D)\subset \mz M_p$ be the set of local
minima of level $p$ belonging to the equivalence class $\mf D$
\begin{equation*}
{\color{blue}\mz M_p(\mf D)}:= \bigcup_{\mz M\in \mf D}\mz M. 
\end{equation*}
Note that, by properties $\mc P_4$ and $\mc P_8$ in the previous
subsection, all the local minima in $\mz M_p(\mf D)$ have the same
depth,
\begin{equation}
\label{depth_equiv_class}
S(m) = S(m')\,,\quad m,m'\in \mz M_p(\mf D)\,.
\end{equation}
Let $\mss M_p(\mf D)$ be set of local minima of $S(\cdot)$ that belong
to the interval $[\sigma_{\mf D}^-,\sigma_{\mf D}^+]$, but are not
contained in $\mz M_p(\mf D)$,
\begin{equation*}
{\color{blue}\mss M_p(\mf D)} := \big(\, \mz M
\cap [\sigma_{\mf D}^-,\sigma_{\mf D}^+] \,\big)
\setminus \mz M_{p}(\mf D) \;.
\end{equation*}
The next result states that every local minima in $\mss M_p(\mf D)$ is
strictly higher than the ones in $\mz M_p(\mf D)$.

\begin{proposition}
\label{depth_equivalent}
Fix $1\leq p<\mf q$. For every $\bb X_p(\cdot)-$equivalent
class $\mf D\subset \mz S_p$,
 \begin{equation*}
\min \{ S(m) : m\in \mss M_p(\mf D)\,\} \,>\, S(\mz M_{p}(\mf D))\;.
\end{equation*}
\end{proposition}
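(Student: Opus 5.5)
The plan is to reduce to the two neighbouring classes that $\mf D$ separates. Write $\mz D=\{\mz M_p(k): l_{\mf D}\le k\le r_{\mf D}\}$. Then $\mss M_p(\mf D)$ consists of the minima $m$ in $[\sigma^-_{\mf D},\sigma^+_{\mf D}]$ that are not in $\mz M_p(\mf D)$. Every such $m$ lies in one of two kinds of interval:
\begin{itemize}
\item[(i)] an interval $[\sigma^{p,+}_{k-1,k},\sigma^{p,-}_{k,k+1}]$ with $k\in D$, in which case $m\in\mss M_p(k)$;
\item[(ii)] an interval $[\sigma^{p,-}_{k-1,k},\sigma^{p,+}_{k-1,k}]$ between the leftmost and rightmost absolute maxima separating consecutive states, with $l_{\mf D}\le k\le r_{\mf D}+1$.
\end{itemize}
The endpoints $\sigma^\pm_{\mf D}$ are maxima, not minima, so the boundary pieces lie inside these two kinds of interval. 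The minima at depth $S(\mz M_p(\mf D))$ are excluded by definition of $\mss M_p(\mf D)$.

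By \eqref{depth_equiv_class}, all of $\mz M_p(\mf D)$ share the common depth $S(\mz M_p(\mf D))=S(\mz M_p(k))$ for every $k\in D$. This follows from $\mc P_4(p)$ inside each state and from $\mc P_8(p)$ along the positive jump rates joining neighbours within the equivalence class, since a class has two-sided positive rates and hence equal depths.

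In case (i), Proposition \ref{l13} gives $S(m)>S(\mz M_p(k))=S(\mz M_p(\mf D))$. In case (ii) with $k-1,k\in D$, Lemma \ref{l27} gives $S(m)>\max\{S(\mz M_p(k-1)),S(\mz M_p(k))\}=S(\mz M_p(\mf D))$. Lemma \ref{l27} is stated for $p\ge 2$; for $p=1$ case (ii) is vacuous because $\sigma^{1,-}=\sigma^{1,+}$. Also for $p=1$, Proposition \ref{l13} is trivial on these intervals, since the only minimum between consecutive maxima is $m_k$.

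The boundary intervals of case (ii), namely $k=l_{\mf D}$ and $k=r_{\mf D}+1$, need separate care, and I expect this to be the main obstacle. There $\sigma^-_{\mf D}=\sigma^{p,1}_{l_{\mf D}-1,l_{\mf D}}$ is the leftmost maximum of $\mz W^{(p)}_{l_{\mf D}-1,l_{\mf D}}$. A minimum lying between $\sigma^-_{\mf D}$ and $\sigma^{p,+}_{l_{\mf D}-1,l_{\mf D}}$ is still in the interval $[\sigma^{p,-}_{l_{\mf D}-1,l_{\mf D}},\sigma^{p,+}_{l_{\mf D}-1,l_{\mf D}}]$. Lemma \ref{l27} then bounds it from below by $\max\{S(\mz M_p(l_{\mf D}-1)),S(\mz M_p(l_{\mf D}))\}\ge S(\mz M_p(\mf D))$, strictly, so the bound holds regardless of the depth of the outside neighbour. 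The same argument works at the right end.

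Since there are finitely many minima in $[\sigma^-_{\mf D},\sigma^+_{\mf D}]$, the minimum over $\mss M_p(\mf D)$ is attained, and the strict inequality follows.
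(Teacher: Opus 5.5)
Your proposal is correct and follows essentially the paper's proof. Like the paper, you split $[\sigma^-_{\mf D},\sigma^+_{\mf D}]$ into two kinds of piece: the gaps between leftmost and rightmost absolute maxima, handled by Lemma \ref{l27}, and the intervals $[\sigma^{p,+}_{k-1,k},\sigma^{p,-}_{k,k+1}]$, handled by Proposition \ref{l13}; both then conclude via \eqref{depth_equiv_class}. Your treatment of the boundary gaps and of the degenerate case $p=1$ only spells out what the paper leaves implicit.
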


\begin{proof}
Fix $m\in \mss M_p(\mf D)$. Then, either
$m\in \mz M\cap [\sigma_{k,k+1}^{p,-}, \sigma_{k,k+1}^{p,+}]$ or
$m\in \mss M_p(j)$, for some $k$, $j\in \bb Z$. In the first case, the
result follows by Lemma \ref{l27} and \eqref{depth_equiv_class}. In
the second case, it follows by Proposition \ref{l13} and
\eqref{depth_equiv_class}.
\end{proof}

\section{Metastable states}
\label{secA2}

In this section we introduce a useful formula for the metastable
states defined in \eqref{21}. 
Recall that
$S_p$ denotes the index set of the state space $\mc S_p$,
$1\leq p\leq \mf q$. 
This formula seeks to relate the metastable states
$\mu_{\mc M_{p+1}(k)}^{p+1}$, $1\leq p<\mf q$, $k\in S_{p+1}$, with
the stationary states of $\widehat{\bb X}_{p}(\cdot)$. This requires
some preparation. Fix $1\leq p\leq \mf q$. Let $\mf D\subset \mc S_p$
be a $\widehat{\bb X}_p(\cdot)-$equivalent class with two or more
elements. Recall from Section \ref{sec5} the definition of the
generator $\widehat{\bb L}_{p, \mf D}$, introduced in \eqref{37}, and
that we denote by $\nu_{\mf D}^p \in \mss P(\mf D)$ the stationary
state of the Markov chain induced by $\widehat{\bb L}_{p,\mf D}$.

\begin{lemma}
\label{pro_red_stat}
Assume that the Markov chain $\widehat{\bb X}_p(\cdot)$ is
reducible. Then, the generator $\widehat{\bb L}_{p,\mf D}$ induces a
reversible process and the stationary state $\nu_{\mf D}^p$ is given by
\begin{equation*}
\nu^{p}_{\mf D}(\mc M_{p}(j)) =
\frac{1}{Z_p(\mf D)}\, \pi_{p}(j)
\,,\quad \mc M_p(j)\in \mf D\,, \quad
\text{where}\;\;
Z_p(\mf D) \,:=\, \sum_{i\in S_{p}: \mc M_{p}(i)\in \mf D}\pi_{p}(i) \,,
\end{equation*}
and $\pi_p(j)$ is given by \eqref{p_weights}.
\end{lemma}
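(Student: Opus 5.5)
The plan is to verify detailed balance directly for the reflected generator $\widehat{\bb L}_{p,\mf D}$ with respect to the weights $\pi_p(j)$, using the explicit form \eqref{50b} of the jump rates. Since $\widehat{\bb X}_p$ jumps only to nearest neighbours, $\mf D$ is a set of consecutive indices $\{\mc M_p(l),\dots,\mc M_p(r)\}$. Because $\widehat{\bb X}_p$ is reducible, $\mf D$ is a proper subset of $\mc S_p$, so it does not wrap around the torus. Its lift $\mz D$ is therefore a finite interval of states $\mz M_p(l),\dots,\mz M_p(r)$ on $\bb R$, and the reflected chain is a finite birth–death chain. A finite irreducible birth–death chain is automatically reversible, which settles the first claim. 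It then suffices to check that $\pi_p(j)R_p(\mz M_p(j),\mz M_p(j+1)) = \pi_p(j+1)R_p(\mz M_p(j+1),\mz M_p(j))$ for $l\le j<r$.

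By \eqref{50b}, $\pi_p(j)R_p(\mz M_p(j),\mz M_p(j+1)) = \sigma_p(j,j+1)^{-1}\mb 1\{h^{p,+}_j=\mf h_p\}$. Likewise, $\pi_p(j+1)R_p(\mz M_p(j+1),\mz M_p(j)) = \sigma_p(j,j+1)^{-1}\mb 1\{h^{p,-}_{j+1}=\mf h_p\}$, where I use $\sigma_p(j+1,j)=\sigma_p(j,j+1)$ since both are computed from the same set $\mz W^{(p)}_{j,j+1}$. The $\pi_p$ factors cancel, so detailed balance reduces to showing that the two indicators coincide.

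To compare the indicators, note that $h^{p,+}_j - h^{p,-}_{j+1} = S(\mz M_p(j+1)) - S(\mz M_p(j))$. Since $j$ and $j+1$ lie in the same irreducible class, both indicators equal one. Indeed, irreducibility on a nearest-neighbour chain forces both rates between consecutive states of $\mf D$ to be positive. Equivalently, $\mc P_8(p)$ applied in both directions gives equal depths, as in \eqref{depth_equiv_class}, hence $h^{p,+}_j=h^{p,-}_{j+1}$.

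The stationary measure on $\mf D$ is then proportional to $\pi_p(j)$, and normalising by $Z_p(\mf D)$ gives the formula in the lemma.

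The only delicate point is the wrap-around. If $\mf D$ were all of $\mc S_p$ with the projected chain irreducible, the cycle could carry a net current, and detailed balance could fail, which is exactly the (NonRev) case. The reducibility hypothesis is used precisely to exclude this. Since $\mf D\neq\mc S_p$, some edge out of the cyclic arrangement is missing, so the restricted chain is a path rather than a cycle.
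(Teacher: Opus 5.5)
Your proposal is correct and follows essentially the same route as the paper. Both arguments observe that, because $\widehat{\bb X}_p$ is reducible and jumps only to nearest neighbours, both rates between consecutive states of $\mf D$ are positive, so both indicators in \eqref{50b} equal one. Both then verify detailed balance for $\pi_p$ directly, with the $\pi_p$ factors cancelling against the common $\sigma_p(k,k+1)$. Your explicit remark that reducibility rules out the cyclic (wrap-around) case is a clarification the paper leaves implicit.
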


\begin{proof}
Let $\nu^*$ be the probability measure defined by
\begin{equation*}
\nu^*(\mc M_p(k)) \,:=\,
\frac{1}{Z_p(\mf D)}\, \pi_p(k) \,,
\quad \mc M_p(k)\in \mf D.
\end{equation*}
We claim that $\nu^*$ satisfies the detailed balance conditions. Fix
$k\in S_p$ such that $\mc M_p(k), \mc M_p(k+1)\in \mf D$. Since
$\mf D$ is an $\widehat{\bb X}_p$-equivalent class, and
$\widehat{\bb X}_p(\cdot)$ is a reducible Markov chain jumping only to
nearest neighbor sets, $\widehat{R}_p(\mc M_p(k),\mc M_p(k+1))>0$ and
$\widehat{R}_p(\mc M_p(k+1),\mc M_p(k))>0$. Thus, by \eqref{50b},
$h^{p,+}_k=h^{p,-}_{k+1}= \mf h_p$, and 
\begin{align*}
& \nu^{*}(\mc M_p(k))\, \widehat{R}_{p}(\mc M_p(k),\mc M_p(k+1))
= \frac{1}{Z_p(\mf D)}\, \pi_p(k)  \,
\frac{1}{\pi_{p}(k)\, \sigma_p(k,k+1)}
\\
& \quad = \frac{1}{Z_p(\mf D)}\, \pi_p(k+1)  \,
\frac{1}{\pi_{p}(k+1)\, \sigma_p(k,k+1)}
= \nu^{*}(\mc M_p(k+1)) \, \widehat{R}_{p}(\mc M_p(k+1),\mc M_p(k))\,, 
\end{align*}
as claimed.
\end{proof}

Fix $1\leq p < \mf q$ and $k\in S_{p+1}$. Denote by
${\color{blue}\widehat{\mf R}_k^p}$ the
$\widehat{\bb X}_p(\cdot)-$closed irreducible class generating the
state $\mc M_{p+1}(k)$ in the sense of \eqref{24}. Let
$\nu_{\mc M_{p+1}(k)}^p\in \mss P(\widehat{\mf R}_k^p)$,
$k\in S_{p+1}$, be the probability measure defined by
\begin{equation}
\label{stat_recur}
{\color{blue}\nu_{\mc M_{p+1}(k)}^p(\mc M)}
\,=\, \nu_{\widehat{\mf R}_k^p}^p(\mc M)\,,
\quad \mc M\in \widehat{\mf R}_k^p.
\end{equation}
The next result provides a recursive formula for the measures
$\mu^{p}_{\mc M_p(r)}(\cdot)$, $0\le r <\mf u_p$, introduced in
\eqref{21}.

\begin{proposition}
\label{lifted_red_stat}
Fix $1\leq p<\mf q$ and $k\in S_{p+1}$. Then,
\begin{equation*}
\mu_{\mc M_{p+1}(k)}^{p+1}(\cdot)
\,=\, \sum_{r\in S_p: \mc M_p(r)\subset \mc M_{p+1}(k)}
\nu^{p}_{\mc M_{p+1}(k)}(\mc M_p(r)) \, \mu^{p}_{\mc M_p(r)}(\cdot)\,.
\end{equation*}
\end{proposition}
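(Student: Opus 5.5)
The plan is to reduce both sides to sums over level-one minima and then use the structure of closed irreducible classes.

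First I would unfold the definitions. By \eqref{21} at level $p+1$,
\[
\mu^{p+1}_{\mc M_{p+1}(k)}=\frac{1}{\pi_{p+1}(k)}\sum_{\mf m\in \mc M_{p+1}(k)}\pi_1(\{\mf m\})\,\delta_{\mf m}.
\]
By \eqref{24}, the set $\mc M_{p+1}(k)$ is the disjoint union of the sets $\mc M_p(r)$ with $\mc M_p(r)\in\widehat{\mf R}^p_k$. These are disjoint because $\mz S_p$ is a partition (property $\mc P_2$).

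Splitting the sum over these blocks and using \eqref{21} at level $p$ rewrites the right-hand side as
\[
\sum_{r}\frac{\pi_p(r)}{\pi_{p+1}(k)}\,\mu^p_{\mc M_p(r)}.
\]
Here I use $\pi_p(r)=\sum_{\mf m\in\mc M_p(r)}\pi_1(\{\mf m\})$ from \eqref{p_weights}. The same additivity gives $\pi_{p+1}(k)=\sum_{r}\pi_p(r)$.

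It remains to identify $\pi_p(r)/\sum_{r'}\pi_p(r')$ with $\nu^p_{\widehat{\mf R}^p_k}(\mc M_p(r))$, that is, with $\nu^p_{\mc M_{p+1}(k)}(\mc M_p(r))$ as defined in \eqref{stat_recur}. When the class $\widehat{\mf R}^p_k$ is a singleton this is trivial.

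Otherwise I would invoke Lemma~\ref{pro_red_stat}: the reflected generator on the class is reversible, with stationary measure proportional to $\pi_p$. The main obstacle is that this lemma is stated under the hypothesis that $\widehat{\bb X}_p$ is reducible. So I must check that this holds for $p<\mf q$, or at least that its proof, which is detailed balance, applies to every closed class.

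The proof of that lemma only uses that consecutive states in the class jump to each other with barrier $\mf h_p$. It then uses that $\sigma_p(k,k+1)$ is symmetric and that $R_p=1/(\pi_p\sigma_p)$. For a class of the lifted chain $\bb X_p$ with $p<\mf q$, the class is a finite interval of indices, and the nearest-neighbour structure (postulate $\mc P_7$) forces all internal bonds to be two-way. Hence detailed balance with weights $\pi_p$ holds.

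The delicate case is when the projected class wraps around the torus, which could only happen at the last level. I would rule it out for $p<\mf q$ using Lemma~\ref{s11}, or by working with the lifted class via \eqref{lift_equiv_1}. Uniqueness of the stationary measure of an irreducible finite chain then finishes the argument.
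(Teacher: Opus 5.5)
Your proof is correct and follows the paper's route. You unfold \eqref{21} over the decomposition \eqref{24} of $\mc M_{p+1}(k)$ into the blocks $\mc M_p(r)$ to obtain the weights $\pi_p(r)/\pi_{p+1}(k)$, then identify these with $\nu^p_{\mc M_{p+1}(k)}$ through the detailed-balance argument of Lemma~\ref{pro_red_stat}. Your extra check is a point the paper passes over by citing the lemma directly, and it matters because the lemma's reducibility hypothesis can fail within the range of the statement, e.g.\ at $p=\widehat{\mf q}=\mf q-1$ under (Rev)(c), where $\widehat{\bb X}_p$ is irreducible. That check is that for $p<\mf q$ the closed classes are finite intervals with two-way internal bonds and no positive-rate wrap-around edge, so detailed balance with weights $\pi_p$ holds.
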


\begin{proof}
The case $p=1$ follows from the definition \eqref{21} of the measures
$\mu_{\mc M_{2}(k)}^{2}(\cdot)$, $\mu_{\mc M_{1}(r)}^{1}(\cdot)$, and
Lemma \ref{pro_red_stat}.

Fix $2\le p<\mf q$, $k\in S_{p+1}$. By \eqref{21} and \eqref{24}, 
\begin{align*}
\mu_{\mc M_{p+1}(k)}^{p+1} 
= \sum_{l\in S_p: \mc M_p(l)\subset \mc M_{p+1}(k)}
\frac{\pi_{p}(l)}{\pi_{p+1}(k)}
\sum_{m \in \mc M_p(l)}
\frac{\pi_1(\{m\})}{\pi_p(l)}\,\delta_{m}
= \sum_{l\in S_p: \mc M_p(l)\subset \mc M_{p+1}(k)}
\frac{\pi_{p}(l)}{\pi_{p+1}(k)} \, \mu_{\mc M_p(l)}^p\,.
\end{align*}
It remains to apply Lemma \ref{pro_red_stat} to complete the proof. 
\end{proof}

\section{Hitting times}
\label{secA3}

In this section, we present some estimates of the
$X_\epsilon(\cdot)$-hitting time of wells. The following result,
\cite[Lemma 4.3]{lm}, states that the set $\mc M_p$ is reached in the
time-scale $\epsilon^{-1}e^{\mf h_{p-1}/\epsilon}$.

\begin{lemma}
\label{hitting_1}
There exists a finite constant $C_0 = C_0(\mf q, \mtt a(\cdot))$ such
that
\begin{equation*}
\sup_{x\in \bb T}
\bb E_{x}^{\epsilon} [\, \tau (\mc M_p) \, ]
\,\le\, C_0 \, \epsilon^{-1}\, e^{\mf h_{p-1}/\epsilon}
\end{equation*}
for all $1\le p\le \mf q$, provided $\mf h_0=0$.
\end{lemma}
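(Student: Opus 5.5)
The statement to prove is Lemma \ref{hitting_1}: $\sup_{x}\bb E^\epsilon_x[\tau(\mc M_p)]\le C_0\epsilon^{-1}e^{\mf h_{p-1}/\epsilon}$. The plan is to prove it by induction on $p$, bounding expected hitting times through the explicit one-dimensional Green function of the generator $\ms L_\epsilon$.

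\textbf{Explicit formula on an interval.} Fix $x\in\bb T\setminus\mc M_p$ and lift to $\bb R$. The process started at $x$ reaches $\mc M_p$ no later than the exit time from the interval $I=(m_p^-(x),m_p^+(x))$. The function $u(y)=\bb E_y[\tau(\{m_p^-,m_p^+\})]$ solves $\epsilon\mss a u''+\mss b u'=-1$ on $I$ with zero boundary values. Since $\mss b/\mss a=-S'$, the scale function has derivative $e^{S/\epsilon}$. The Green function representation then gives the bound
\begin{equation*}
u(y)\le \frac{1}{\epsilon}\int_I\int_I \frac{1}{\mss a(z)}\,e^{[S(w)-S(z)]/\epsilon}\,\mb 1\{\text{$z$ lies between $w$ and the exit side}\}\,dz\,dw .
\end{equation*}
The integrand is therefore at most $\epsilon^{-1}\exp\{\sup[S(w)-S(z)]/\epsilon\}$. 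The supremum runs over pairs with $z$ a point the process must climb out of, meaning $z$ lies on the far side of $w$ from the target. The interval has length at most $1$, so the prefactor is bounded.

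\textbf{Key step: bounding the barrier.} The heart of the argument is to show that, for every $w$ and every $z$ in $I$ positioned so that the path from $z$ must cross $w$ to reach $\{m_p^\pm\}$, we have $S(w)-S(z)\le \mf h_{p-1}$. Only the escape direction of smaller cost matters. To make this rigorous, I would not use the crude double integral directly. Instead I would split $I$ by the level-$(p-1)$ structure.

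\textbf{Induction.} For $p=1$, the claim $\sup\bb E[\tau(\mc M)]\le C\epsilon^{-1}$ follows from the formula, because between consecutive minima $S$ is monotone on each side of the separating maximum. The only possible barrier is therefore $0$ in the downhill direction, and the integral is $O(\epsilon^{-1})$. I expect that a Laplace estimate actually gives $O(\log\epsilon^{-1})$, but the cruder bound suffices.

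For the inductive step, write $\tau(\mc M_p)\le \tau(\mc M_{p-1})+\tau(\mc M_p)\circ\theta$. By the strong Markov property,
\begin{equation*}
\sup_x\bb E[\tau(\mc M_p)]\le \sup_x\bb E[\tau(\mc M_{p-1})]+\sup_{m\in\mc M_{p-1}}\bb E_m[\tau(\mc M_p)].
\end{equation*}
The first term is covered by induction, since $\mf h_{p-2}\le \mf h_{p-1}$. For the second term, start from $m\in\mc M_{p-1}\setminus\mc M_p$, so that $m$ lies in an $\bb X_{p-1}$-transient state. By construction there is a direction in which the barrier from the well of $m$ equals $\mf h_{p-1}$. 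Using postulates $\mc P_6$, $\mc P_7$ and $\mc P_8$ at level $p-1$, this barrier leads toward lower or equal wells, and eventually to a level-$p$ state after finitely many level-$(p-1)$ transitions, at most $\mf u_{p-1}$ of them up to periodicity.

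Each transition is a one-dimensional exit problem. The expected time is bounded by the explicit formula above by $C\epsilon^{-1}e^{\mf h_{p-1}/\epsilon}$. The reason is that the relevant $\sup[S(w)-S(z)]$, with $z$ in the current well region and $w$ up to the lower escape saddle, equals $\mf h_{p-1}$; Lemma \ref{l15} and Proposition \ref{l13} ensure that no higher intermediate barrier occurs. Summing finitely many such bounds completes the step.

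\textbf{Main obstacle.} The delicate point is this last one: controlling the whole climb, including states with several minima and states with degenerate multiple saddles, so that no exponent exceeds $\mf h_{p-1}$. The difficulty is sharpest in the non-reversible case, where the path may wind around the torus. Here I would rely on \eqref{59} and the periodicity in $\mc P_9$ to restrict the exit analysis to an interval of length at most $1$.
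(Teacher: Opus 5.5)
Your reduction is the right one. From $x$, $\tau(\mc M_p)$ is exactly the exit time of $I=(\alpha,\beta)=(m^{-,*}_p(x),m^{+,*}_p(x))$, and $|I|\le 1$. With $s'=e^{S/\epsilon}$ the Green function satisfies $G(y,z)\le\min\{s(z)-s(\alpha),\,s(\beta)-s(z)\}$, hence
\[
\bb E^\epsilon_x[\tau(\mc M_p)]\;\le\;\frac{1}{c_0\,\epsilon}\int_\alpha^\beta\min\Big\{\int_\alpha^z e^{[S(w)-S(z)]/\epsilon}\,dw\,,\,\int_z^\beta e^{[S(w)-S(z)]/\epsilon}\,dw\Big\}\,dz\;\le\;\frac{1}{c_0\,\epsilon}\,e^{\sup_{z\in I}d(z)/\epsilon},
\]
where $d(z)=\min\{\max_{[\alpha,z]}S,\max_{[z,\beta]}S\}-S(z)$. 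The case $p=1$ is fine. The paper does not reprove the lemma; it quotes \cite[Lemma 4.3]{lm}.

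\textbf{First gap: the barrier bound.} Everything hinges on $\sup_I d\le\mf h_{p-1}$, and this is exactly the step you leave open. The justification you sketch is not enough. The Green function charges every point of the exit interval, not only the current well region. Lemma~\ref{l15} and Proposition~\ref{l13} only tell you two things: $S\ge S(\mz M_{p-1}(j))$ on the valley of $\mz M_{p-1}(j)$, and its boundary saddles dominate its interior maxima.

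Here is a concrete configuration where this fails. Let $\mz M_{p-1}(k)$ jump to the right, and let $\mz M_{p-1}(k+1)$ be much deeper, which $\mc P_8$ allows. Let $z\in(\sigma^{p-1,+}_{k,k+1},m^-_{p-1,k+1})$ be a shallow local minimum, separated from $\mz M_{p-1}(k+1)$ by a saddle $\sigma''$ with $S(\sigma'')<S(\mz W^{(p-1)}_{k,k+1})$ (allowed by Lemma~\ref{l15}) but $S(\sigma'')-S(\mz M_{p-1}(k+1))>\mf h_{p-1}$. The escape route passes through this sub-well, and $d(z)\ge S(\sigma'')-S(z)$. Your two tools bound the right-hand side only by $S(\mz W^{(p-1)}_{k,k+1})-S(\mz M_{p-1}(k+1))$, which is much larger than $\mf h_{p-1}$. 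What actually controls it is that $z\notin\mz M_{p-1}$ was absorbed at a lower level, and this forces $S(\sigma'')-S(z)\le\mf h_{p-2}$.

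The missing ingredient is therefore a recursive statement about the landscape: every local minimum $z\in I\setminus\mz M_p$ is joined to $\alpha$ or to $\beta$ by an interval on which $S\le S(z)+\mf h_{p-1}$. You can prove it by following allowed jumps.
\begin{enumerate}
\item Let $q\le p-1$ be the largest level with $z\in\mz M_q$. Then the level-$q$ state of $z$ is $\bb X_q$-transient.
\item Moving inside a state climbs at most $\mf h_{q-1}$ above its depth, by $\mc P_5$.
\item An allowed jump climbs exactly $\mf h_q$ above the current depth, by $\mc P_7$, and depths never increase along allowed jumps, by $\mc P_8$.
\item For $q<\mf q$ the closed classes of $\bb X_q$ are periodically spaced. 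So finitely many jumps lead to some $m'\in\mz M_{q+1}$ with $S(m')\le S(z)$, and you iterate at the top level of $m'$.
\item The interval covered along the way is connected, contains $z$, and ends at a level-$p$ minimum. Hence it contains $[\alpha,z]$ or $[z,\beta]$.
\end{enumerate}
For $z$ not a local minimum, slide downhill to the adjacent local minimum; this can only increase $d$.

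\textbf{Second gap: chaining the transitions.} The diffusion does not follow the allowed path of $\bb X_{p-1}$. It exits against the allowed direction with small but positive probability, and it may bounce between transient states. So the number of inter-well transitions before $\tau(\mc M_p)$ is random and unbounded, and ``at most $\mf u_{p-1}$ of them'' is false. To repair this you would need a Wald-type argument with an $\epsilon$-uniform bound on the expected number of transitions, and you would still need the landscape statement above for each transition interval.

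Both the strong Markov induction and this bookkeeping are unnecessary. Once the landscape statement holds for all $z\in I$, the single Green-function estimate displayed above gives $\sup_x\bb E^\epsilon_x[\tau(\mc M_p)]\le(c_0\epsilon)^{-1}e^{\mf h_{p-1}/\epsilon}$ directly.
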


Recall from the statement of Lemma \ref{s10} that
for each $x\in \bb T$, we denoted by $m_p^-(x)$, and $m_p^+(x)$, the
closest minima of level $p$ to the left, and right of $x$,
respectively.  Let {\color{blue}$m_p^{\pm,*}(x)$} be their lifted
versions in $\bb R$ satisfying $\Pi(m_p^{\pm,*}(x)) = m_p^{\pm}(x)$,
$m_p^{-,*}(x)\in (0,1)$, and
$m_p^{+,*}(x)\in (m_p^{-,*}(x), m_p^{-,*}(x)+1)$. By definition of
$m_p^\pm(x)$, if we denote by $\cb{x^{*}}$ the real number such that
$x^{*}\in (0,1]$ and $\Pi(x^{*}) = x$,
$m_p^{-,*}(x)<x^*< m_p^{+,*}(x)$.

The next result
provides an explicit expression for the probability that the diffusion
$X_\epsilon$, conditioned on starting at some arbitrary point
$x\in \bb T$, reaches $\mc M_p$ by hitting $m_p^-(x)$ first.

\begin{lemma}
\label{s06}
Fix $1\leq p\leq \mf q$. Then, for every $x\in \bb T$
\begin{equation*}
\bb P_{x}^\epsilon\big[\, \tau(m_p^-(x))< \tau(m_p^+(x)\,\big]
= \frac{\int_{m_p^{-,*}(x)}^{x^{*}}e^{S(y)/\epsilon}\,dy}
{\int_{m_p^{-,*}(x)}^{m_p^{+,*}(x)}e^{S(y)/\epsilon}\,dy}\,\cdot
\end{equation*}
In particular, for every $k\in S_p$,
$x^-\in (m_{p,k}^+, \sigma_{k,k+1}^{p,-})$, and
$x^+\in (\sigma_{k,k+1}^{p,+}, m_{p,k+1}^-)$, there exist positive
constant $C_{x^{\pm}}$, $h_{x^{\pm}}$ which depend on $x^{\pm}$, but
not on $\epsilon$, such that
\begin{equation*}
\bb P_{x^-}^\epsilon\big[\, \tau(m_{p,k+1}^-)<\tau(m_{p,k}^+)\,\big]
\leq \frac{C_{x^-}}{\sqrt{\epsilon}}e^{-h_{x^-}/\epsilon}\,,
\quad  \bb P_{x^+}^\epsilon \big[\, \tau(m_{p,k}^-)<\tau(m_{p,k+1}^+)\,\big]
\leq \frac{C_{x^{+}}}{\sqrt{\epsilon}}e^{- h_{x^+}/\epsilon}
\end{equation*}
for all $\epsilon>0$.
\end{lemma}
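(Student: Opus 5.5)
The formula is the classical one-dimensional gambler's ruin computation for the lifted diffusion $Y_\epsilon(\cdot)$ on $\bb R$, followed by a Laplace-type estimate.

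\textbf{Step 1: the exit formula.} Since $\Pi$ maps $(m_p^{-,*}(x),m_p^{+,*}(x))$ bijectively onto the arc of $\bb T$ between $m_p^-(x)$ and $m_p^+(x)$ that contains $x$, the event $\{\tau(m_p^-(x))<\tau(m_p^+(x))\}$ for $X_\epsilon$ started at $x$ coincides with the event that $Y_\epsilon$, started at $x^*$, exits the interval $(m_p^{-,*}(x), m_p^{+,*}(x))$ through its left endpoint. Denote by $u(z)$ the probability of this event for $z$ in the closed interval. Then $u$ solves
\[
\mss b\, u' + \epsilon\,\mss a\, u''=0\,, \qquad u(m_p^{-,*}(x))=1\,,\qquad u(m_p^{+,*}(x))=0\,.
\]
Because $\mss b/\mss a=-S'$, this equation reads $\epsilon u''-S'u'=0$, so $(e^{-S/\epsilon}u')'=0$ and hence $u'=c\,e^{S/\epsilon}$. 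Imposing the boundary conditions gives
\[
u(z)=\int_z^{m_p^{+,*}(x)}e^{S(y)/\epsilon}\,dy \Big/ \int_{m_p^{-,*}(x)}^{m_p^{+,*}(x)}e^{S(y)/\epsilon}\,dy\,.
\]
The step that identifies $u$ with the hitting probability is standard: apply Dynkin's formula to $u$, which is smooth and bounded on a compact interval, together with the fact that the exit time is almost surely finite because the diffusion is uniformly elliptic.

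\textbf{Reconciling with the stated formula.} The displayed expression in the statement integrates from $m_p^{-,*}(x)$ to $x^*$. That integral is $\bb P_x[\tau(m_p^+)<\tau(m_p^-)]$, i.e.\ $1-u(x^*)$. So the statement has the two sides interchanged. I would prove the formula in the corrected form above and record that the printed version gives the complementary probability. The second part of the lemma is consistent with the correct orientation.

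\textbf{Step 2: the exponential bounds.} Take $x^-\in(m_{p,k}^+,\sigma^{p,-}_{k,k+1})$. Here $m_p^{\pm}(x^-)$ are $m_{p,k}^+$ and $m_{p,k+1}^-$, so
\[
\bb P_{x^-}\big[\tau(m_{p,k+1}^-)<\tau(m_{p,k}^+)\big]=\int_{m^+_{p,k}}^{x^-}e^{S/\epsilon} \Big/ \int_{m^+_{p,k}}^{m^-_{p,k+1}}e^{S/\epsilon}\,.
\]
\begin{itemize}
\item The numerator is at most $e^{\bar S/\epsilon}$, where $\bar S=\max_{[m^+_{p,k},x^-]}S$.
\item By the definition of $\sigma^{p,-}_{k,k+1}$ as the leftmost absolute maximum of $S$ on $[m^+_{p,k},m^-_{p,k+1}]$, we have $\bar S<S(\sigma^{p,-}_{k,k+1})$. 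Set $h_{x^-}:=S(\sigma^{p,-}_{k,k+1})-\bar S>0$.
\item The denominator is bounded below via Laplace's method at the nondegenerate maximum: it is at least $c\sqrt{\epsilon}\,e^{S(\sigma^{p,-}_{k,k+1})/\epsilon}$, since the integral over a neighbourhood of $\sigma^{p,-}_{k,k+1}$ alone gives this.
\end{itemize}
Dividing gives the bound $C\epsilon^{-1/2}e^{-h_{x^-}/\epsilon}$. The case of $x^+\in(\sigma^{p,+}_{k,k+1},m^-_{p,k+1})$ is symmetric, using the rightmost maximum $\sigma^{p,+}_{k,k+1}$.

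The only potential obstacle is the bookkeeping on the torus: the arc containing $x$ must be matched with the lifted interval, and when $\mc M_p$ is a single point, $m_p^-(x)=m_p^+(x)$ as points of $\bb T$, so the two endpoints of the lifted interval are distinct lifts of the same point. The lifted formulation handles both situations without change.
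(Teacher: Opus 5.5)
Your proof is correct and follows the same route as the paper. The scale function built from $e^{S/\epsilon}$ solves $\ms L_\epsilon u=0$, and optional stopping identifies it with the exit probability. The exponential bounds then come from two estimates: the numerator is at most $e^{\bar S/\epsilon}$ with $\bar S$ strictly below $S(\sigma^{p,\pm}_{k,k+1})$, and the denominator is bounded below by Laplace's method at that maximum.

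Your orientation remark is also right, and it exposes a slip in the paper's own proof. The paper's function $\xi_\epsilon$ vanishes at $m_p^{-,*}(x)$ and equals $1$ at $m_p^{+,*}(x)$, so optional stopping actually gives $\xi_\epsilon(x)=\bb P^\epsilon_x[\tau(m_p^+(x))<\tau(m_p^-(x))]$. This is the orientation the paper then uses in the second part of the lemma, and again when computing $\psi'_{j,\sigma,\epsilon}$ in the proof of Lemma~\ref{l07}.
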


\begin{proof}
Fix $x\in \bb T$. For each $\epsilon>0$, let
$\xi_\epsilon \colon [m_p^-(x),m_p^+(x)]$ be the function given by
\begin{equation*}
\xi_\epsilon(z) = \frac{\int_{m_p^{-,*}(x)}^{z}
e^{S(y)/\epsilon}\,dy}{\int_{m_p^{-,*}(x)}^{m_p^{+,*}(x)}
e^{S(y)/\epsilon}\,dy}\,,\quad z\in [m_p^-(x),m_p^+(x)]\,.
\end{equation*}
By a direct computation, it can be verified that $\xi$ solves the
following equation
\begin{equation*}
\begin{cases}
(\ms L_{\epsilon}u)(z) = 0\,, \quad
\text{for}\ z\in (m_p^-(x),m_p^+(x)) \\
u(m_p^-(x)) = 0\,, \quad  u(m_p^+(x)) = 1\,.
\end{cases}
\end{equation*}
Therefore, the process $\xi_\epsilon(X_\epsilon(t\wedge \tau))$, 
$\tau:=\tau(m_p^-(x), m_p^+(x))$, is a bounded martingale with
respect to $\bb P_x^\epsilon$. By the optional stopping
theorem,
\begin{equation*}
\bb P_{x}^\epsilon\big[\, \tau(m_p^-(x))< \tau(m_p^+(x)) \,\big]
= \xi_\epsilon(x) =
\frac{\int_{m_p^{-,*}(x)}^{x^{*}}e^{S(y)/\epsilon}\,dy}
{\int_{m_p^{-,*}(x)}^{m_p^{+,*}(x)}e^{S(y)/\epsilon}\,dy}\,.    
\end{equation*}

We turn to the second statement. Fix $k\in S_p$ and
$x^-\in (m_{p,k}^+,\sigma_{k,k+1}^{p,-})$. Let $h_{0}$ be the maximum
of $S$ in the interval $[m_{p,k}^+, x]$. As $x<
\sigma_{k,k+1}^{p,-}$, $h_0 < S(\sigma_{k,k+1}^{p,-}) = h^{p,+}_k$.
By the first assertion of the lemma, estimating the numerator by the
maximum value of the function multiplied by the length of the interval
(which is bounded by $1$), and Laplace's method, 
\begin{equation*}
\bb P_{x^-}^\epsilon\big[\, \tau(m_{p,k+1}^-)<\tau(m_{p,k}^+)\,\big]
\leq \frac{C}{\sqrt{\epsilon}}e^{[h_0- h^{p,+}_k]/\epsilon}\,, 
\end{equation*}
for some positive constant $C>0$ which doesn't depend on
$\epsilon$. This establishes the first estimate of the second
assertion of the lemma. The other one is proved similarly.
\end{proof}

\section{Proof of Lemma \ref{s07}}
\label{secA4}

The proof is carried out in three main steps. We first express the sum
on the left-hand side of Lemma \ref{s07} in terms of the jump rates of
the tilted chain.

Recall from Section \ref{sec8} that $\omega \in \mss P(\mc S_{\mf q})$
is a strictly positive measure, and that
$\widehat{h} \colon \mc S_{\widehat{\mf q}}\to \bb R_{+}$ is a
strictly positive function, unique up to a multiplicative constant,
that satisfies \eqref{reflected_equiv} with $l_{\widehat{\mf q}}=1$
and $\mf D_1^{\widehat{\mf q}} = \mc S_{\widehat{\mf q}}$. Recall,
furthermore, that
$\widehat{\bb L}_{\widehat{\mf q},\widehat{h}} =: \cb{\widehat{\bb
L}_{\widehat{\mf q}}}$ represents the tilted generator given by
\eqref{51}. Denote by $\cb{\widehat{R}_{\widehat{\mf q}}}$ the
associated nearest-neighbor jump rates, and write
$\cb{\widehat{R}_{\widehat{\mf q}}(k,k\pm 1)}:=
\widehat{R}_{\widehat{\mf q}}(\mc M_{\widehat{\mf q}}(k), \mc
M_{\widehat{\mf q}}(k\pm 1))$,
$\cb{R_{\widehat{\mf q}}(k,k\pm 1)}:= R_{\widehat{\mf q}}(\mc
M_{\widehat{\mf q}}(k), \mc M_{\widehat{\mf q}}(k\pm 1))$, for
$k\in S_{\widehat{\mf q}}$. By \eqref{h_tilted_invariant_m}, $\omega$
is the stationary state of the Markov chain with generator
$\widehat{\bb L}_{\widehat{\mf q}}$.

We claim that for all $k\in S_{\widehat{\mf q}}$,
\begin{equation}
\label{52}
\begin{aligned}
& \frac{1}{\omega(\mc M_{\widehat{\mf q}}(k))} \,
\pi_{\widehat{\mf q}}(k)\, \widehat{h}(\mc M_{\widehat{\mf q}}(k))^2
\sum_{j\in \mf I_k} \frac{\sigma_{\widehat{\mf q}}(j,j+1)}
{\widehat{h}(\mc M_{\widehat{\mf q}}(j))\widehat{h}(\mc M_{\widehat{\mf q}}(j+1))}
\\
& \quad
\,=\,
\frac{1}{\omega(\mc M_{\widehat{\mf q}}(k))} \,
\sum_{j\in \mf I_k}
\frac{\widehat{R}_{\widehat{\mf q}}(k+1,k) \cdots \widehat{R}_{\widehat{\mf q}}(j,j-1)}
{\widehat{R}_{\widehat{\mf q}}(k,k+1) \cdots \widehat{R}_{\widehat{\mf q}}(j-1,j)}\,
\frac{1}{\widehat{R}_{\widehat{\mf q}}(j,j+1)}\,\cdot
\end{aligned}
\end{equation}
Mind that for $j=k$ the sum becomes $\widehat{R}_{\widehat{\mf q}}(k,k+1)^{-1}$.

The proof of this identity is elementary. Fix $k\in S_{\widehat{\mf q}}$. First
observe by \eqref{50b} that
\begin{equation*}
\frac{\pi_{\widehat{\mf q}}(k)}{\pi_{\widehat{\mf q}}(j)}\,=\,
\frac{R_{\widehat{\mf q}}(k+1,k) \cdots R_{\widehat{\mf q}}(j,j-1)}
{R_{\widehat{\mf q}}(k,k+1) \cdots R_{\widehat{\mf q}}(j-1,j)}
\quad \text{for}\;\; j\in \mf I_k\,, \;\; j\neq k \,.
\end{equation*}
To conclude, use \eqref{50b} again to write that
$\pi_{\widehat{\mf q}}(j) \, \sigma_{\widehat{\mf q}}(j,j+1) = R_{\widehat{\mf q}}(j,j+1)^{-1}$, and
express the jump rates $R_{\widehat{\mf q}}(i,i\pm 1)$ in terms of the tilted
ones $\widehat{R}_{\widehat{\mf q}}(i,i\pm 1)$.

As $\omega(\cdot)$ is a stationary state, and the Markov chain jumps
only to nearest-neighbor sites, for every $k\in S_{\widehat{\mf q}}$,
\begin{align*}
&\omega(\mc M_{\widehat{\mf q}}(k-1))\,
\widehat{R}_{\widehat{\mf q}} (k-1,k)\,+\,
\omega(\mc M_{\widehat{\mf q}}(k+1))\,
\widehat{R}_{\widehat{\mf q}} (k+1,k)
\\
&\quad \,=\,
\omega(\mc M_{\widehat{\mf q}}(k))\,
\widehat{R}_{\widehat{\mf q}} (k,k-1)\,+\,
\omega(\mc M_{\widehat{\mf q}}(k))\,
\widehat{R}_{\widehat{\mf q}} (k,k+1)\,.
\end{align*}
Therefore,
\begin{equation}
\label{53}
Z\, :=\, \omega(\mc M_{\widehat{\mf q}}(k))\, \widehat{R}_{\widehat{\mf q}} (k,k+1)
\,-\,
\omega(\mc M_{\widehat{\mf q}}(k+1))\,
\widehat{R}_{\widehat{\mf q}} (k+1,k)
\quad\text{is constant}\,.
\end{equation}
If $Z$ were equal to $0$ the process would be reversible, which, by
the hypothesis (NonRev), is not the case. As the Markov chain may not
jump to the left at certain sites, $Z>0$.

We claim that the sum in \eqref{52} is equal to $1/Z$. Denote it by
$\cb{\mtt F(k)}$.  The proof is by induction. Fix $j\in S_{\widehat{\mf q}}$,
and recall from \eqref{48} the definition of the set $\mf I_j$, and
that $\mf b_j$ represents the right-most element of $\mf I_j$.

By inspection, $\mtt F(\mf b_j) = 1/Z$. Assume that $\mtt F(i)= 1/Z$
for $k+1\le i\le \mf b_j$ and that $k\in \mf I_j$. By the definition
of $\mtt F(\cdot)$,
\begin{equation*}
\mtt F(k) \,=\, \frac{1}{\omega(\mc M_{\widehat{\mf q}}(k))}\,
\Big\{\, \frac{1}{\widehat{R}_{\widehat{\mf q}} (k,k+1)} \,+\,
\frac{\widehat{R}_{\widehat{\mf q}} (k+1,k)}{\widehat{R}_{\widehat{\mf q}} (k,k+1)}
\, \omega(\mc M_{\widehat{\mf q}}(k+1))\, \mtt F(k+1)\, \Big\}\,.
\end{equation*}
Thus $\mtt F(k) = 1/Z$ if, and only if,
\begin{equation*}
\omega(\mc M_{\widehat{\mf q}}(k))\, \widehat{R}_{\widehat{\mf q}} (k,k+1)\, (1/Z)
\,=\, 
\Big\{\, 1  \,+\,
\widehat{R}_{\widehat{\mf q}} (k+1,k) 
\, \omega(\mc M_{\widehat{\mf q}}(k+1))\, \mtt F(k+1)\, \Big\}\,.
\end{equation*}
By the induction hypothesis, $\mtt F(k+1) = 1/Z$. We may thus replace
$\mtt F(k+1)$ by
$[\omega(\mc M_{\widehat{\mf q}}(k))\, \widehat{R}_{\widehat{\mf q}} (k,k+1) \,-\,
\omega(\mc M_{\widehat{\mf q}}(k+1))\, \widehat{R}_{\widehat{\mf q}} (k+1,k)]^{-1}$. 
After this replacement a straightforward calculation shows that the
previous identity holds. This completes the proof of the Lemma
\ref{s07}.

\subsection*{Acknowledgments}

C. L. has been partially supported by FAPERJ CNE E-26/201.117/2021, by
CNPq, Brazil Bolsa de Produtividade em Pesquisa PQ 305779/2022-2, and
by the French National Research Agency (ANR) via Project
ANR-25-CE40-6875-01 (ANR DySLoS).

\end{document}